\documentclass[11pt,twoside, leqno]{article}

\usepackage{amssymb}
\usepackage{amsmath}
\usepackage{mathrsfs}
\usepackage{amsthm}
\usepackage{amsfonts}
\usepackage{enumerate}
\usepackage{esint}
\usepackage{latexsym}

\allowdisplaybreaks

\pagestyle{myheadings}\markboth{\rm\sc\footnotesize The Anh Bui et al.}
{\rm\sc\footnotesize Musielak-Orlicz-Hardy Spaces}

\textwidth=15cm
\textheight=21.08truecm
\oddsidemargin 0.35cm
\evensidemargin 0.35cm

\parindent=13pt

\def\ls{\lesssim}
\def\gs{\gtrsim}
\def\fz{\infty}

\def\r{\right}
\def\lf{\left}

\def\supp{{\mathop\mathrm{\,supp\,}}}

\def\aa{{\mathbb A}}
\def\rr{{\mathbb R}}
\def\rh{{\mathbb R}{\mathbb H}}
\def\rn{{{\rr}^n}}
\def\zz{{\mathbb Z}}
\def\nn{{\mathbb N}}
\def\cc{{\mathbb C}}

\newcommand{\wz}{\widetilde}
\newcommand{\oz}{\overline}

\newcommand{\ca}{{\mathcal A}}

\newcommand{\cm}{{\mathcal M}}
\newcommand{\cn}{{\mathcal N}}

\newcommand{\ccr}{{\mathcal R}}
\newcommand{\cs}{{\mathcal S}}
\newcommand{\cx}{{\mathcal X}}

\def\cl{{\mathcal L}}
\def\az{\alpha}
\def\lz{\lambda}
\def\blz{\Lambda}
\def\bdz{\Delta}
\def\bfai{\Phi}

\def\epz{\epsilon}

\def\bz{\beta}

\def\fai{\varphi}
\def\gz{{\gamma}}
\def\bgz{{\Gamma}}

\def\tz{\theta}
\def\sz{\sigma}

\def\wz{\widetilde}

\def\ls{\lesssim}
\def\gs{\gtrsim}

\def\boz{\Omega}
\def\oz{\omega}

\def\uc{{\varepsilon}}

\def\pat{\partial}

\def\esup{\mathop\mathrm{\,esssup\,}}

\def\at{{{\mathop\mathrm{at}}}}

\def\hs{\hspace{0.3cm}}

\def\dsum{\displaystyle\sum}
\def\dint{\displaystyle\int}

\def\dsup{\displaystyle\sup}
\def\dinf{\displaystyle\inf}

\newtheorem{theorem}{Theorem}[section]
\newtheorem{lemma}[theorem]{Lemma}
\newtheorem{corollary}[theorem]{Corollary}
\newtheorem{proposition}[theorem]{Proposition}
\theoremstyle{definition}
\newtheorem{remark}[theorem]{Remark}
\newtheorem{definition}[theorem]{Definition}
\numberwithin{equation}{section}
\def\supp{{\mathop\mathrm{\,supp\,}}}

\def\loc{{\mathop\mathrm{loc\,}}}
\def\lfz{\lfloor}
\def\rfz{\rfloor}
\numberwithin{equation}{section}

\begin{document}

\arraycolsep=1pt

\title{\bf\Large Musielak-Orlicz-Hardy Spaces Associated with Operators Satisfying Reinforced
Off-Diagonal Estimates\footnotetext {\hspace{-0.35cm}
2010 {\it Mathematics Subject Classification}. Primary: 42B35;
Secondary: 42B30, 42B25, 42B20, 35J10, 46E30, 47B38, 47B06, 30L99.
\endgraf {\it Key words and phrases}.
Musielak-Orlicz-Hardy space, molecule, atom,
maximal function, Lusin area function, Schr\"odinger operator, elliptic operator,
Riesz transform.
\endgraf Dachun Yang is supported by the National
Natural Science Foundation (Grant No. 11171027) of China and
Program for Changjiang Scholars and Innovative Research Team in
University of China.}}
\author{The Anh Bui, Jun Cao, Luong Dang Ky, Dachun Yang\footnote{Corresponding author}\,
\ and Sibei Yang}
\date{ }
\maketitle

\vspace{-0.5cm}

\begin{center}
\begin{minipage}{13.9cm}
{\small {\bf Abstract}\quad Let $\mathcal{X}$ be a metric space
with doubling measure and $L$ a one-to-one operator of
type $\omega$ having a bounded $H_\infty$-functional calculus
in $L^2(\mathcal{X})$ satisfying the reinforced $(p_L, q_L)$
off-diagonal estimates on balls,
where $p_L\in[1,2)$ and $q_L\in(2,\infty]$.
Let $\varphi:\,\mathcal{X}\times[0,\infty)\to[0,\infty)$ be a
function such that $\varphi(x,\cdot)$ is an Orlicz function,
$\varphi(\cdot,t)\in {\mathbb A}_{\infty}(\mathcal{X})$ (the class of
uniformly Muckenhoupt weights), its uniformly critical upper type index
$I(\varphi)\in(0,1]$ and $\varphi(\cdot,t)$ satisfies the uniformly
reverse H\"older inequality of order $(q_L/I(\varphi))'$, where $(q_L/I(\varphi))'$ denotes
the conjugate exponent of $q_L/I(\varphi)$. In this paper, the authors
introduce a Musielak-Orlicz-Hardy space $H_{\varphi,\,L}(\mathcal{X})$,
via the Lusin-area function associated with $L$,
and establish its molecular characterization. In particular, when $L$ is nonnegative
self-adjoint and satisfies the Davies-Gaffney estimates, the atomic characterization
of $H_{\varphi,\,L}(\mathcal{X})$ is also obtained. Furthermore, a sufficient condition
for the equivalence between $H_{\varphi,\,L}(\mathbb{R}^n)$ and the
classical Musielak-Orlicz-Hardy
space $H_{\varphi}(\mathbb{R}^n)$ is given. Moreover, for the
Musielak-Orlicz-Hardy space $H_{\varphi,\,L}(\mathbb{R}^n)$
associated with the second order elliptic operator
in divergence form on $\rn$ or
the Schr\"odinger operator $L:=-\Delta+V$ with $0\le V\in
L^1_{\mathrm{loc}}(\mathbb{R}^n)$,
the authors further obtain its several equivalent
characterizations in terms of various non-tangential and radial maximal
functions; finally, the authors show that the Riesz
transform $\nabla L^{-1/2}$ is bounded from
$H_{\varphi,\,L}(\mathbb{R}^n)$ to the Musielak-Orlicz space $L^\varphi(\mathbb{R}^n)$
when $i(\varphi)\in(0,1]$, from $H_{\varphi,\,L}(\mathbb{R}^n)$ to
$H_{\varphi}(\mathbb{R}^n)$ when $i(\varphi)\in(\frac{n}{n+1},1]$, and from
$H_{\varphi,\,L}(\mathbb{R}^n)$ to the weak Musielak-Orlicz-Hardy space
$WH_{\varphi}(\mathbb{R}^n)$ when $i(\fai)=\frac{n}{n+1}$ is attainable and
$\varphi(\cdot,t)\in {\mathbb A}_1(\mathcal{X})$,
 where $i(\varphi)$ denotes the uniformly critical lower type index of $\varphi$.}
\end{minipage}
\end{center}

%\tableofcontents

\section{Introduction\label{s1}}

\hskip\parindent The real-variable theory of Hardy spaces $H^p(\mathbb{R}^n)$ with
$p\in(0,1]$, introduced by Stein and Weiss \cite{sw60} and systematically
developed in the seminal paper of Fefferman and Stein
\cite{fs72}, plays a center role in various fields of harmonic analysis and
partial differential equations (see, for example, \cite{clms,s94} and the references
therein). One of the main features of the Hardy space
$H^p(\mathbb{R}^n)$ with $p\in(0,1]$ is their atomic decomposition characterizations
(see \cite{c74} for $n=1$ and \cite{l78} for $n>1$). Later, the theory of weighted
Hardy spaces $H^p_w(\mathbb{R}^n)$ with Muckenhoupt weights $w$ has been
studied by Garc\'ia-Cuerva \cite{g79}, and Str\"omberg and Torchinsky \cite{st}. Furthermore,
Str\"omberg \cite{s79} and  Janson \cite{ja80} introduced the Orlicz-Hardy space
which play an important role in studying the theory of nonlinear PDEs (see, for example,
\cite{GI,IS,bfg10,bg10,bijz07}). Recently, in \cite{k},
the last two authors of the present paper studied
Hardy spaces of Musielak-Orlicz type which generalize the Orlicz-Hardy space
in \cite{s79,ja80} and the weighted Hardy spaces in \cite{g79,st}. Furthermore,
several real-variable characterizations of the Hardy spaces of
Musielak-Orlicz type were established in \cite{lhy,hyy}. Moreover, the local Hardy space
of Musielak-Orlicz type was studied in \cite{yys3}.
It is worth pointing out that Musielak-Orlicz functions are
the natural generalization of Orlicz functions
(see, for example, \cite{d05,dhr09,k,m83})
and the motivation to study function spaces of
Musielak-Orlicz type is attributed to their extensive applications to
many fields of mathematics (see, for example,
\cite{bfg10,bg10,bgk,bijz07,d05,dhr09,k,k1,l05} for more details).
However, it is now understood that
there are many settings in which the theory of the spaces of Hardy type can not be
applicable; for example, the Riesz transform $\nabla L^{-1/2}$ may not be bounded from
$H^1(\mathbb R^n)$ to $L^1(\mathbb R^n)$ when $L:=-{\rm div} (A\nabla)$ is a second order
divergence elliptic operator with complex bounded measurable coefficients (see, for example,
\cite{hm09}).

Recently, there has been a lot of studies which pay attention to the theory of function
spaces associated with operators. In many applications, the very dependence on the
function spaces associated with the operators provides many advantages in studying the boundedness
of singular integrals which may not fall within the scope of the classical Calder\'on-Zygmund theory.
Here, we would like to give a brief overview of this research direction.
Let $L$ be an infinitesimal generator of an analytic semigroup $\{e^{-tL}\}_{t>0}$ on
$L^2(\mathbb{R}^n)$ whose kernels satisfy the Gaussian upper bound estimates.
The theory on Hardy spaces associated with such operators $L$ was investigated
in \cite{adm,dy05}. Later, Hardy spaces associated with operators which satisfy the weaker
conditions, so called Davies-Gaffney estimate conditions, were treated in the works of
Auscher et al. \cite{amr08}, Hofmann and Mayboroda \cite{hm09} and
Hofmann et al. \cite{hlmmy,hmm}.
In \cite{jy10, jy11,jyz09,lyy11,yys1, yys2, yys3,yys4}, the authors studied the
Orlicz-Hardy spaces associated with
operators and, in some sense, these results are extensions to  Hardy spaces associated
with operators. Then, the weighted Hardy spaces associated with operators were
also considered in \cite{sy10} and \cite{bd11}. Recently, in \cite{yys4}, the
last two authors of this paper studied the Musielak-Orlicz-Hardy spaces associated with
nonnegative self-adjoint operators satisfying Davies-Gaffney estimates.
Furthermore, some special Musielak-Orlicz-Hardy spaces  associated with the Schr\"odinger
operator $L:=-\Delta+V$ on $\rn$, where the nonnegative
potential $V$ satisfies the reverse H\"older inequality of order $n/2$,
were studied by the third author of this paper \cite{k3,k4,k5} and further applied to
the study of commutators of singular integral operators associated with the operator $L$.
Very recently, the authors of this paper \cite{bckyy} studied the weighted Hardy space
associated with nonnegative self-adjoint operators satisfying the reinforced off-diagonal
estimates on $\rn$ (see Assumption (B) for their definitions in the present setting),
which improves these results in \cite{sy10,bd11,yys4} in some sense by
essentially extending the range of the considered weights.

We would like to describe partly the results in \cite{yys4} which may be closely related
to this paper. Let $L$ be a nonnegative self-adjoint operator on $L^2(\cx)$ satisfying
Davies-Gaffney estimates, where $\cx$ denotes a metric space with doubling measure.
Let $\varphi : \cx\times [0,\infty)\to [0,\infty)$ be a function such that
$\varphi(x, \cdot)$ is an Orlicz function, $\varphi(\cdot, t)\in\mathbb{A}_\infty
(\mathcal{X})$ (the class of uniformly Muckenhoupt weights), its uniformly critical
upper type index $I(\varphi) \in (0, 1]$ and $\varphi(\cdot, t)$ satisfies the uniformly reverse
H\"older inequality of order $2/[2-I(\varphi)]$ (see Subsection \ref{s2.2} below
for these definitions). A typical example of such a $\fai$ is
\begin{equation}\label{1.1}
\fai(x,t):=w(x)\Phi(t)
\end{equation}
for all $x\in\cx$ and $t\in[0,\fz)$, where $w\in A_{\fz}(\cx)$ (the \emph{class of
Muckenhoupt weights}) and $\Phi$ is an Orlicz function on $[0,\fz)$ of
upper type 1 and lower type $p\in(0,1]$ (see Section \ref{s2.2} below
for the definition of types). Let $x_0\in\cx$. Another typical
and useful example of such a $\fai$ is
\begin{equation}\label{1.2}
\fai(x,t):=\frac{t^{\az}}{[\ln(e+d(x,x_0))]^{\bz}+[\ln(e+t)]^{\gz}}
\end{equation}
for all $x\in\cx$ and $t\in[0,\fz)$, with some $\az\in(0,1]$,
$\bz\in[0,n)$ and $\gz\in [0,2\az(1+\ln2)]$ (see
Section \ref{s2.2} for more details). Then, the last two authors of the present paper
\cite{yys4} introduced a Musielak-Orlicz-Hardy space $H_{\varphi,\,L}(\mathcal{X})$,
via the Lusin-area function associated with $L$, and obtained two
equivalent characterizations of $H_{\varphi,\,L}
(\mathcal{X})$ in terms of the atom and the molecule. Hence, it is natural
to raise the question \emph{when the condition
$\varphi(x, \cdot) \in \mathbb{RH}_{2/[2 - I(\varphi)]}
(\mathcal{X})$ can be relaxed}. One of  the main aims of this paper is to give
an affirmative answer to this question.

Moreover, motivated by \cite{bckyy,yys4,am07ii,cy}, in this paper,
we consider more general operators by assuming that the considered operator satisfies
\textbf{Assumptions (A)} and \textbf{(B)} in Subsection \ref{s2.3} of this paper.
Indeed, Assumption {\rm (A)} is weaker than ``the nonnegative and self-adjoint''
condition imposing on the operator $L$ in \cite{yys4}. Meanwhile, in Assumption (B), we first introduce
the notion of the \emph{reinforced $(p_L, q_L,m)$ off-diagonal estimates on balls}
in the spaces of homogeneous type (see Definition \ref{d2.3} below), which is quite wide
so that it can provide a framework  to treat  almost the results in previous works
(see, for example, \cite{adm, dy05, hm09, hlmmy, jy10, jy11, yys4, bckyy}).
Under Assumptions (A) and (B),
we first introduce the Musielak-Orlicz-Hardy spaces $H_{\varphi,\,L}(\mathcal{X})$
(see Definition \ref{d4.1} below), via
the Lusin-area function associated with L, and then
characterize the spaces of $H_{\varphi,\,L}(\mathcal{X})$ in terms of the molecular
with $\varphi(x, \cdot) \in \mathbb{RH}_{(q_L/I(\varphi))'}(\mathcal{X})$
(see Theorem \ref{t4.1} below), where
$(q_L/I(\varphi))'$ denotes the conjugate exponent of $q_L/I(\varphi)$.
In particular, when $L$ is nonnegative self-adjoint and satisfies the Davies-Gaffney
estimates, the atomic characterization of $H_{\varphi,\,L}(\mathcal{X})$ is
also obtained (see Theorem \ref{ct4.3} below).
It is important to notice that $\mathbb{RH}_{2/[2 - I(\varphi)]}(\mathcal{X})
\subset \mathbb{RH}_{(q_L/I(\varphi))'}(\mathcal{X})$ whenever $q_L>2$ and hence the
results in this paper improve significantly those in \cite{yys4}, by enlarging the range of the
weights. In particular case, if
the heat kernels associated with $\{e^{-tL}\}_{t>0}$ satisfy the Gaussian upper
bound estimate, then $p_L=1$ and $q_L=\infty$ and hence the class of $\varphi$ can be extended
to $\varphi(x, \cdot) \in \mathbb{A}_{\infty}(\mathcal{X})$. Moreover, we
also give a sufficient conditions on $L$ so that our Musielak-Orlicz-Hardy space
$H_{\varphi,\, L}(\mathcal{X})$ coincides with the Musielak-Orlicz-Hardy space
$H_{\varphi}(\mathcal{X})$ introduced by the third author of this paper in \cite{k}
when $\mathcal{X}:=\mathbb{R}^n$ (see Theorem \ref{t5.1} below).
As applications, we consider Musielak-Orlicz-Hardy spaces $H_{\varphi,\,L}(\mathcal{X})$
in some particular cases, for example, $L$ being the second order elliptic operator in
divergence form or the Schr\"odinger operator. More precisely, for the
Musielak-Orlicz-Hardy space $H_{\varphi,\,L}(\mathbb{R}^n)$
associated with the second order elliptic operator
in divergence form on $\rn$ with bounded measurable complex coefficients or
the Schr\"odinger operator $L:=-\Delta+V$, where $0\le V\in L^1_{\mathrm{loc}}(\mathbb{R}^n)$,
we further obtain its several equivalent
characterizations in terms of the non-tangential and the radial maximal
functions (see Theorems \ref{ct5.5} and \ref{t6.1} below); finally, we show that the Riesz
transform $\nabla L^{-1/2}$ is bounded from
$H_{\varphi,\,L}(\mathbb{R}^n)$ to the Musielak-Orlicz space $L^\varphi(\mathbb{R}^n)$
when $i(\varphi)\in(0,1]$, from $H_{\varphi,\,L}(\mathbb{R}^n)$ to
$H_{\varphi}(\mathbb{R}^n)$ when $i(\varphi)\in(\frac{n}{n+1},1]$, and from
$H_{\varphi,\,L}(\mathbb{R}^n)$ to the weak Musielak-Orlicz-Hardy space
$WH_{\varphi}(\mathbb{R}^n)$ when $i(\fai)=\frac{n}{n+1}$ is attainable
and $\varphi(\cdot,t)\in {\mathbb A}_{1}(\mathcal{X})$ (see Theorems
\ref{ct5.7}, \ref{ct5.12}, \ref{t6.2} and \ref{t6.3} below),
where $i(\varphi)$ denotes the uniformly critical lower type index of $\varphi$.

One of the new ingredients appeared in this paper is the introduction of the notion of
the reinforced $(p_L, q_L,m)$ off-diagonal estimates on balls
in spaces of homogeneous type with $m\in\nn:=\{1,\,2,\,\ldots\}$.
We remark that, to study the weighted Hardy space
$H_{\omega}^p(\rn)$ on the Euclidean space $\rn$ and to relax the range of the weight
$\omega$ as wide as possible,  the authors introduced a notion of
the reinforced $(p_L, q_L)$ off-diagonal estimates in \cite{bckyy}, which
is particular useful for studying the weighted Hardy space associated with
various differential operators of second order in the
setting of Euclidean spaces. However, if we consider the differential operators
on some more general spaces (for example, the Laplace-Beltrami operator on the Riemannian
manifold with doubling property), the reinforced $(p_L, q_L)$ off-diagonal estimates
in \cite{bckyy} seem no longer suitable (see Remark \ref{r2.4}(a)). To overcome this difficulty,
we introduce the reinforced $(p_L, q_L,m)$ off-diagonal estimates on balls by combining
the ideas of  the reinforced $(p_L, q_L)$ off-diagonal estimates from \cite{bckyy} and
the off-diagonal estimates on balls from \cite{am07ii}. Also, the order $m\in\nn$
makes many differential operators of higher order fall into our scope
(see Remark \ref{r2.4}(c)). We also point out that, in \cite{hlmmy,jy11}, the authors
introduced a Hardy space associated with operators $L$ in the space of homogenous
type by assuming that $L$ satisfies the so called Davies-Gaffney estimates. However,
due to the fact that Davies-Gaffney estimates are equivalent to $L^2$-$L^2$ off-diagonal
estimates on balls (see Remark \ref{equivalence between off-diagonal estimates
on balls and off-diagonal estimates}(ii)), their Hardy spaces can be viewed as a
special case of ours.

Another interesting ingredient appeared in this paper is the discussion of
the role of the $L^2(\cx)$ norm in the definitions of the Hardy space
$H_{\fai,\,L}(\cx)$ and  the atomic or the molecular Hardy space.
This discussion has two aspects, the first one is from \cite{bckyy}, where
the authors asked the question that what happen if we replace  $L^2(\mathbb R^n)$
by $L^q(\mathbb R^n)$ with $q\ne 2$ in the definition of the Hardy space. For
this question, in the present setting, we prove that the space $H_{\fai,\,L}(\cx)$
is invariant when we do this replacement for all $q\in (p_L,\,q_L)$
(see Theorem \ref{t4.2}), which coincides
with the result obtained in \cite{bckyy} when $\fai(x,\,t):=t^p w(x)$, for $p\in(0,\,1]$.
The second aspect of this discussion can be reduced to the following
question: ``\emph{what happen if we replace the  $L^2(\mathbb R^n)$-convergence of the
atomic (resp. molecular) representation by the $L^s(\mathbb R^n)$-convergence
with $s\ne 2$ in the definition of the the atomic and molecular Hardy space?}"
This question arises naturally when we study the boundedness of the fractional integral
between two different Hardy spaces. For this question, we prove that the atomic
and molecular Hardy spaces are invariant when we do this replacement
for all $s\in  (p_L,\,q_L)$ (see Theorems \ref{ct4.9} and \ref{t4.1}).

The organization of this paper is as follows. In Section \ref{s2}, we discuss the
settings which are considered in this papers. This includes the assumptions
for the function $\varphi$ and the operator $L$. Then, we establish the results
on the $L^p(\cx)$-boundedness of two square functions which is useful in what follows.

Section \ref{s3} is dedicated to studying the Musielak-Orlicz tent spaces. Like the classical
result for the tent spaces, we also give out the atomic decomposition for the Musielak-Orlicz
tent spaces.

In Section \ref{ss4}, we first introduce the Musielak-Orlicz-Hardy
space $H_{\varphi,\,L}(\mathcal{X})$ via the Lusin-area function and prove
that the operator $\pi_{L,\,M}$ (see \eqref{4.2} below for its definition) maps
the Musielak-Orlicz tent space $T_\fai(\cx_+)$ continuously into the
space $H_{\fai,\,L}(\cx)$ (see Proposition \ref{p4.1} below), here and in
what follows, $\cx_+:=\cx\times (0,\fz)$. By this and the
atomic decomposition of the space $T_\fai(\cx_+)$, we establish the molecular
characterization of $H_{\fai,\,L}(\cx)$ (see Theorem \ref{t4.1} below).
Moreover, similar to \cite[Theorem 3.4]{bckyy}, we show that $H_{\fai,\,L}(\cx)$
is invariant if we replace $L^2(\cx)$ by $L^q(\cx)$ with $q\in (p_L,q_L)$
in the definition of $H_{\fai,\,L}(\cx)$ (see Theorem \ref{t4.2} below).
As a consequence, we see that $L^s(\cx)\cap H_{\fai,\,L}(\cx)$ is dense
in $H_{\fai,\,L}(\cx)$ whenever $s\in(p_L,q_L)$ (see Corollary \ref{cs4.1} below).

If $L$ is a nonnegative self-adjoint operator in $L^2(\cx)$ satisfying
the reinforced $(p_L,p_L',1)$ off-diagonal estimates on balls with $p_L\in[1,2)$,
in Section \ref{cs4}, we establish the atomic characterization of the space
$H_{\fai,\,L}(\cx)$ (see Theorem \ref{ct4.3} below) by using the finite
propagation speed for the wave equation and a similar method used in Section \ref{ss4}.

The aim of Section \ref{s5} is to give an affirmative answer
to the question ``\emph{when do the Musielak-Orlicz-Hardy
spaces $H_{\varphi,\,L}(\mathbb{R}^n)$ and $H_{\varphi}(\mathbb{R}^n)$ coincide?}''.
More precisely, if the distribution kernel of the heat semigroup $\{e^{-tL}\}_{t>0}$
satisfies the Gaussian upper bound estimate, some H\"older regularity and the conservation
(see Assumption (C) below for details), then the spaces $H_{\fai,\,L}(\rn)$ and
$H_{\fai}(\rn)$ coincide with equivalent quasi-norms (see Theorem \ref{t5.1} below).

In Section \ref{cs5}, as a special case, we further study the Musielak-Orlicz-Hardy
space $H_{\fai,\,L}(\rn)$ associated with the second order elliptic operator
in divergence on $\rn$ with complex bounded measurable coefficients. By making full use of
the special structure of the divergence form elliptic operator and
establishing a good-$\lz$ inequality concerning the non-tangential maximal function
and the truncated Lusin-area function, we obtain the radial and the non-tangential
maximal function characterizations of $H_{\fai,\,L}(\rn)$ (see Theorem \ref{ct5.5} below).
We remark that the proof of Theorem \ref{ct5.5} is similar to that of
\cite[Theorem 7.4]{yys4} (see also the proof of \cite[Proposition 3.2]{yys2}).
Theorem \ref{ct5.5} completely covers
\cite[Theorem 5.2 and Corollary 5.1]{jy10} by taking $\fai$ as in \eqref{1.1}
with $w\equiv1$ and $\Phi$ concave.
Moreover, we prove that the Riesz transform $\nabla L^{-1/2}$,
associated with $L$, is bounded from $H_{\varphi,\,L}(\mathbb{R}^n)$
to $L^\varphi(\mathbb{R}^n)$ when $i(\varphi)\in(0,1]$, from
$H_{\varphi,\,L}(\mathbb{R}^n)$ to $H_{\varphi}(\mathbb{R}^n)$ when
$i(\varphi)\in(\frac{n}{n+1},1]$, and from
$H_{\varphi,\,L}(\mathbb{R}^n)$ to the weak Musielak-Orlicz-Hardy space
$WH_{\varphi}(\mathbb{R}^n)$ when $i(\fai)=\frac{n}{n+1}$ and is attainable (see Theorems
\ref{ct5.7} and \ref{ct5.12} below). We point out that
Theorem \ref{ct5.7} completely covers \cite[Theorems 7.1 and 7.4]{jy10} by taking
completely cover \cite[Theorems 5.1 and 5.2]{jy11} by taking $\fai$ as in \eqref{1.1}
with $w\equiv1$ and $\Phi$ concave.
Theorem \ref{ct5.12} completely covers \cite[Theorem 1.2]{cyy} by
taking $\fai$ as in \eqref{1.1} with $w\equiv1$ and $\Phi(t):=t^{n/(n+1)}$
for all $t\in[0,\fz)$.

In Section \ref{s6}, we consider the Musielak-Orlicz-Hardy spaces
$H_{\varphi,\,L}(\mathbb{R}^n)$ associated with
the Schr\"odinger operator $L:=-\Delta+V$, where $0\le V\in L^1_\loc(\rn)$. Similar to
Section \ref{cs5}, we establish several equivalent characterizations of $H_{\fai,\,L}(\rn)$
in terms of the radial and the non-tangential maximal functions associated with the heat
and the Poisson semigroups of $L$ (see Theorem \ref{t6.1} below). Moreover, we also study
the boundedness of $\nabla L^{-1/2}$ on the space $H_{\fai,\,L}(\rn)$ (see Theorems \ref{t6.2}
and \ref{t6.3} below). It is worth pointing out that Theorems \ref{t6.1} and \ref{t6.2},
respectively, improve \cite[Theorem 7.4]{yys4} and \cite[Theorems 7.11 and 7.15]{yys4}
by extending the range of weights (see Remarks \ref{r6.1} and \ref{r6.2} below
for details).

Finally we make some conventions on notation. Throughout the whole
paper, we denote by $C$ a \emph{positive constant} which is
independent of the main parameters, but it may vary from line to
line. We also use $C_{(\gz,\bz,\ldots)}$ to denote a \emph{positive
constant depending on the indicated parameters $\gz$, $\bz$,
$\ldots$}. The \emph{symbol} $A\ls B$ means that $A\le CB$. If
$A\ls B$ and $B\ls A$, then we write $A\sim B$. The  \emph{symbol}
$\lfz s\rfz$ for $s\in\rr$ denotes the maximal integer not more
than $s$. For any given normed spaces $\mathcal A$ and $\mathcal
B$ with the corresponding norms $\|\cdot\|_{\mathcal A}$ and
$\|\cdot\|_{\mathcal B}$, the \emph{symbol} ${\mathcal
A}\subset{\mathcal B}$ means that for all $f\in \mathcal A$, then
$f\in\mathcal B$ and $\|f\|_{\mathcal B}\ls \|f\|_{\mathcal A}$.
Also given $\lambda > 0$, we write $\lambda B$ for the $\lambda$-dilated ball,
which is the ball with the same center as $B$ and with radius
$r_{\lambda B} = \lambda r_B$. We also set
$\nn:=\{1,\,2,\, \ldots\}$ and $\zz_+:=\{0\}\cup\nn$. For each
ball $B\subset \cx$, we set
$$S_0(B)=B\ \text{and}\ S_j(B) = 2^jB\setminus 2^{j-1}B$$
for $j\in \mathbb{N}.$
For any measurable subset $E$ of $\cx$, we denote by $E^\complement$
the \emph{set} $\cx\setminus E$ and by $\chi_{E}$ its
\emph{characteristic function}. For any
$\tz:=(\tz_{1},\ldots,\tz_{n})\in\zz_+^{n}$, let
$|\tz|:=\tz_{1}+\dots+\tz_{n}$. For any subsets
$E$, $F\subset\cx$ and $z\in\cx$, let
$$d(E, F):=\inf_{x\in E,\,y\in F}d(x,y)\ \text{and}\ d(z, E):=\inf_{x\in E}d(z,x).$$
For $1 \le q \le\infty$, we denote by $q'$ the \emph{conjugate exponent} of
 $q$, namely, $1/q + 1/q'= 1$. Finally, we use the notation
$$\fint_B h(x)d\mu(x):=\frac{1}{\mu(B)}\int_Bh(x)d\mu(x).$$

\section{Preliminaries\label{s2}}

\hskip\parindent In Subsection \ref{s2.1}, we first recall some notions on metric measure
spaces and then, in Subsection \ref{s2.2}, we state some notions and assumptions
concerning growth functions considered in this paper and give some examples
which satisfy these assumptions; finally, we recall some properties of growth
functions established in \cite{k}. In Subsection \ref{s2.3}, we describe some basic
assumptions on the operator $L$ studied in this paper and then study the
$L^p(\cx)$-boundedness of two square functions associated with $L$.

\subsection{Metric measure spaces\label{s2.1}}

\hskip\parindent Throughout the whole paper, we let $\cx$ be a
\emph{set}, $d$ a \emph{metric} on $\cx$ and $\mu$ a \emph{nonnegative Borel regular
measure} on $\cx$. For all $x\in\cx$ and $r\in(0,\fz)$, let
$$B(x,r):=\{y\in\cx:\ d(x,y)<r\}$$
and $V(x,r):=\mu(B(x,r))$.
Moreover, we assume that there exists a constant
$C\in[1,\fz)$ such that, for all $x\in\cx$ and $r\in(0,\fz)$,
\begin{equation}\label{2.1}
V(x,2r)\le CV(x,r)<\fz.
\end{equation}

Observe that $(\cx,\,d,\,\mu)$ is a \emph{space of homogeneous type} in
the sense of Coifman and Weiss \cite{cw71}. Recall that in the
definition of spaces of homogeneous type in \cite[Chapter 3]{cw71}, $d$ is assumed to be a
quasi-metric. However, for simplicity, we always assume that
$d$ is a metric. Notice that the doubling property
\eqref{2.1} implies that the following strong homogeneity property
that, for some positive constants $C$ and $n$,
\begin{equation}\label{2.2}
V(x,\lz r)\le C\lz^n V(x,r)
\end{equation}
uniformly for all
$\lz\in[1,\fz)$, $x\in\cx$ and $r\in(0,\fz)$. There also
exist constants $C\in(0,\fz)$ and $N\in[0,n]$ such that, for all
$x,\,y\in\cx$ and $r\in(0,\fz)$,
\begin{equation}\label{2.3}
V(x,r)\le C\lf[1+\frac{d(x,y)}{r}\r]^NV(y,r).
\end{equation}
Indeed, the property \eqref{2.3} with $N=n$ is a simple corollary
of the triangle inequality for the metric $d$ and the strong
homogeneity property \eqref{2.2}. In the cases of Euclidean spaces
and Lie groups of polynomial growth, $N$ can be chosen to be $0$.

Furthermore, for $p\in(0,\fz]$, the \emph{space of $p$-integrable
functions on $\cx$} is denoted by $L^p(\cx)$ and the \emph{(quasi-)norm} of
$f\in L^p(\cx)$ by $\|f\|_{L^p(\cx)}$.

\subsection{Growth functions\label{s2.2}}

\hskip\parindent Recall that a function
$\Phi:[0,\fz)\to[0,\fz)$ is called an \emph{Orlicz function} if it
is nondecreasing, $\Phi(0)=0$, $\Phi(t)>0$ for $t\in(0,\fz)$ and
$\lim_{t\to\fz}\Phi(t)=\fz$ (see, for example,
\cite{m83,rr91,rr00}). The function $\Phi$ is said to be of
\emph{upper type $p$} (resp. \emph{lower type $p$}) for some $p\in[0,\fz)$, if
there exists a positive constant $C$ such that for all
$s\in[1,\fz)$ (resp. $s\in[0,1]$) and $t\in[0,\fz)$,
$\Phi(st)\le Cs^p \Phi(t).$

For a given function $\fai:\,\cx\times[0,\fz)\to[0,\fz)$ such that for
any $x\in\cx$, $\fai(x,\cdot)$ is an Orlicz function,
$\fai$ is said to be of \emph{uniformly upper type $p$} (resp.
\emph{uniformly lower type $p$}) for some $p\in[0,\fz)$ if there
exists a positive constant $C$ such that for all $x\in\cx$,
$t\in[0,\fz)$ and $s\in[1,\fz)$ (resp. $s\in[0,1]$), $\fai(x,st)\le Cs^p\fai(x,t)$.
We say that $\fai$ is of \emph{positive uniformly upper type}
(resp. \emph{uniformly lower type}) if it is of uniformly upper
type (resp. uniformly lower type) $p$ for some $p\in(0,\fz)$. Moreover,
let
\begin{equation}\label{2.4}
I(\fai):=\inf\{p\in(0,\fz):\ \fai\ \text{is of uniformly upper
type}\ p\}
\end{equation}
and
\begin{equation}\label{2.5}
i(\fai):=\sup\{p\in(0,\fz):\ \fai\ \text{is of uniformly lower
type}\ p\}.
\end{equation}
In what follows, $I(\fai)$ and
$i(\fai)$ are, respectively, called the \emph{uniformly critical
upper type index} and the \emph{uniformly critical lower type index} of $\fai$.
Observe that $I(\fai)$ and $i(\fai)$ may not be attainable, namely, $\fai$ may not
be of uniformly upper type $I(\fai)$ and uniformly lower type $i(\fai)$
(see below for some examples).

Let $\fai:\cx\times[0,\fz)\to[0,\fz)$ satisfy that
$x\mapsto\fai(x,t)$ is measurable for all $t\in[0,\fz)$. Following
\cite{k}, $\fai(\cdot,t)$ is called \emph{uniformly locally
integrable} if, for all bounded sets $K$ in $\cx$,
$$\int_{K}\sup_{t\in(0,\fz)}\lf\{\fai(x,t)
\lf[\int_{K}\fai(y,t)\,d\mu(y)\r]^{-1}\r\}\,d\mu(x)<\fz.$$

\begin{definition}\label{d2.1}
Let $\fai:\cx\times[0,\fz)\to[0,\fz)$ be uniformly locally
integrable. The function $\fai(\cdot,t)$ is said to satisfy the
\emph{uniformly Muckenhoupt condition for some $q\in[1,\fz)$},
denoted by $\fai\in\aa_q(\cx)$, if, when $q\in (1,\fz)$,
\begin{equation*}
\aa_q (\fai):=\sup_{t\in
(0,\fz)}\sup_{B\subset\cx}\fint_B
\fai(x,t)\,d\mu(x) \lf\{\fint_B
[\fai(y,t)]^{-q'/q}\,d\mu(y)\r\}^{q/q'}<\fz,
\end{equation*}
where $1/q+1/q'=1$, or
\begin{equation*}
\aa_1 (\fai):=\sup_{t\in (0,\fz)}
\sup_{B\subset\cx}\fint_B \fai(x,t)\,d\mu(x)
\lf(\esup_{y\in B}[\fai(y,t)]^{-1}\r)<\fz.
\end{equation*}
Here the first supremums are taken over all $t\in(0,\fz)$ and the
second ones over all balls $B\subset\cx$.

The function $\fai(\cdot,t)$ is said to satisfy the
\emph{uniformly reverse H\"older condition for some
$q\in(1,\fz]$}, denoted by $\fai\in \rh_q(\cx)$, if, when $q\in
(1,\fz)$,
\begin{eqnarray*}
\rh_q (\fai):&&=\sup_{t\in (0,\fz)}\sup_{B\subset\cx}\lf\{
\fint_B [\fai(x,t)]^q\,d\mu(x)\r\}^{1/q}\lf\{\fint_B
\fai(x,t)\,d\mu(x)\r\}^{-1}<\fz,
\end{eqnarray*}
or
\begin{equation*}
\rh_{\fz} (\fai):=\sup_{t\in
(0,\fz)}\sup_{B\subset\cx}\lf\{\esup_{y\in
B}\fai(y,t)\r\}\lf\{\fint_B
\fai(x,t)\,d\mu(x)\r\}^{-1} <\fz.
\end{equation*}
Here the first supremums are taken over all $t\in(0,\fz)$ and the
second ones over all balls $B\subset\cx$.
\end{definition}

Let $\aa_{\fz}(\cx):=\cup_{q\in[1,\fz)}\aa_{q}(\cx)$ and define
the \emph{critical indices} of $\fai\in\aa_{\fz}(\cx)$ as follows:
\begin{equation}\label{2.7}
q(\fai):=\inf\lf\{q\in[1,\fz):\ \fai\in\aa_{q}(\cx)\r\}
\end{equation}
and
\begin{equation}\label{2.8}
r(\fai):=\sup\lf\{q\in(1,\fz]:\ \fai\in\rh_{q}(\cx)\r\}.
\end{equation}

Now we introduce the notion of growth functions.

\begin{definition}\label{d2.2}
A function $\fai:\ \cx\times[0,\fz)\to[0,\fz)$ is called
 a \emph{growth function} if the following hold true:
 \vspace{-0.25cm}
\begin{enumerate}
\item[(i)] $\fai$ is a \emph{Musielak-Orlicz function}, namely,
\vspace{-0.2cm}
\begin{enumerate}
    \item[(i)$_1$] the function $\fai(x,\cdot):\ [0,\fz)\to[0,\fz)$ is an
    Orlicz function for all $x\in\cx$;
    \vspace{-0.2cm}
    \item [(i)$_2$] the function $\fai(\cdot,t)$ is a measurable
    function for all $t\in[0,\fz)$.
\end{enumerate}
\vspace{-0.25cm} \item[(ii)] $\fai\in \aa_{\fz}(\cx)$.
\vspace{-0.25cm} \item[(iii)] The function $\fai$ is of positive
uniformly upper type $1$ and of uniformly
lower type $p_2$ for some $p_2\in(0,1]$.
\end{enumerate}
\end{definition}

\begin{remark}\label{r2.1}
From the definitions of the uniformly upper type and the uniformly lower type, we deduce
that, if the growth function $\fai$ is of positive uniformly upper
type $p_1$ with $p_1\in(0,1]$, and of positive uniformly lower type $p_2$
with $p_2\in(0,1]$, then $p_1\ge p_2$.
\end{remark}

Clearly, $\fai(x,t):=\oz(x)\Phi(t)$ is a growth function if
$\oz\in A_{\fz}(\cx)$ and $\Phi$ is an Orlicz function of lower
type $p$ for some $p\in(0,1]$ and of upper type 1. It is known
that, for $p\in(0,1]$, if $\Phi(t):=t^p$ for all $t\in [0,\fz)$,
then $\Phi$ is an Orlicz function of lower type $p$ and of upper type $p$;
for $p\in[\frac{1}{2},1]$, if
$\Phi(t):= t^p/\ln(e+t)$ for all $t\in [0,\fz)$, then $\Phi$ is an
Orlicz function of lower type $q$ for $q\in(0, p)$ and of upper type $p$; for
$p\in(0,\frac{1}{2}]$, if $\Phi(t):=t^p\ln(e+t)$ for all $t\in
[0,\fz)$, then $\Phi$ is an Orlicz function of lower type $p$ and of upper type $q$
for $q\in(p,1]$. Recall that if an Orlicz function is of upper
type $p\in(0,1)$, then it is also of upper type 1.

Another typical
and useful example of the growth function $\fai$ is as in \eqref{1.2}.
It is easy to show that
$\fai\in \aa_1(\cx)$, $\fai$ is of uniformly
upper type $\az$, $I(\fai)=i(\fai)=\az$, $i(\fai)$ is not attainable,
but $I(\fai)$ is attainable.
Moreover, it worths to point out that such function $\fai$
naturally appears in the study of the pointwise multiplier
characterization for the BMO-type space on the metric space with
doubling measure (see \cite{na97,ny85}); see also \cite{k1,k3,k4,k5} for some
other applications of such functions.

Throughout the whole paper, we \emph{always
assume that $\fai$ is a growth function} as in Definition
\ref{d2.2}. Let us now introduce the Musielak-Orlicz space.

The \emph{Musielak-Orlicz space $L^{\fai}(\cx)$} is defined to be the set
of all measurable functions $f$ such that
$\int_{\cx}\fai(x,|f(x)|)\,d\mu(x)<\fz$ with \emph{Luxembourg
norm}
$$\|f\|_{L^{\fai}(\cx)}:=\inf\lf\{\lz\in(0,\fz):\ \int_{\cx}
\fai\lf(x,\frac{|f(x)|}{\lz}\r)\,d\mu(x)\le1\r\}.
$$
In what follows, for any measurable subset $E$ of $\cx$ and $t\in[0,\fz)$, we let
$$\fai(E,t):=\int_E\fai(x,t)\,d\mu(x).$$

The following Lemma \ref{l2.1} on the
properties of growth functions is just \cite[Lemmas
4.1 and 4.2]{k}.

\begin{lemma}\label{l2.1}
{\rm(i)} Let $\fai$ be a growth function as in Definition \ref{d2.2}.
Then $\fai$ is uniformly
$\sigma$-quasi-subadditive on $\cx\times[0,\fz)$, namely, there
exists a positive constant $C$ such that, for all
$(x,t_j)\in\cx\times[0,\fz)$ with $j\in\nn$,
$\fai(x,\sum_{j=1}^{\fz}t_j)\le C\sum_{j=1}^{\fz}\fai(x,t_j).$

{\rm(ii)} Let $\fai$ be a growth function as in Definition \ref{d2.2}. For all
$(x,t)\in\cx\times[0,\fz)$, assume that
$\wz{\fai}(x,t):=\int_0^t\frac{\fai(x,s)}{s}\,ds$. Then $\wz{\fai}$
is a growth function,
which is equivalent to $\fai$; moreover, $\wz{\fai}(x,\cdot)$ is
continuous and strictly increasing.

{\rm (iii)} Let $\fai$ be a growth function as in Definition \ref{d2.2}.
Then $\int_{\cx}\varphi(x,\frac{|f(x)|}{\|f\|_{L^\varphi(\cx)}})\,d\mu(x)=1$
for all $f\in L^\varphi(\cx)\setminus\{0\}$.

\end{lemma}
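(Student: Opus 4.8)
The three assertions are elementary consequences of the structural hypotheses in Definition \ref{d2.2} (positive uniformly upper type $1$, uniformly lower type $p_2$ for some $p_2\in(0,1]$, and $\fai\in\aa_\fz(\cx)$), and they coincide with \cite[Lemmas 4.1 and 4.2]{k}; here is the plan. For (i), I would fix $x\in\cx$ and $\{t_j\}_{j\in\nn}\subset[0,\fz)$, set $t:=\sum_j t_j$, and dispose of the trivial case $t=0$. For $t>0$ and each $j$ with $t_j>0$ one has $t/t_j\ge1$, so the uniformly upper type $1$ property gives $\fai(x,t)=\fai(x,(t/t_j)t_j)\ls(t/t_j)\fai(x,t_j)$, i.e.\ $\fai(x,t_j)\gs(t_j/t)\fai(x,t)$; summing in $j$ (terms with $t_j=0$ contribute nothing) then yields $\sum_j\fai(x,t_j)\gs\fai(x,t)$, which is the claimed uniform $\sigma$-quasi-subadditivity.

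For (ii), I would first check that $\wz\fai(x,t)=\int_0^t s^{-1}\fai(x,s)\,ds$ makes sense: the uniformly lower type $p_2$ gives $\fai(x,s)\ls s^{p_2}\fai(x,1)$ for $s\in[0,1]$, so the integrand is $\ls s^{p_2-1}\fai(x,1)$ near $0$ and is integrable since $p_2>0$, while integrability on $[1,t]$ is trivial from monotonicity; measurability of $\wz\fai(\cdot,t)$ follows from Tonelli's theorem. For the equivalence $\wz\fai\sim\fai$, writing $s=(s/t)t$ with $s/t\le1$ and using the lower type gives $\wz\fai(x,t)\ls\fai(x,t)\int_0^t(s/t)^{p_2}s^{-1}\,ds=p_2^{-1}\fai(x,t)$, whereas $\wz\fai(x,t)\ge\int_{t/2}^t s^{-1}\fai(x,s)\,ds\ge(\ln2)\fai(x,t/2)\gs\fai(x,t)$, the last step by the uniformly upper type $1$. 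From this comparability I would transfer the three defining properties of a growth function from $\fai$ to $\wz\fai$ --- the point being that membership in $\aa_\fz(\cx)$ and the values of the critical type indices are unaffected by replacing a weight by a pointwise-comparable one (only the various constants change). Finally $\wz\fai(x,\cdot)$, being the indefinite integral of the locally integrable, everywhere-positive (for $s>0$) function $s\mapsto s^{-1}\fai(x,s)$, is automatically absolutely continuous and strictly increasing.

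For (iii), using (ii) I would first replace $\fai$ by the comparable $\wz\fai$, so that I may assume each $\fai(x,\cdot)$ continuous and strictly increasing. Then, with $f\in L^\fai(\cx)\setminus\{0\}$, $\lz_0:=\|f\|_{L^\fai(\cx)}\in(0,\fz)$ and $g(\lz):=\int_\cx\fai(x,|f(x)|/\lz)\,d\mu(x)$, the function $g$ is non-increasing, and the very definition of the Luxembourg norm as an infimum forces $g(\lz)\le1$ for $\lz>\lz_0$ and $g(\lz)>1$ for $0<\lz<\lz_0$. Letting $\lz\downarrow\lz_0$ (resp.\ $\lz\uparrow\lz_0$), the integrands $x\mapsto\fai(x,|f(x)|/\lz)$ increase monotonically to $x\mapsto\fai(x,|f(x)|/\lz_0)$, so the monotone convergence theorem gives $g(\lz_0)=\lim_{\lz\downarrow\lz_0}g(\lz)\le1$ and $g(\lz_0)=\lim_{\lz\uparrow\lz_0}g(\lz)\ge1$; hence $g(\lz_0)=1$, as required.

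I do not expect any serious obstacle: the lemma is lifted from \cite{k}, and every step above is routine. The only items deserving a little care are, in (ii), the standard fact that pointwise comparability of weights preserves the $\aa_\fz(\cx)$-class and the critical type indices, and, in (iii), the reduction via (ii) to a continuous, strictly increasing $\fai(x,\cdot)$; this reduction is not mere convenience, since for a growth function with a jump the claimed identity can genuinely fail, and it is precisely the continuity of $\wz\fai(x,\cdot)$ from (ii) that legitimises the monotone passage to the limit inside the integral.
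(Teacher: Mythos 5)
The paper offers no proof of this lemma at all (it is simply quoted from \cite{k}), so there is nothing to compare line by line; your arguments for (i) and (ii) are the standard ones and are correct. Two trivial remarks: in (i) the case $\sum_j t_j=\fz$ deserves a word (it follows from $\fai(x,t_j)\gs t_j\,\fai(x,1)$ for $t_j\le1$, which is the upper type $1$ inequality read backwards, so the right-hand side is then infinite); and in (ii) the transfer of the $\aa_\fz(\cx)$ and type properties under pointwise comparability with uniform constants is indeed routine.

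The genuine gap is in (iii). Replacing $\fai$ by $\wz\fai$ does \emph{not} reduce the statement to the continuous case: the Luxembourg norms $\|f\|_{L^\fai(\cx)}$ and $\|f\|_{L^{\wz\fai}(\cx)}$ are equivalent but not equal, so the identity $\int_\cx\wz\fai(x,|f(x)|/\|f\|_{L^{\wz\fai}(\cx)})\,d\mu(x)=1$ gives no information about $\int_\cx\fai(x,|f(x)|/\|f\|_{L^{\fai}(\cx)})\,d\mu(x)$. You yourself note that the identity "can genuinely fail" for a growth function with a jump, and indeed it can: take $\fai(x,t):=\Phi(t)$ with $\Phi(t)=t/2$ on $[0,1]$ and $\Phi(t)=2t$ on $(1,\fz)$, which is of upper type $1$, of lower type $1$ and trivially in $\aa_1$; for $f:=\chi_{[0,1]}$ on $\rr$ one finds $\|f\|_{L^\fai}=1$ while the integral equals $1/2$. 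Since such a $\fai$ is still a growth function in the sense of Definition \ref{d2.2}, no reduction can prove the stated identity for it; what your argument actually establishes is (iii) under the extra hypothesis that $\fai(x,\cdot)$ is continuous (which is how the literature tacitly reads the lemma, using (ii) to justify the convention of working with $\wz\fai$ from the outset). If you keep the statement as written, you must either add that hypothesis explicitly or restate (iii) for $\wz\fai$. Separately, in the limit $\lz\uparrow\lz_0$ the integrands \emph{decrease} rather than increase, so the monotone convergence theorem does not apply there; you need dominated convergence with majorant $\fai(\cdot,|f(\cdot)|/\lz_1)$ for a fixed $\lz_1<\lz_0$, whose integrability follows from the uniformly upper type $1$ property together with $\int_\cx\fai(x,|f(x)|)\,d\mu(x)<\fz$.
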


We have the following properties for $\aa_\fz(\cx)$, whose proofs
are similar to those in \cite{gr1,gra1}.

\begin{lemma}\label{l2.4}
$\mathrm{(i)}$ $\aa_1(\cx)\subset\aa_p(\cx)\subset\aa_q(\cx)$ for
$1\le p\le q<\fz$.

$\mathrm{(ii)}$ $\rh_{\fz}(\cx)\subset\rh_p(\cx)\subset\rh_q(\cx)$
for $1<q\le p\le\fz$.

$\mathrm{(iii)}$ If $\fai\in\aa_p(\cx)$ with $p\in(1,\fz)$, then
there exists $q\in(1,p)$ such that $\fai\in\aa_q(\cx)$.

$\mathrm{(iv)}$ If $\fai \in \rh_q(\cx)$ with $q\in(1,\fz)$, then there exists
$p\in(q,\fz)$ such that $\fai\in \rh_p(\cx)$.

$\mathrm{(v)}$ $\aa_{\fz}(\cx)=\cup_{p\in[1,\fz)}\aa_p(\cx)
\subset\cup_{q\in(1,\fz]}\rh_q(\cx)$.

$\mathrm{(vi)}$ If $p\in(1,\fz)$ and $\fai\in \aa_{p}(\cx)$, then
there exists a positive constant $C$ such that, for all measurable
functions $f$ on $\cx$ and $t\in[0,\fz)$,
$$\int_{\cx}\lf[\cm(f)(x)\r]^p\fai(x,t)\,d\mu(x)\le
C\int_{\cx}|f(x)|^p\fai(x,t)\,d\mu(x),$$
where $\cm$ denotes the
Hardy-Littlewood maximal function on $\cx$, defined by setting, for all $x\in\cx$,
$$\cm(f)(x):=\sup_{x\in B}\frac{1}{\mu(B)}\int_B|f(y)|\,d\mu(y),$$
where the supremum is taken over all balls $B\ni x$.

$\mathrm{(vii)}$ If $\fai\in \aa_{p}(\cx)$ with $p\in[1,\fz)$, then
there exists a positive constant $C$ such that, for all balls
$B_1,\,B_2\subset\cx$ with $B_1\subset B_2$ and $t\in[0,\fz)$,
$\frac{\fai(B_2,t)}{\fai(B_1,t)}\le
C[\frac{\mu(B_2)}{\mu(B_1)}]^p.$

$\mathrm{(viii)}$ If $\fai\in \rh_{q}(\cx)$ with $q\in(1,\fz]$,
then there exists a positive constant $C$ such that, for all balls
$B_1,\,B_2\subset\cx$ with $B_1\subset B_2$ and $t\in[0,\fz)$,
$\frac{\fai(B_2,t)}{\fai(B_1,t)}\ge
C[\frac{\mu(B_2)}{\mu(B_1)}]^{(q-1)/q}.$
\end{lemma}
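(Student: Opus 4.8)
The plan is to recognize that every item of Lemma~\ref{l2.4} is a $t$-uniform version of a classical fact about Muckenhoupt and reverse-H\"older weights on the space of homogeneous type $(\cx,d,\mu)$, and that in each classical proof the constant produced depends only on the doubling constant in \eqref{2.1} and on the weight characteristic $\aa_q(\fai)$ or $\rh_q(\fai)$ involved --- both of which are themselves already suprema over $t\in(0,\fz)$. So for each fixed $t$ I would treat $\fai(\cdot,t)$ as one weight, run the classical argument as in \cite{gr1,gra1}, and only at the end take the supremum over $t$; this is what yields, for instance, $\aa_q(\fai)\ls\aa_p(\fai)$ in part (i).

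Parts (i), (ii), (vii) and (viii) are elementary. For (i) and (ii) I would fix $t$ and a ball $B$ and apply H\"older's (equivalently Jensen's) inequality in the exponent to the averages $\fint_B[\fai(\cdot,t)]^{-q'/q}\,d\mu$ and $\fint_B[\fai(\cdot,t)]^{q}\,d\mu$, respectively; monotonicity in the exponent gives the nesting of the classes with an absolute constant, including the limiting inclusions $\aa_1(\cx)\subset\aa_p(\cx)$ and $\rh_\fz(\cx)\subset\rh_p(\cx)$. For (vii) I would use the standard consequence of the $\aa_p$-condition that $[\mu(E)/\mu(B)]^p\le\aa_p(\fai)\,\fai(E,t)/\fai(B,t)$ for every ball $B$ and every measurable $E\subset B$, applied with $E=B_1$ and $B=B_2$. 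For (viii), with $B_1\subset B_2$, H\"older's inequality followed by the $\rh_q$-condition gives
$$\fai(B_1,t)\le[\mu(B_1)]^{1/q'}\lf\{\int_{B_2}[\fai(x,t)]^q\,d\mu(x)\r\}^{1/q}\ls\lf[\frac{\mu(B_1)}{\mu(B_2)}\r]^{1/q'}\fai(B_2,t),$$
and since $1/q'=(q-1)/q$, rearranging and taking $\sup_t$ yields the claim.

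The remaining parts all rest on one ingredient, the \emph{$t$-uniform reverse-H\"older inequality for $\aa_\fz(\cx)$ weights}: if $\fai\in\aa_\fz(\cx)$, then there exist $\epsilon\in(0,\fz)$ and $C\in(0,\fz)$, independent of $t$, such that $\lf\{\fint_B[\fai(x,t)]^{1+\epsilon}\,d\mu(x)\r\}^{1/(1+\epsilon)}\le C\fint_B\fai(x,t)\,d\mu(x)$ for all balls $B$ and all $t\in(0,\fz)$. Granting this, the inclusion $\aa_\fz(\cx)\subset\cup_{q\in(1,\fz]}\rh_q(\cx)$ in (v) is immediate (the equality in (v) being the definition of $\aa_\fz(\cx)$); (iv) is the Gehring-type self-improvement of the reverse-H\"older exponent, proved by the same scheme and using that $\fai\in\aa_\fz(\cx)$ by the standing assumption; and (iii) follows by duality, since $\fai\in\aa_p(\cx)$ is equivalent, uniformly in $t$, to $[\fai(\cdot,t)]^{-1/(p-1)}\in\aa_{p'}(\cx)$, so one applies the reverse-H\"older inequality to $[\fai(\cdot,t)]^{-1/(p-1)}$ and converts the gained integrability into a smaller index $q\in(1,p)$ for $\fai$. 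Finally (vi) is the Muckenhoupt weighted norm inequality for $\cm$ on $(\cx,d,\mu)$: the $\aa_p$-condition with $p\in(1,\fz)$ yields the weighted weak-type $(p,p)$ bound for $\cm$ against $[\fai(\cdot,t)]$, which together with the self-improvement in (iii) and Marcinkiewicz interpolation (or a direct good-$\lambda$ argument) upgrades to the strong-type $(p,p)$ bound, with constants controlled by $p$, $\aa_p(\fai)$ and the doubling constant; taking $\sup_t$ finishes.

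The main obstacle is the $t$-uniform reverse-H\"older inequality itself. I would prove it along the classical lines of \cite[Chapter~7]{gra1}: a Calder\'on--Zygmund stopping-time decomposition of the set $\lf\{x\in B:\ \fai(x,t)>\lambda\r\}$ at heights $\lambda$, performed with balls and the basic (Vitali-type) covering lemma available in a space of homogeneous type in place of dyadic cubes and combined with the doubling property \eqref{2.1}, produces a good-$\lambda$ inequality for the level sets of $\fai(\cdot,t)$ that integrates to the asserted higher integrability. The only point requiring care is that every constant in this argument depends solely on the doubling constant of $\mu$ and on $\aa_q(\fai)$ for a $q$ with $\fai\in\aa_q(\cx)$; since none of these depend on $t$, taking $\sup_{t\in(0,\fz)}$ at the end is harmless. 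Apart from this uniformity bookkeeping the proof is that of \cite{gr1,gra1}, which is why no self-contained argument is given here.
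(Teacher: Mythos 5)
Your proposal is correct and takes essentially the same route as the paper, which gives no proof but simply notes that the arguments are those of the classical weighted theory in \cite{gr1,gra1}; your observation that every constant produced by the classical proofs depends only on the doubling constant and on the $t$-uniform characteristics $\aa_q(\fai)$, $\rh_q(\fai)$, so that one may argue for each fixed $t$ and take the supremum at the end, is precisely the intended adaptation.
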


\begin{remark}\label{r2.2}
By Lemma \ref{l2.4}(iii), we see that if $q(\fai)\in(1,\fz)$, then
$\fai\not\in\aa_{q(\fai)}(\cx)$. Moreover, there exists $\fai\not\in\aa_1(\cx)$
such that $q(\fai)=1$ (see, for example, \cite{jn87}). Similarly,
if $r(\fai)\in(1,\fz)$, then $\fai\not\in\rh_{r(\fai)}(\cx)$, and there exists
$\fai\not\in\rh_\fz(\cx)$ such that $r(\fai)=\fz$ (see, for example, \cite{cn95}).
\end{remark}

\subsection{Two assumptions on the operator $L$\label{s2.3}}

\hskip\parindent Before giving the assumptions on operators $L$, we first recall
some notions of bounded holomorphic functional calculus introduced by
McIntosh \cite{mc86}.

For $\tz\in[0,\,\pi)$, the \emph{open} and \emph{closed
sectors}, $S_\tz^0$ and $S_\tz$, of angle $\tz$ in the complex plane
$\cc$ are defined, respectively, by setting
$S_\tz^0:=\lf\{z\in\cc\setminus\{0\}:\ |\arg z|<\tz\r\}$ and
$S_\tz:=\lf\{z\in\cc:\ |\arg z|\le\tz\r\}$. Let
$\omega\in[0,\,\pi)$. A closed operator $T$ in $L^2(\cx)$ is said to
be of {\it type} $\omega$, if

(i) the \emph{spectrum} of $T$, $\sz(T)$, is contained in
$S_\omega$;

(ii) for each $\tz\in (\omega,\,\pi)$, there exists a nonnegative
constant $C$ such that, for all $z\in\cc\setminus S_\tz$,
$$\lf\|(T-zI)^{-1}\r\|_{\cl(L^2(\cx))}\le
C|z|^{-1},$$
where above and in what follows, for any normed linear space
${\mathcal H}$, $\|S\|_{\cl(\mathcal H)}$ denotes the {\it operator
norm} of the linear operator $S:\ {\mathcal H}\to {\mathcal H}$.

For $\mu\in\![0,\,\pi)$ and $\sz,\tau\in(0,\,\fz)$, let
$H(S_\mu^0)\!:=\!\lf\{f:\ f\ \text{is a holomorphic function on}\
S_\mu^0\r\},$
\begin{eqnarray*}
H_{\fz}(S_\mu^0):= \lf\{f\in H(S_\mu^0):\
\|f\|_{L^{\fz}(S^0_{\mu})}<\fz\r\}
\end{eqnarray*}
and
\begin{eqnarray*}
\Psi_{\sz,\,\tau}(S_\mu^0):=&&\lf\{f\in H(S_\mu^0):\ \text{there
exists a positive
constant}\ C \ \text{such that}\r.\\
&&\hspace{3.5cm}\text{for all}\ \xi\in S_\mu^0,\lf. |f(\xi)|\le C
\inf\{|\xi|^\sz,\,|\xi|^{-\tau}\}\r\}.
\end{eqnarray*}

It is known that every one-to-one operator $T$ of type $\omega$ in
$L^2(\cx)$ has a unique holomorphic functional calculus (see, for
example, \cite{mc86}). More precisely, let $T$ be a one-to-one
operator of type $\omega$, with $\omega\in[0,\,\pi)$,
$\mu\in(\omega,\,\pi)$, $\sz,\,\tau\in(0,\,\fz)$ and
$f\in\Psi_{\sz,\tau}(S_\mu^0)$. The \emph{function of the operator
$T$}, $f(T)$, can be defined by the $H_\fz$-functional calculus in
the following way,
\begin{eqnarray}\label{2.9}
f(T):=\frac{1}{2\pi i}\int_\Gamma(\xi I-T)^{-1}f(\xi)\,d\xi,
\end{eqnarray}
where $\Gamma:=\{re^{i\nu}:\ \fz>r>0\}\cup\{re^{-i\nu}:\ 0<r<\fz\}$,
$\nu\in(\omega,\,\mu)$, is a curve consisting of two rays
parameterized anti-clockwise. It is known that $f(T)$ in
\eqref{2.9} is independent of the choice of $\nu\in(\omega,\,\mu)$
and the integral in \eqref{2.9} is absolutely convergent in
$\|\cdot\|_{\cl(L^2(\cx))}$ (see \cite{mc86,ha06}).

In what follows, we {\it always assume $\omega\in[0,\,\pi/2)$}.
Then, it follows, from \cite[Proposition 7.1.1]{ha06}, that for every
operator $T$ of type $\omega$ in $L^2(\cx)$, $-T$ generates a
holomorphic $C_0$-semigroup $\{e^{-zT}\}_{z\in S^0_{\pi/2-\omega}}$
on the open sector $S^0_{\pi/2-\omega}$ such that
$\|e^{-zT}\|_{\cl(L^2(\cx))}\le1$ for all $z\in S^0_{\pi/2-\omega}$
and, moreover, every \emph{nonnegative self-adjoint operator} is of
type $0$.

Let $\Psi(S_\mu^0):=\cup_{\sz,\tau>0}\Psi_{\sz,\,\tau}(S_\mu^0)$. It
is well known that the above holomorphic functional calculus defined
on $\Psi(S_\mu^0)$ can be extended to $H_\fz(S_\mu^0)$ via a limit
process (see \cite{mc86}).  Recall that, for $\mu\in(0,\,\pi)$, the
operator $T$ is said to have a \emph{bounded $H_\fz(S_\mu^0)$
functional calculus} in the Hilbert space $\mathcal{H}$, if there
exists a positive constant $C$ such that, for all $\psi\in
H_\fz(S_\mu^0)$, $\|\psi(T)\|_{\cl(\mathcal{H})}\le
C\|\psi\|_{L^\fz(S_\mu^0)}$ and $T$ is said to have a \emph{bounded
$H_\fz$ functional calculus} in the Hilbert space $\mathcal{H}$ if
there exists $\mu\in (0,\,\pi)$ such that $T$ has a bounded
$H_\fz(S_\mu^0)$ functional calculus.

For any given $f\in L^1_\loc(\cx)$, each ball $B\subset\cx$ and $j\in\zz_+$, let
$$\fint_{S_j(B)}|f(x)|d\mu(x):=\frac{1}{\mu(2^jB)}\int_{S_j(B)}|f(x)|d\mu(x).$$

Now we recall the notion of $L^p-L^q$ off-diagonal estimates
on balls, which was first introduced
in \cite{am07ii}.

\begin{definition}\label{d2.3}
Let $k\in\nn$, $p,\,q\in[1,\fz]$ with $p\le q$, and $\{A_t\}_{t>0}$ be a family of sublinear
operators. The family $\{A_t\}_{t>0}$ is said to satisfy \emph{$L^p-L^q$  off-diagonal
estimates on balls of order} $m$, denoted by $A_t\in \mathcal{O}_{m}(L^p-L^q)$, if
there exist constants $\theta_1,\,\theta_2 \in[0,\fz)$ and $C,\,c\in(0,\fz)$ such that,
 for all $t\in(0,\fz)$ and all balls $B\subset\cx$ and $f\in L^p_\loc(\cx)$,
\begin{equation}\label{2.10}
\lf\{\fint_B|A_t\lf(\chi_Bf\r)(x)|^qd\mu(x)\r\}^{1/q}\le C
\lf[\Upsilon\lf(\frac{r_B}{t^{1/2m}}\r)\r]^{\theta_2}\lf\{\fint_B
|f(x)|^p\,d\mu(x)\r\}^{1/p},
\end{equation}
and, for all $j\in\nn$ with $j\ge3$,
\begin{eqnarray}\label{2.11}
&&\lf\{\fint_{S_j(B)}|A_t\lf(\chi_{B}f\r)(x)|^q\,d\mu(x)\r\}^{1/q}\\ \nonumber
&&\hs\le C
2^{j\theta_1}\lf[\Upsilon\lf(\frac{2^jr_B}{t^{1/2m}}\r)\r]^{\theta_2}
e^{-c\frac{(2^jr_B)^{2m/(2m-1)}}{t^{1/(2m-1)}}}\lf\{\fint_{B}
|f(x)|^p\,d\mu(x)\r\}^{1/p}
\end{eqnarray}
and
\begin{eqnarray*}
&&\lf\{\fint_{B}|A_t(\chi_{S_j(B)}f)(x)|^q\,d\mu(x)\r\}^{1/q}\\ \nonumber
&&\hs\le C
2^{j\theta_1}\lf[\Upsilon\lf(\frac{2^jr_B}{t^{1/2m}}\r)\r]^{\theta_2}
e^{-c\frac{(2^jr_B)^{2m/(2m-1)}}{t^{1/(2m-1)}}}\lf\{\fint_{S_j(B)}
|f(x)|^p\,d\mu(x)\r\}^{1/p},
\end{eqnarray*}
where $\Upsilon(s):=\max\{s,\frac{1}{s}\}$ for all $s\in(0,\fz)$.
\end{definition}
Similar to the comments below \cite[Definition 2.1]{am07ii},
we have the following properties on $\mathcal{O}_m(L^p-L^q)$.
\begin{remark}\label{r2.3}
\begin{enumerate}[(i)]
\item It is easy to see that, for $p\leq p_1\leq q_1\leq q$,
$$\mathcal{O}_m(L^{p}-L^{q})
\subset\mathcal{O}_m(L^{p_1}-L^{q_1}).$$

\item Similar to \cite[Proposition 2.2]{am07ii}, we see that
$A_t\in \mathcal{O}_m(L^{1}-L^\fz)$ if and only if the
associated kernel $p_t$ of $A_t$ satisfies the Gaussian upper
bound, namely, there exist positive constants $c$ and $C$ such that, for all
$x,\,y\in\cx$ and $t\in(0,\fz)$,
$$
|p_t(x,y)|\le \frac{C}{V(x,t^{1/2m})}\exp\lf\{-c\frac{[d(x,y)]^{2m/(2m-1)}}
{t^{1/(2m-1)}}\r\}.
$$

\item $A_t\in \mathcal{O}_m(L^{p}-L^q)$ if and only if its dual,
$A^*_t$, belongs to $\mathcal{O}_m(L^{q'}-L^{p'})$.
\end{enumerate}
\end{remark}

Now, we make the following two assumptions on operators $L$, which are used through
the whole paper.

\medskip

\noindent {\bf Assumption (A).} Assume that the operator $L$ is a one-to-one operator
of type $\omega$ in $L^2(\cx)$ with $\omega\in[0,\,\pi/2)$, has dense
range in $L^2(\cx)$ and a bounded $H_\fz$-calculus in $L^2(\cx)$.

\medskip

\noindent {\bf Assumption (B).} Let $m\in\nn$.
Assume that there exist $p_L\in[1,\,2)$ and
$q_L\in(2,\fz]$, depending on $L$, such that the family
$\{(tL)^k e^{-tL}\}_{t>0}$, with $k\in\zz_+$, satisfies the
\emph{reinforced $(p_L,\,q_L,\,m)$ off-diagonal estimates on balls}, namely,
for all $t\in(0,\fz)$ and $p,\,q\in(p_L,q_L)$ with $p\le q$, $(tL)^k e^{-tL}\in
\mathcal{O}_m(L^{p}-L^q)$.

\medskip

\begin{remark}\label{r2.4}
\begin{enumerate}[(a)]

\item We first point that in Assumptions {\rm (A)} and {\rm (B)},
if $L$ is non-negative self-adjoint,
$\cx$ is the Euclidean space $\rn$ and $m=1$, from \cite[Proposition 3.2]{am07ii},
 it follows that the notion of the reinforced $(p_L,\,q_L,\,m)$ off-diagonal estimates
 on balls is the same as the reinforced $(p_L,\,q_L)$ off-diagonal estimates introduced
in \cite{bckyy} (see \cite{bk05-2,da95,ga59}
and their references for the history of the off-diagonal estimates).
Here, we use the off-diagonal estimates
on balls, because they coincide with the off-diagonal
estimates when $\cx=\rn$, and the off-diagonal estimates on balls seem more
suitable in a general space of homogeneous type. For example, the heat semigroup
$e^{-t\bdz}$ on functions for general Riemannian manifolds with doubling
property is not $L^p-L^q$ bounded when $p<q$ unless the measure of
any ball is bounded below by a power of its radius. However, if we assume
the $L^p-L^q$ off-diagonal estimates, it then implies the $L^p-L^q$ boundedness
(see also the discussions above \cite[Proposition 3.2]{am07ii}).

\item Denote by $L^\ast$ the \emph{adjoint operator} of $L$ in $L^2(\cx)$.
Let $p_L,\,q_L$ be as in Assumption {\rm (B)}, $m\in\nn$ and $k\in\nn$. If $(tL)^k e^{-tL}$
satisfies the reinforced $(p_L,\,q_L,\,m)$ off-diagonal estimates on balls, then
$(tL^*)^k e^{-tL^*}$ also satisfies the reinforced $(q'_L,\,p'_L,\,m)$ off-diagonal
estimates on balls. Recall that, for any $p\in[1,\fz]$, $1/p+1/p'=1$.

\item Examples of operators which satisfy Assumptions {\rm (A)} and {\rm (B)}
include:

\begin{enumerate}[(i)]

\item the second order divergence form elliptic operators with complex
bounded coefficients as in \cite{hm09} (see also \eqref{c5.1x} below
for its precise definition);

\item the $2m$-\emph{order homogeneous divergence form elliptic operators}
$$(-1)^m\sum_{|\az|=m=|\bz|}\pat^\bz\lf(a_{\az,\bz}\pat^\az\r)$$
interpreted in the usual weak sense via a
sesquilinear form, with complex bounded measurable coefficients
$a_{\az,\,\bz}$ for all multi-indices $\az$ and $\bz$
(see, for example, \cite{bk1,cy});

\item the \emph{Schr\"odinger operator} $-\Delta +V$ on $\rn$ with the nonnegative potential $V
\in L_{\mathrm{loc}}^1(\rn)$ (see, for example, \cite{hlmmy,jy11} and related references);

\item the \emph{Schr\"odinger operator} $-\Delta +V$ on $\rn$ with the suitable real
potential $V$ as in \cite{ao};

\item the nonnegative self-adjoint operators satisfying \emph{Gaussian upper bounds},
namely, there exist positive constants $C$ and $c$ such that, for all $x,\,y\in\cx$
and $t\in(0,\fz)$,
$$|p_t(x,y)|\le \frac{C}{V(x,t^{1/2m})}\exp\lf\{-c\frac{[d(x,y)]^{2m/(2m-1)}}{t^{1/(2m-1)}}
\r\},$$
where $p_t$ is the associated kernel of $e^{-tL}$ and $m\in\nn$;
\end{enumerate}

\item We point out that the condition that $L$ is one-to-one is necessary
for the bounded $H_\fz$ functional calculus on $L^2(\cx)$
(see \cite{mc86,cdmy96}). Moreover, from \cite[Theorem 2.3]{cdmy96},
it follows that if $T$ is a one-to-one operator of type $\omega$
in $L^2(\cx)$, then $T$ has dense domain and dense range;

\item
If $L$ is nonnegative self-adjoint on $L^2(\cx)$
satisfying the reinforced $(p_L,\,p_L',\,m)$ off-diagonal estimates on balls, then
the condition that $L$ is one-to-one can be removed and we can introduce
another kind of functional calculus by using the spectral theorem.
More precisely, in this case, for every bounded Borel function $F:[0,\,\fz)\to
\cc$, we define the operator $F(L): L^2(\cx)\to L^2(\cx)$ by the formula
$$F(L):=\dint_0^\fz F(\lz)\,d E_L(\lz),$$
where $E_L(\lz)$ is the \emph{spectral resolution} of $L$ (see \cite{hlmmy}
for more details). Observe also that a one-to-one nonnegative self-adjoint operator is of
type $0$.
\end{enumerate}
\end{remark}

Assume that the operator $L$ satisfies Assumptions {\rm (A)} and {\rm (B)}.
For all $k\in \mathbb{N}$, the \emph{vertical square function} $G_{L,\,k}$
is defined by setting, for all
$f\in L^2(\cx)$ and $x\in\cx$,
$$
G_{L,\,k}(f)(x):=\lf\{\int_0^\infty \lf|(t^{2m}L)^ke^{-t^{2m}L}f(x)\r|^2
\frac{dt}{t}\r\}^{1/2},
$$
which is bounded on $L^2(\cx)$ (see, for example, \cite{mc86}).
When $k=1$, we write $G_L$ instead of $G_{L,\,1}$.

\begin{theorem}\label{thm on G{L,m}}
Let $L$ satisfy Assumptions {\rm (A)} and {\rm (B)}, $k\in\nn$ and, $p_L$ and $q_L$ be as
in Assumption {\rm (B)}. Then $G_{L,\,k}$
is bounded on $L^p(\cx)$ for all $p\in(p_L,q_L)$.
\end{theorem}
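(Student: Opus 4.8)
The plan is to prove the $L^p(\cx)$-boundedness of $G_{L,\,k}$ by interpolation, splitting into the cases $p\in(p_L,2]$ and $p\in[2,q_L)$, with the endpoint $p=2$ already known. For $p\in(2,q_L)$ the natural route is a duality/Calder\'on--Zygmund argument adapted to spaces of homogeneous type, while for $p\in(p_L,2)$ we use a good-$\lambda$ or weak-type $(p_L,p_L)$ type estimate built directly from the reinforced off-diagonal estimates on balls in Assumption (B). Throughout, I will repeatedly use the reproducing-type identity $f=c_{k}\int_0^\infty (t^{2m}L)^k e^{-t^{2m}L}f\,\frac{dt}{t}$ (valid on $L^2(\cx)$ via the $H_\infty$-functional calculus from Assumption (A), since $L$ has dense range), together with the elementary change of variables $t\mapsto t^{2m}$ which converts the order-$m$ scaling $t^{1/2m}$ appearing in Definition \ref{d2.3} into the clean dyadic annuli $S_j(B)$.

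First, for the case $p\in[2,q_L)$, I would run the standard argument of Auscher--Martell type (compare \cite{am07ii}): it suffices to show that for each ball $B$ and each $f$ supported away from $B$, the tail contribution of $G_{L,\,k}(f)$ on $B$ is controlled by a maximal function of $|f|^{p_0}$ for some $p_0\in(p_L,2)$, with Gaussian-type decay in $j$. Concretely, one decomposes $f=\sum_{j\ge 2}\chi_{S_j(B)}f$, bounds $\|G_{L,\,k}(\chi_{S_j(B)}f)\|_{L^2(B)}$ using the third off-diagonal inequality in Definition \ref{d2.3} (the one estimating $A_t$ of a function supported on an annulus, restricted to $B$), integrates in $t$ after the substitution $t\mapsto t^{2m}$ — here the factor $e^{-c(2^j r_B)^{2m/(2m-1)}/t^{1/(2m-1)}}$ integrated against $dt/t$ produces a summable constant $\sim 2^{-jN}$ for any $N$ — and then invokes a Cotlar-type inequality or the $L^2$ off-diagonal bounds to pass from $L^2$-averages to $L^{q_0}$-averages for $q_0$ close to $q_L$. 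Combined with the $L^2$-boundedness and interpolation (Marcinkiewicz or the sharp maximal function estimate valid in spaces of homogeneous type), this yields $L^p(\cx)$-boundedness for $p\in(2,q_L)$.

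For the case $p\in(p_L,2)$, the cleanest approach is a weak-type $(p_L,p_L)$ argument via a Calder\'on--Zygmund decomposition of $|f|^{p_L}$ at height $\lambda$, writing $f=g+\sum_i b_i$ with $g\in L^2$ well-behaved and $b_i$ supported on disjoint balls $B_i$ with controlled $L^{p_L}$-norm. The good part $g$ is handled by the $L^2$-bound. For the bad part, I would use the off-diagonal estimates to bound $G_{L,\,k}(b_i)$ off a fixed dilate of $B_i$ with Gaussian decay, splitting the $t$-integral at the scale $r_{B_i}^{2m}$: for $t^{2m}\le r_{B_i}^{2m}$ one uses cancellation coming from the fact that $(t^{2m}L)^k e^{-t^{2m}L}$ applied to the CZ-atom-like pieces of $b_i$ is small (using that $p_L\ge 1$ and a size estimate), and for $t^{2m}>r_{B_i}^{2m}$ one uses the rapid decay of the semigroup on annuli. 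Summing over $i$ and using the bounded overlap of the enlarged $B_i$ together with $\sum_i \mu(B_i)\lesssim \lambda^{-p_L}\|f\|_{L^{p_L}(\cx)}^{p_L}$ gives the weak-type bound; interpolation with the $L^2$-bound then covers $p\in(p_L,2)$.

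The main obstacle I anticipate is the case $p\in(2,q_L)$ when $q_L<\infty$: unlike the Gaussian case $q_L=\infty$, one cannot use a pointwise kernel bound, so passing from $L^2$-averages to $L^{q_0}$-averages on balls for $q_0$ close to $q_L$ must be done purely through the third off-diagonal estimate of Definition \ref{d2.3}, and one must be careful that the constants $\theta_1,\theta_2$ and the polynomial factor $\Upsilon(2^j r_B/t^{1/2m})^{\theta_2}$ do not destroy the summability in $j$ after integration in $t$ — this is exactly where the exponential factor $e^{-c(2^jr_B)^{2m/(2m-1)}/t^{1/(2m-1)}}$ must be exploited, and one has to check that the relevant $t$-integral $\int_0^\infty \Upsilon(2^jr_B/t^{1/2m})^{2\theta_2}e^{-2c(2^jr_B)^{2m/(2m-1)}/t^{1/(2m-1)}}\frac{dt}{t}$ is finite and decays like $2^{-jM}$ for $M$ as large as needed. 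A secondary technical point is verifying that the interpolation machinery (the sharp function $f^\#$ and its $L^p$ control) is available in the present generality of spaces of homogeneous type satisfying only \eqref{2.1}; this is standard but should be cited carefully (e.g.\ from \cite{am07ii} or the references therein).
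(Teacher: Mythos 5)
Your overall framework (the Blunck--Kunstmann/Auscher off-diagonal Calder\'on--Zygmund theory, splitting at $p=2$) is the same as the paper's, which invokes precisely the two criteria recorded as Lemmas \ref{Auscher's thm 1} and \ref{Auscher's thm 2}. For $p\in(2,q_L)$ your sketch is essentially the paper's argument (and your worry about summability in $j$ is handled exactly as you predict, by the exponential factor in Definition \ref{d2.3}). The genuine gap is in the case $p\in(p_L,2)$. First, a weak-type $(p_L,p_L)$ bound is not available: Assumption (B) only gives off-diagonal estimates for exponents in the \emph{open} interval $(p_L,q_L)$, so one must prove weak type $(p_0,p_0)$ for each $p_0\in(p_L,2)$ and interpolate. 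More seriously, your treatment of the bad part fails. You propose to exploit ``cancellation'' of the Calder\'on--Zygmund pieces $b_i$ for $t^{2m}\le r_{B_i}^{2m}$ and ``rapid decay of the semigroup on annuli'' for $t^{2m}>r_{B_i}^{2m}$. The mean-zero property of $b_i$ is useless here because no kernel regularity is assumed, and for $t^{1/2m}\gtrsim 2^{j}r_{B_i}$ the exponential factor in \eqref{2.11} is $O(1)$, so the semigroup has \emph{no} decay on the annulus $S_j(B_i)$ in that range of $t$; the resulting $j$-sum and $t$-integral diverge.

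The missing idea is the one built into Lemma \ref{Auscher's thm 1}: one must insert the semigroup-adapted approximation of the identity $A_{r_{B_i}}:=I-(I-e^{-r_{B_i}^{2m}L})^{M}$ with $M>(n+\theta_1)/2m$, writing $b_i=(I-A_{r_{B_i}})b_i+A_{r_{B_i}}b_i$. The term $A_{r_{B_i}}b_i$ is controlled by the $L^{p_0}$--$L^{2}$ off-diagonal estimates, while for $G_{L,k}(I-e^{-r_{B_i}^{2m}L})^{M}b_i$ one uses the representation
\begin{equation*}
\lf(I-e^{-r_{B_i}^{2m}L}\r)^{M}=\int_0^{r_{B_i}^{2m}}\cdots\int_0^{r_{B_i}^{2m}}
L^{M}e^{-(s_1+\cdots+s_M)L}\,ds_1\cdots ds_M,
\end{equation*}
which, combined with the bound for $((t^{2m}+s_1+\cdots+s_M)L)^{M+1}e^{-(t^{2m}+s_1+\cdots+s_M)L}\in\mathcal{O}_m(L^{p_0}-L^{2})$, produces the polynomial factor $t^{2m}(2^{j}r_{B_i})^{-2m(M+1)}$; integrating in $t$ and $\vec{s}$ then yields the decay $2^{-j(2mM-\theta_1)}$ that makes the sum over annuli converge. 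Without this $M$-fold difference of the semigroup (or an equivalent device), your decomposition cannot close.
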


To prove Theorem \ref{thm on G{L,m}}, we need the following two criteria, which
are due to \cite{au07} (see also \cite{am07i}).

\begin{lemma}\label{Auscher's thm 1}
Let $p_0\in [1,2)$ and $\{A_t\}_{t>0}$ be a family of linear operators acting on $L^2(\cx)$.
Suppose that $T$ is a sublinear operator of strong type $(2,2)$. Assume that there exists
a sequence $\{\az(j)\}_{j=2}^\fz$ of positive numbers such that, for all balls
$B:=B(x_B,r_B)$ and $f\in L^{p_0}(\cx)$ supported in $B$,
\begin{equation}\label{2.13}
\lf\{\fint_{S_j(B)}\lf|T(I-A_{r_B})f(x)\r|^2\,d\mu(x)\r\}^{1/2}\le
\alpha(j)\lf\{\fint_B |f(x)|^{p_0}\,d\mu(x)\r\}^{1/p_0}
\end{equation}
when $j\ge3$, and
\begin{equation}\label{2.14}
\lf\{\fint_{S_j(B)}\lf|A_{r_B}f(x)\r|^2\,d\mu(x)\r\}^{1/2}\le\alpha(j)
\lf\{\fint |f(x)|^{p_0}\,d\mu(x)\r\}^{1/p_0}
\end{equation}
when $j\ge2$. If $\sum_{j=2}^\fz \alpha(j)2^{nj}<\infty$, then $T$ is of weak
type $(p_0, p_0)$.
\end{lemma}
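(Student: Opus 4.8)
The plan is to establish the weak type $(p_0,p_0)$ inequality for $T$ by a Calder\'on--Zygmund decomposition argument, in the spirit of Auscher \cite{au07} and Auscher--Martell \cite{am07i}. Since $T$ is of strong type $(2,2)$, it suffices to prove that $\mu(\{x\in\cx:\ |Tf(x)|>\lz\})\ls\lz^{-p_0}\|f\|_{L^{p_0}(\cx)}^{p_0}$ for all $\lz\in(0,\fz)$ and all $f\in L^{p_0}(\cx)\cap L^2(\cx)$, the general case following by density. First I would fix such $f$ and $\lz$ and apply the Calder\'on--Zygmund decomposition of $|f|^{p_0}$ at height $\lz^{p_0}$ on the space of homogeneous type $(\cx,d,\mu)$ (see \cite{cw71}), obtaining $f=g+\sum_i b_i$ and balls $B_i:=B(x_{B_i},r_{B_i})$ such that $|g|\ls\lz$ almost everywhere and $\|g\|_{L^{p_0}(\cx)}\ls\|f\|_{L^{p_0}(\cx)}$ (whence $\|g\|_{L^2(\cx)}^2\ls\lz^{2-p_0}\|f\|_{L^{p_0}(\cx)}^{p_0}$, using $p_0<2$); $\supp b_i\subset B_i$ and $\fint_{B_i}|b_i|^{p_0}\,d\mu\ls\lz^{p_0}$; $\sum_i\mu(B_i)\ls\lz^{-p_0}\|f\|_{L^{p_0}(\cx)}^{p_0}$; and, for each $j\in\zz_+$, the dilated family $\{2^jB_i\}_i$ has overlap at most $C2^{jn}$, with $n$ as in \eqref{2.2}. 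Setting $h:=\sum_i A_{r_{B_i}}b_i$, so that $f=g+(b-h)+h$ with $b-h=\sum_i(I-A_{r_{B_i}})b_i$, the sublinearity of $T$ gives $|Tf|\le|Tg|+|T(b-h)|+|Th|$ almost everywhere, and the task reduces to bounding the measures of $\{|Tg|>\lz/3\}$, $\{|T(b-h)|>\lz/3\}$ and $\{|Th|>\lz/3\}$ each by $C\lz^{-p_0}\|f\|_{L^{p_0}(\cx)}^{p_0}$.

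The first and third pieces use only $L^2(\cx)$ bounds. Since $g\in L^2(\cx)$, the strong $(2,2)$ bound of $T$ and Chebyshev's inequality give $\mu(\{|Tg|>\lz/3\})\ls\lz^{-2}\|g\|_{L^2(\cx)}^2\ls\lz^{-p_0}\|f\|_{L^{p_0}(\cx)}^{p_0}$. For $h$, the crucial point is the estimate $\|h\|_{L^2(\cx)}\ls\lz^{1-p_0/2}\|f\|_{L^{p_0}(\cx)}^{p_0/2}$, which I would obtain by an annular decomposition. For each $i$ and each $j\in\zz_+$, by \eqref{2.14} (understood as holding for all $j\ge0$, the cases $j\in\{0,1\}$ expressing the natural local $L^{p_0}$--$L^2$ bound for $A_{r_{B_i}}$ on $2B_i$, as in \cite{au07,am07i}), the doubling property \eqref{2.2} and the size estimate for $b_i$,
$$\lf\|A_{r_{B_i}}b_i\r\|_{L^2(S_j(B_i))}\ls\az(j)\,[\mu(2^jB_i)]^{1/2}\lf\{\fint_{B_i}|b_i|^{p_0}\,d\mu\r\}^{1/p_0}\ls\az(j)\,2^{jn/2}[\mu(B_i)]^{1/2}\lz;$$
summing in $i$ for each fixed $j$ via the overlap bound on $\{2^jB_i\}_i$ and $\sum_i\mu(B_i)\ls\lz^{-p_0}\|f\|_{L^{p_0}(\cx)}^{p_0}$ yields $\|\sum_i(A_{r_{B_i}}b_i)\chi_{S_j(B_i)}\|_{L^2(\cx)}\ls\az(j)\,2^{jn}\lz^{1-p_0/2}\|f\|_{L^{p_0}(\cx)}^{p_0/2}$, and summing in $j$ by Minkowski's inequality together with the hypothesis $\sum_j\az(j)2^{jn}<\fz$ gives the claimed bound on $\|h\|_{L^2(\cx)}$. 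Hence, as for $g$, $\mu(\{|Th|>\lz/3\})\ls\lz^{-2}\|h\|_{L^2(\cx)}^2\ls\lz^{-p_0}\|f\|_{L^{p_0}(\cx)}^{p_0}$.

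For the second piece $b-h$ I would not use an $L^2(\cx)$ bound. Since $\mu(\bigcup_i4B_i)\ls\sum_i\mu(B_i)\ls\lz^{-p_0}\|f\|_{L^{p_0}(\cx)}^{p_0}$, it is enough to control $T(b-h)$ on $\cx\setminus\bigcup_i4B_i$; for $x$ in that set and each $i$ one has $x\in S_j(B_i)$ for some $j\ge3$, so that \eqref{2.13} is available, and integrating annulus by annulus (Cauchy--Schwarz on each $S_j(B_i)$, then \eqref{2.13}, \eqref{2.2}, and the size estimate for $b_i$) gives
$$\int_{\cx\setminus\bigcup_i4B_i}|T(b-h)|\,d\mu\le\sum_i\sum_{j\ge3}\int_{S_j(B_i)}\lf|T(I-A_{r_{B_i}})b_i\r|\,d\mu\ls\lz\lf(\sum_{j\ge3}\az(j)2^{jn}\r)\sum_i\mu(B_i)\ls\lz^{1-p_0}\|f\|_{L^{p_0}(\cx)}^{p_0},$$
where the interchange of $T$ with the countable sum defining $b-h$ is justified by a routine truncation, using that the partial sums converge in $L^2(\cx)$ and $T$ is continuous there. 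Consequently $\mu(\{x\in\cx\setminus\bigcup_i4B_i:\ |T(b-h)(x)|>\lz/3\})\ls\lz^{-1}\int_{\cx\setminus\bigcup_i4B_i}|T(b-h)|\,d\mu\ls\lz^{-p_0}\|f\|_{L^{p_0}(\cx)}^{p_0}$. Adding the three contributions together with $\mu(\bigcup_i4B_i)$ finishes the proof.

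The step I expect to be the main obstacle is the $L^2(\cx)$ control of $h=\sum_iA_{r_{B_i}}b_i$: one must play the off-diagonal decay $\az(j)$ off against \emph{two} sources of polynomial growth in $2^j$, namely the doubling ratio $\mu(2^jB_i)/\mu(B_i)$ and the overlap of the dilated families $\{2^jB_i\}_i$, and it is precisely this that makes the strengthened summability hypothesis $\sum_j\az(j)2^{jn}<\fz$ (rather than merely $\sum_j\az(j)2^{jn/2}<\fz$) indispensable. A secondary, purely technical point is that the hypotheses \eqref{2.13} and \eqref{2.14}, as stated on the annuli $S_j(B)$ for large $j$, have to be complemented by the corresponding near-diagonal $L^{p_0}$--$L^2$ bounds on $2B$, which is what controls the near-diagonal contribution to $\|h\|_{L^2(\cx)}$; this is automatic once the Calder\'on--Zygmund balls are taken with a little room around $\supp b_i$, or by reading \eqref{2.14} as in \cite{au07,am07i}.
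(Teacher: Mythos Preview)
The paper does not supply its own proof of this lemma: it is stated as a criterion ``due to \cite{au07} (see also \cite{am07i})'' and used as a black box in the proof of Theorem~\ref{thm on G{L,m}}. Your proposal reproduces precisely the Calder\'on--Zygmund decomposition argument from those references---splitting $f=g+h+(b-h)$ with $h=\sum_i A_{r_{B_i}}b_i$, handling $g$ and $h$ by the strong $(2,2)$ bound, and treating $b-h$ off the exceptional set via \eqref{2.13}---so your approach coincides with the intended one. Your identification of the $L^2$ estimate for $h$ as the crux, and of the need for a near-diagonal companion to \eqref{2.14}, is accurate; the one place where your sketch is a little compressed is the passage from the individual bounds $\|A_{r_{B_i}}b_i\|_{L^2(S_j(B_i))}\ls\az(j)2^{jn/2}[\mu(B_i)]^{1/2}\lz$ to the bound on $\|h\|_{L^2(\cx)}$, which in \cite{au07} is carried out by duality (pairing with $u\in L^2$, Cauchy--Schwarz in $i$, and the bounded-overlap estimate $\sum_i\|u\|_{L^2(2^jB_i)}^2\ls 2^{jn}\|u\|_{L^2(\cx)}^2$) rather than by a direct $L^2$ sum, but the outcome and the required summability $\sum_j\az(j)2^{jn}<\fz$ are exactly as you state.
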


\begin{lemma}\label{Auscher's thm 2}
Let $p_0\in (2,\fz]$ and $\{A_t\}_{t>0}$ be a family of linear operators acting on $L^2(\cx)$.
Suppose that $T$ is a sublinear operator acting on $L^2(\cx)$. Assume that there exists a
positive constant $C$ such that, for all balls
$B:=B(x_B,r_B)$, $y\in B$ and $f\in L^{p_0}(\cx)$ supported in $B$,
\begin{equation}\label{2.15}
\lf\{\fint_{B}\lf|T(I-A_{r_B})f(x)\r|^{p_0}\,d\mu(x)\r\}^{1/p_0}\le C\lf[\cm(|f|^2)(y)\r]^{1/2}
\end{equation}
and
\begin{equation*}
\lf\{\fint_{B}\lf|TA_{r_B}f(x)\r|^{p_0}\,d\mu(x)\r\}^{1/p_0}\le C[\mathcal{M}
(|Tf|^2)(y)]^{1/2},
\end{equation*}
where $\mathcal{M}$ denotes the Hardy-Littlewood maximal function
(see Lemma \ref{l2.4}(vi)).
Then $T$ is of strong type $(p_0, p_0)$.
\end{lemma}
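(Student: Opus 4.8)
The plan is to follow the extrapolation method of Auscher \cite{au07} (see also \cite{am07i}) and to derive the boundedness from a good-$\lz$ inequality. It suffices to establish the a priori bound $\|Tf\|_{L^{p_0}(\cx)}\ls\|f\|_{L^{p_0}(\cx)}$ for $f\in L^2(\cx)\cap L^{p_0}(\cx)$; since $T$ is bounded on $L^2(\cx)$ and $L^2(\cx)\cap L^{p_0}(\cx)$ is dense in $L^{p_0}(\cx)$, this then extends to all of $L^{p_0}(\cx)$ by density. The endpoint $p_0=\fz$ is treated by a separate, simpler argument, so assume $p_0\in(2,\fz)$. For such an $f$ one has $Tf\in L^2(\cx)$, hence $\cm(|Tf|^2)$ and $\cm(|f|^2)$ are finite almost everywhere, and, by the weak type $(1,1)$ of $\cm$, the set $\boz_\lz:=\{x\in\cx:\ [\cm(|Tf|^2)(x)]^{1/2}>\lz\}$ is open and of finite measure for every $\lz\in(0,\fz)$; this finiteness is what legitimizes the Whitney decomposition and the truncation used below.

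The key step is to prove that, for a suitably large constant $\kappa\in(1,\fz)$, there exist $\dz\in(0,\fz)$ and $C_0\in(0,\fz)$ such that, for all $\lz\in(0,\fz)$ and all sufficiently small $\gz\in(0,1)$,
\begin{equation*}
\mu\lf(\lf\{x\in\cx:\ [\cm(|Tf|^2)(x)]^{1/2}>\kappa\lz,\ [\cm(|f|^2)(x)]^{1/2}\le\gz\lz\r\}\r)\le C_0\,\gz^{\dz}\,\mu(\boz_\lz).
\end{equation*}
To do so, take a Whitney-type covering $\boz_\lz=\bigcup_i B_i$ of bounded overlap with each dilate $4B_i$ meeting $\boz_\lz^\complement$, and keep only those $B_i$ that meet the good set $\{x:\ [\cm(|f|^2)(x)]^{1/2}\le\gz\lz\}$. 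On such a $B_i$, split $f=\chi_{4B_i}f+\chi_{(4B_i)^\complement}f$ and, using the linearity of $A_{r_{4B_i}}$ and the sublinearity of $T$, bound $|Tf|$ pointwise on $B_i$ by
\begin{equation*}
\lf|T(I-A_{r_{4B_i}})(\chi_{4B_i}f)\r|+\lf|TA_{r_{4B_i}}(\chi_{4B_i}f)\r|+\lf|T(\chi_{(4B_i)^\complement}f)\r|.
\end{equation*}
The first term is controlled on $4B_i$ in $L^{p_0}$-average by $[\cm(|f|^2)(y)]^{1/2}$ by applying hypothesis \eqref{2.15} to the ball $4B_i$ and the function $\chi_{4B_i}f$ (which is supported in $4B_i$); the third (tail) term is handled on $B_i$ by a standard argument using the off-diagonal decay of $A_{r_{4B_i}}$ over the annuli $S_j(4B_i)$ together with the doubling property \eqref{2.1}, again producing control by $[\cm(|f|^2)(y)]^{1/2}$; evaluating at a point $y\in B_i$ of the good set and applying Chebyshev's inequality, these two terms contribute a subset of $B_i$ of measure $\ls\gz^{p_0}\mu(B_i)$. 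The second (principal) term $TA_{r_{4B_i}}(\chi_{4B_i}f)$ is controlled on $4B_i$, via the second hypothesis of the lemma, by $[\cm(|T(\chi_{4B_i}f)|^2)(y)]^{1/2}$, hence (up to the already-accounted-for tail) by $[\cm(|Tf|^2)(y)]^{1/2}$; here one exploits the Whitney property $4B_i\cap\boz_\lz^\complement\neq\emptyset$, which forces $\fint_{4B_i}|Tf|^2\le\lz^2$, together with the weak type $(1,1)$ of $\cm$, to bound the corresponding bad subset of $B_i$ by a multiple of $\mu(B_i)$ that is small once $\kappa$ is taken large. Summing over $i$, using the bounded overlap and $\bigcup_iB_i=\boz_\lz$, yields the displayed good-$\lz$ inequality.

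Finally, multiplying the good-$\lz$ inequality by $p_0\lz^{p_0-1}$, integrating over $\lz\in(0,\fz)$ via the layer-cake formula, choosing $\gz$ so small that $C_0\gz^{\dz}\kappa^{p_0}<\frac12$, and running the standard truncation (legitimate since $\mu(\boz_\lz)<\fz$), one absorbs and obtains $\|[\cm(|Tf|^2)]^{1/2}\|_{L^{p_0}(\cx)}\ls\|[\cm(|f|^2)]^{1/2}\|_{L^{p_0}(\cx)}$. Since $|Tf|^2\le\cm(|Tf|^2)$ almost everywhere by the Lebesgue differentiation theorem, and since $p_0/2>1$ so that $\cm$ is bounded on $L^{p_0/2}(\cx)$ (Lemma \ref{l2.4}(vi) with $\fai\equiv1$), it follows that
\begin{equation*}
\|Tf\|_{L^{p_0}(\cx)}\le\lf\|[\cm(|Tf|^2)]^{1/2}\r\|_{L^{p_0}(\cx)}\ls\lf\|[\cm(|f|^2)]^{1/2}\r\|_{L^{p_0}(\cx)}=\|\cm(|f|^2)\|_{L^{p_0/2}(\cx)}^{1/2}\ls\lf\||f|^2\r\|_{L^{p_0/2}(\cx)}^{1/2}=\|f\|_{L^{p_0}(\cx)},
\end{equation*}
which is the desired a priori bound; density then completes the proof. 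The main obstacle is the good-$\lz$ step: arranging the Whitney/local decomposition in the space of homogeneous type so that the contribution of the principal term $TA_{r_{4B_i}}(\chi_{4B_i}f)$ — which via the second hypothesis reintroduces $\cm(|Tf|^2)$ — is genuinely absorbable, and simultaneously controlling the off-diagonal tail, all while keeping the a priori finiteness under control through the truncation.
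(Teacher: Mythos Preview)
The paper does not prove this lemma; it states it as a criterion ``due to \cite{au07} (see also \cite{am07i})'' and uses it as a black box. Your outline follows the good-$\lz$ method that is indeed the proof in those references, so in spirit you are reproducing the intended argument rather than departing from it.

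That said, there is a genuine wrinkle in your tail handling. You split $f=\chi_{4B_i}f+\chi_{(4B_i)^\complement}f$ and then claim that the third term $|T(\chi_{(4B_i)^\complement}f)|$ ``is handled on $B_i$ by a standard argument using the off-diagonal decay of $A_{r_{4B_i}}$.'' But this term contains no $A_{r_{4B_i}}$ at all, and the abstract hypotheses of the lemma say nothing about off-diagonal decay of $\{A_t\}$; you have imported structure from the application (where $A_t$ is built from $e^{-tL}$) into the abstract statement. In Auscher's original formulation the two hypotheses are assumed for \emph{all} $f$ (not only $f$ supported in $B$), and that is precisely what lets one bypass any localization: one applies the second hypothesis directly to the full $f$ and uses the Whitney property $4B_i\cap\boz_\lz^\complement\neq\emptyset$ to get $[\cm(|Tf|^2)(y)]^{1/2}\le\lz$ at a point $y\in 4B_i$. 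The ``supported in $B$'' clause in the paper's statement appears to be a transcription artifact; if you take it literally, the lemma as stated does not obviously supply enough to control $T(\chi_{(4B_i)^\complement}f)$. Either drop the localization of $f$ (as in \cite{au07}) or state explicitly the additional off-diagonal input you are using and where it enters.
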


Now we prove Theorem \ref{thm on G{L,m}} by using Lemmas \ref{Auscher's thm 1}
and \ref{Auscher's thm 2}.

\begin{proof}[Proof of Theorem \ref{thm on G{L,m}}]

For the sake of simplicity, we only give the proof for $k=1$.
Since $G_{L}$ is bounded on $L^2(\cx)$, we can assume that $p_L<2<q_L$.
We now consider the following two cases.

\textbf{Case 1). $p\in(p_L,2)$}.

In this case, we apply Lemma \ref{Auscher's thm 1}
with $A_{r_B}:=I-(I-e^{-r_B^{2m}L})^M$,
$M\in\nn$ and $M>(n+\theta_1)/2m$.
From Assumption {\rm (B)}, we deduce that
$e^{-tL} \in \mathcal{O}_m(L^p-L^2)$. Thus,
\eqref{2.14} holds. It remains to show that
for all $j\in\nn$ with $j\ge3$, balls $B$ and $f\in L^p(\cx)$ supported in $B$,
it holds that
\begin{eqnarray}\label{2.17}
&&\lf\{\fint_{S_j(B)}\lf|G_{L}(I-e^{-r_B^{2m}L})^Mf(x)\r|^2\,d\mu(x)\r\}^{1/2}\\ \nonumber
&&\hs\ls2^{-j(mM-\theta_1)}\lf\{\fint_B |f(x)|^{p}\,d\mu(x)\r\}^{1/p}.
\end{eqnarray}

First, we write
\begin{eqnarray}\label{2.18}
&&\lf\{\fint_{S_j(B)}\lf|G_{L}(I-e^{-r_B^{2m}L})^Mf(x)\r|^2
\,d\mu(x)\r\}^{1/2}\\ \nonumber
&&\hs=\lf\{\fint_{S_j(B)}\int_0^\infty\lf|t^{2m}
Le^{-t^{2m}L}(I-e^{-r_B^{2m}L})^Mf(x)\r|^2
\,\frac{dt}{t}d\mu(x)\r\}^{1/2}\\ \nonumber
&&\hs\le\lf\{\fint_{S_j(B)}\int_0^{2^jr_B}
\lf|t^{2m}Le^{-t^{2m}L}(I-e^{-r_B^{2m}L})^Mf(x)\r|^2
\,\frac{dt}{t}d\mu(x)\r\}^{1/2}\\ \nonumber
&&\hs\hs + \lf\{\fint_{S_j(B)}\int^\infty_{2^jr_B}
\cdots\r\}^{1/2}=:\mathrm{I} +\mathrm{II}.
\end{eqnarray}

We first estimate $\mathrm{I}$. Write
$$
\lf(I-e^{-r_B^{2m}L}\r)^M=\int_0^{r^m_B}\cdots
\int_0^{r^{2m}_B}L^M
e^{-(s_1+\cdots+s_M)L}ds_1\cdots ds_M.$$
Thus,
\begin{eqnarray}\label{2.19}
\mathrm{I}&&\le\lf\{\fint_{S_j(B)}\int_0^{2^jr_B}
\int_0^{r^{2m}_B}\cdots \int_0^{r^{2m}_B}\lf|t^{2m}L^{M+1}
e^{-(t^{2m}+s_1+\cdots+s_M)L}f(x)
\r|^2\r.\\ \nonumber
&&\hs\times ds_1\cdots ds_M \frac{dt}{t}d\mu(x)\Bigg\}^{1/2}.
\end{eqnarray}
By the fact that $((t^{2m}+s_1+\cdots+s_M)L)^{M+1}e^{-(t^{2m}+s_1+\cdots+s_M)L}
\in \mathcal{O}_m(L^p-L^2)$, we know that
\begin{eqnarray*}
&&\lf\{\fint_{S_j(B)}\lf|t^{2m}L^{M+1}e^{-(t^{2m}
+s_1+\cdots+s_M)L}f(x)\r|^2\,d\mu(x)\r\}^{1/2}\\
&&\hs\ls\frac{t^{2m}}{(t^{2m}+s_1+\cdots+s_M)^{M+1}}
2^{j\theta_1}\lf[\frac{t^{2m}+s_1
+\cdots+s_M}{\lf(2^jr_B\r)^{2m}}\r]^{M+1}\lf\{\fint_B|f(x)|^p\,d\mu(x)\r\}^{1/p}\\
&&\hs\ls2^{j\theta_1}t^{2m}(2^jr_B)^{-2m(M+1)}
\lf\{\fint_B|f(x)|^p\,d\mu(x)\r\}^{1/p},
\end{eqnarray*}
which, together with Minkowski's integral inequality and
\eqref{2.19}, implies that
\begin{eqnarray}\label{2.20}
\mathrm{I}
&&\ls\int_0^{2^jr_B}\int_0^{r^{2m}_B}\cdots\int_0^{r^{2m}_B}
2^{j\theta_1}t^{2m}(2^jr_B)^{-2m(M+1)}ds_1\cdots\,
ds_M \frac{dt}{t}\\ \nonumber
&&\hs\times\lf\{\fint_B|f(x)|^p\,d\mu(x)\r\}^{1/p}
\ls2^{-j(2mM-\theta_1)}\lf\{\fint_B|f(x)|^p\,d\mu(x)\r\}^{1/p}.
\end{eqnarray}
Likewise, we also see that
$$
\mathrm{II}\ls2^{-j(2mM-\theta_1)}\lf\{\fint_B|f(x)|^p\,d\mu(x)\r\}^{1/p}.
$$
This, combined with \eqref{2.18} and \eqref{2.20}, shows that \eqref{2.17}
holds as long as $M>(n+\theta_1)/2m$. Thus, as a direct consequence of
Lemma \ref{2.14} and interpolation, we conclude that
$G_L$ is bounded on $L^p(\cx)$ for all $p\in(p_L,\,2)$.

\textbf{Case 2). $p\in(2,\,q_L)$.}

In this case, we first prove \eqref{2.15}  for $T:=G_L$
and $A_{r_B}:=I-(I-e^{-r_B^{2m}L})^M$ with $M>(n+\theta_1)/2m$. To do this,
we decompose $f=\sum_{j=0}^\fz f_j$, where for each $j\in\zz_+$, $f_j:=f\chi_{S_j(B)}$.
Then, by an argument similar to that used in the proof of Case 1), we conclude that
\eqref{2.15} holds true for $T:=G_L$ and $A_{r_B}:=I-(I-e^{-r_B^{2m}L})^M$
with $M>(n+\theta_1)/2m$. We now claim that, for all $f\in L^2(\cx)$,
balls $B:=B(x_B,r_B)$ and $y\in B$,
\begin{equation}\label{2.21}
\lf\{\fint_{B}\lf|G_L [I-(I-e^{-r_B^{2m}L})^M]f(x)\r|^{p}\r\}^{1/p}\ls\lf[\cm(|Tf|^2)
(y)\r]^{1/2}.
\end{equation}

Since $I-(I-e^{-r_B^{2m}L})^M=\sum_{j=1}^M c_{(j,\,M)}e^{-jr_B^{2m}L}$, where
$\{c_{(j,\,M)}\}_{j=0}^M$ are constants depending on $j$ and $M$, it follows that
\eqref{2.21} is equivalent to that, for all $j\in\{1,\,\ldots,\,M\}$,
\begin{equation}\label{2.22}
\lf\{\fint_{B}\lf|G_L e^{-jr_B^{2m}L}f(x)\r|^{p}\,d\mu(x)\r\}^{1/p}
\ls\lf[\cm(|Tf|^2)(y)\r]^{1/2}.
\end{equation}
To see this, by Minkowski's inequality, we conclude that
\begin{eqnarray*}
&&\lf\{\fint_{B}\lf|G_Le^{-jr_B^{2m}L}f(x)\r|^{p}\,d\mu(x)\r\}^{1/p}\\
&&\hs\le \lf\{\int_0^\infty\lf[\fint_{B}\lf|e^{-jr_B^{2m}L}\lf(t^{2m}
Le^{-t^{2m}L}f\r)(x)\r|^p
\,d\mu(x)\r]^{2/p}\frac{dt}{t}\r\}^{1/2}.
\end{eqnarray*}
Let $g:=t^{2m}Le^{-t^{2m}L}f$ and $g_i:=g\chi_{S_i(B)}$ with $i\in\zz_+$.
Then, from the fact that $e^{-jr_B^{2m}L}\in \mathcal{O}_m(L^2-L^p)$,
we deduce that, for all $y\in B$,
\begin{eqnarray*}
\lf\{\fint_{B}\lf|G_L e^{-jr_B^{2m}L}f(x)\r|^{p}\,d\mu(x)\r\}^{1/p}&&\ls
\lf\{\int_0^\infty \sum_{i\in\zz_+}\lf[\fint_{B}\lf|e^{-jr_B^{2m}L}g_i(x)\r|^p\,
d\mu(x)\r]^{2/p}\frac{dt}{t}\r\}^{1/2}\\
&&\ls\lf[\cm(|f|^2)(y)\r]^{1/2}.
\end{eqnarray*}
Thus, \eqref{2.22} holds and hence \eqref{2.21} holds true.
By this and Lemma \ref{Auscher's thm 2}, we see that $G_L$ is bounded on
$L^p(\cx)$ for all $p\in(2,\,q_L)$, which completes the proof of Theorem \ref{thm on G{L,m}}.
\end{proof}

For all $k\in \mathbb{N}$, the \emph{non-tangential square functions}
$S_{L,\,k}$ is defined by setting, for all
$f\in L^2(\cx)$ and $x\in\cx$,
\begin{equation}\label{2.23}
S_{L,\,k}(f)(x):=\lf\{\int_{0}^\fz \int_{B(x,t)}
\lf|(t^{2m}L)^ke^{-t^{2m}L}f(y)\r|^2\,\frac{d\mu(y)}{V(x,t)}\frac{dt}{t}\r\}^{1/2}.
\end{equation}
In particular case $k=1$, we omit the subscript $k$ to write $S_L$.
It is easy to show that, for all $f\in L^2(\cx)$, $\|S_{L,\,k}(f)\|_{L^2(\cx)}\ls
\|G_{L,\,k}(f)\|_{L^2(\cx)}$ and hence $S_{L,\,k}$ is bounded on $L^2(\cx)$.
Moreover, we have the following boundedness of $S_{L,\,k}$ on $L^p(\cx)$.
\begin{theorem}\label{thm on S_L}
Let $L$ satisfy Assumptions {\rm (A)} and {\rm (B)}, $k\in\nn$ and $p_L$ and $q_L$ be as
in Assumption {\rm (B)}.
Then $S_{L,\,k}$ is bounded on $L^p(\cx)$ for all $p\in(p_L,\,q_L)$.
\end{theorem}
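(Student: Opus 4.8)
The plan is twofold. For exponents $p\ge2$ I would reduce $S_{L,\,k}$ to the already-treated vertical square function $G_{L,\,k}$ (Theorem \ref{thm on G{L,m}}) by a duality-and-Tonelli argument, while for $p\in(p_L,2)$ I would rerun the Calder\'on--Zygmund-type scheme of Lemma \ref{Auscher's thm 1} exactly as in the proof of Theorem \ref{thm on G{L,m}}, with the additional Lusin-cone integral split off by Tonelli's theorem. As there, one may assume $p_L<2<q_L$ and, for brevity, treat only $k=1$; write $F(y,t):=t^{2m}Le^{-t^{2m}L}f(y)$, so that $S_{L,\,1}(f)(x)^2=\int_0^\fz\fint_{B(x,t)}|F(y,t)|^2\,d\mu(y)\,\frac{dt}{t}$. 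The case $p=2$ is already recorded in the excerpt, so it remains to handle $p\in(2,q_L)$ and $p\in(p_L,2)$.

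For $p\in(2,q_L)$, since $p/2>1$, I would argue by duality: for any $0\le h\in L^{(p/2)'}(\cx)$ with $\|h\|_{L^{(p/2)'}(\cx)}\le1$, Tonelli's theorem, the doubling property \eqref{2.1} (which gives $V(x,t)\sim V(y,t)$ whenever $d(x,y)<t$), and the definition of $\cm$ yield
$$\int_\cx S_{L,\,1}(f)(x)^2h(x)\,d\mu(x)\ls\int_0^\fz\int_\cx|F(y,t)|^2\cm(h)(y)\,\frac{dt}{t}\,d\mu(y)=\int_\cx\lf[G_{L,\,1}(f)(y)\r]^2\cm(h)(y)\,d\mu(y);$$
by H\"older's inequality and the boundedness of $\cm$ on $L^{(p/2)'}(\cx)$ this is $\ls\|G_{L,\,1}(f)\|_{L^p(\cx)}^2$, which, by Theorem \ref{thm on G{L,m}} (note $(2,q_L)\subset(p_L,q_L)$), is $\ls\|f\|_{L^p(\cx)}^2$. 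Taking the supremum over such $h$ gives $\|S_{L,\,1}(f)\|_{L^p(\cx)}\ls\|f\|_{L^p(\cx)}$.

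For $p\in(p_L,2)$, fix $p_0\in(p_L,p)$; since $S_{L,\,1}$ is bounded on $L^2(\cx)$, by the Marcinkiewicz interpolation theorem it suffices to show that $S_{L,\,1}$ is of weak type $(p_0,p_0)$ and then let $p_0\downarrow p_L$. To this end I would apply Lemma \ref{Auscher's thm 1} with $T:=S_{L,\,1}$ and $A_{r_B}:=I-(I-e^{-r_B^{2m}L})^M$, $M\in\nn$ to be taken large; \eqref{2.14} holds, just as in the proof of Theorem \ref{thm on G{L,m}}, because each $e^{-lr_B^{2m}L}\in\mathcal{O}_m(L^{p_0}-L^2)$, $l\in\{1,\ldots,M\}$, by Assumption {\rm (B)}. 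For \eqref{2.13}, fix a ball $B:=B(x_B,r_B)$ and $f\in L^{p_0}(\cx)$ supported in $B$, put $g:=(I-e^{-r_B^{2m}L})^Mf$ and $F_g(y,t):=t^{2m}Le^{-t^{2m}L}g(y)$, and let $j\ge3$. Using the identity for $S_{L,\,1}(g)^2$, Tonelli's theorem, $V(x,t)\sim V(y,t)$ for $d(x,y)<t$, and the elementary facts that at height $t\le2^{j-2}r_B$ the cone over $S_j(B)$ meets only the annuli $S_i(B)$ with $|i-j|\le1$, while at height $t>2^{j-2}r_B$ it is contained in $B(x_B,5t)\ (\supset B)$, one reaches
$$\fint_{S_j(B)}S_{L,\,1}(g)(x)^2\,d\mu(x)\ls\sum_{|i-j|\le1}\int_0^{2^{j-2}r_B}\fint_{S_i(B)}|F_g(y,t)|^2\,d\mu(y)\,\frac{dt}{t}+\int_{2^{j-2}r_B}^\fz\fint_{B(x_B,5t)}|F_g(y,t)|^2\,d\mu(y)\,\frac{dt}{t}.$$
The first sum is estimated exactly as the term $\mathrm{I}$ in the proof of Theorem \ref{thm on G{L,m}} (expand $(I-e^{-r_B^{2m}L})^M$ as an $M$-fold time integral, compose with $t^{2m}Le^{-t^{2m}L}$, apply the off-diagonal estimates $\mathcal{O}_m(L^{p_0}-L^2)$ from $B$ to $S_i(B)$ with $2^i\sim2^j$, and integrate), giving a bound $\ls2^{-2j(2mM-\theta_1)}\{\fint_B|f(x)|^{p_0}\,d\mu(x)\}^{2/p_0}$; the second is estimated via the local off-diagonal estimate \eqref{2.10} on $B(x_B,5t)$ together with $\fint_{B(x_B,5t)}|f|^{p_0}\,d\mu\le\fint_B|f|^{p_0}\,d\mu$ and the gain $(r_B/t)^{2mM}$ from the time integration, which produces $\ls\{\fint_B|f(x)|^{p_0}\,d\mu(x)\}^{2/p_0}\int_{2^{j-2}r_B}^\fz(r_B/t)^{4mM}\,\frac{dt}{t}\ls2^{-4mMj}\{\fint_B|f(x)|^{p_0}\,d\mu(x)\}^{2/p_0}$. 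Thus \eqref{2.13} holds with $\alpha(j)\ls2^{-j(2mM-\theta_1)}$, so $\sum_{j\ge3}\alpha(j)2^{nj}<\fz$ once $M>(n+\theta_1)/(2m)$ (as in the proof of Theorem \ref{thm on G{L,m}}), and Lemma \ref{Auscher's thm 1} applies.

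I expect the main obstacle to be, in the case $p\in(p_L,2)$, the bookkeeping of the Lusin-cone integral: reorganizing $\fint_{S_j(B)}S_{L,\,1}(g)^2\,d\mu$ into the two $t$-regimes by Tonelli, keeping the normalizing measures $\mu(2^jB)$, $\mu(2^iB)$ and $V(x_B,t)$ mutually comparable under doubling, and, for small $t$, controlling the piece where the cone over $S_j(B)$ dips into the adjacent annulus $S_{j-1}(B)$ (in particular $S_2(B)=4B\setminus2B$ when $j=3$). Here the super-polynomial decay as $t\to0$ furnished by the exponential factor in the off-diagonal estimates is what guarantees the local square-integrability of $S_{L,\,1}(g)$ and hence the finiteness of $\alpha(3)$; since only finitely many small $j$ are involved this costs nothing in $\sum_j\alpha(j)2^{nj}$. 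Apart from this point, everything is a routine adaptation of the corresponding estimates in the proof of Theorem \ref{thm on G{L,m}}, once the cone has been split off.
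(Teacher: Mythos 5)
Your proposal is correct and follows essentially the same route as the paper: the case $p\in(2,q_L)$ is the identical duality/Tonelli reduction to $G_{L,\,k}$ via $\cm$, and the case $p\in(p_L,2)$ is the same application of Lemma \ref{Auscher's thm 1} with $A_{r_B}:=I-(I-e^{-r_B^{2m}L})^M$, splitting the cone integral into the small-$t$ regime (handled over the adjacent annuli exactly as for $G_L$) and the large-$t$ regime (where your containment of the cone in $B(x_B,5t)$ is a mild streamlining of the paper's decomposition of $\cx$ into $4B_t$ and the annuli $S_j(B_t)$, yielding the same $2^{-j(4mM-n)}$ bound once the doubling factor $\mu(B(x_B,5t))/\mu(2^jB)$ you flag is kept). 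The only slip is quantitative and harmless: because of the extra $2^{nj}$-type losses from the cone geometry, the threshold should be $M>(2n+\theta_1)/(2m)$ as in the paper rather than $(n+\theta_1)/(2m)$, which costs nothing since $M$ is a free parameter.
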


\begin{proof}
Without the loss of generality, we may assume that $k=1$. Similar to the proof
of Theorem \ref{thm on G{L,m}}, we also consider the following two cases for $p$.

\textbf{Case 1). $p\in(p_L,2)$.}

In this case, we apply Theorem \ref{Auscher's thm 1} to this
situation for $T:=S_L$ and $A_{r_B}:=I-(I-e^{-r_B^{2m}L})^M$ with $M>(2n+\theta_1)/2m$.
Due to the fact that $(tL)^k e^{-tL}\in \mathcal{O}_m(L^p-L^2)$ for all $k\in\zz_+$,
we only need to show \eqref{2.14}, namely, for all $j\in\nn$ with
$j\ge3$, balls $B:=B(x_B,r_B)$ and $f\in L^p(\cx)$ supported in $B$, it holds that
\begin{eqnarray}\label{2.24}
&&\lf\{\fint_{S_j(B)}\lf|S_{L}(I-e^{-r_B^{2m}L})^Mf(x)\r|^2\,d\mu(x)\r\}^{1/2}\\ \nonumber
&&\hs\ls2^{-j(2mM-\theta_1)}\lf\{\fint_B |f(x)|^{p}\,d\mu(x)\r\}^{1/p}.
\end{eqnarray}

To show this, we first write
\begin{eqnarray*}
&&\fint_{S_j(B)}\lf|S_{L}(I-e^{-r_B^{2m}L})^Mf(x)\r|^2\,d\mu(x)\\
&&\hs=\fint_{S_j(B)}\int_0^{\frac{d(x,x_B)}{4}}\int_{B(x,t)} \lf|t^{2m}
Le^{-t^{2m}L}(I-e^{-r_B^{2m}L})^Mf(y)\r|^2\,\frac{d\mu(y)}{V(x,t)}\frac{dt}
{t}d\mu(x)\\&&\hs\hs+ \fint_{S_j(B)}\int^\infty_{\frac{d(x,x_B)}{4}}
\cdots=: \mathrm{I}_1 + \mathrm{I}_2.
\end{eqnarray*}
Let us first estimate $\mathrm{I}_1$. Let
$$F_j(B):=\lf\{z\in\cx:\ \text{there exists $x\in S_j(B)$ such that $d(x,z)
<\frac{d(x,x_B)}{4}$}\r\}.$$
Then $F_j(B)\subset S_{j-1}(B)\cup S_j(B)\cup S_{j+1}(B)=:U_j(B)$.
This, together with the fact that $\int_{d(x,y)<t}\frac{1}{V(x,t)}d\mu(x)\ls1$, implies that
\begin{eqnarray*}
\mathrm{I}_1&&\le\frac{1}{\mu(2^jB)}\int_{F_j(B)}\int_0^{\frac{d(x,x_B)}{4}}
\lf|t^{2m}Le^{-t^{2m}L}(I-e^{r_B^{2m}L})^Mf(y)\r|^2\,\frac{dt}{t}d\mu(y)\\
&&\ls\frac{1}{\mu(2^jB)}\int_{U_j(B)}\int_0^{2^jr_B}\lf|t^{2m}L
e^{-t^{2m}L}(I-e^{r_B^{2m}L})^Mf(y)\r|^2\,\frac{dt}{t}d\mu(y).
\end{eqnarray*}
At this stage, by an argument used in  Case 1) of the proof of Theorem
\ref{thm on G{L,m}}, we conclude that
\begin{equation*}
\mathrm{I}_1\ls2^{-2j(2mM-\theta_1)}\lf\{\fint_B |f(x)|^{p}\,d\mu(x)\r\}^{2/p}.
\end{equation*}
Likewise, for $\mathrm{I}_2$, we write
\begin{eqnarray*}
\mathrm{I}_2&&\le\fint_{S_j(B)}\int^\infty_{\frac{d(x,x_B)}{4}}\int_{B(x,t)}
\lf|t^{2m}Le^{-t^{2m}L}(I-e^{-r_B^{2m}L})^Mf(y)\r|^2\,\frac{d\mu(y)}{V(x,t)}
\frac{dt}{t}d\mu(x)\\
&&\ls\frac{1}{\mu(2^jB)}\int^\infty_{2^{j-1}r_B}\int_{\cx}
\lf|t^{2m}Le^{-t^{2m}L}(I-e^{-r_B^{2m}L})^Mf(y)\r|^2\,d\mu(y)\frac{dt}{t}\\
&&\ls\frac{1}{\mu(2^jB)}\int^\infty_{2^{j-1}r_B}\int_{4B_t}\lf|t^{2m}Le^{-t^{2m}L}
(I-e^{-r_B^{2m}L})^Mf(y)\r|^2\,d\mu(y)\frac{dt}{t}\\
&&\hs+ \sum_{j=2}^\fz\frac{1}{\mu(2^jB)}\int^\infty_{2^{j-1}r_B}
\int_{S_j(B_t)}\lf|t^{2m}Le^{-t^{2m}L}(I-e^{-r_B^{2m}L})^Mf(y)\r|^2\,d\mu(y)
\frac{dt}{t}\\&&=:\mathrm{K}+\sum_{j=2}^\fz\mathrm{H}_j,
\end{eqnarray*}
where $B_t:=B(x_B,t)$.

Notice that in this situation, $B\subset B_t$ and hence $f=f\chi_{B_t}$. By an argument
similar to that used in the proof of Theorem \ref{thm on G{L,m}} and the fact
that $(tL)^k e^{-tL}\in \mathcal{O}_m(L^p-L^2)$ for all $k\in\zz_+$, we see that
\begin{eqnarray*}
&&\lf\{\fint_{4B_t}\lf|t^{2m}Le^{-t^{2m}L}(I-e^{-r_B^{2m}L})^Mf(y)\r|^2\,
d\mu(y)\r\}^{1/2}\\
&&\hs\ls\int_0^{r_B^{2m}}\cdots\int_0^{r_B^{2m}}
\lf\{\fint_{4B_t}\lf|t^{2m}L^{M+1}e^{-(t^{2m}+s_1+\cdots+s_M)L}
f(y)\r|^2\, d\mu(y)\r\}^{1/2}d\vec{s}\\
&&\hs\ls\int_0^{r_B^{2m}}\cdots\int_0^{r_B^{2m}}\frac{t^{2m}}
{(t^{2m}+s_1+\cdots+s_M)^{M+1}} d\vec{s}\lf\{\fint_{4B_t}|f(x)|^p\,
d\mu(x)\r\}^{1/p}\\
&&\hs\ls\frac{r_B^{2mM}}{t^{2mM}}\lf\{\fint_{4B_t}|f(x)|^p\,d\mu(x)\r\}^{1/p},
\end{eqnarray*}
where $d\vec{s}:=ds_1\cdots ds_M$. This implies that
\begin{eqnarray*}
\mathrm{K}&&\ls\int^\infty_{2^{j-1}r_B}\frac{r_B^{4mM}}{t^{4mM}}
\frac{\mu(B_t)}{\mu(2^jB)}\frac{dt}{t}\lf\{\fint_{B}|f(x)|^p\,
d\mu(x)\r\}^{2/p}\\
&&\ls\int^\infty_{2^{j-1}r_B}\frac{r_B^{4mM}}{t^{4mM}}
\lf(\frac{t}{2^jr_B}\r)^n\frac{dt}{t}
\lf\{\fint_{B}|f(x)|^p\,d\mu(x)\r\}^{2/p}\\
&&\ls2^{-j(4mM-n)}\lf\{\fint_{B}|f(x)|^p\,d\mu(x)\r\}^{2/p}.
\end{eqnarray*}

Likewise, for all $j\in\nn$ with $j\ge2$, we have
\begin{eqnarray*}
&&\lf\{\fint_{S_j(B_t)}\lf|t^{2m}Le^{-t^{2m}L}(I-e^{-r_B^{2m}L})^Mf(y)\r|^2\,
d\mu(y)\r\}^{1/2}\\
&&\hs\le\int_0^{r_B^{2m}}\cdots\int_0^{r_B^{2m}}\lf\{\fint_{S_j(B_t)}
\lf|t^{2m}L^{M+1}e^{-(t^{2m}+s_1+\cdots+s_M)L}f(y)\r|^2\,d\mu(y)\r\}^{1/2}d\vec{s}\\
&&\hs\ls2^{j\theta_1}\lf(\frac{2^jr_{B_t}}{t}\r)^{\theta_2}
\int_0^{r_B^{2m}}\cdots\int_0^{r_B^{2m}}\frac{t^{2m}}{(t^{2m}+
s_1+\cdots+s_M)^{M+1}}
\exp\lf\{-c\frac{(2^jr_{B_t})^{2m}}{t^{2m}}\r\}\,d\vec{s} \\
&&\hs\hs\times\lf\{\fint_{B_t}|f(x)|^p\,d\mu(x)\r\}^{1/p}\ls
2^{-j(n+\epsilon)}\frac{r_B^{2mM}}{t^{2mM}}\lf\{\fint_{B_t}|f(x)|^p\,d\mu(x)\r\}^{1/p}.
\end{eqnarray*}
Therefore,
\begin{eqnarray*}
\mathrm{H}_j&&\ls2^{-j(n+\epsilon)}\int_{2^{j-1}r_B}^\infty
\frac{r_B^{4mM}}{t^{4mM}} \frac{\mu(2^jB_t)}{\mu(2^jB)}
\lf\{\fint_{B}|f(x)|^p\,d\mu(x)\r\}^{2/p}\\
&&\ls2^{-j(n+\epsilon)}\int_{2^{j-1}r_B}^\infty \frac{r_B^{4mM}}{t^{4mM}}
\lf(\frac{2^jt}{2^jr_B}\r)^n \frac{dt}{t}\lf\{\fint_{B}|f(x)|^p\,d\mu(x)\r\}^{2/p}\\
&&\ls2^{-j\epsilon}2^{-j(4mM-n)}\lf\{\fint_{B}|f(x)|^p\,d\mu(x)\r\}^{2/p},
\end{eqnarray*}
where $n$ is the dimension of $\cx$ appearing in \eqref{2.2}.

From these estimates of $\mathrm{K}$ and $\mathrm{H}_j$, we deduce  that
\eqref{2.24} holds and hence $S_L$ is bounded on $L^p(\cx)$ for all
$p\in(p_L,\,2)$.

\textbf{Case 2). $p\in(2,\,q_L)$.}

In this case, for any $h\in L^{(p/2)'}(\cx)$, from Fubini's theorem and H\"older's inequality,
we infer that
\begin{eqnarray*}
\int_\cx [S_L f(x)]^2h(x)\,d\mu(x)&&=\int_\cx\int_0^\infty\int_{B(x,t)}
\lf|t^{2m}Le^{-t^{2m}L}f(y)\r|^2h(x)\,\frac{d\mu(y)}{V(x,t)}\frac{dt}{t}d\mu(x)\\
&&=\int_\cx\int_0^\infty\int_{B(x,t)}\lf|t^{2m}Le^{-t^{2m}L}f(y)\r|^2h(x)\,
\frac{d\mu(x)}{V(x,t)}\frac{dt}{t}d\mu(y)\\
&&\ls\int_\cx\int_0^\infty\lf|t^{2m}Le^{-t^{2m}L}f(y)\r|^2
\mathcal{M}(h)(y)\,\frac{dt}{t}d\mu(y)\\
&&\ls\int_\cx [G_L f(x)]^2\mathcal{M}(h)(y)\,d\mu(y)
\ls\|G_L(f)\|^2_{L^p(\cx)}\lf\|\mathcal{M}(h)\r\|_{L^{(p/2)'}(\cx)}.
\end{eqnarray*}
At this stage, using Theorem \ref{thm on G{L,m}}
and the fact that $\mathcal{M}$ is bounded on $L^p(\cx)$ for all $p\in(1,\fz)$,
we conclude that
$$
\int_\cx [S_Lf(x)]^2h(x)\,d\mu(x)\ls\|f\|^2_{L^p(\cx)}\|h\|_{L^{(p/2)'}(\cx)},
$$
which implies that $S_L$ is bounded on $L^p(\cx)$ for all $p\in(2,q_L)$ and hence completes
the proof of Theorem \ref{thm on S_L}.
\end{proof}

\section{Musielak-Orlicz tent spaces\label{s3}}

\hskip\parindent In this section, we study the Musielak-Orlicz
tent space associated with the growth function.
We first recall some notions as follows.

For any $\nu\in(0,\fz)$ and $x\in\cx$, let
$\bgz_{\nu}(x):=\{(y,t)\in\cx_+:\ d(x,y)<\nu t\}$
be the \emph{cone of aperture $\nu$ with vertex $x\in\cx$}, here
and in what follows, we \emph{always assume that $\mathcal{X}_+:=
\mathcal{X}\times(0,\,\fz)$.} For any
closed subset $F$ of $\cx$, denote by $\ccr_{\nu}F$ the \emph{union of all
cones with vertices in $F$}, namely, $\ccr_{\nu}F:=\cup_{x\in
F}\bgz_{\nu}(x)$ and, for any open subset $O$ of $\cx$, denote the
\emph{tent over $O$} by $T_{\nu}(O)$, which is defined as
$T_{\nu}(O):=[\ccr_{\nu}(O^\complement)]^{\complement}$. It is
easy to see that
$T_{\nu}(O)=\{(x,t)\in\cx_+:\ d(x,O^\complement)
\ge\nu t\}.$
In what follows, we denote $\bgz_1(x)$ and
$T_1(O)$ \emph{simply by $\bgz(x)$ and $\widehat{O}$}, respectively.

For all measurable functions $g$ on $\cx_+$ and
$x\in\cx$, define
$$\ca(g)(x):=\lf\{\int_{\bgz(x)}|g(y,t)|^2
\frac{d\mu(y)}{V(x,t)}\frac{dt}{t}\r\}^{1/2}.
$$
Coifman, Meyer and Stein \cite{cms85} introduced the
tent space $T^p_2(\rr^{n+1}_+)$ for $p\in(0,\fz)$, here and in what
follows, $\rr^{n+1}_+:=\rn\times(0,\fz)$. The tent space
$T^p_2(\cx_+)$ on spaces of homogenous type was introduced
by Russ \cite{ru07}. Recall that a measurable function $g$ is said
to belong to the \emph{tent space} $T^p_2(\cx_+)$ with $p\in(0,\fz)$, if
$\|g\|_{T^p_2(\cx_+)}:=\|\ca(g)\|_{L^p(\cx)}<\fz$. Moreover,
Harboure, Salinas and Viviani \cite{hsv07}, and Jiang and Yang
\cite{jy11}, respectively, introduced the Orlicz tent spaces
$T_{\bfai}(\rr^{n+1}_+)$ and $T_{\bfai}(\cx_+)$.

Let $\fai$ be as in Definition \ref{d2.2}. In what follows, we
denote by $T_\fai(\cx_+)$ the \emph{space of all measurable functions
$g$ on $\cx_+$ such that $\ca(g)\in L^{\fai}(\cx)$} and,
for any $g\in T_\fai(\cx_+)$, its \emph{quasi-norm} is defined by
$$\|g\|_{T_\fai(\cx_+)}:=\|\ca(g)\|_{L^{\fai}(\cx)}=
\inf\lf\{\lz\in(0,\fz):\
\int_{\cx}\fai\lf(x,\frac{\ca(g)(x)}{\lz}\r)\,d\mu(x)\le1\r\}.
$$

Let $p\in(1,\,\fz)$. A function $A$ on $\cx_+$ is called a
\emph{$(T_\fai,\,p)$-atom} if

(i) there exists a ball $B\subset\cx$ such that $\supp
a\subset\widehat{B}$;

(ii) $\|A\|_{T^p_2(\cx_+)}\le
[\mu(B)]^{1/p}\|\chi_B\|_{L^\fai(\cx)}^{-1}$.

Furthermore, if $A$ is a $(T_\fai,p)$-atom for all $p\in (1,\fz)$, we then
call $A$ a \emph{$(\fai,\fz)$-atom}.

For functions in $T_\fai(\cx_+)$, we have the following
atomic decomposition.

\begin{theorem}\label{t3.1}
Let $\fai$ be as in Definition \ref{d2.2}. Then for any $f\in
T_\fai(\cx_+)$, there exist $\{\lz_j\}_j\subset\cc$ and a sequence
$\{A_j\}_j$ of $(T_\fai,\,\fz)$-atoms associated with $\{B_j\}_j$ such that, for almost every
$(x,t)\in\cx_+$,
\begin{equation}\label{3.1}
f(x,t)=\sum_{j}\lz_jA_j(x,t).
\end{equation}
Moreover, there exists a positive constant $C$ such that, for all
$f\in T_\fai(\cx_+)$,
\begin{eqnarray}\label{3.2}
\blz(\{\lz_j A_j\}_j)&&:=\inf\lf\{\lz\in(0,\fz):\
\sum_j\fai\lf(B_j,\frac{|\lz_j|}
{\lz\|\chi_{B_j}\|_{L^\fai(\cx)}}\r)\le1\r\}\\
&&\le
C\|f\|_{T_\fai(\cx_+)}.\nonumber
\end{eqnarray}
\end{theorem}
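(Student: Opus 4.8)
The plan is to adapt to the growth function $\fai$ the classical atomic decomposition of tent spaces of Coifman--Meyer--Stein \cite{cms85}, in the space-of-homogeneous-type version of Russ \cite{ru07} and the Orlicz version of Jiang--Yang \cite{jy11}, the Musielak--Orlicz bookkeeping being carried out through Lemmas \ref{l2.1} and \ref{l2.4}. First, for each $k\in\zz$, I would set $O_k:=\{x\in\cx:\ \ca(f)(x)>2^k\}$. Since $\ca(f)\in L^\fai(\cx)$, we have $\ca(f)<\fz$ $\mu$-a.e., so $\bigcap_k O_k$ is null; and a routine comparison of $V(x,t)$ at nearby points shows $\ca(f)$ to be, up to a fixed multiplicative constant, lower semicontinuous, which, as usual, allows each $O_k$ to be taken open, with $O_{k+1}\subset O_k$. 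I then regularize, putting $O_k^*:=\{x\in\cx:\ \cm(\chi_{O_k})(x)>1/2\}$; then $O_k\subset O_k^*$, $O_k^*$ is open, $O_{k+1}^*\subset O_k^*$, the weak type $(1,1)$ of $\cm$ yields $\mu(O_k^*)\le C\mu(O_k)$, and, picking $q_0$ with $\fai(\cdot,t)\in\aa_{q_0}(\cx)$, the $L^{q_0}(\cx,\fai(\cdot,t))$-boundedness of $\cm$ from Lemma \ref{l2.4}(vi) gives $\fai(O_k^*,t)\le C\fai(O_k,t)$ uniformly in $t\in(0,\fz)$.

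Next, by the standard argument (cf.\ \cite{cms85,ru07,jy11}) --- Fubini's theorem applied to $\int_{O_k^\complement}[\ca(f)(x)]^2\,d\mu(x)$ together with the observation that $\mu(B(y,t)\cap O_k^\complement)\gs V(y,t)$ whenever $(y,t)\notin\widehat{O_k^*}$, first reducing to truncations of $f$ supported in sets of finite measure --- I would show that, up to a subset of $\cx_+$ of $\frac{d\mu(y)\,dt}{t}$-measure zero, $f=\sum_{k\in\zz}f\,\chi_{\widehat{O_k^*}\setminus\widehat{O_{k+1}^*}}$. For each $k$ with $O_k^*\ne\cx$ I would take a Whitney covering $\{B_{k,i}\}_i$ of $O_k^*$ by balls of bounded overlap with $r_{B_{k,i}}\sim\dist(B_{k,i},(O_k^*)^\complement)$; then, for a fixed dilation constant $c_0>1$, $\widehat{O_k^*}\subset\bigcup_i\widehat{c_0B_{k,i}}$, and disjointifying the sets $\widehat{c_0B_{k,i}}$ one writes $f\,\chi_{\widehat{O_k^*}\setminus\widehat{O_{k+1}^*}}=\sum_i f_{k,i}$, where $\supp f_{k,i}\subset\widehat{c_0B_{k,i}}$ and $f_{k,i}$ is supported in $\widehat{O_k^*}\setminus\widehat{O_{k+1}^*}$. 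Relabelling $c_0B_{k,i}$ as $B_{k,i}$, this will be the claimed decomposition $f=\sum_{k,i}\lz_{k,i}A_{k,i}$ once the atoms and coefficients are named.

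For the atoms, the crucial pointwise bound is that if $(z,s)\in\supp f_{k,i}$ then $(z,s)\notin\widehat{O_{k+1}^*}$, so $z$ lies within distance $s$ of a point of $(O_{k+1}^*)^\complement\subset O_{k+1}^\complement$, where $\ca(f)\le2^{k+1}$; feeding this into the cone geometry one gets $\ca(f_{k,i})\le C2^k$ pointwise, with $C$ independent of $k$, of $i$, and of any integrability exponent, while $\ca(f_{k,i})$ is supported in a fixed dilate of $B_{k,i}$ of comparable measure. Hence $\|f_{k,i}\|_{T^2_2(\cx_+)}\le C\,2^k[\mu(B_{k,i})]^{1/2}$ and, more generally, $\|f_{k,i}\|_{T^p_2(\cx_+)}\le C\,2^k[\mu(B_{k,i})]^{1/p}$ for every $p\in(1,\fz)$ with $C$ independent of $p$. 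Setting $\lz_{k,i}:=\wz C\,2^k\|\chi_{B_{k,i}}\|_{L^\fai(\cx)}$ with $\wz C$ a sufficiently large absolute constant, and $A_{k,i}:=\lz_{k,i}^{-1}f_{k,i}$, each $A_{k,i}$ is then a $(T_\fai,p)$-atom for every $p\in(1,\fz)$, that is, a $(T_\fai,\fz)$-atom associated with $B_{k,i}$, and $f=\sum_{k,i}\lz_{k,i}A_{k,i}$ a.e.\ on $\cx_+$ (at each point only boundedly many terms are nonzero).

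It remains to prove \eqref{3.2}, which I expect to be the main obstacle. Since $|\lz_{k,i}|/(\lz\,\|\chi_{B_{k,i}}\|_{L^\fai(\cx)})=\wz C\,2^k/\lz$, using the bounded overlap of $\{B_{k,i}\}_i$, the uniform $\sigma$-quasi-subadditivity of $\fai$ (Lemma \ref{l2.1}(i)) and $\fai(O_k^*,\cdot)\le C\fai(O_k,\cdot)$, one reduces to bounding $\sum_k\fai(O_k,\wz C2^k/\lz)$. Writing $O_k=\bigcup_{\ell\ge k}\{x:\ 2^\ell<\ca(f)(x)\le2^{\ell+1}\}$, summing the geometric series in $k\le\ell$ by the uniform lower type of $\fai$, bounding $\fai(x,2^\ell/\lz)$ by $\fai(x,\ca(f)(x)/\lz)$ on each dyadic layer by monotonicity, and extracting the constant $\wz C$ by the uniform upper type of $\fai$, one obtains $\sum_k\fai(O_k,\wz C2^k/\lz)\le C\,\wz C^{\,I(\fai)+\uc}\int_\cx\fai(x,\ca(f)(x)/\lz)\,d\mu(x)$; and by the uniform lower type of $\fai$ together with Lemma \ref{l2.1}(iii) the last integral is $\le C\,(\lz/\|f\|_{T_\fai(\cx_+)})^{-(i(\fai)-\uc)}$ as soon as $\lz\ge\|f\|_{T_\fai(\cx_+)}$. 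Choosing $\lz:=C_0\|f\|_{T_\fai(\cx_+)}$ with $C_0$ large enough makes the whole sum at most $1$, i.e.\ $\blz(\{\lz_{k,i}A_{k,i}\})\le C_0\|f\|_{T_\fai(\cx_+)}$, which is \eqref{3.2}. The delicacy is exactly this balancing act: one must play the uniform lower type of $\fai$ (needed both to sum the level series and to dilate the argument of $\fai$) off against its uniform upper type and the $\aa_\fz(\cx)$ property (needed to revert from $O_k^*$ to $O_k$ and to absorb the Whitney overlap), while keeping every implicit constant independent of $k$ and $i$; the absolute constants $\wz C$ (normalizing the atoms, uniformly in $p$) and $C_0$ (fixing $\lz$) can be chosen consistently only at the very end.
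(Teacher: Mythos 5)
Your construction is, in outline, exactly the one the paper intends: the paper omits the proof and refers to \cite[Theorem 3.1]{yys4}, which is the Coifman--Meyer--Stein decomposition in the form of \cite{ru07,jy10} with the Musielak--Orlicz bookkeeping carried out through Lemmas \ref{l2.1} and \ref{l2.4}; your level sets $O_k$, the majorants $O_k^*$, the Whitney covering, the passage from $\fai(O_k^*,t)$ back to $\fai(O_k,t)$ via Lemma \ref{l2.4}(vi), and the final estimate of $\blz$ obtained by playing the uniform lower type against the uniform upper type all match that template.

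The step you should not wave through is the claim that ``feeding this into the cone geometry one gets $\ca(f_{k,i})\le C2^k$ pointwise.'' The hypothesis $(y,t)\notin\widehat{O_{k+1}^*}$ only supplies, for each individual $(y,t)\in\bgz(x)\cap\supp f_{k,i}$, some point $z=z(y,t)\in(O_{k+1}^*)^{\complement}$ with $d(y,z)<t$; these certifying points vary with $(y,t)$, no single cone $\bgz_{\beta}(z)$ of fixed aperture contains $\bgz(x)\cap\supp f_{k,i}$, and if one instead averages over all such $z$ (replacing $1/V(x,t)$ by $\mu(B(y,Ct)\cap O_{k+1}^{\complement})/[V(x,t)]^2$ and applying Fubini's theorem) one is left with an uncancelled factor $1/V(x,t)$ inside the $t$-integral, which in $\rn$ produces a logarithmically divergent integral $\int V(x,d(x,z))^{-1}\,d\mu(z)$. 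What the geometry actually yields, and what \cite{cms85,ru07,jy10,yys4} use, is the integrated estimate of \cite[Lemma 2]{cms85} (in its homogeneous-type form): for every $g$,
\begin{equation*}
\int_{\cx_+}|f_{k,i}(y,t)|\,|g(y,t)|\,\frac{d\mu(y)\,dt}{t}
\ls\int_{O_{k+1}^{\complement}\cap CB_{k,i}}\ca(f)(x)\,\ca(g)(x)\,d\mu(x)
\ls 2^{k}\lf[\mu(B_{k,i})\r]^{1/p}\|g\|_{T^{p'}_2(\cx_+)},
\end{equation*}
using $\ca(f)\le 2^{k+1}$ on $O_{k+1}^{\complement}$ and H\"older's inequality; the bound $\|f_{k,i}\|_{T^p_2(\cx_+)}\ls 2^k[\mu(B_{k,i})]^{1/p}$ then follows by tent-space duality. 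Substitute this pairing argument for the pointwise claim; with that replacement the rest of your proof, including the normalization $\lz_{k,i}:=\wz C2^k\|\chi_{B_{k,i}}\|_{L^\fai(\cx)}$ and the computation of $\blz$, goes through as written.
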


The proof of Theorem \ref{t3.1} is similar to that of \cite[Theorem 3.1]{yys4}.
We omit the details here.

\begin{corollary}\label{c3.1}
Let $p\in(0,\fz)$ and $\fai$ be as in Definition \ref{d2.2}. If $f\in
T_{\fai}(\cx_+)\cap T^p_2(\cx_+)$, then the
decomposition \eqref{3.1} also holds in both $T_{\fai}(\cx_+)$ and
$T^p_2(\cx_+)$.
\end{corollary}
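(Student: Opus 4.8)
The plan is to promote the almost-everywhere identity \eqref{3.1} of Theorem \ref{t3.1} to convergence in the quasi-norms of $T^p_2(\cx_+)$ and $T_\fai(\cx_+)$ by a dominated-convergence argument, using the feature of the construction behind Theorem \ref{t3.1} that, after a disjointification of the Whitney pieces, each atom $\lz_jA_j$ is simply the restriction of $f$ to a measurable subset of $\cx_+$. Recalling that proof, one sets $O_k:=\{x\in\cx:\ \ca(f)(x)>2^k\}$ for $k\in\zz$, covers each $O_k$ by a bounded-overlap family of Whitney balls $\{B_{k,i}\}_i$, and writes $f=\sum_{k,i}\lz_{k,i}A_{k,i}$ with $\lz_{k,i}A_{k,i}=f\chi_{E_{k,i}}$, where the $E_{k,i}$ are pairwise disjoint and $\cup_iE_{k,i}=\widehat{O_k}\setminus\widehat{O_{k+1}}$ for each $k$. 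Reindexing the pairs $(k,i)$ as a single index $j$, I would look at the partial sums $g_N:=\sum_{|k|\le N}\sum_i\lz_{k,i}A_{k,i}=f\chi_{\Omega_N}$, where $\Omega_N:=\widehat{O_{-N}}\setminus\widehat{O_{N+1}}$ increases with $N$ and $f=f\chi_{\cup_N\Omega_N}$ almost everywhere on $\cx_+$ (the latter being exactly why \eqref{3.1} holds pointwise). Then $|g_N|\le|f|$ on $\cx_+$ and $g_N\to f$ almost everywhere there, so $\ca(f-g_N)\le\ca(f)$; and since $\ca(f)$ is finite almost everywhere, dominated convergence applied to the $(y,t)$-integral defining $\ca$, with dominating function $|f|^2\chi_{\bgz(x)}$, gives $\ca(f-g_N)(x)\to0$ for $\mu$-almost every $x\in\cx$.

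Both convergences then follow by dominated convergence at the level of $\ca(f-g_N)$. Since $f\in T^p_2(\cx_+)$ we have $\ca(f)\in L^p(\cx)$, and from $\ca(f-g_N)\le\ca(f)$ with $\ca(f-g_N)\to0$ a.e.\ Lebesgue's dominated convergence theorem in $L^p(\cx)$ gives $\|f-g_N\|_{T^p_2(\cx_+)}=\|\ca(f-g_N)\|_{L^p(\cx)}\to0$. For $T_\fai(\cx_+)$, recall $\ca(f)\in L^\fai(\cx)$; by Lemma \ref{l2.1}(ii) we may assume $\fai(x,\cdot)$ is continuous, and since $\fai$ is of uniformly upper type $1$, comparing $\ca(f)/\uc$ with $\ca(f)/\|\ca(f)\|_{L^\fai(\cx)}$ and using Lemma \ref{l2.1}(iii) shows $\int_\cx\fai(x,\ca(f)(x)/\uc)\,d\mu(x)<\fz$ for every $\uc\in(0,\fz)$. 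Applying Lebesgue's theorem to $\fai(x,\ca(f-g_N)(x)/\uc)$, which is dominated by $\fai(x,\ca(f)(x)/\uc)$ and tends to $0$ a.e., we obtain $\int_\cx\fai(x,\ca(f-g_N)(x)/\uc)\,d\mu(x)\to0$ for each fixed $\uc>0$; by the definition of the Luxembourg quasi-norm this yields $\|f-g_N\|_{T_\fai(\cx_+)}=\|\ca(f-g_N)\|_{L^\fai(\cx)}\to0$.

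The two applications of dominated convergence are routine, so I expect the only real point to pin down is the structural claim about the partial sums, namely $|g_N|\le|f|$ on $\cx_+$ and $g_N\to f$ almost everywhere. This is a statement about the construction underlying Theorem \ref{t3.1} (built as in \cite[Theorem 3.1]{yys4}): it rests on the bounded overlap of the Whitney covering of each $O_k$ together with the covering property that the enlarged tents $\{\widehat{B_{k,i}}\}_i$ exhaust $\widehat{O_k}\setminus\widehat{O_{k+1}}$, so that after disjointification each $\lz_jA_j$ is a restriction of $f$. I would simply extract these facts from the proof of Theorem \ref{t3.1}.
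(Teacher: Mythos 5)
Your proposal is correct and is essentially the proof the paper has in mind (the paper omits it, deferring to \cite[Corollary 3.5]{yys4}, whose argument is exactly this one): the partial sums of the tent-space atomic decomposition are restrictions of $f$ to an increasing exhaustion of its support, so $\ca(f-g_N)\le\ca(f)$ and $\ca(f-g_N)\to0$ $\mu$-a.e., and dominated convergence applied to $\ca(f-g_N)$ in $L^p(\cx)$ and to $\int_{\cx}\fai(x,\ca(f-g_N)(x)/\uc)\,d\mu(x)$ for each fixed $\uc\in(0,\fz)$ yields convergence in both quasi-norms. I see no gaps.
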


The proof of Corollary \ref{c3.1} is similar to that of \cite[Corollary 3.5]{yys4}
 and hence we omit the details here.

In what follows, let $T^b_{\fai}(\cx_+)$ and $T^{p,\,b}_2(\cx_+)$ with
$p\in(0,\fz)$ denote, respectively, the \emph{set of all functions
in $T_\fai(\cx_+)$ and $T^p_2(\cx_+)$ with bounded support}. Here
and in what follows, a function $f$ on $\cx_+$ is said to have
\emph{bounded support} means that there exist a ball $B\subset\cx$ and
$0<c_1<c_2<\fz$ such that $\supp f\subset B\times(c_1,c_2)$.

\begin{proposition}\label{p3.1}
Let $\fai$ be as in Definition \ref{d2.2}. Then $T^b_{\fai}(\cx_+)
\subset T^{2,\,b}_2(\cx_+)$ as sets.
\end{proposition}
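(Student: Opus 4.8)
The plan is to take any $f\in T^b_\fai(\cx_+)$, say with $\supp f\subset B_0\times(c_1,c_2)$ for some ball $B_0\subset\cx$ and $0<c_1<c_2<\fz$, and to show that $\ca(f)\in L^2(\cx)$ with its support contained in a fixed dilate of $B_0$; this is exactly what membership in $T^{2,\,b}_2(\cx_+)$ requires. First I would locate the support of $\ca(f)$: if $x\in\cx$ and $\bgz(x)\cap\bigl(B_0\times(c_1,c_2)\bigr)=\emptyset$ then $\ca(f)(x)=0$, and since any $(y,t)$ with $t\in(c_1,c_2)$ and $d(x,y)<t$ forces $d(x,B_0)<c_2$, we get $\supp\ca(f)\subset \lambda B_0$ for $\lambda:=1+c_2/r_{B_0}$, a ball of finite measure by \eqref{2.1}.

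Next I would bound $\|\ca(f)\|_{L^2(\cx)}$. The natural route is the elementary pointwise/integral identity for the $A$-functional: by Fubini's theorem,
\begin{equation*}
\int_\cx [\ca(f)(x)]^2\,d\mu(x)=\int_\cx\int_{\bgz(x)}|f(y,t)|^2\frac{d\mu(y)}{V(x,t)}\frac{dt}{t}\,d\mu(x)
=\int_{\cx_+}|f(y,t)|^2\lf(\int_{d(x,y)<t}\frac{d\mu(x)}{V(x,t)}\r)\frac{d\mu(y)\,dt}{t},
\end{equation*}
and the inner integral is bounded by a constant (this is the standard fact $\int_{d(x,y)<t}\frac{1}{V(x,t)}\,d\mu(x)\ls 1$, already invoked in the proof of Theorem \ref{thm on S_L}, which follows from \eqref{2.3} since $V(y,t)\ls V(x,t)$ when $d(x,y)<t$). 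Hence $\|\ca(f)\|_{L^2(\cx)}^2\ls \int_{\cx_+}|f(y,t)|^2\,\frac{d\mu(y)\,dt}{t}$. It remains to see this last quantity is finite: since $f$ is supported in $B_0\times(c_1,c_2)$, the $dt/t$ integral is over a compact subinterval of $(0,\fz)$, so it suffices to know $f\in L^2(B_0\times(c_1,c_2))$ with respect to $d\mu\,dt$. This is where I would use that $f\in T_\fai(\cx_+)$ rather than merely measurable: by definition $\ca(f)\in L^\fai(\cx)$, and combining this with the support restriction of $\ca(f)$ to the finite-measure ball $\lambda B_0$ — via the properties of the Musielak--Orlicz norm recorded in Lemma \ref{l2.1} (in particular the uniform lower type $p_2$ of $\fai$, which gives $\|\cdot\|_{L^1(E)}\ls \|\cdot\|_{L^\fai(E)}$ on a set $E$ of finite measure up to the constant $\fai(E,1)$-type factors) — one deduces $\ca(f)\in L^{p_2}(\lambda B_0)$, hence, after another elementary manipulation of the cone integral restricted to the compact $t$-range, that $f$ is locally square integrable there. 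Actually, the cleaner version is: on $B_0\times(c_1,c_2)$ the weight $V(x,t)$ is comparable, uniformly, to $\mu(B_0)$ for $x$ in a bounded region, so $\ca(f)(x)^2 \gtrsim \mu(B_0)^{-1}\int_{c_1}^{c_2}\int_{B(x,t)\cap B_0}|f(y,t)|^2\,d\mu(y)\,dt/t$ for suitable $x$, and integrating in $x$ over $\lambda B_0$ recovers $\int_{B_0\times(c_1,c_2)}|f|^2$ up to constants.

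The main obstacle I anticipate is precisely this last point — extracting $f\in L^2_{\loc}$ on its support from the a priori hypothesis $\ca(f)\in L^\fai$. One must be careful that $\fai$ is only a growth function (uniformly lower type $p_2\in(0,1]$, $\fai(\cdot,t)\in\aa_\fz$), so $L^\fai$ is smaller than no $L^p$ in general; the argument goes through because we have localized to a ball of finite measure, where $L^\fai(\lambda B_0)\hookrightarrow L^{p_2}(\lambda B_0)$ with a constant controlled by $\fai(\lambda B_0,1)$ and its type constants, all finite. Once $f\in L^2(B_0\times(c_1,c_2),\,d\mu\,dt)$ is established, the Fubini computation above closes the argument, and the bounded-support property of $\ca(f)$ has already been checked, so $f\in T^{2,\,b}_2(\cx_+)$.
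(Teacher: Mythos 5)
Your localization of $\supp\ca(f)$ to a dilate $\lambda B_0$ and the Fubini reduction $\|\ca(f)\|_{L^2(\cx)}^2\ls\int_{\cx_+}|f(y,t)|^2\,\frac{d\mu(y)\,dt}{t}$ are both correct, so the proposition does reduce to showing $f\in L^2(B_0\times(c_1,c_2),\,d\mu\,dt/t)$. The gap is in that last step, and neither of the two routes you sketch closes it. The embedding route at best yields $\ca(f)\in L^{p_2}(\lambda B_0)$ with $p_2\in(0,1]$ (and even this needs care, since $L^\fai$ is a weighted-type space and $\fai(\cdot,1)$ is merely an $\aa_\fz$ weight), and an $L^{p_2}$ bound with $p_2\le1<2$ cannot by itself produce square-integrability of $f$; the ``elementary manipulation of the cone integral'' that is supposed to convert it into $f\in L^2$ is precisely the missing argument. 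Your ``cleaner version'' is circular: the inequality $\int_{\lambda B_0}[\ca(f)(x)]^2\,d\mu(x)\gs\int_{B_0\times(c_1,c_2)}|f(y,t)|^2\,\frac{d\mu(y)\,dt}{t}$ is true, but invoking it requires knowing that its left-hand side is finite, which is exactly the conclusion being proved. Nor can you get away with a single good vertex: no one cone $\bgz(x)$ contains $B_0\times(c_1,c_2)$ unless $c_1$ exceeds roughly the radius of $B_0$.

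The standard way to close the gap — this is the content of the cited \cite[Proposition 3.5]{hyy}, and it is where the hypothesis $\ca(f)\in L^\fai(\cx)$ together with the lower type property of $\fai$ actually gets used — is a ``good set'' argument exploiting that $t\ge c_1>0$ on $\supp f$. Since $\fai(x,\cdot)$ is an Orlicz function, $\int_\cx\fai(x,\ca(f)(x)/\lz)\,d\mu(x)\le1$ for some $\lz\in(0,\fz)$ forces $\ca(f)(x)<\fz$ almost everywhere; as $\{\ca(f)>\lz_0\}\subset\lambda B_0$ and $\mu(\lambda B_0)<\fz$, we get $\mu(\{\ca(f)>\lz_0\})\to0$ as $\lz_0\to\fz$. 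On the other hand, for every $(y,t)\in B_0\times(c_1,c_2)$ the ball $B(y,t)$ has radius at least $c_1$ and center in $B_0$, so \eqref{2.3} gives $V(y,t)\ge c_0>0$ uniformly. Hence one may fix $\lz_0$ so large that $F:=\{x\in\cx:\ \ca(f)(x)\le\lz_0\}$ satisfies $\mu(F\cap B(y,t))\ge\frac12V(y,t)$ for all such $(y,t)$. Then Fubini over $F$ yields
\begin{equation*}
\lz_0^2\,\mu(\lambda B_0)\ge\int_{F}[\ca(f)(x)]^2\,d\mu(x)
=\int_{\cx_+}|f(y,t)|^2\int_{F\cap B(y,t)}\frac{d\mu(x)}{V(x,t)}\,\frac{d\mu(y)\,dt}{t}
\gs\int_{\cx_+}|f(y,t)|^2\,\frac{d\mu(y)\,dt}{t},
\end{equation*}
where the last step uses \eqref{2.3} to compare $V(x,t)$ with $V(y,t)$ for $x\in B(y,t)$ and the lower bound on $\mu(F\cap B(y,t))$. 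This supplies the finiteness your argument needs; combined with your support localization and your upper Fubini bound, it completes the proof.
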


The proof of Proposition \ref{p3.1} is an application
of the uniformly lower type $p_2$ property of $\fai$ for some
$p_2\in(0,1]$, which is similar to that of
\cite[Proposition 3.5]{hyy}. We omit the details.

\section{The Musielak-Orlicz-Hardy space $H_{\fai,\,L}(\cx)$ and its
molecular characterization\label{ss4}}

\hskip\parindent In this section, we first introduce the Musielak-Orlicz-Hardy
space $H_{\fai,\,L}(\cx)$ associated with the operator $L$ via the Lusin-area function.
Then we establish an equivalent characterization of $H_{\fai,\,L}(\cx)$
in terms of the molecule. We begin with some notions and notations.

Let $L$ satisfy Assumptions {\rm (A)} and {\rm (B)}, and $m\in\nn$ be as in
\eqref{2.10}. For all $f\in L^2(\cx)$, the Lusin-area
function $S_L$ is defined as in \eqref{2.23}.

By Theorem \ref{thm on S_L},
we know that, for any $p\in(p_L,q_L)$, where $p_L$ and $q_L$ are as in
Assumption {\rm (B)}, there exists a positive constant $C(p)$, depending on $p$,
such that, for all $f\in L^p(\cx)$,
\begin{equation}\label{4.1}
\|S_L(f)\|_{L^p(\cx)}\le C(p)\|f\|_{L^p(\cx)}.
\end{equation}

Now we introduce the Musielak-Orlicz-Hardy $H_{\fai,\,L}(\cx)$ via the Lusin-area function
$S_L$.

\begin{definition}\label{d4.1}
Let $\fai$ be as in Definition \ref{d2.2} and $L$ satisfy Assumptions {\rm (A)} and {\rm (B)}.
Assume that $p_L$ and $q_L$ are as in Assumption {\rm (B)}.
A function $f\in L^p(\cx)$ with $p\in(p_L,q_L)$ is said to be in
$\widetilde{H}_{\fai,\,L,\,p}(\cx)$ if $S_L(f)\in L^\fai(\cx)$ and, moreover, define
$$\|f\|_{H_{\fai,\,L,\,p}(\cx)}:=\|S_L f\|_{L^\fai(\cx)}:=\inf\lf\{\lz\in(0,\fz):\ \int_{\cx}
\fai\lf(x,\frac{S_L(f)(x)}{\lz}\r)\,d\mu(x)\le1\r\}.
$$
The \emph{Musielak-Orlicz-Hardy space $H_{\fai,\,L,\,p}(\cx)$} is defined to
be the completion of $\widetilde{H}_{\fai,\,L,\,p}(\cx)$ with respect to the quasi-norm
$\|\cdot\|_{H_{\fai,\,L,\,p}(\cx)}$.
\end{definition}

In what follows, for the simplicity of the notation,
we \emph{write} $H_{\fai,\,L}(\cx):=H_{\fai,\,L,\,2}(\cx)$.

\begin{remark}\label{r4.1}
From the Aoki-Rolewicz theorem in \cite{ao42,ro57}, it follows
that, there exist a quasi-morn $\|\!|\cdot\|\!|$ on
$\wz{H}_{\fai,\,L,\,p}(\cx)$ and $\gz\in(0,1]$ such that, for all
$f\in\wz{H}_{\fai,\,L,\,p}(\cx)$,
$\|\!|f\|\!|\sim\|f\|_{H_{\fai,\,L,\,p}(\cx)}$ and, for any sequence
$\{f_j\}_j\subset\wz{H}_{\fai,\,L,\,p}(\cx)$,
$$\lf\|\!\lf|\sum_j f_j\r\|\!\r|^{\gz}\le\sum_j\|\!|f_j\|\!|^{\gz}.
$$
By the theorem of completion of Yosida \cite[p.\,56]{yo95}, it
follows that $(\wz{H}_{\fai,\,L,\,p}(\cx),\,\|\!|\cdot\|\!|)$ has a
completion space $(H_{\fai,\,L,\,p}(\cx),\|\!|\cdot\|\!|)$; namely,
for any $f\in H_{\fai,\,L,\,p}(\cx)$, there exists a
Cauchy sequence $\{f_k\}_{k=1}^{\fz} \subset
\wz{H}_{\fai,\,L,\,p}(\cx)$ such that
$\lim_{k\to\fz}\|\!|f_k -f\|\!|=0$. Moreover, if
$\{f_k\}_{k=1}^{\fz}$ is a Cauchy sequence in
$\wz{H}_{\fai,\,L,\,p}(\cx)$, then there exists a
unique $f\in H_{\fai,\,L,\,p}(\cx)$ such that
$\lim_{k\to\fz}\|\!|f_k -f\|\!|=0$. Furthermore, by the fact that
$\|\!|f\|\!|\sim\|f\|_{H_{\fai,\,L,\,p}(\cx)}$ for all
$f\in\wz{H}_{\fai,\,L,\,p}(\cx)$, we know that the spaces
$(H_{\fai,\,L,\,p}(\cx), \|\cdot\|_{H_{\fai,\,L,\,p}(\cx)})$ and
$(H_{\fai,\,L,\,p}(\cx), \|\!|\cdot\|\!|)$ coincide with equivalent
quasi-norms.
\end{remark}

To introduce the molecular Musielak-Orlicz-Hardy space, we first
introduce the notion of the molecule associated with the growth function $\fai$.

\begin{definition}\label{d4.2}
Let $\fai$ be as in Definition \ref{d2.2}, $L$ satisfy Assumptions {\rm (A)} and
{\rm (B)}, $p_L$ and $q_L$ be as in Assumption {\rm (B)}.
Let $q\in(p_L,q_L)$, $M\in\nn$ and $\epz\in(0,\fz)$.
A function $\az\in L^q(\cx)$ is called a \emph{$(\fai,\,q,\,M,\,\epz)_L$-molecule}
associated with the ball $B\subset\cx$ if,
for each $k\in\{0,\,\ldots,\,M\}$ and $j\in\zz_+$, it holds that
$$\lf\|\lf(r_B^{-2m} L^{-1}\r)^k\az\r\|_{L^q(S_j(B))}\le
2^{-j\epz}[\mu(2^jB)]^{1/q}\|\chi_{B}\|_{L^\fai(\cx)}^{-1}.$$

Moreover, if $\az$ is a $(\fai,\,q,\,M,\,\epz)_L$-molecule
for all $q\in(p_L,\,q_L)$, then $\az$ is called a \emph{$(\fai,\,M,\,\epz)_L$-molecule}.
\end{definition}

\begin{definition}\label{d4.3}
Let $\fai$ be as in Definition \ref{d2.2}, $L$ satisfy Assumptions {\rm (A)}
and {\rm (B)}, $p_L$
and $q_L$ be as in Assumption {\rm (B)}.
Assume that $q\in(p_L,q_L)$, $M\in\nn$ and $\epz\in(0,\fz)$. The equality
$f=\sum_j\lz_j\az_j$ is called a \emph{molecular $(\fai,\,r,\,q,\,M,\,\epz)$-representation}
of $f$ for some $r\in(p_L,q_L)$, if each $\az_j$ is a $(\fai,\,q,\,M,\,\epz)_L$-molecule
associated to the ball $B_j\subset\cx$, the summation converges in $L^r(\cx)$
and $\{\lz_j\}_j$ satisfies that
$$\sum_j\fai\lf(B_j,|\lz_j|\|\chi_{B_j}\|^{-1}_{L^\fai(\cx)}\r)<\fz.$$
Let
\begin{eqnarray*}
\wz{H}^{M,\,q,\,\epz}_{\fai,\,L}(\cx):=&&\lf\{f:\ f \ \text{has a molecular}\r.\\
&&\hs\lf.(\fai,\,r,\,q,\,M,\,\epz)-\text{representation for some}\ r\in(p_L,q_L)\r\}
\end{eqnarray*}
with the \emph{quasi-norm} $\|\cdot\|_{H^{M,\,q,\,\epz}_{\fai,\,L}(\cx)}$ given by
setting, for all $f\in\wz{H}^{M,\,q,\,\epz}_{\fai,\,L}(\cx)$,
\begin{eqnarray*}
&&\|f\|_{H^{M,\,q,\,\epz}_{\fai,\,L}(\cx)}\\
&&\hs:=\inf\lf\{\blz(\{\lz_j\az_j\}_j):\
f=\sum_j\lz_j\az_j \ \text{is a molecular}
\ (\fai,\,r,\,q,\,M,\,\epz)\text{-representation}\r\},
\end{eqnarray*}
where $\blz(\{\lz_j\az_j\}_j)$ is as in \eqref{3.2}.

The \emph{molecular Musielak-Orlicz-Hardy space} $H^{M,\,q,\,\epz}_{\fai,\,L}(\cx)$ is then
defined as the completion of $\wz{H}^{M,\,q,\,\epz}_{\fai,\,L}(\cx)$ with respect to the
quasi-norm $\|\cdot\|_{H^{M,\,q,\,\epz}_{\fai,\,L}(\cx)}$.
\end{definition}

In what follows, let $L^2_b(\cx_+)$ denote the
\emph{set of all functions $f\in L^2(\cx_+)$ with
bounded support}, $M\in\nn$ and
$M>\frac{n}{2m}[\frac{q(\fai)}{i(\fai)}+\frac{\tz_1}{n}-\frac{2}{nq_L}]$, where
$k$, $q(\fai)$, $i(\fai)$, $\tz_1$ and $q_L$ are respectively as in Definition \ref{d2.3},
\eqref{2.7}, \eqref{2.5}, \eqref{2.11} and Assumption {\rm (B)}.
For all $f\in L^2_b(\cx_+)$ and $x\in\cx$, define
\begin{equation}\label{4.2}
\pi_{L,\,M}(f)(x):=C_{(m,\,M)}\int_0^{\fz}(t^{2m}L)^{M+1}e^{-t^{2m}L}
(f(\cdot,t))(x)\,\frac{dt}{t},
\end{equation}
where $C_{(m,\,M)}$ is a positive constant such that
\begin{equation}\label{4.3}
C_{(m,\,M)}\int_0^\fz t^{2m(M+2)}e^{-2t^{2m}}\,\frac{dt}{t}=1.
\end{equation}
Here $m$ is as in Definition \ref{d2.3}.

For the operator $\pi_{L,\,M}$, we have the following boundedness.

\begin{proposition}\label{p4.1}
Assume that $L$ satisfies Assumptions {\rm (A)} and {\rm (B)}, and $\pi_{L,\,M}$
is as in \eqref{4.2}. Let $\fai$ be as in Definition \ref{d2.2} with
$\fai\in\rh_{(q_L/I(\fai))'}(\cx)$, where $q_L$
and $I(\fai)$ are, respectively, as in Assumption {\rm (B)} and \eqref{2.4}.
Then

{\rm(i)} the operator $\pi_{L,\,M}$, initially defined on the
space $T^{p,\,b}_2(\cx_+)$ with $p\in(p_L,q_L)$, extends to a bounded linear
operator from $T^p_2(\cx_+)$ to $L^p(\cx)$;

{\rm(ii)} the operator $\pi_{L,\,M}$, initially defined on the space
$T^b_{\fai}(\cx_+)$, extends to a bounded linear operator
from $T_{\fai}(\cx_+)$ to $H_{\fai,\,L}(\cx)$.
\end{proposition}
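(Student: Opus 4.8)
The plan is to treat the two assertions in parallel, since (ii) is essentially the ``weighted'' analogue of (i), obtained by replacing the $L^p$-scale estimate by an estimate on tent-space atoms together with the atomic decomposition from Theorem \ref{t3.1}. For part (i), I would first verify that $\pi_{L,\,M}$ is well defined on $T^{p,\,b}_2(\cx_+)$: for $f\in T^{p,\,b}_2(\cx_+)$ the integral in \eqref{4.2} is absolutely convergent in $L^2(\cx)$ because $f$ has bounded support and $(t^{2m}L)^{M+1}e^{-t^{2m}L}$ is uniformly bounded on $L^2(\cx)$ (Assumption (A) and the $H_\fz$-calculus). The core of (i) is the duality estimate: for $g\in L^{p'}(\cx)$ with $\|g\|_{L^{p'}(\cx)}\le 1$, write, by Fubini and the reproducing-type identity,
\[
\lf|\int_\cx \pi_{L,\,M}(f)(x)\,g(x)\,d\mu(x)\r|
=C_{(m,\,M)}\lf|\int_0^\fz\!\!\int_\cx f(x,t)\,\overline{(t^{2m}L^*)^{M+1}e^{-t^{2m}L^*}g(x)}\,\frac{d\mu(x)\,dt}{t}\r|,
\]
then apply the Cauchy--Schwarz inequality in $(x,t)$ against the cone measure, bounding the right-hand side by $\|\ca(f)\|_{L^p(\cx)}$ times $\|G_{L^*,\,M+1}(g)\|_{L^{p'}(\cx)}$ via the standard tent-space pairing $\int_{\cx_+}|F||H|\,\frac{d\mu\,dt}{t}\ls\int_\cx \ca(F)\,\cm\text{-type}(H)$; since $p\in(p_L,q_L)$ we have $p'\in(q_L',p_L')\subset((q_L)',\ldots)$, and by Remark \ref{r2.4}(b) the adjoint $L^*$ satisfies the reinforced $(q_L',p_L',m)$ off-diagonal estimates, so Theorem \ref{thm on G{L,m}} gives $\|G_{L^*,\,M+1}(g)\|_{L^{p'}(\cx)}\ls\|g\|_{L^{p'}(\cx)}$. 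This yields $\|\pi_{L,\,M}(f)\|_{L^p(\cx)}\ls\|f\|_{T^p_2(\cx_+)}$, and density of $T^{p,\,b}_2(\cx_+)$ in $T^p_2(\cx_+)$ extends $\pi_{L,\,M}$ as claimed.

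For part (ii), I would start from Theorem \ref{t3.1}: any $f\in T^b_\fai(\cx_+)$ has an atomic decomposition $f=\sum_j\lz_j A_j$ with $(T_\fai,\fz)$-atoms $A_j$ supported in $\widehat{B_j}$ and $\blz(\{\lz_j A_j\}_j)\ls\|f\|_{T_\fai(\cx_+)}$, and by Corollary \ref{c3.1} the convergence also holds in $T^2_2(\cx_+)$, hence (using Proposition \ref{p3.1} and part (i) with $p=2$) $\pi_{L,\,M}(f)=\sum_j\lz_j\pi_{L,\,M}(A_j)$ in $L^2(\cx)$. The key computation is that $\pi_{L,\,M}(A_j)$ is, up to a harmless constant, a $(\fai,\,q,\,M,\,\epz)_L$-molecule associated with $B_j$ for every $q\in(p_L,q_L)$ and some $\epz>0$; that is, $\pi_{L,\,M}$ maps $(T_\fai,\fz)$-atoms to $(\fai,M,\epz)_L$-molecules. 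To see this one estimates, for $k\in\{0,\ldots,M\}$ and $i\in\zz_+$,
\[
\lf\|(r_{B_j}^{-2m}L^{-1})^k\pi_{L,\,M}(A_j)\r\|_{L^q(S_i(B_j))}
=C_{(m,\,M)}\lf\|\int_0^{r_{B_j}}(r_{B_j}^{-2m}t^{2m})^k(t^{2m}L)^{M+1-k}e^{-t^{2m}L}(A_j(\cdot,t))\,\frac{dt}{t}\r\|_{L^q(S_i(B_j))},
\]
where the $t$-integral only runs to $r_{B_j}$ since $\supp A_j\subset\widehat{B_j}$ forces $A_j(\cdot,t)$ to be supported in $B_j$ and to vanish for $t>r_{B_j}$; then Minkowski's integral inequality, the off-diagonal estimates of order $m$ from Assumption (B) applied to $(t^{2m}L)^{M+1-k}e^{-t^{2m}L}\in\mathcal{O}_m(L^q-L^q)$ (giving the Gaussian factor $e^{-c(2^i r_{B_j}/t)^{2m/(2m-1)}}$ for the annuli $S_i(B_j)$), and the $L^2$-bound $\|A_j\|_{T^2_2(\cx_+)}\le[\mu(B_j)]^{1/2}\|\chi_{B_j}\|_{L^\fai(\cx)}^{-1}$ (upgraded to an $L^q$-in-$x$, $L^2$-in-$t$ control via Hölder and the doubling property) produce the required decay $2^{-i\epz}[\mu(2^iB_j)]^{1/q}\|\chi_{B_j}\|_{L^\fai(\cx)}^{-1}$. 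The choice $M>\frac{n}{2m}[\frac{q(\fai)}{i(\fai)}+\frac{\tz_1}{n}-\frac{2}{nq_L}]$ is exactly what is needed to make $\epz>0$ after absorbing the growth $2^{i\tz_1}$ and the weight-comparison factors coming from Lemma \ref{l2.4}(vii). Finally, $\blz(\{\lz_j\pi_{L,\,M}(A_j)\}_j)\ls\blz(\{\lz_j A_j\}_j)\ls\|f\|_{T_\fai(\cx_+)}$ gives $\|\pi_{L,\,M}(f)\|_{H_{\fai,\,L}(\cx)}\ls\|f\|_{T_\fai(\cx_+)}$, and a density argument extends $\pi_{L,\,M}$ to all of $T_\fai(\cx_+)$.

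The main obstacle, I expect, is the reverse-Hölder hypothesis $\fai\in\rh_{(q_L/I(\fai))'}(\cx)$ and its role in the molecule-to-Hardy-space step --- specifically, controlling $\int_\cx\fai(x,S_L(\pi_{L,\,M}(f))(x)/\lz)\,d\mu(x)$ by passing from the molecular estimates (which live on the $L^q$-scale, $q$ close to $q_L$) back to the Musielak--Orlicz norm. This requires a Hölder-type inequality in the $x$-variable that trades an $L^q$-average of $S_L(\az_j)$ against the weight $\fai(\cdot,t)$, and it is precisely here that one needs $\fai(\cdot,t)\in\rh_{(q_L/I(\fai))'}(\cx)$ so that $\fai(\cdot,t)^{1-(q_L/I(\fai))'}$-type averages are summable over the annuli $S_i(B_j)$; the uniformly-lower-type-$i(\fai)$ and uniformly-upper-type-$I(\fai)$ properties of $\fai$ must be used to linearize $\fai$ before applying this. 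Matching the index constraint on $M$ with the exponents generated by \eqref{2.11} (the $\tz_1,\tz_2$ parameters) and by Lemma \ref{l2.4}(vii)--(viii) is the delicate bookkeeping that makes the argument work; everything else is the by-now-standard tent-space/molecule machinery, parallel to \cite[Proposition 4.1 and its proof]{yys4} and \cite{bckyy}.
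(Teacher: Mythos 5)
Your overall architecture coincides with the paper's: reduce (ii) to the atomic decomposition of $T^b_\fai(\cx_+)$, show that $\pi_{L,\,M}$ sends $(T_\fai,\fz)$-atoms to $(\fai,\,q,\,M,\,\epz)_L$-molecules, and then control $\int_\cx\fai(x,S_L(\lz\az)(x))\,d\mu(x)$ by $\fai(B,|\lz|\|\chi_B\|_{L^\fai(\cx)}^{-1})$ for a single molecule. Your duality argument for (i) is also the intended one (the paper simply refers to \cite[Proposition 4.1(i)]{jy10}); note only that the cone pairing produces the conical square function $S_{L^\ast,\,M+1}(g)$ rather than the vertical one $G_{L^\ast,\,M+1}(g)$, though both are bounded on $L^{p'}(\cx)$ by Theorems \ref{thm on S_L} and \ref{thm on G{L,m}} applied to $L^\ast$.

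Where you genuinely diverge is in how the molecule bounds $\|(r_B^{-2m}L^{-1})^k\pi_{L,\,M}(A)\|_{L^q(S_j(B))}$ are obtained. The paper argues by duality against $h\in L^{q'}(S_j(B))$, which lets it pay only with $\|\ca(A)\|_{L^q(\cx)}$ and the conical square function of $h$ on the tent $\widehat{B}$. Your direct route via Minkowski's integral inequality needs, after applying the off-diagonal estimates, a bound of the form $\bigl(\int_0^{r_B}\|A(\cdot,t)\|_{L^q(B)}^2\,\frac{dt}{t}\bigr)^{1/2}\ls[\mu(B)]^{1/q}\|\chi_B\|_{L^\fai(\cx)}^{-1}$; the ``upgrade'' of the tent-atom control to an $L^q$-in-$x$, $L^2$-in-$t$ bound fails for $q>2$ (Minkowski gives $\|\cdot\|_{L^q_xL^2_t}\le\|\cdot\|_{L^2_tL^q_x}$, the wrong direction, and the vertical square function is not dominated by $\ca$ in $L^q$ when $q>2$), and $q\in[2,q_L)$ is exactly the range one must treat. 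The fix is to use the smoothing estimates $e^{-t^{2m}L}\in\mathcal O_m(L^2-L^q)$ so that only $\|A(\cdot,t)\|_{L^2(B)}$ enters, followed by Cauchy--Schwarz in $t$ and the genuine $L^2(d\mu\,dt/t)$ bound on $\widehat B$ coming from $\|A\|_{T^2_2(\cx_+)}$; with that correction your route is equivalent to the paper's. Finally, you correctly identify the estimate \eqref{4.8} as the crux and correctly describe the roles of $\rh_{(q_L/I(\fai))'}(\cx)$, the uniform upper/lower types and the size of $M$, but you do not carry it out: the splitting into $t\le r_B$ and $t>r_B$ (writing $t^{2m}Le^{-t^{2m}L}(r_B^{2m}L)^M(r_B^{2m}L)^{-M}\az$ in the second regime so as to exploit the cancellation encoded in $(r_B^{-2m}L^{-1})^k\az$), the double sum over annuli $S_i(B_j)$ with $B_j=2^jB$, and the matching of the exponents $s$, $\tz_1$, $\tz_2$, $q_0/p_2$ constitute the bulk of the paper's proof and are still missing from your argument.
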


\begin{proof}
The proof of (i) is similar to that of \cite[Proposition 4.1(i)]{jy10}.
We omit the details. Now we prove (ii). Let
$f\in T^b_{\fai}(\cx_+)$. Then by Proposition \ref{p3.1},
Corollary \ref{c3.1} and (i), we know that
$$\pi_{L,\,M} f=\sum_j\lz_j\pi_{L,\,M}A_j
=:\sum_j\lz_j\az_j$$
in $L^2(\cx)$, where $\{\lz_j\}_j$ and $\{A_j\}_j$ satisfy
\eqref{3.1} and \eqref{3.2}. Recall that for each $j$,
$\supp A_j\subset\widehat{B_j}$ and $B_j$ is a ball of $\cx$. Moreover,
from the fact that $S_L$ is bounded on $L^2(\cx)$,
we deduce that for almost every  $x\in\cx$,
$S_L(\pi_{L,\,M}(f))(x)\le\sum_j|\lz_j|S_L(\az_j)(x)$. This,
combined with Lemma \ref{l2.1}(i), yields
\begin{equation*}
\int_{\cx}\fai\lf(x,S_L(\pi_{L,\,M}(f))(x)\r)\,d\mu(x)\ls
\sum_j\int_{\cx}\fai\lf(x,|\lz_j|S_L(\az_j)(x)\r)\,d\mu(x).
\end{equation*}

We now claim that for some $\epz\in(nq(\fai)/i(\fai),\fz)$,
$\az_j= \pi_{L,\,M}(A_j)$ is a $(\fai,\,M,\,\epz)_L$-molecule, up to a
harmless constant, associated to the ball $B_j$ for each $j$.
Indeed, assume that $A$ is a $(T_\fai,\fz)$-atom associated to
the ball $B:=B(x_B, r_B)$ and $q\in(p_L,q_L)$. Since for $q\in(p_L,2)$,
each $(\fai,\,2,\,M,\,\epz)_L$-molecule is also a $(\fai,\,q,\,M,\,\epz)_L$-molecule,
to prove the above claim, it suffices to show that $\az:=\pi_{L,\,M}(A)$
is a $(\fai,\,q,\,M,\,\epz)_L$-molecule, up to a harmless constant,
adapted to $B$ with $q\in[2,q_L)$.

Let $q\in[2,q_L)$. When $j\in\{0,\,\ldots,\,4\}$,
by (i), we know that
\begin{eqnarray}\label{4.4}
\|\az\|_{L^q(S_j(B))}&&=\|\pi_{L,\,M}A\|_{L^q(S_j(B))}
\ls\|A\|_{T^q_2(\cx_+)}\\ \nonumber
&&\ls[\mu(B)]^{1/q}
\|\chi_B\|^{-1}_{L^\fai(\cx)}\sim2^{-j\epz}[\mu(2^jB)]^{1/q}
\|\chi_B\|^{-1}_{L^\fai(\cx)}.
\end{eqnarray}

When $j\in\nn$ with $j\ge5$, take $h\in L^{q'}(\cx)$ satisfying
$\|h\|_{L^{q'}(\cx)}\le1$ and $\supp h\subset S_j(B)$.
Then from H\"older's inequality and
$q'\in(q_L',2]$, we infer that
\begin{eqnarray}\label{4.5}
\qquad|\langle \pi_{L,\,M} A,h\rangle|&&\!\!\le\int_\cx
\int_0^\fz |A(x,t)(t^{2m}L^\ast)^{M+1}e^{-t^{2m}L^\ast}(h)(x)|\frac{dt}
{t}\,d\mu(x)\\ \nonumber
&&\le\|\ca(A)\|_{L^q(\cx)}\lf\|\ca\lf(\chi_{\widehat{B}}(t^{2m}L^\ast)^{M+1}
e^{-t^{2m}L^\ast}(h)\r)\r\|_{L^{q'}(\cx)}\\ \nonumber
&&\ls\|A\|_{T^q_2(\cx_+)}
[\mu(B)]^{1/q'-1/2}\\ \nonumber
&&\hs\times\lf\{\int_{\widehat{B}}\lf|(t^{2m}L^\ast)^{M+1}e^{-t^{2m}L^\ast}
(h)(x,t)\r|^2\,\frac{d\mu(x)\,dt}{t}\r\}^{1/2}.
\end{eqnarray}
Moreover, by Assumption (B), we see that
\begin{eqnarray*}
&&\int_{\widehat{B}}\lf|(t^{2m}L^\ast)^{M+1}e^{-t^{2m}L^\ast}
(h)(x,t)\r|^2\,\frac{d\mu(x)\,dt}{t}\\
&&\hs\ls\int_0^{r_B}\lf\{2^{j\tz_1}\lf[\Upsilon\lf(\frac{2^jr_B}{t}\r)\r]^{\tz_2}
[\mu(B)]^{1/2}[\mu(2^jB)]^{-1/q'}\exp\lf[-\lf(\frac{2^jr_B}
{t}\r)^{2m/(2m-1)}\r]\r\}^2\frac{dt}{t}\\
&&\hs\ls2^{2\tz_1j}\mu(B)[\mu(2^jB)]^{-2/q'}\int_0^{r_B}
\lf(\frac{2^jr_B}{t}\r)^{-2(\epz+\tz_1)}\frac{dt}{t}\ls2^{-2\epz j}
\mu(B)[\mu(2^jB)]^{-2/q'},
\end{eqnarray*}
which, together with \eqref{4.5}, implies that
\begin{eqnarray*}
|\langle \pi_{L,\,M}a,h\rangle|\ls2^{-\epz j}[\mu(B)]^{1/q}
\|\chi_B\|^{-1}_{L^\fai(\cx)}
\ls2^{-\epz j}[\mu(2^jB)]^{1/q}
\|\chi_B\|^{-1}_{L^\fai(\cx)}.
\end{eqnarray*}
From this and the choice of $h$, we deduce that, for each
$j\in\nn$ with $j\ge5$,
\begin{eqnarray}\label{4.6}
\|\az\|_{L^{q}(S_j(B))}=
 \|\pi_{L,\,M}(a)\|_{L^{q}(S_j(B))}
 \ls2^{-\epz j}[\mu(2^jB)]^{1/q}
\|\chi_B\|^{-1}_{L^\fai(\cx)}.
\end{eqnarray}

Moreover, let $k\in\{1,\,\ldots,\,M\}$. When $j\in\{1,\,\ldots,\,4\}$,
take $h\in L^{q'}(\cx)$ satisfying
$\|h\|_{L^{q'}(\cx)}\le1$ and $\supp h\subset S_j(B)$. Then it follows, from H\"older's
inequality and the $L^{q'}(\cx)$-boundedness of $S_{L^\ast,\,M+1-k}$, that
\begin{eqnarray*}
&&|\langle(r_B^{-2m}L^{-1})^k\pi_{L,\,M}(a),h\rangle|\\
&&\hs\ls\int_0^{r_B}\int_B
\lf(\frac{t}{r_B}\r)^{2km}|a(x,t)|\lf|(t^{2m}L^\ast)^{M+1-k}e^{-t^{2m}
L^\ast}(h)(x)\r|\,\frac{d\mu(x)\,dt}{t}\\
&&\hs\ls\|\ca(a)\|_{L^q(\cx)}\lf\|S_{L^\ast,\,M+1-k}(h)\r\|_{L^{q'}(\cx)}\\
&&\hs\ls\|a\|_{T^q_2(\cx_+)}\ls[\mu(B)]^{1/q}\|\chi_B\|^{-1}_{L^\fai(\cx)}\ls
2^{-j\epz}[\mu(2^jB)]^{1/q}\|\chi_B\|^{-1}_{L^\fai(\cx)},
\end{eqnarray*}
which implies that, for each $k\in\{1,\,\ldots,\,M\}$ and $j\in\{0,\,\ldots,\,4\}$,
\begin{eqnarray}\label{4.7}
\lf\|(r^{-2m}_BL^{-1})^k\az\r\|_{L^{q}(S_j(B))}
 \ls2^{-j\epz}[\mu(2^jB)]^{1/q}\|\chi_B\|^{-1}_{L^\fai(\cx)}.
\end{eqnarray}
When $j\in\nn$ with $j\ge5$, similar to the proof of \eqref{4.5}, we know that, for each
$m\in\{1,\,\ldots,\,M\}$,
$$\lf\|\lf(r^{-2m}_BL^{-1}\r)^k\az\r\|_{L^{q}(S_j(B))}\ls2^{-\epz j}[\mu(2^jB)]^{1/q}
\|\chi_B\|^{-1}_{L^\fai(\cx)},
$$
which, together with \eqref{4.4}, \eqref{4.6} and \eqref{4.7}, implies that
$\az$ is a $(\fai,\,q,\,M,\,\epz)_L$-molecule.

Let $\epz>nq(\fai)/i(\fai)$ and $M\in\nn$ with $M>\frac{n}{2m}[\frac{q(\fai)}{i(\fai)}+
\frac{\tz_1}{n}-\frac{2}{nq_L}]$. By $\fai\in\rh_{(q_L/I(\fai))'}(\cx)$,
$\epz>nq(\fai)/i(\fai)$ and $M>\frac{n}{2m}[\frac{q(\fai)}{i(\fai)}+\frac{\tz_1}{n}
-\frac{2}{nq_L}]$, we find that there exist $p_1\in[I(\fai),1]$,
$p_2\in(0,i(\fai))$, $q_0\in(q(\fai),\fz)$ and $q\in[2,q_L)$
such that $\fai$ is of uniformly upper type $p_1$ and lower type $p_2$,
$\fai\in\aa_{q_0}(\cx)$, $\fai\in\rh_{(q/p_1)'}(\cx)$, $\epz>nq_0/p_2$ and
$M>\frac{n}{2m}[\frac{q_0}{p_2}+\frac{\tz_1}{n}
-\frac{2}{nq}]$.
We now claim that, for any $\lz\in\cc$
and $(\fai,\,q,\,M,\epz)_L$-molecule $\az$ associated with the ball $B\subset\cx$,
\begin{equation}\label{4.8}
\int_{\cx}\fai(x,S_L(\lz
\az)(x))\,d\mu(x)\ls\fai\lf(B,\frac{|\lz|}{\|\chi_B\|_{L^\fai(\cx)}}\r).
\end{equation}

If \eqref{4.8} holds, from this, the facts that, for all $\lz\in(0,\fz)$,
$$S_L(\pi_{L,\,M}(f/\lz))=S_L(\pi_{L,\,M}(f))/\lz\
\text{and} \ \pi_{L,\,M}(f/\lz)=\sum_j\lz_j\az_j/\lz,$$
and $S_L(\pi_{L,\,M}(f))\le\sum_j|\lz_j|S_L(\az_j)$,
it follows that, for all $\lz\in(0,\fz)$,
$$\int_{\cx}\fai\lf(x,\frac{S_L(\pi_{L,\,M}(f))(x)}
{\lz}\r)\,d\mu(x)\ls\sum_j
\fai\lf(B_j,\frac{|\lz_j|}{\lz\|\chi_{B_j}\|_{L^\fai(\cx)}}\r),$$
which, together with \eqref{3.2}, implies that
$$\|\pi_{L,\,M}f\|_{H_{\fai,\,L,\,2}(\cx)}\ls
\blz(\{\lz_j\az_j\}_j)\ls\|f\|_{T_{\fai}(\cx_+)},$$
and hence completes the proof of (ii).

Now we prove \eqref{4.8}. By the definition of $\az$, we see that
\begin{eqnarray}\label{4.9}
\hs\hs\hs&&\int_\cx\fai(x,S_L(\lz\az)(x))\,d\mu(x)\\ \nonumber
&&\hs\ls\sum_{j=0}^\fz
\int_{\cx}\fai\lf(x,\lf\{\int_0^{r_B}\int_{B(x,t)}\lf|t^{2m}Le^{-t^{2m}L}
\lf(\lz\az\chi_{S_j(B)}\r)(y)\r|^2\frac{d\mu(y)\,dt}
{V(x,t)t}\r\}^{1/2}\r)\,dx\\ \nonumber
&&\hs\hs+\sum_{j=0}^\fz
\int_{\cx}\fai\lf(x,\lf\{\int_{r_B}^\fz\int_{B(x,t)}\lf|t^{2m}Le^{-t^{2m}L}
(r^{2m}_B L)^M\lf(\lz\chi_{S_j(B)}(r^{2m}_B L)^{-M}\az\r)(y)\r|^2\r.\r.\\ \nonumber
&&\hs\hs\lf.\lf.\times\frac{d\mu(y)\,dt}{V(x,t)t}\r\}^{1/2}\r)\,dx
=:\sum_{j=0}^\fz\mathrm{E}_j+\sum_{j=0}^\fz\mathrm{F}_j.
\end{eqnarray}
For any $j\in\zz_+$, let $B_j:=2^j B$. Then
\begin{eqnarray*}
\hs\hs\mathrm{E}_j&&=\sum_{i=0}^{\fz}\int_{S_i(B_j)}\fai\lf(x,|\lz|
\lf\{\int_0^{r_B}\int_{B(x,t)}
\lf|t^{2m}Le^{-t^{2m}L}\lf(\az\chi_{S_j(B)}\r)(y)\r|^2\r.\r.\\ \nonumber
&&\hs\times\lf.\lf.\frac{d\mu(y)\,dt}{V(x,t)t}\r\}^{1/2}\r)\,dx
=:\sum_{i=0}^{\fz}\mathrm{E}_{i,\,j}.
\end{eqnarray*}

When $i\in\{0,\,1,\,\ldots,\,4\}$, by the uniformly upper type $p_1$ and lower type
$p_2$ properties of $\fai$, we see that
\begin{eqnarray}\label{4.11}
\mathrm{E}_{i,\,j}&&\ls
\|\chi_B\|_{L^\fai(\cx)}^{p_1}\int_{S_i(B_j)}\fai\lf(x,|\lz|
\|\chi_B\|_{L^\fai(\cx)}^{-1}\r)\lf[S_L\lf(\az\chi_{S_j(B)}\r)(x)\r]^{p_1}
\,d\mu(x)\\ \nonumber &&\hs+
\|\chi_B\|_{L^\fai(\cx)}^{p_2}\int_{S_i(B_j)}\fai\lf(x,|\lz|
\|\chi_B\|_{L^\fai(\cx)}^{-1}\r)\lf[S_L\lf(\az\chi_{S_j(B)}\r)(x)\r]^{p_2}\,d\mu(x)\\
\nonumber &&=: \mathrm{G}_{i,\,j}+\mathrm{H}_{i,\,j}.
\end{eqnarray}

Now we estimate $\mathrm{G}_{i,\,j}$. From H\"older's inequality,
Theorem \ref{thm on S_L}, $\fai\in\rh_{(q/p_1)'}(\cx)$ and Lemma \ref{l2.4}(vi),
we deduce that
\begin{eqnarray}\label{4.12}
\mathrm{G}_{i,\,j}&&\ls\|\chi_B\|^{p_1}_{L^\fai(\cx)}
\lf\{\int_{U_i(B_j)}\lf[S_L\lf(\az\chi_{S_j(B)}\r)(x)\r]^q\,
d\mu(x)\r\}^{p_1/q}\\ \nonumber&&\hs\times\lf\{\int_{S_i(B_j)}\lf[\fai\lf(x,|\lz|
\|\chi_B\|_{L^\fai(\cx)}^{-1}\r)\r]^{(q/p_1)'}\,d\mu(x)\r\}^{\frac{1}{(q/p_1)'}}\\
\nonumber &&\ls\|\chi_B\|^{p_1}_{L^\fai(\cx)}
\|\az\|^{p_1}_{L^q(S_j(B))}[\mu(2^{i+j}B)]^{-p_1/q}\fai\lf(2^{i+j}B,|\lz|
\|\chi_B\|_{L^\fai(\cx)}^{-1}\r)\\ \nonumber
&&\ls2^{-jp_1[\epz-nq_0/p_1]}\fai\lf(B,|\lz|
\|\chi_B\|_{L^\fai(\cx)}^{-1}\r).
\end{eqnarray}
For $\mathrm{H}_{i,\,j}$, similarly, we have
\begin{eqnarray*}
\mathrm{H}_{i,\,j}\ls2^{-jp_2(\epz-nq_0/p_2)}\fai\lf(B,|\lz|
\|\chi_B\|_{L^\fai(\cx)}^{-1}\r),
\end{eqnarray*}
which, together with \eqref{4.11} and \eqref{4.12}, implies that, for each
$j\in\zz_+$ and $i\in\{0,\,1,\,\ldots,\,4\}$,
\begin{eqnarray}\label{4.13}
\mathrm{E}_{i,\,j}\ls 2^{-jp_2(\epz-nq_0/p_2)}\fai\lf(B,|\lz|
\|\chi_B\|_{L^\fai(\cx)}^{-1}\r).
\end{eqnarray}

For all $j\in\zz_+$ and $x\in\cx$, let
$$\mathrm{H}_j(x):=\lf\{\int_0^{r_B}\int_{B(x,t)}\lf|t^{2m}Le^{-t^{2m}L}
\lf(\az\chi_{S_j(B)}\r)(y)\r|^2\frac{d\mu(y)\,dt}{V(x,t)t}\r\}^{1/2}.
$$
Now we estimate $\int_{S_i(B_j)}[\mathrm{H}_j(x)]^q\,d\mu(x)$. For any $i,\,j\in\zz_+$,
let
$$\widetilde{S}_i(B_j):=\lf\{y\in\cx:\ 2^{i-3}2^jr_B\le d(y,r_B)\le2^{i+1}2^jr_B\r\}.$$
It is easy to see that when $i\ge5$, $d(S_j(B),S_i(B_j))\gs2^{i+j}r_B$.
By $M>\frac{n}{2m}(\frac{q_0}{p_2}+\frac{\tz_1}{n}
-\frac{2}{nq_L})$, we know that $2mMq+\tz_2 q+q/2+1>(\frac{nq_0}{p_2}+\frac{1}{2}+
\tz_1+\tz_2)q-1$. Let $s\in([\frac{nq_0}{p_2}+\frac{1}{2}+\tz_1+\tz_2]q-1,
2mMq+\tz_2 q+q/2+1)$. Then by H\"older's inequality, Fubini's theorem and Assumption
{\rm (B)}, we conclude that
\begin{eqnarray}\label{4.14}
&&\int_{S_i(B_j)}[\mathrm{H}_j(x)]^q\,d\mu(x)\\ \nonumber
&&\hs\le\int_{S_i(B_j)}\lf\{\int_0^{r_B}\int_{B(x,t)}\lf|t^{2m}Le^{-t^{2m}L}
\lf(\az\chi_{S_j(B)}\r)(y)\r|^q\frac{d\mu(y)\,dt}{V(x,t)t^{q/2}}\r\}\\ \nonumber
&&\hs\hs\times\lf\{\int_0^{r_B}\int_{B(x,t)}\frac{d\mu(y)\,dt}
{V(x,t)}\r\}^{(q-2)/2}\,d\mu(x)\\ \nonumber
&&\hs\ls r_B^{(q-2)/2}\int_0^{r_B}
\int_{\widetilde{S}_i(B_j)}\lf|t^{2m}Le^{-t^{2m}L}
\lf(\az\chi_{S_j(B)}\r)(y)\r|^q\frac{d\mu(y)\,dt}{t^{q/2}}\\ \nonumber
&&\hs\ls r_B^{(q-2)/2}\int_0^{r_B}\Bigg\{2^{i\tz_1}\lf[\Upsilon\lf(\frac{2^{i+j}r_B}{t}\r)
\r]^{\tz_2}[\mu(2^{i+j}B)]^{1/q}[\mu(2^jB)]^{-1/q}\\ \nonumber
&&\hs\hs\times e^{-(\frac{2^{i+j}r_B}{t})^{2m/(2m-1)}}\|\az\|_{L^q(S_j(B))}\Bigg\}^q
\frac{dt}{t^{q/2}}\\ \nonumber
&&\hs\ls r_B^{(q-2)/2}2^{i\tz_1q}2^{-j\epz q}(2^{i+j}r_B)^{\tz_2q}
\|\chi_B\|^{-q}_{L^\fai(\cx)}\mu(2^{i+j}B)\\ \nonumber
&&\hs\hs\times\lf\{\int_0^{r_B}\lf(\frac{t}
{2^{i+j}r_B}\r)^st^{-(\tz_2q+q/2)}\,dt\r\}\\\nonumber
&&\hs\ls2^{-i[s-(\tz_1+\tz_2)q]}2^{-j(s+\epz-\tz_2q)}\mu(2^{i+j}B)
\|\chi_B\|^{-q}_{L^\fai(\cx)}.
\end{eqnarray}

By using \eqref{4.14}, similar to the proof of \eqref{4.13},
we know that for any $j\in\zz_+$ and $i\in\nn$ with $i\ge5$,
\begin{eqnarray}\label{4.15}
\mathrm{E}_{i,\,j}\ls2^{-p_2[s/q-(\tz_1+\tz_2)-nq_0/p_2]i}
2^{-p_2(s/q+\epz-\tz_2-nq_0/p_2)j}\fai\lf(B,
|\lz|\|\chi_B\|^{-1}_{L^\fai(\cx)}\r).
\end{eqnarray}

Now we deal with $\mathrm{F}_{j}$. Let
\begin{eqnarray*}
\hs\hs\mathrm{F}_j&&=\sum_{i=0}^{\fz}\int_{S_i(B_j)}\fai\lf(x,|\lz|
\lf\{\int_{r_B}^\fz\int_{B(x,t)}
\lf|t^{2m}Le^{-t^{2m}L}(r^{2m}L)^M
\bigg(\chi_{S_j(B)}\r.\r.\r.\\ \nonumber
&&\hs\times\lf.\lf.\lf.\lf.\lf(r^{2m}_BL\r)^{-M}\az\r)(y)\r|^2
\frac{d\mu(y)\,dt}{V(x,t)t}\r\}^{1/2}\r)\,d\mu(x)
=:\sum_{i=0}^{\fz}\mathrm{F}_{i,\,j}.
\end{eqnarray*}
When $i\in\{0,\,1,\,\ldots,\,4\}$, similar to the proof of \eqref{4.13},
we conclude that
\begin{eqnarray}\label{4.17}
\mathrm{F}_{i,\,j}\ls 2^{-jp_2(\epz-nq_0/p_2)}\fai\lf(B,|\lz|
\|\chi_B\|_{L^\fai(\cx)}^{-1}\r).
\end{eqnarray}
For each $j\in\zz_+$ and all $x\in\cx$, let
$$\mathrm{G}_{j}(x):=\lf\{\int_{r_B}^\fz\int_{B(x,t)}
\lf|t^{2m}Le^{-t^{2m}L}\lf(r^{2m}L\r)^M\lf(\chi_{S_j(B)}
\lf(r^{-2m}_BL^{-1}\r)^M\az\r)(y)\r|^2\frac{d\mu(y)\,dt}{V(x,t)t}\r\}^{1/2}.
$$
Now we estimate $\int_{S_i(B_j)}[\mathrm{G}_j(x)]^q\,d\mu(x)$. We
first see that, for all $x\in\cx$,
\begin{eqnarray}\label{4.18}
\hs\hs\hs\hs\mathrm{G}_{j}(x)&&\le\lf\{\int_{r_B}^{2^{i+j-3}r_B}\int_{B(x,t)}
\lf|t^{2m}Le^{-t^{2m}L}(r^{2m}L)^M\lf(\chi_{S_j(B)}
\lf(r^{-2m}_BL^{-1}\r)^M\az\r)(y)\r|^2\r.\\ \nonumber
&&\hs\times\lf.\frac{d\mu(y)\,dt}{V(x,t)t}\r\}^{1/2}
+\lf\{\int_{2^{i+j-3}r_B}^\fz\cdots\r\}^{1/2}=:\mathrm{G}_{j,\,1}(x)
+\mathrm{G}_{j,\,2}(x).
\end{eqnarray}

For $\mathrm{G}_{j,\,1}$, similar to \eqref{4.14},
we conclude that, when $i\in\nn$ with $i\ge5$,
\begin{eqnarray}\label{4.19}
\int_{S_i(B_j)}[\mathrm{G}_{j,\,1}(x)]^q\,d\mu(x)
&&\ls2^{-i[s+1-(\tz_1+\tz_2+1/2)q]}\\ \nonumber
&&\hs\times2^{-j[s+1+\epz-(\tz_2+1/2)q]}\mu(2^{i+j}B)
\|\chi_B\|^{-q}_{L^\fai(\cx)}.
\end{eqnarray}
For $\mathrm{G}_{j,\,2}$, by Theorem \ref{thm on S_L},
we find that
\begin{eqnarray*}
\int_{S_i(B_j)}[\mathrm{G}_{j,\,2}(x)]^q\,d\mu(x)
&&\ls\frac{r_B^{2mMq}}{(2^{i+j}r_B)^{2mMq}}\int_{S_i(B_j)}\lf[S_{L,\,M+1}
\lf(\chi_{S_j(B)}(r^{2m}_B L)^{-M}\az\r)(x)\r]^q\,d\mu(x)\\
&&\ls2^{-2mMq(i+j)}\lf\|(r^{2m}_B L)^{-M}\az\r\|^q_{L^q(S_j(B))}\\
&&\ls2^{-2mMqi}2^{-j(2mMq+\epz q)j}\mu(2^jB)\|\chi_B\|^{-q}_{L^\fai(\cx)},
\end{eqnarray*}
which, together with \eqref{4.18} and \eqref{4.19}, implies that
\begin{eqnarray*}
\int_{S_i(B_j)}[\mathrm{G}_{j}(x)]^q\,d\mu(x)
&&\ls2^{-i[s+1-(\tz_1+\tz_2+1/2)q]}2^{-j[s+1+\epz-(\tz_2+1/2)q]}
\mu(2^{i+j}B)\|\chi_B\|^{-q}_{L^\fai(\cx)}.
\end{eqnarray*}
By using this estimate, similar to the proof of \eqref{4.15},
we see that, for all $j\in\zz_+$ and $i\in\nn$ with $i\ge5$,
\begin{eqnarray*}
\mathrm{F}_{i,\,j}\ls2^{p_2[(s+1)/q-(\tz_1+\tz_2+1/2)-nq_0/p_2]}
2^{p_2[(s+1)/q+\epz-(\tz_1+1/2)-nq_0/p_2]}\fai\lf(B,|\lz|\|\chi_B\|^{-1}_{L^\fai(\cx)}\r),
\end{eqnarray*}
which, together with \eqref{4.9} through \eqref{4.17} and $s>[\frac{nq_0}{p_2}+
\frac{1}{2}+\tz_1+\tz_2]q-1$,
implies that \eqref{4.8} holds true, and hence completes
the proof of Proposition \ref{p4.1}.
\end{proof}

\begin{proposition}\label{p4.2}
Let $\fai$ be as in Definition \ref{d2.2}, $L$ satisfy Assumptions {\rm (A)} and {\rm (B)},
$\epz\in(nq(\fai)/i(\fai),\fz)$ and
$M\in\nn$ with $M>\frac{n}{2m}[\frac{q(\fai)}{i(\fai)}+
\frac{\tz_1}{n}-\frac{2}{nq_L}]$.
Then, for all $f\in H_{\fai,\,L}(\cx)
\cap L^2(\cx)$, there exist $\{\lz_j\}_j\subset\cc$
and a sequence $\{\az_j\}_j$ of $(\fai,\,M,\,\epz)_L$-molecules, respectively,
associated with the balls $\{B_j\}_j$ such that
$f=\sum_j\lz_j\az_j$
in both $H_{\fai,\,L}(\cx)$ and $L^2(\cx)$. Moreover, there exists
a positive constant $C$ such that, for all $f\in
H_{\fai,\,L}(\cx)\cap L^2(\cx)$,
$$\blz(\{\lz_j\az_j\}_j):=\inf\lf\{\lz\in(0,\fz):\ \sum_j
\fai\lf(B_j,\frac{|\lz_j|}{\lz\|\chi_{B_j}\|_{L^\fai(\cx)}}\r)\le1\r\}\le
C\|f\|_{H_{\fai,\,L}(\cx)}.$$
\end{proposition}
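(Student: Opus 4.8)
The plan is to represent $f$ through the operator $\pi_{L,\,M}$ acting on a tent-space function built from the Lusin-area function, and then transfer the tent-space atomic decomposition of Theorem~\ref{t3.1} through $\pi_{L,\,M}$ by means of Proposition~\ref{p4.1}.

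First I would set, for $f\in H_{\fai,\,L}(\cx)\cap L^2(\cx)$ and $(x,t)\in\cx_+$, the function $g(x,t):=(t^{2m}L)e^{-t^{2m}L}f(x)$. By the very definition \eqref{2.23} of $S_L$ one has $\ca(g)=S_L(f)$, whence $g\in T_\fai(\cx_+)$ with $\|g\|_{T_\fai(\cx_+)}=\|S_L(f)\|_{L^\fai(\cx)}=\|f\|_{H_{\fai,\,L}(\cx)}$; and, since $S_L$ is bounded on $L^2(\cx)$, also $g\in T^2_2(\cx_+)$. On the other hand, the bounded $H_\fz$-calculus of $L$ (Assumption~(A)), combined with the normalization \eqref{4.3}, gives, via the associated quadratic estimates, the reproducing formula
\begin{equation*}
f=C_{(m,\,M)}\int_0^\fz (t^{2m}L)^{M+2}e^{-2t^{2m}L}f\,\frac{dt}{t}=\pi_{L,\,M}(g),
\end{equation*}
the integral converging in $L^2(\cx)$. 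The second identity I would justify by first treating the truncations $g_\ez:=g\chi_{\{(y,t):\,\ez<t<1/\ez,\ d(y,x_0)<1/\ez\}}\in T^{2,\,b}_2(\cx_+)$, on which $\pi_{L,\,M}$ is literally given by \eqref{4.2}, and then letting $\ez\to0$, using that $g_\ez\to g$ in $T^2_2(\cx_+)$, that $\pi_{L,\,M}$ is bounded from $T^2_2(\cx_+)$ to $L^2(\cx)$ (Proposition~\ref{p4.1}(i)), and that $C_{(m,\,M)}\int_\ez^{1/\ez}(t^{2m}L)^{M+2}e^{-2t^{2m}L}f\,\frac{dt}{t}\to f$ in $L^2(\cx)$.

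Next I would apply Theorem~\ref{t3.1} to $g$, obtaining $\{\lz_j\}_j\subset\cc$ and $(T_\fai,\,\fz)$-atoms $\{A_j\}_j$ associated with balls $\{B_j\}_j$ such that $g=\sum_j\lz_jA_j$ almost everywhere and $\blz(\{\lz_jA_j\}_j)\ls\|g\|_{T_\fai(\cx_+)}$; since $g\in T_\fai(\cx_+)\cap T^2_2(\cx_+)$, Corollary~\ref{c3.1} improves this to convergence in both $T_\fai(\cx_+)$ and $T^2_2(\cx_+)$. Applying $\pi_{L,\,M}$ and invoking its boundedness from $T^2_2(\cx_+)$ to $L^2(\cx)$ and from $T_\fai(\cx_+)$ to $H_{\fai,\,L}(\cx)$ (Proposition~\ref{p4.1}), I get $f=\pi_{L,\,M}(g)=\sum_j\lz_j\pi_{L,\,M}(A_j)=:\sum_j\lz_j\az_j$ with convergence in both $H_{\fai,\,L}(\cx)$ and $L^2(\cx)$. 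That each $\az_j=\pi_{L,\,M}(A_j)$ is, up to a multiplicative constant independent of $j$, a $(\fai,\,M,\,\epz)_L$-molecule associated with $B_j$ is precisely the claim established inside the proof of Proposition~\ref{p4.1}(ii), whose hypotheses $\fai\in\rh_{(q_L/I(\fai))'}(\cx)$, $\epz>nq(\fai)/i(\fai)$ and $M>\frac{n}{2m}[\frac{q(\fai)}{i(\fai)}+\frac{\tz_1}{n}-\frac{2}{nq_L}]$ are in force here. Finally, since $\blz(\{\lz_j\az_j\}_j)$ as defined in the statement depends only on $\{\lz_j\}_j$ and $\{B_j\}_j$, absorbing the harmless constant into $\{\lz_j\}_j$ yields $\blz(\{\lz_j\az_j\}_j)\ls\blz(\{\lz_jA_j\}_j)\ls\|f\|_{H_{\fai,\,L}(\cx)}$.

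The main obstacle is the limiting argument in the second paragraph: verifying that the $T^2_2(\cx_+)\to L^2(\cx)$ extension of $\pi_{L,\,M}$ evaluated at $g$ — which need not have bounded support — really coincides with the $L^2$-convergent Calder\'on reproducing integral, and hence equals $f$. This relies on the quadratic estimates furnished by the bounded $H_\fz$-functional calculus together with Proposition~\ref{p4.1}(i); once this is in place, the remainder is a routine transfer of Theorem~\ref{t3.1} and Corollary~\ref{c3.1} through the boundedness of $\pi_{L,\,M}$.
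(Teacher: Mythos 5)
Your proposal is correct and follows essentially the same route as the paper: write $f=\pi_{L,\,M}(t^{2m}Le^{-t^{2m}L}f)$ via the $H_\fz$-functional calculus, note that $t^{2m}Le^{-t^{2m}L}f\in T_\fai(\cx_+)\cap T^2_2(\cx_+)$, push the tent-space atomic decomposition of Theorem \ref{t3.1} and Corollary \ref{c3.1} through $\pi_{L,\,M}$ using Proposition \ref{p4.1}, and invoke the molecule claim from the proof of Proposition \ref{p4.1}(ii). The only difference is that you spell out the truncation/limiting argument identifying the $T^2_2\to L^2$ extension of $\pi_{L,\,M}$ with the Calder\'on reproducing integral, a detail the paper leaves implicit.
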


\begin{proof}
Let $f\in H_{\fai,\,L}(\cx)\cap L^2(\cx)$. Then by the
$H_{\fz}$-functional calculi for $L$ and \eqref{4.1}, we know that
$$f=C_{(m,\,M)}\int_0^\fz(t^{2m}L)^{M+2}e^{-2t^{2m}L}f\frac{dt}{t}
=\pi_{L,\,M}\lf(t^{2m}Le^{-t^{2m}L}f\r)
$$
in $L^2(\cx)$. Moreover, from Definition \ref{d4.1} and
the $L^2(\cx)$-boundedness of $S_L$, we infer that $t^{2m}Le^{-t^{2m}L}f\in
T_\fai(\cx_+)\cap T^2_2(\cx_+)$. Applying Theorem
\ref{t3.1}, Corollary \ref{c3.1} and
Proposition \ref{p4.1} to $t^{2m}Le^{-t^{2m}L}f$, we conclude that
$$f=\pi_{L,\,M}(t^{2m}Le^{-t^{2m}L}f)=\sum_j\lz_j
\pi_{L,\,M}A_j=:\sum_j\lz_j\az_j
$$
in $L^2(\cx)\cap H_{\fai,\,L}(\cx)$, and
$\blz(\{\lz_j\az_j\}_j)\ls\|t^{2m}Le^{-t^{2m}L}f\|_{T_\fai(\cx_+)}\sim
\|f\|_{H_{\fai,\,L}(\cx)}$. Furthermore, by the proof of
Proposition \ref{p4.1}, we know that, for each $j$, $\az_j$ is a
$(\fai,\,M,\,\epz)_L$-molecule  up to a harmless constant, which completes the
proof of Proposition \ref{p4.2}.
\end{proof}

The proofs of Propositions \ref{p4.1} and
\ref{p4.2} imply immediately the following corollary.

\begin{corollary}\label{sufficient condition for molecule}
Let $\fai$ be as in Definition \ref{d2.2}, $L$ satisfy Assumptions ${\rm (A)}$
and ${\rm (B)}$, $p_L$ and $q_L$ be as in Assumption ${\rm (B)}$, $q\in(p_L,\,q_L)$ and
$M\in\nn$ satisfying $M>\frac{n}{2m}[\frac{q(\fai)}{i(\fai)}+
\frac{\tz_1}{n}-\frac{2}{nq}]$, where
$q(\fai)$, $i(\fai)$ and $\tz_1$ are respectively as in \eqref{2.7},
\eqref{2.5} and \eqref{2.11}.
Suppose that $T$ is a linear (resp. nonnegative
sublinear) operator which maps $L^2(\cx)$ continuously into weak $L^2(\cx)$.
If there exists a positive constant $C$ such that, for all $\lz\in\cc$ and $(\fai,\,
q,\,M,\,\epsilon)_L$-molecule $\az$ associated with the ball $B$,
\begin{eqnarray*}
\int_{\cx}\fai\lf(x,\, T(\lz \az)(x)\r)\,d\mu(x)\le C \fai\lf(B,\,\frac{|\lz|}
{\|\chi_B\|_{L^\fai(\cx)}}\r),
\end{eqnarray*}
then $T$ can extend to be a bounded linear (resp. sublinear) operator from
$H^{M,\,q,\,\epz}_{\fai,\,L}(\cx)$ to $L^\fai(\cx)$.
\end{corollary}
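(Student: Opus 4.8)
The plan is to run, with the abstract operator $T$ in place of the Lusin-area function $S_L$, exactly the argument already used in the proofs of Propositions \ref{p4.1} and \ref{p4.2}: those proofs exploited only two features of $S_L$, namely its boundedness from $L^2(\cx)$ into (weak) $L^2(\cx)$ and the modular molecular bound, and both of these are now hypotheses on $T$. Concretely, I would first reduce the corollary to the single inequality
$$\|Tf\|_{L^\fai(\cx)}\ls\blz(\{\lz_j\az_j\}_j)$$
to be proved for every $f$ in the dense subspace $\wz{H}^{M,\,q,\,\epz}_{\fai,\,L}(\cx)$ equipped with a molecular $(\fai,\,r,\,q,\,M,\,\epz)$-representation $f=\sum_j\lz_j\az_j$ (with $r\in(p_L,q_L)$, each $\az_j$ a $(\fai,\,q,\,M,\,\epz)_L$-molecule associated with a ball $B_j$, the series converging in $L^r(\cx)$, and $\sum_j\fai(B_j,|\lz_j|\|\chi_{B_j}\|_{L^\fai(\cx)}^{-1})<\fz$), where $Tf$ is read as $\sum_j\lz_jT\az_j$. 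Granting this, an infimum over all such representations gives $\|Tf\|_{L^\fai(\cx)}\ls\|f\|_{H^{M,\,q,\,\epz}_{\fai,\,L}(\cx)}$ on $\wz{H}^{M,\,q,\,\epz}_{\fai,\,L}(\cx)$; since $L^\fai(\cx)$ is complete, $T$ then extends uniquely and boundedly to the completion $H^{M,\,q,\,\epz}_{\fai,\,L}(\cx)$, and the extension is linear (resp.\ sublinear) because these properties pass to $L^\fai(\cx)$-limits.

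The core estimate is obtained just as in Proposition \ref{p4.1}. Put $\lz_0:=\blz(\{\lz_j\az_j\}_j)$, so that $\sum_j\fai(B_j,|\lz_j|\lz_0^{-1}\|\chi_{B_j}\|_{L^\fai(\cx)}^{-1})\le1$. The (sub)linearity of $T$ gives $|Tf(x)|/\lz_0\le\sum_j|T((\lz_j/\lz_0)\az_j)(x)|$ for a.e.\ $x\in\cx$ (since $T((\lz_j/\lz_0)\az_j)=\lz_0^{-1}T(\lz_j\az_j)$ in both the linear and the nonnegative sublinear case), so Lemma \ref{l2.1}(i) (the uniform $\sigma$-quasi-subadditivity of $\fai$) followed by the molecular hypothesis, applied to each scalar multiple $(\lz_j/\lz_0)\az_j$ of a $(\fai,\,q,\,M,\,\epz)_L$-molecule, yields
$$\int_\cx\fai\Big(x,\,\frac{|Tf(x)|}{\lz_0}\Big)\,d\mu(x)\ls\sum_j\int_\cx\fai\big(x,\,|T((\lz_j/\lz_0)\az_j)(x)|\big)\,d\mu(x)\ls\sum_j\fai\Big(B_j,\,\frac{|\lz_j|}{\lz_0\|\chi_{B_j}\|_{L^\fai(\cx)}}\Big)\le C.$$
Absorbing the constant $C$ via the uniformly lower type $p_2$ property of $\fai$ (namely $\fai(x,t/s)\le Cs^{-p_2}\fai(x,t)$ for $s\ge1$) then produces $\|Tf\|_{L^\fai(\cx)}\ls\lz_0=\blz(\{\lz_j\az_j\}_j)$, completing the reduction.

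The only point that I expect to require genuine care is the one suppressed above: showing that $Tf:=\sum_j\lz_jT\az_j$ is a well-defined element of $L^\fai(\cx)$, independent of the chosen molecular representation of $f$. This is handled exactly as for the operator $\pi_{L,\,M}$ in the proof of Proposition \ref{p4.1}: one first restricts to those $f\in\wz{H}^{M,\,q,\,\epz}_{\fai,\,L}(\cx)$ lying in $L^2(\cx)$, which form a dense subclass and for which a representation converging in $L^2(\cx)$ may be used, so that by the weak $(2,2)$-boundedness of $T$ the partial sums of $\sum_j\lz_jT\az_j$ converge in weak $L^2(\cx)$ to an unambiguous limit; meanwhile the displayed modular estimate, applied to the tails $\sum_{j>N}\lz_j\az_j$ and combined once more with the lower type $p_2$ of $\fai$, shows that these tails tend to $0$ in $L^\fai(\cx)$, so the series also converges in $L^\fai(\cx)$ and the two limits agree. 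The passage from this dense subclass to all of $H^{M,\,q,\,\epz}_{\fai,\,L}(\cx)$ is then the standard completion argument, and everything else is routine bookkeeping with the Luxembourg norm and the properties of $\fai$ recorded in Lemmas \ref{l2.1} and \ref{l2.4}.
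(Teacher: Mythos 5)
Your proposal is correct and follows essentially the route the paper intends: the paper derives this corollary directly from the arguments in Propositions \ref{p4.1} and \ref{p4.2} (compare also Lemma \ref{cl4.5}), namely the pointwise domination $|Tf|\le\sum_j|T(\lz_j\az_j)|$, the uniform $\sigma$-quasi-subadditivity of $\fai$ from Lemma \ref{l2.1}(i), the assumed molecular modular bound applied to each $(\lz_j/\lz_0)\az_j$, absorption of the constant via the uniformly lower type property, and a density/completion argument using the weak $(2,2)$ hypothesis to identify the $L^\fai(\cx)$-limit of the partial sums with $Tf$. Your treatment of the well-definedness of $\sum_j\lz_jT\az_j$ is the same device the paper uses implicitly when passing between $L^2(\cx)$-convergent representations and $L^\fai(\cx)$-convergence, so nothing further is needed.
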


\begin{theorem}\label{t4.1}
Let $\fai$ be as in Definition \ref{d2.2} and $L$ satisfy Assumptions {\rm (A)} and {\rm (B)}.
Assume that $q\in[2,q_L)\cap([r(\fai)]'I(\fai),q_L)$, $M\in\nn$ with $M>\frac{n}{2m}
[\frac{q(\fai)}{i(\fai)}+\frac{\tz_1}{n}-\frac{2}{nq}]$, $\epz\in(nq(\fai)/i(\fai),\fz)$,
where $q_L$, $r(\fai)$, $I(\fai)$, $q(\fai)$ and $i(\fai)$ are, respectively, as in
Assumption (B), \eqref{2.8}, \eqref{2.4}, \eqref{2.7} and \eqref{2.5}. Then
$H_{\fai,\,L}(\cx)$ and $H^{M,\,q,\,\epz}_{\fai,\,L}(\cx)$ coincide
with equivalent quasi-norms.
\end{theorem}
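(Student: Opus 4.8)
The plan is to establish the two inclusions $H_{\fai,\,L}(\cx)\subset H^{M,\,q,\,\epz}_{\fai,\,L}(\cx)$ and $H^{M,\,q,\,\epz}_{\fai,\,L}(\cx)\subset H_{\fai,\,L}(\cx)$ separately, using the machinery already assembled in Section \ref{ss4}. For the first inclusion, I would start from an arbitrary $f\in\wz{H}_{\fai,\,L}(\cx)$, which by Definition \ref{d4.1} lies in $L^2(\cx)$ with $S_L(f)\in L^\fai(\cx)$. Proposition \ref{p4.2} then directly produces a molecular representation $f=\sum_j\lz_j\az_j$ in both $H_{\fai,\,L}(\cx)$ and $L^2(\cx)$ with $\blz(\{\lz_j\az_j\}_j)\ls\|f\|_{H_{\fai,\,L}(\cx)}$, where each $\az_j$ is a $(\fai,\,M,\,\epz)_L$-molecule; since the convergence holds in $L^2(\cx)$ and $2\in(p_L,q_L)$, this is in particular a molecular $(\fai,\,2,\,q,\,M,\,\epz)$-representation, so $f\in\wz{H}^{M,\,q,\,\epz}_{\fai,\,L}(\cx)$ with $\|f\|_{H^{M,\,q,\,\epz}_{\fai,\,L}(\cx)}\ls\|f\|_{H_{\fai,\,L}(\cx)}$. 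Passing to completions (using Remark \ref{r4.1}) gives the continuous inclusion on the whole space.

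For the reverse inclusion, the key is to show that $S_L$ maps a single molecule well, i.e. to verify the hypothesis of Corollary \ref{sufficient condition for molecule} with $T:=S_L$: for every $\lz\in\cc$ and every $(\fai,\,q,\,M,\,\epz)_L$-molecule $\az$ associated with a ball $B$, one has
\begin{equation*}
\int_{\cx}\fai\lf(x,S_L(\lz\az)(x)\r)\,d\mu(x)\ls\fai\lf(B,\frac{|\lz|}{\|\chi_B\|_{L^\fai(\cx)}}\r).
\end{equation*}
This is precisely estimate \eqref{4.8}, which was already proved inside the proof of Proposition \ref{p4.1}; the only point needing attention is that there the parameters $p_1,p_2,q_0,q$ were chosen subject to $\fai\in\rh_{(q/p_1)'}(\cx)$, and here we must make that choice compatible with the hypothesis $q\in[2,q_L)\cap([r(\fai)]'I(\fai),q_L)$. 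Since $q>[r(\fai)]'I(\fai)$ and $I(\fai)$ can be chosen as $p_1$ (or slightly above it), we get $q/p_1>[r(\fai)]'$, hence $(q/p_1)'<r(\fai)$, and by the definition \eqref{2.8} of $r(\fai)$ together with Lemma \ref{l2.4}(iv) we may pick $p_1\in[I(\fai),1]$ so that $\fai\in\rh_{(q/p_1)'}(\cx)$; the remaining constraints on $p_2\in(0,i(\fai))$, $q_0\in(q(\fai),\fz)$ and $\epz,M$ are met exactly as in Proposition \ref{p4.1}. With \eqref{4.8} in hand, Corollary \ref{sufficient condition for molecule} yields that $S_L$ extends to a bounded operator from $H^{M,\,q,\,\epz}_{\fai,\,L}(\cx)$ into $L^\fai(\cx)$; applying this to a molecular representation $f=\sum_j\lz_j\az_j$ converging in $L^r(\cx)$ for some $r\in(p_L,q_L)$ gives $S_L(f)\in L^\fai(\cx)$ with $\|S_L(f)\|_{L^\fai(\cx)}\ls\blz(\{\lz_j\az_j\}_j)$, whence, taking the infimum over representations, $f\in H_{\fai,\,L}(\cx)$ and $\|f\|_{H_{\fai,\,L}(\cx)}\ls\|f\|_{H^{M,\,q,\,\epz}_{\fai,\,L}(\cx)}$, first on a dense subclass and then on the completion.

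I expect the main obstacle to be the bookkeeping in the reverse inclusion: one must verify that the $L^r(\cx)$-convergence of the molecular sum is strong enough to pass $S_L$ inside the sum in the sense of quasi-norms (this uses the sublinearity of $S_L$, Lemma \ref{l2.1}(i) on the $\sigma$-quasi-subadditivity of $\fai$, and the Aoki--Rolewicz argument of Remark \ref{r4.1}), and that the resulting element of $L^\fai(\cx)$ is genuinely $S_L$ applied to an element of $H_{\fai,\,L}(\cx)$ rather than merely a limit object — this is handled exactly as in the analogous density/completion arguments (cf. Theorem \ref{t4.2} and Corollary \ref{cs4.1}). A secondary technical point is confirming that the molecular space defined with $L^r$-convergence for varying $r\in(p_L,q_L)$ does not depend on $r$ (so that the value $r=2$ used in the forward direction and a general $r$ in the backward direction describe the same space), which is asserted to follow from Theorems \ref{ct4.9} and \ref{t4.1} and should be invoked rather than reproved here. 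Everything else — the molecule estimates, the $L^p(\cx)$-boundedness of $S_L$ on $(p_L,q_L)$ from Theorem \ref{thm on S_L}, and the reverse Hölder manipulations — is already available in the preceding sections.
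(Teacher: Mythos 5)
Your proposal is correct and follows essentially the same route as the paper: Proposition \ref{p4.2} gives the inclusion of $\wz{H}_{\fai,\,L,\,2}(\cx)$ into the molecular space, the single-molecule estimate \eqref{4.8} combined with the sublinearity of $S_L$ and Lemma \ref{l2.1}(i) gives the reverse inclusion, and a density/completion argument transfers the equivalence from the dense subspaces to the full spaces. One small caution: your ``secondary technical point'' about $r$-independence is moot, since Definition \ref{d4.3} quantifies existentially over $r$ (and invoking Theorem \ref{t4.1} for it would in any case be circular); the paper simply starts the reverse inclusion from an arbitrary representation converging in $L^r(\cx)$ for some $r\in(p_L,q_L)$ and uses Theorem \ref{thm on S_L} to obtain the pointwise domination $S_L(\sum_{j=1}^i\lz_j\az_j-f)\le\sum_{j=i+1}^{\fz}|\lz_j|S_L(\az_j)$ almost everywhere.
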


\begin{proof}
We first prove that
$$\widetilde{H}^{M,\,q,\,\epz}_{\fai,\,L}(\cx)\cap L^2(\cx)
\subset \widetilde{H}_{\fai,\,L,\,2}(\cx)$$
and the inclusion is continuous.
Let $f\in\widetilde{H}^{M,\,q,\,\epz}_{\fai,\,L}(\cx)\cap L^2(\cx)$.
Then there exist $\{\lz_j\}_{j\in\nn}\subset\cc$ and a sequence $\{\az_j\}_{j\in\nn}$ of
$(\fai,\,q,\,M,\,\epz)_L$-molecules such that
$f=\sum_{j=1}^\infty \lz_j \az_j$, where the summation converges in $L^r(\cx)$ for some
$r\in(p_L,q_L)$. By Theorem \ref{thm on S_L}, we see that, for each
$i\in\nn$, $S_L(\sum_{j=1}^i \lz_j \az_j -f)(x)\le \sum_{j=i+1}^\infty
|\lz_j| S_L(\az_j)(x)$ for almost every $x\in\cx$, which, together with \eqref{4.8} and
$f\in H^{M,\,q,\,\epz}_{\fai,\,L}(\cx)$, implies that
\begin{equation}\label{4.20}
S_L\lf(\sum_{j=1}^i \lz_j \az_j -f\r)\in L^\fai(\cx)
\end{equation}
and
$$\lf\|S_L\lf(\sum_{j=1}^i\lz_j \az_j -f\r)\r\|_{L^\fai(\cx)}\to0$$
as $i\to\fz$. Moreover, by the definition of $(\fai,\,q,\,M,\,\epz)_L$-molecules,
H\"older's inequality
and $q\ge2$, we conclude that for each $j\in\nn$, $\az_j\in L^2(\cx)$, which, together with
$f\in L^2(\cx)$, implies that, for any $i\in\nn$, $f-\sum_{j=1}^i\lz_j\az_j\in L^2(\cx)$.
From this and \eqref{4.20}, it follows that $f\in\widetilde{H}_{\fai,\,L,\,2}(\cx)$.
Furthermore, by the fact that $S_L(f)\le\sum_{j=1}^\fz|\lz_j|S_L(\az_j)$ and \eqref{4.8},
we see that
\begin{equation}\label{4.21}
\|f\|_{H_{\fai,\,L}(\cx)}\ls\|f\|_{H^{M,\,q,\,\epz}_{\fai,\,L}(\cx)}.
\end{equation}

Now we prove that $\widetilde{H}_{\fai,\,L,\,2}(\cx)\subset\widetilde{
H}^{M,\,q,\,\epz}_{\fai,\,L}(\cx)\cap L^2(\cx)$ and the inclusion is continuous.
Let $f\in\widetilde{H}_{\fai,\,L,\,2}(\cx)$.
Then by Proposition \ref{p4.2}, we know that
there exist $\{\lz_j\}_j\subset\cc$ and a sequence $\{\az_j\}_j$ of
$(\fai,\,q,\,M,\,\epz)_L$-molecules such that
$f=\sum_j\lz_j\az_j$
in $H_{\fai,\,L}(\cx)\cap L^2(\cx)$ and $\blz(\{\lz_j\az_j\}_j)
\ls\|f\|_{H_{\fai,\,L}(\cx)}$,
which implies that $f\in\wz{H}^{M,\,q,\,\epz}_{\fai,\,L}(\cx)\cap L^2(\cx)$ and
\begin{equation*}
\|f\|_{H^{M,\,q,\,\epz}_{\fai,\,L}(\cx)}\ls\|f\|_{H_{\fai,\,L}(\cx)}.
\end{equation*}
From this and \eqref{4.21},
we infer that
$\wz{H}_{\fai,\,L,\,2}(\cx)=\wz{H}^{M,\,q,\,\epz}_{\fai,\,L}(\cx)\cap L^2(\cx)$ and, for all
$f\in\wz{H}_{\fai,\,L,\,2}(\cx)$,
$$\|f\|_{H_{\fai,\,L}(\cx)}\sim\|f\|_{H^{M,\,q,\,\epz}_{\fai,\,L}(\cx)}.$$

To finish the proof of Theorem \ref{t4.1}, it suffices to prove that
$\widetilde{H}_{\fai,\,L}(\cx)$ and $\widetilde{H}^{M,\,q,\,\epz}_{\fai,\,L}(\cx)
\cap L^2(\cx)$ are dense in $H_{\fai,\,L}(\cx)$ and
$H^{M,\,q,\,\epz}_{\fai,\,L}(\cx)$, respectively.
Indeed, if these hold true, by these and a standard density argument, we conclude that
$H_{\fai,\,L,\,2}(\cx)$ and $H^{M,\,q,\,\epz}_{\fai,\,L}(\cx)$ coincide with
equivalent norms. Obviously, $\widetilde{H}_{\fai,\,L,\,2}(\cx)$ is dense in
$H_{\fai,\,L}(\cx)$. Now we prove that $\widetilde{H}^{M,\,q,\,\epz}_{\fai,\,L}(\cx)
\cap L^2(\cx)$ is dense in $H^{M,\,q,\,\epz}_{\fai,\,L}(\cx)$.
Let $f\in\widetilde{H}^{M,\,q,\,\epz}_{\fai,\,L}(\cx)$. Then there exist a sequence
$\{\lz_j\}_{j\in\nn}\subset\cc$ and a sequence $\{\az_j\}_{j\in\nn}$ of
$(\fai,\,q,\,M,\,\epz)_L$-molecules such that $f=\sum_{j\in\nn}\lz_j\az_j$ in
$L^r(\cx)$ with some  $r\in(p_L,q_L)$.
For any $N\in\nn$, let $f_N:=\sum_{j=1}^N\lz_j\az_j$. From the definition of $(\fai,\,q,\,
M,\,\epz)$-molecules, $q\ge2$ and H\"older's inequality, we deduce that, for all $j\in\nn$,
$\az_j\in L^2(\cx)$, which implies that, for any $N\in\nn$, $f_N\in L^2(\cx)$.
Thus, for any $N\in\nn$, $f_N\in\widetilde{H}^{M,\,q,\,\epz}_{\fai,\,L}(\cx)\cap L^2(\cx)$,
and $\|f-f_N\|_{H^{M,\,q,\,\epz}_{\fai,\,L}(\cx)}\to0$ as $N\to\fz$. By this,
we see that $\widetilde{H}^{M,\,q,\,\epz}_{\fai,\,L}(\cx)\cap L^2(\cx)$ is dense in
$\widetilde{H}^{M,\,q,\,\epz}_{\fai,\,L}(\cx)$ and hence dense in $H^{M,\,q,\,
\epz}_{\fai,\,L}(\cx)$. This finishes the proof of Theorem \ref{t4.1}.
\end{proof}

\begin{theorem}\label{t4.2}
Let $\fai$ be as in Definition \ref{d2.2} and $L$ satisfy Assumptions {\rm (A)} and
{\rm (B)}. Assume that $\fai\in\rh_{(q_L/I(\fai))'}(\cx)$.
Then the spaces $H_{\fai,\,L}(\cx)$ and $H_{\fai,\,L,\,s}(\cx)$, with $s\in(p_L,q_L)$,
coincide with equivalent quasi-norms.
\end{theorem}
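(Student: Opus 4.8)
The plan is to follow the strategy of \cite[Theorem 3.4]{bckyy} and to identify, for every fixed $s\in(p_L,q_L)$, the space $H_{\fai,\,L,\,s}(\cx)$ with the molecular space $H^{M,\,q,\,\epz}_{\fai,\,L}(\cx)$, where $q$, $M$ and $\epz$ are chosen once and for all so that the hypotheses of Theorem \ref{t4.1} hold; such a choice exists because $\fai\in\rh_{(q_L/I(\fai))'}(\cx)$ forces $[r(\fai)]'I(\fai)<q_L$ (Lemma \ref{l2.4}(iv)), so that $[2,q_L)\cap([r(\fai)]'I(\fai),q_L)\ne\emptyset$, after which $M$ and $\epz$ may be taken large. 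Since $H^{M,\,q,\,\epz}_{\fai,\,L}(\cx)$ does not depend on $s$, once this identification is established for a general $s$, combining it with Theorem \ref{t4.1} (which is the case $s=2$) yields $H_{\fai,\,L,\,s}(\cx)=H^{M,\,q,\,\epz}_{\fai,\,L}(\cx)=H_{\fai,\,L}(\cx)$ with equivalent quasi-norms, as claimed. By the same density/completion reasoning used in the proof of Theorem \ref{t4.1}, it therefore suffices to show that $\wz H_{\fai,\,L,\,s}(\cx)=\wz H^{M,\,q,\,\epz}_{\fai,\,L}(\cx)\cap L^s(\cx)$ with equivalent quasi-norms, and that each of these two sets is dense in the corresponding completion.

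For the inclusion ``$\subset$'' I would argue as in Proposition \ref{p4.2}. Given $f\in\wz H_{\fai,\,L,\,s}(\cx)$, set $g:=t^{2m}Le^{-t^{2m}L}f$, so that $\ca(g)=S_L(f)\in L^\fai(\cx)$ and, since $S_L$ is bounded on $L^s(\cx)$ by Theorem \ref{thm on S_L}, $g\in T_\fai(\cx_+)\cap T^s_2(\cx_+)$ with $\|g\|_{T_\fai(\cx_+)}=\|f\|_{H_{\fai,\,L,\,s}(\cx)}$ and $\|g\|_{T^s_2(\cx_+)}\ls\|f\|_{L^s(\cx)}$. By Theorem \ref{t3.1}, Corollary \ref{c3.1} and Proposition \ref{p4.1}(i) one can write $\pi_{L,\,M}(g)=\sum_j\lz_j\pi_{L,\,M}(A_j)=:\sum_j\lz_j\az_j$ with convergence in $L^s(\cx)$, where by the proof of Proposition \ref{p4.1} each $\az_j$ is, up to a uniform constant, a $(\fai,\,M,\,\epz)_L$-molecule associated with $B_j$ and $\blz(\{\lz_j\az_j\}_j)\ls\|g\|_{T_\fai(\cx_+)}\sim\|f\|_{H_{\fai,\,L,\,s}(\cx)}$. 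The only genuinely new point is the Calder\'on reproducing identity $\pi_{L,\,M}(g)=f$ in $L^s(\cx)$ for $f\in L^s(\cx)$ (rather than $f\in L^2(\cx)$): the operator $f\mapsto\pi_{L,\,M}(t^{2m}Le^{-t^{2m}L}f)$ is bounded on $L^s(\cx)$, being the composition of $f\mapsto t^{2m}Le^{-t^{2m}L}f$, bounded from $L^s(\cx)$ into $T^s_2(\cx_+)$ by Theorem \ref{thm on S_L}, with $\pi_{L,\,M}$, bounded from $T^s_2(\cx_+)$ into $L^s(\cx)$ by Proposition \ref{p4.1}(i); on the dense subspace $L^s(\cx)\cap L^2(\cx)$ it coincides with the identity by the bounded $H_\fz$-functional calculus of $L$ in $L^2(\cx)$ together with the normalization \eqref{4.3}, hence it is the identity on all of $L^s(\cx)$. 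Consequently $f=\sum_j\lz_j\az_j$ in $L^s(\cx)$, so $f\in\wz H^{M,\,q,\,\epz}_{\fai,\,L}(\cx)\cap L^s(\cx)$ and $\|f\|_{H^{M,\,q,\,\epz}_{\fai,\,L}(\cx)}\ls\|f\|_{H_{\fai,\,L,\,s}(\cx)}$.

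For the inclusion ``$\supset$'', let $f\in\wz H^{M,\,q,\,\epz}_{\fai,\,L}(\cx)\cap L^s(\cx)$ with a molecular representation $f=\sum_j\lz_j\az_j$ converging in $L^r(\cx)$ for some $r\in(p_L,q_L)$. Since $S_L$ is sublinear and bounded on $L^r(\cx)$, the partial sums $S_L(\sum_{j\le N}\lz_j\az_j)$ converge to $S_L(f)$ in $L^r(\cx)$ and hence, along a subsequence, $\mu$-almost everywhere, so $S_L(f)\le\sum_j|\lz_j|S_L(\az_j)$ $\mu$-a.e.; combining this with the molecular estimate \eqref{4.8}, the uniform $\sigma$-quasi-subadditivity of $\fai$ (Lemma \ref{l2.1}(i)) and the uniform lower type of $\fai$, one obtains $S_L(f)\in L^\fai(\cx)$ with $\|S_L(f)\|_{L^\fai(\cx)}\ls\blz(\{\lz_j\az_j\}_j)$, and taking the infimum over all such representations gives $\|f\|_{H_{\fai,\,L,\,s}(\cx)}=\|S_L(f)\|_{L^\fai(\cx)}\ls\|f\|_{H^{M,\,q,\,\epz}_{\fai,\,L}(\cx)}$; since $f\in L^s(\cx)$, we conclude $f\in\wz H_{\fai,\,L,\,s}(\cx)$. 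Finally, $\wz H_{\fai,\,L,\,s}(\cx)$ is dense in $H_{\fai,\,L,\,s}(\cx)$ by definition of the completion, while $\wz H^{M,\,q,\,\epz}_{\fai,\,L}(\cx)\cap L^s(\cx)$ is dense in $H^{M,\,q,\,\epz}_{\fai,\,L}(\cx)$: every $(\fai,\,M,\,\epz)_L$-molecule is a $(\fai,\,q_0,\,M,\,\epz)_L$-molecule for every $q_0\in(p_L,q_L)$, so, choosing $q_0\ge s$ and using H\"older's inequality with \eqref{2.2} and $\epz>nq(\fai)/i(\fai)\ge n$, such a molecule lies in $L^s(\cx)$; hence the partial sums of a molecular representation belong to $\wz H^{M,\,q,\,\epz}_{\fai,\,L}(\cx)\cap L^s(\cx)$ and converge to $f$ in $H^{M,\,q,\,\epz}_{\fai,\,L}(\cx)$, the tails' $\blz$-quantity tending to $0$ by the uniform upper type of $\fai$. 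A standard density argument then yields $H_{\fai,\,L,\,s}(\cx)=H^{M,\,q,\,\epz}_{\fai,\,L}(\cx)$ with equivalent quasi-norms, which together with Theorem \ref{t4.1} completes the proof. I expect the principal difficulty to be precisely the $L^s$-version of the reproducing formula for arbitrary $f\in L^s(\cx)$, handled as above via the density of $L^s(\cx)\cap L^2(\cx)$ in $L^s(\cx)$ and the $L^s$-boundedness of $\pi_{L,\,M}$ and of $S_L$.
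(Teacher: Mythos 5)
Your proposal is correct and rests on the same pillars as the paper's proof (Theorem \ref{t3.1}, Corollary \ref{c3.1}, Proposition \ref{p4.1}, the molecular estimate \eqref{4.8}, and the $L^s(\cx)$-boundedness of $S_L$ and $\pi_{L,\,M}$), but it is packaged differently. The paper does not pass through the molecular space for general $s$: it observes that $\wz H_{\fai,\,L,\,2}(\cx)\cap L^s(\cx)=\wz H_{\fai,\,L,\,s}(\cx)\cap L^2(\cx)$ trivially, proves density of this common set in $\wz H_{\fai,\,L,\,2}(\cx)$ via the molecular decomposition of Proposition \ref{p4.2} (choosing the molecular exponent $q\in(\max\{s,2\},q_L)$ so that each molecule lies in $L^2(\cx)\cap L^s(\cx)$), and proves density in $\wz H_{\fai,\,L,\,s}(\cx)$ by a \emph{truncation} in the tent space, setting $f_N:=\pi_{L,\,M}(t^{2m}Le^{-t^{2m}L}f\,\chi_{O_N})$ and letting $N\to\fz$. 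You instead identify $\wz H_{\fai,\,L,\,s}(\cx)$ outright with $\wz H^{M,\,q,\,\epz}_{\fai,\,L}(\cx)\cap L^s(\cx)$ and invoke Theorem \ref{t4.1}. Both routes hinge on the Calder\'on reproducing formula $f=\pi_{L,\,M}(t^{2m}Le^{-t^{2m}L}f)$ for $f\in L^s(\cx)$ with $s\ne2$; the paper uses it silently in the step $f-f_N=\pi_{L,\,M}(t^{2m}Le^{-t^{2m}L}f\,\chi_{(O_N)^\complement})$, whereas you prove it explicitly by the density of $L^2(\cx)\cap L^s(\cx)$ in $L^s(\cx)$ together with the $L^s$-boundedness of the composed operator --- this is a genuine improvement in rigor. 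One small imprecision on your side: in the density step for $\wz H^{M,\,q,\,\epz}_{\fai,\,L}(\cx)\cap L^s(\cx)$ you argue with $(\fai,\,M,\,\epz)_L$-molecules (molecules for \emph{all} exponents), but a generic $(\fai,\,q,\,M,\,\epz)_L$-representation only provides $L^q$-control; if $s>q$ the partial sums need not lie in $L^s(\cx)$. This is harmless --- since $s<q_L$ you may simply fix $q\in[\max\{s,2\},q_L)\cap([r(\fai)]'I(\fai),q_L)$ at the outset, exactly as the paper does --- but it should be said explicitly.
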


\begin{proof}
Let $s\in(p_L,q_L)$. By the definitions of the spaces $H_{\fai,\,L}(\cx)$ and
$H_{\fai,\,L,\,s}(\cx)$, we see that $\widetilde{H}_{\fai,\,L,\,2}(\cx)\cap L^s(\cx)$ and
$\widetilde{H}_{\fai,\,L,\,s}(\cx)\cap L^2(\cx)$ coincide with equivalent norms. Similar to
the proof of Theorem \ref{t4.1}, we need to prove that $\widetilde{H}_{\fai,\,L,\,2}(\cx)
\cap L^s(\cx)$  and $\widetilde{H}_{\fai,\,L,\,s}(\cx)
\cap L^2(\cx)$ are dense in $\widetilde{H}_{\fai,\,L,\,2}(\cx)$ and
$\widetilde{H}_{\fai,\,L,\,s}(\cx)$, respectively.

We first prove that $\widetilde{H}_{\fai,\,L,\,2}(\cx)\cap L^s(\cx)$ is dense in
$\widetilde{H}_{\fai,\,L,\,2}(\cx)$. Let $f\in\widetilde{H}_{\fai,\,L,\,2}(\cx)$. Then by
Proposition \ref{p4.1}, we know that there exist $\{\lz_j\}_{j\in\nn}\subset\cc$ and a
sequence $\{\az_j\}_{j\in\nn}$ of $(\fai,\,q,\,M,\,\epz)_L$-molecules,
with $q\in(\max\{s,\,2\},q_L)$, such that
$f=\sum_{j=1}^\fz\lz_j\az_j$
in $H_{\fai,\,L}(\cx)\cap L^2(\cx)$. From $q\in(s,q_L)\cap[2,\fz)$ and H\"older's
inequality, we deduce that, for each $j\in\nn$, $\az_j$ is a $(\fai,\,2,\,M,\,
\epz)_L$-molecule and also a $(\fai,\,s,\,M,\,\epz)_L$-molecule, which implies that
for any $N\in\nn$, $\sum_{j=1}^N\lz_j\az_j\in L^2(\cx)\cap L^s(\cx)$. Moreover,
by \eqref{4.8}, we see that $S_L(\sum_{j=1}^N\lz_j\az_j)\in L^\fai(\cx)$. Thus, for
any $N\in\nn$, $\sum_{j=1}^N\lz_j\az_j\in \widetilde{H}_{\fai,\,L,\,2}(\cx)\cap L^s(\cx)$.
Furthermore, from $f=\sum_{j=1}^\fz\lz_j\az_j$ in $H_{\fai,\,L}(\cx)$, we infer that
$\|f-\sum_{j=1}^N\lz_j\az_j\|_{H_{\fai,\,L}(\cx)}\to0$ as $N\to\fz$. Thus,
$\widetilde{H}_{\fai,\,L,\,2}(\cx)\cap L^s(\cx)$ is dense in
$\widetilde{H}_{\fai,\,L,\,2}(\cx)$.

Let $f\in\widetilde{H}_{\fai,\,L,\,s}(\cx)$. By the definition of $H_{\fai,\,L,\,s}(\cx)$,
we see that $t^{2m}Le^{-t^{2m}L}f\in T_{\fai}(\cx_+)$. For any $N\in\nn$,
let $f_N:=\pi_{L,\,M}(t^{2m}Le^{-t^{2m}L}f\chi_{O_N})$, where
$$O_N:=\lf\{(y,t)\in\cx
\times(0,\fz):\ d(y,x_0)<N,\ t\in(N^{-1},N)\r\}$$
with some $x_0\in\cx$. Then from Proposition
\ref{p3.1}, we infer that $t^{2m}Le^{-t^{2m}L}f\chi_{O_N}\in T_\fai(\cx_+)\cap
T_2^2(\cx_+)$, which implies that $f_N\in \widetilde{H}_{\fai,\,L,\,2}(\cx)$.
Moreover, by $f\in L^s(\cx)$ and the $L^s(\cx)$-boundedness of $S_L$, we conclude that
$S_L(f)\in L^s(\cx)$, which implies that $t^{2m}Le^{-t^{2m}L}f\in T^{s}_2(\cx_+)$.
From this and the definition of $T^s_2(\cx_+)$, it follows that
$t^{2m}Le^{-t^{2m}L}f\chi_{O_N}\in T^{s,\,b}_2(\cx_+)$, which, together with
proposition \ref{p4.1}(i), implies that $f_N\in L^s(\cx)$. Thus,
$f_N\in\widetilde{ H}_{\fai,\,L,\,2}(\cx)\cap L^s(\cx)$. Moreover,
\begin{eqnarray*}
\|S_L(f_N - f)\|_{H_{\fai,\,L,\,s}(\cx)}\ls\lf\|t^{2m} L e^{-t^{2m} L}f\chi_{
(O_N)^\complement}\r\|_{T_\fai(\cx_+)}\to 0,
\end{eqnarray*}
as $N\to\infty$. Thus, $\widetilde{H}_{\fai,\,L,\,s}(\cx)\cap L^2(\cx)$ is dense in
$\widetilde{H}_{\fai,\,L,\,s}(\cx)$, which completes the proof of Theorem \ref{t4.2}.
\end{proof}

As a corollary of Theorem \ref{t4.2}, we have the following conclusion. We omit the details.

\begin{corollary}\label{cs4.1}
Let $L$ satisfy Assumptions ${\rm (A)}$ and ${\rm (B)}$, and $\fai$
be as in Definition \ref{d2.2} with $\fai\in \rh_{(q_L/I(\fai))'}(\cx)$, where $q_L$
and $I(\fai)$ are respectively as in Assumption {\rm(B)} and \eqref{2.4}.
Then, for all $s\in(p_L,q_L)$, the space $L^s(\cx)\cap H_{\fai,\,L}(\cx)$
is dense in $H_{\fai,\,L}(\cx)$.
\end{corollary}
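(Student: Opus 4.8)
The plan is to read this off directly from Theorem~\ref{t4.2}, exactly as the phrase ``as a corollary of Theorem~\ref{t4.2}'' suggests. When $s=2$ the statement is trivial, so fix $s\in(p_L,q_L)$ with $s\ne2$; the standing hypothesis $\fai\in\rh_{(q_L/I(\fai))'}(\cx)$ is precisely the one under which Theorem~\ref{t4.2} applies, giving that $H_{\fai,\,L}(\cx)$ and $H_{\fai,\,L,\,s}(\cx)$ coincide, with equivalent quasi-norms.

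First I would recall that, by Definition~\ref{d4.1}, the space $H_{\fai,\,L,\,s}(\cx)$ is by construction the completion of $\wz H_{\fai,\,L,\,s}(\cx)$, so $\wz H_{\fai,\,L,\,s}(\cx)$ is dense in $H_{\fai,\,L,\,s}(\cx)$; by the coincidence just quoted it is then dense in $H_{\fai,\,L}(\cx)$ as well, the two quasi-norms being comparable. On the other hand, every $f\in\wz H_{\fai,\,L,\,s}(\cx)$ belongs to $L^s(\cx)$ by definition and, again by Theorem~\ref{t4.2}, represents an element of $H_{\fai,\,L}(\cx)$; hence
$$\wz H_{\fai,\,L,\,s}(\cx)\subset L^s(\cx)\cap H_{\fai,\,L}(\cx)\subset H_{\fai,\,L}(\cx),$$
and since the first of these sets is dense in $H_{\fai,\,L}(\cx)$, so is the middle one, which is the assertion. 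If one prefers an argument that does not pass through the abstract completion, one can instead start from $f\in\wz H_{\fai,\,L,\,2}(\cx)$, invoke Proposition~\ref{p4.2} to write $f=\sum_j\lz_j\az_j$ as a sum of $(\fai,\,q,\,M,\,\epz)_L$-molecules with $q\in(\max\{s,\,2\},\,q_L)$ converging in $H_{\fai,\,L}(\cx)\cap L^2(\cx)$, note via H\"older's inequality that each $\az_j\in L^2(\cx)\cap L^s(\cx)$ and via \eqref{4.8} that each partial sum lies in $\wz H_{\fai,\,L,\,2}(\cx)\cap L^s(\cx)$, and conclude that $\wz H_{\fai,\,L,\,2}(\cx)\cap L^s(\cx)$ is dense in $\wz H_{\fai,\,L,\,2}(\cx)$, hence in $H_{\fai,\,L}(\cx)$; this is exactly the first half of the proof of Theorem~\ref{t4.2}.

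There is no real obstacle here: the analytic content is already contained in Theorem~\ref{t4.2}, ultimately in Proposition~\ref{p4.1}, and what remains is bookkeeping. The only point deserving a line of care is to make sure the symbol $L^s(\cx)\cap H_{\fai,\,L}(\cx)$ is understood in the natural way, namely as those $L^s(\cx)$ functions that also represent elements of $H_{\fai,\,L}(\cx)$, so that the containment $\wz H_{\fai,\,L,\,s}(\cx)\subset L^s(\cx)\cap H_{\fai,\,L}(\cx)$ is literally correct; this it is, once one uses the identification of $H_{\fai,\,L}(\cx)$ with $H_{\fai,\,L,\,s}(\cx)$ supplied by Theorem~\ref{t4.2}.
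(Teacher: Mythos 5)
Your proposal is correct and follows exactly the route the paper intends (the paper states the corollary follows from Theorem \ref{t4.2} and omits the details): you deduce density of $L^s(\cx)\cap H_{\fai,\,L}(\cx)$ from the coincidence $H_{\fai,\,L}(\cx)=H_{\fai,\,L,\,s}(\cx)$ together with the density of $\wz H_{\fai,\,L,\,s}(\cx)$ (or, in your cleaner second variant, of $\wz H_{\fai,\,L,\,2}(\cx)\cap L^s(\cx)$, which sidesteps the identification issue you flag) in the completion. The bookkeeping and the caveat about how $L^s(\cx)\cap H_{\fai,\,L}(\cx)$ is to be interpreted are both handled appropriately.
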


\section{The atomic characterization of $H_{\fai,\,L}(\cx)$}\label{cs4}

\hskip\parindent In this section, we establish  the atomic characterization
of the Musielak-Orlicz-Hardy space $H_{\fai,\,L}(\cx)$. To obtain
the support condition of $H_{\fai,\,L}(\cx)$ atoms by using
the finite propagation speed for the wave equation, we have to restrict
to a special case of operators satisfying Assumptions ${\rm (A)}$ and ${\rm (B)}$.
More precisely, throughout this section, we assume that
the considered operator $L$ satisfies the following assumptions as in \cite{bckyy}:

\medskip

\noindent {\bf Assumption $\mathrm{(H_1)}$.} $L$ is a
non-negative and self-adjoint operator in $L^2(\cx)$.

\medskip

\noindent {\bf Assumption $\mathrm{(H_2)}$.} There exists a constant $p_L\in [1,2)$
such that the semigroup $\{e^{-tL}\}_{t>0}$, generated by $L$,
satisfies the reinforced $(p_L, \,p_L',\,1)$ off-diagonal estimates on
balls as in Assumption ${\rm (B)}$.

\smallskip

\begin{remark}\label{equivalence between off-diagonal estimates on
balls and off-diagonal estimates}
(i) It is easy to see that if an operator $L$ satisfying Assumptions ${\rm (H_1)}$ and
${\rm (H_2)}$ is one-to-one, then it falls in the scope of operators satisfying
Assumptions ${\rm (A)}$ and ${\rm (B)}$. For the more general case, by using the
functional calculus via the spectral theorem, all the results obtained in the
above sections still hold true in this situation. Here, the
Hardy space $H_{\fai,\,L}(\cx)$ is defined as in Definition \ref{d4.1}. This
is a little different from the version of Hofmann et al. in \cite{hlmmy},
where the dense subspace $H^2(\cx)$ of the Hardy space
is defined to be the completion of  the \emph{range} of $L$ in $L^2(\cx)$,
$\overline {\mathcal{R}(L)}$ (see \cite{hlmmy} for more details). Recall
that $L^2(\cx)=\mathcal{N}(L)\bigoplus \overline {\mathcal{R}(L)}$, where
$\mathcal{N}(L)$ denotes the \emph{kernel} of $L$. We know that these Hardy
spaces are different from a kernel space $\mathcal{N}(L)$, which is not essential
for our purpose. We make this change in the definition of the Hardy space,
because it brings us some conveniences; for example, when $p=2$, we
obtain $H_L^p(\cx)=L^2(\cx)$.

(ii) The following definition of the $L^q$ off-diagonal estimates is from \cite{au07}.
For all $q\in(1,\,\fz)$, a family $\{T_t\}_{t>0}$ of operators is said to
satisfy the $L^q$ \emph{off-diagonal estimates}, if there exist two
positive constants $C$ and $c$ such that
\begin{equation*}
\|e^{-tL}f\|_{L^q(F)}\le C  e^{-\frac{[d(E,F)]^2}
{ct}} \|f\|_{L^q(E)}
\end{equation*}
holds true for every closed sets $E,\,F\subset \cx$, $t\in(0,\fz)$ and $f\in L^q(E)$.
From \cite{am07ii}, we deduce that $\{T_t\}_{t>0}\in \mathcal{O}_{1}(L^q-L^q)$ if and only
if $\{T_t\}_{t>0}$ satisfies the $L^q$ off-diagonal estimates. Thus, Assumption ${\rm (H_2)}$
implies that $\{T_t\}_{t>0}$ satisfies the $L^q$ off-diagonal estimates.
\end{remark}

To establish the atomic characterization of $H_{\fai,\,L}(\cx)$,
we first introduce the notion of the following atoms.

\begin{definition}\label{cd4.1}
Let $\fai$ be as in Definition \ref{d2.2}, $L$ satisfy Assumptions $\mathrm{(H_1)}$
and $\mathrm{(H_2)}$, and $p_L$ be as in Assumption $\mathrm{(H_2)}$.
Assume that $q\in(p_L,p_L')$, $M\in\nn$ and $B\subset\cx$ is a ball.
A function $a\in L^q(\cx)$ is called a \emph{$(\fai,\,q,\,M)_L$-atom} associated
with $B$, if there exists a function $b\in\mathcal{D}(L^M)$ such that

(i) $a=L^M b$;

(ii) for all $k\in\{0,\,\ldots,\,M\}$, $\supp (L^k b)\subset B$;

(iii) $\|(r_B^2L)^kb\|_{L^q(\cx)}\le r_B^{2M}[\mu(B)]^{1/q}\|\chi_B\|^{-1}_{L^\fai(\cx)}$,
where $r_B$ is the radius of $B$ and $k\in\{0,\,\ldots,\,M\}$.

Moreover, if $a$ is a $(\fai,\,q,\,M)_L$-atom
for all $q\in(p_L,p_L')$, then $a$ is called a \emph{$(\fai,\,M)_L$-atom}.
\end{definition}

Based on this kind of atoms, we introduce the following atomic
Musielak-Orlicz-Hardy space.

\begin{definition}\label{cd4.2}
Let $\fai$ be as in Definition \ref{d2.2}, $L$ satisfy Assumptions $\mathrm{(H_1)}$
and $\mathrm{(H_2)}$, and $p_L$ be as in Assumption $\mathrm{(H_2)}$.
Assume that $q\in(p_L,\,p_L')$ and $M\in\nn$. For $f\in L^2(\cx)$,
$f=\sum_j\lz_ja_j$ is called an \emph{atomic $(\fai,\,q,\,M)_L$-representation} of $f$,
if, for all $j$, $a_j$ is a $(\fai,\,q,\,M)_L$-atom associated with the
ball $B_j\subset\cx$, the summation converges in $L^2(\cx)$ and $\{\lz_j\}_j\subset\cc$
satisfies that
$$\sum_j\fai\lf(B_j,\,\frac{|\lz_j|}{\|\chi_{B_j}\|_{L^\fai(\cx)}}\r)<\fz.$$
Let
\begin{eqnarray*}
\wz{H}^{M,\,q}_{\fai,\,L,\,\rm{at}}(\cx):=&&\lf\{f:\ f \ \text{has an atomic} \
(\fai,\,q,\,M)_L\text{-representation}\r\}
\end{eqnarray*}
with the \emph{quasi-norm} given by
\begin{eqnarray*}
&&\|f\|_{H^{M,\,q}_{\fai,\,L,\,\rm{at}}(\cx)}\\
&&\hs:=\inf\lf\{\blz(\{\lz_j a_j\}_j):\
f=\sum_j\lz_j a_j \ \text{is an atomic}
\ (\fai,\,q,\,M)_L\text{-representation}\r\},
\end{eqnarray*}
where the infimum is taken over all the atomic
$(\fai,\,q,\,M)_L$-representations of $f$ and
\begin{eqnarray*}
\Lambda\lf(\lf\{\lz_j a_j\r\}_{j}\r):=\dinf \lf\{\lz\in(0,\,\fz):\
\dsum_{j}\fai\lf(B_j,\,\frac{|\lz_j|}{\lz \|\chi_{B_j}\|_{L^\fai(\cx)}}\r)\le 1\r\}.
\end{eqnarray*}

The \emph{atomic Musielak-Orlicz-Hardy space} $H^{M,\,q}_{\fai,\,L,\,\rm{at}}(\cx)$ is then
defined as the completion of $\wz{H}^{M,\,q}_{\fai,\,L,\,\rm{at}}(\cx)$ with respect to the
\emph{quasi-norm} $\|\cdot\|_{H^{M,\,q}_{\fai,\,L,\,\rm{at}}(\cx)}$.
\end{definition}

We have the following atomic characterization of the Musielak-Orlicz-Hardy
space $H_{\fai,\,L}(\cx)$.

\begin{theorem}\label{ct4.3}
Let $\fai$ be as in Definition \ref{d2.2}, $L$ satisfy Assumptions $\mathrm{(H_1)}$
and $\mathrm{(H_2)}$, $p_L$ be as in Assumption $\mathrm{(H_2)}$
and $M\in\nn$ satisfying $M>\frac{n}{2}
(\frac{q(\fai)}{i(\fai)}-\frac{1}{p_L'})$, where
$q(\fai)$ and $i(\fai)$ are respectively as in \eqref{2.7}
and \eqref{2.5}. Assume further that $q\in([r(\fai)]' I(\fai),p_L')\cap(p_L,p_L')$,
where $r(\fai)$ and $I(\fai)$ are, respectively,  as in \eqref{2.8} and
\eqref{2.4}. Then, $H_{\fai,\,L}(\cx)$ and $H^{M,\,q}_{\fai,\,L,\,\rm{at}}(\cx)$ coincide
with equivalent quasi-norms.
\end{theorem}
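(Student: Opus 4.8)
The plan is to establish the two inclusions $\widetilde H_{\fai,\,L,\,2}(\cx)\subset\widetilde H^{M,\,q}_{\fai,\,L,\,\rm{at}}(\cx)\cap L^2(\cx)$ and its converse, each with continuous embedding, and then pass to completions by the density argument already used in the proof of Theorem \ref{t4.1}. The forward direction ($H_{\fai,\,L}(\cx)\subset H^{M,\,q}_{\fai,\,L,\,\rm{at}}(\cx)$) is the substantive one. The idea is to revisit the molecular decomposition produced by Proposition \ref{p4.2}: for $f\in\widetilde H_{\fai,\,L,\,2}(\cx)$, write $f=\pi_{L,\,M}(t^{2}Le^{-t^{2}L}f)$ (here $m=1$ since $L$ is self-adjoint and satisfies the order-$1$ estimates) and decompose $t^{2}Le^{-t^{2}L}f=\sum_j\lz_jA_j$ via Theorem \ref{t3.1} into $(T_\fai,\fz)$-atoms $A_j$ supported in $\widehat{B_j}$. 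The point is that, because $L$ is nonnegative self-adjoint, one can exploit the \emph{finite propagation speed of the wave equation} $\cos(t\sqrt L)$ to replace the operator $\pi_{L,\,M}$ by a modified operator $\widetilde\pi_{L,\,M}$ built from $(t^{2}L)^{M+1}e^{-t^{2}L}$ composed with a smooth even cutoff of $\sqrt L$ at frequency $\sim r_{B_j}^{-1}$; this forces $\widetilde\pi_{L,\,M}(A_j)$ to be supported in a fixed dilate of $B_j$, turning the molecule $\az_j$ into an \emph{atom} $a_j$ in the sense of Definition \ref{cd4.1}. One writes $a_j=L^M b_j$ with $b_j:=L^{-M}\widetilde\pi_{L,\,M}(A_j)$ and checks, exactly as in the molecular size estimates \eqref{4.4}--\eqref{4.7} of Proposition \ref{p4.1} but now with the cutoff giving genuine support in $B_j$, that $\|(r_{B_j}^2L)^k b_j\|_{L^q(\cx)}\ls r_{B_j}^{2M}[\mu(B_j)]^{1/q}\|\chi_{B_j}\|_{L^\fai(\cx)}^{-1}$ for $k\in\{0,\dots,M\}$ and all $q\in(p_L,p_L')$; this uses the reinforced $(p_L,p_L',1)$ off-diagonal estimates from Assumption $\mathrm{(H_2)}$ together with the $L^q(\cx)$-boundedness of $S_{L,\,k}$ (Theorem \ref{thm on S_L}) in the duality argument. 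The error term $\pi_{L,\,M}-\widetilde\pi_{L,\,M}$ must be controlled: the difference of the two multipliers is rapidly decaying, so the tail contributions satisfy the same geometric decay and can be absorbed into a further molecular/atomic series; here the condition $M>\frac n2(\frac{q(\fai)}{i(\fai)}-\frac1{p_L'})$ is what guarantees the exponents in the geometric sums are positive, exactly as the analogous condition on $M$ does in Proposition \ref{p4.1}.

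For the control of the $\fai$-``norm'' of the decomposition, the plan is to reuse the key estimate \eqref{4.8}: each atom $a_j$ is in particular a $(\fai,\,q,\,M,\,\epz)_L$-molecule, so $\int_\cx\fai(x,S_L(\lz_j a_j)(x))\,d\mu(x)\ls\fai(B_j,|\lz_j|\|\chi_{B_j}\|_{L^\fai(\cx)}^{-1})$, and summing in $j$ together with $\blz(\{\lz_jA_j\}_j)\ls\|t^2Le^{-t^2L}f\|_{T_\fai(\cx_+)}\sim\|f\|_{H_{\fai,\,L}(\cx)}$ from Theorem \ref{t3.1} gives $\|f\|_{H^{M,\,q}_{\fai,\,L,\,\rm{at}}(\cx)}\ls\|f\|_{H_{\fai,\,L}(\cx)}$. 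Convergence of $\sum_j\lz_j a_j$ to $f$ in $L^2(\cx)$ follows from Proposition \ref{p3.1}, Corollary \ref{c3.1} and the $L^2(\cx)$-boundedness of all operators involved, just as in the proof of Proposition \ref{p4.1}(ii); and the molecules $a_j$ lie in $L^2(\cx)$ because $q\ge2$ (ensured by $q\in([r(\fai)]'I(\fai),p_L')\cap(p_L,p_L')$ after noting $[r(\fai)]'I(\fai)\ge1$, or by interpolating with the $L^q$-bounds) so that partial sums are in $L^2(\cx)$.

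For the converse inclusion $H^{M,\,q}_{\fai,\,L,\,\rm{at}}(\cx)\subset H_{\fai,\,L}(\cx)$, the plan is to observe that every $(\fai,\,q,\,M)_L$-atom is, up to a harmless constant, a $(\fai,\,q,\,M,\,\epz)_L$-molecule for any $\epz$: indeed the support condition $\supp(L^kb)\subset B$ gives $\|(r_B^{-2}L^{-1})^k a\|_{L^q(S_j(B))}=0$ for $j\ge1$, so the molecular bounds hold trivially on all annuli $S_j(B)$ with $j\ge1$ and on $S_0(B)=B$ they follow from Definition \ref{cd4.1}(iii) with $k$ replaced appropriately. Hence an atomic $(\fai,\,q,\,M)_L$-representation is a molecular $(\fai,\,q,\,q,\,M,\,\epz)$-representation with $L^2$-convergence, and Theorem \ref{t4.1} (whose hypotheses are met: $q\in[2,p_L')\subset[2,q_L)$, $q>[r(\fai)]'I(\fai)$, and $M>\frac n2(\frac{q(\fai)}{i(\fai)}-\frac1{p_L'})\ge\frac n2(\frac{q(\fai)}{i(\fai)}+\frac{\tz_1}n-\frac2{nq})$ since $q_L=p_L'$ and $\tz_1=0$ in this setting) yields $\|f\|_{H_{\fai,\,L}(\cx)}\ls\|f\|_{H^{M,\,q}_{\fai,\,L}(\cx)}\ls\|f\|_{H^{M,\,q}_{\fai,\,L,\,\rm{at}}(\cx)}$. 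Finally one checks that $\widetilde H^{M,\,q}_{\fai,\,L,\,\rm{at}}(\cx)\cap L^2(\cx)$ is dense in $H^{M,\,q}_{\fai,\,L,\,\rm{at}}(\cx)$ by truncating the atomic sum (partial sums are in $L^2(\cx)$ because atoms are, using $q\ge2$), and combines everything by a standard density argument to conclude the coincidence with equivalent quasi-norms.

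The main obstacle I expect is the finite-propagation-speed surgery: constructing $\widetilde\pi_{L,\,M}$ so that (a) $\widetilde\pi_{L,\,M}(A_j)$ is supported in a controlled dilate of $B_j$, (b) the difference $(\pi_{L,\,M}-\widetilde\pi_{L,\,M})(A_j)$ still decomposes into a summable family of atoms with the geometric decay controlled by $M$, and (c) all the size/cancellation estimates of Definition \ref{cd4.1}, in particular the ``$b_j\in\mathcal D(L^M)$ with $\supp(L^kb_j)\subset B_j$'' part, are verified simultaneously for the whole range $q\in(p_L,p_L')$; this is where Assumption $\mathrm{(H_2)}$ (reinforced off-diagonal estimates) and the self-adjointness (via the spectral theorem and the $L^2$ theory of $\cos(t\sqrt L)$) are used in an essential way, and it is the step that does not reduce to the already-proven molecular machinery.
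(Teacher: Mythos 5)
Your overall architecture is the same as the paper's: reproduce $f$ from $t^2Le^{-t^2L}f$, decompose the latter into tent-space atoms, use finite propagation speed of $\cos(t\sqrt L)$ to turn the synthesized pieces into genuinely supported atoms, and for the converse observe that $(\fai,q,M)_L$-atoms satisfy the molecular estimate so that $S_L$ is bounded from the atomic space into $L^\fai(\cx)$. The converse half of your argument and the $\Lambda$-bookkeeping are essentially correct.

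The gap is in your localization scheme for the forward inclusion. You propose to keep the heat factor and form $\widetilde\pi_{L,M}$ from $(t^2L)^{M+1}e^{-t^2L}$ composed with a smooth spectral cutoff of $\sqrt L$ at frequency $\sim r_{B_j}^{-1}$. This cannot produce compactly supported kernels: finite propagation speed localizes $F(\sqrt L)$ only when $F$ is the Fourier transform of a compactly supported even function (so that $F(\sqrt L)=\int\psi(s)\cos(s\sqrt L)\,ds$ with $\psi$ supported in $(-C_0^{-1},C_0^{-1})$), whereas a cutoff that is compactly supported \emph{in the spectral variable} has exactly the opposite Paley--Wiener behaviour, and in any case composing with $e^{-t^2L}$ (whose kernel is nowhere compactly supported) destroys any support property the cutoff might have had. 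Consequently $\widetilde\pi_{L,M}(A_j)$ as you define it is not supported in a dilate of $B_j$, so the crucial condition $\supp(L^kb_j)\subset B_j$ of Definition \ref{cd4.1}(ii) fails. The paper's device is to change the reproducing formula itself rather than to modify $\pi_{L,M}$ a posteriori: it writes $f=C_{(\Phi,M)}\int_0^\fz(t^2L)^{M+1}\Phi(t\sqrt L)\,(t^2Le^{-t^2L}f)\,\frac{dt}{t}=\pi_{\Phi,L,M}(t^2Le^{-t^2L}f)$ \emph{exactly}, with $\Phi=\widehat\psi$, $\psi\in C^\fz_c$ even, the normalization \eqref{c4.6} being scale-invariant so the identity holds by the spectral theorem; then \eqref{c4.4} gives $\supp K_{(t^2L)^{M+1}\Phi(t\sqrt L)}\subset\{d(x,y)\le t\}$, which combined with $\supp A_j\subset\widehat{B_j}$ yields $\supp L^kb_j\subset B_j$ with no error term whatsoever. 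Your plan additionally requires absorbing the difference $\pi_{L,M}-\widetilde\pi_{L,M}$ into "a further molecular/atomic series"; this does not close the argument, because molecules are not atoms, and an atomic representation of the error would have to be produced by iterating the whole construction with a norm gain — a step you neither carry out nor reduce to anything already proved. You should replace your $\widetilde\pi_{L,M}$ by the exact Calder\'on reproducing operator $\pi_{\Phi,L,M}$ of \eqref{c4.6}--\eqref{c4.9}; once that is done, the size estimates you sketch (duality against $S_{\Phi,L,k}$ on $L^{q'}(\cx)$, the reinforced off-diagonal estimates of Assumption $\mathrm{(H_2)}$, and the $\fai$-estimate \eqref{c4.10}) go through as in Proposition \ref{cp4.4} and Lemma \ref{cl4.5}.
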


\begin{remark}\label{cr4.1}
When $\cx:=\rn$ and for all $x\in\rn$ and $t\in[0,\fz)$,
$\fai(x,t):=t^pw(x)$ with $p\in(0,1]$ and $w$ a Muckenhoupt weight, Theorem \ref{ct4.3}
is just \cite[Theorem 3.8]{bckyy}.
\end{remark}

To prove Theorem \ref{ct4.3}, we need to introduce some operator $\pi_{\Phi,\,L,\,k}$, which
can be viewed as a retraction operator from the Musielak-Orlicz-tent space
$T_{\fai}(\cx_+)$, introduced in Section \ref{s3},
to $H_{\fai,\,L}(\cx)$.
To this end, we first give some notations. In what follows, for any operator $T$, we
let $K_T$ be its \emph{integral kernel}. Let ${ cos}(t\sqrt{L})$ with $t\in(0,\,\fz)$
be the \emph{cosine function operator} generated by $L$. By
\cite[Theorem 3.4]{cs08plms} (see also \cite[Proposition 3.4]{hlmmy}),
we know that there exists a positive constant $C_0$ such that
\begin{eqnarray}\label{c4.3}
\supp K_{{ cos}(t\sqrt{L})}\subset \{(x,\,y)\in \cx\times\cx:\ d(x,\,y)\le C_0 t \}.
\end{eqnarray}
Moreover, let $\psi\in \rm{C}_{\rm{c}}^\fz(\mathbb{R})$ be even and $\supp\psi\subset
(-C_0^{-1}, C_0^{-1})$, where $C_0$ is as in \eqref{c4.3}. Let $\Phi$
denote the \emph{Fourier transform} of $\psi$. Then, for all $k\in \mathbb{N}$
and $t\in(0,\fz)$, the kernel of $(t^2L)^k\Phi(t\sqrt{L})$ satisfies that
\begin{eqnarray}\label{c4.4}
\supp K_{(t^2L)^k\Phi(t\sqrt{L})}\subset \{(x,y)\in \cx\times \cx:\
d(x,\,y)\leq t\}.
\end{eqnarray}

Now, let $M\in\nn$ with
$M>\frac{n}{2}(\frac{q(\fai)}{i(\fai)}-\frac{1}{p_L'})$, where
$q(\fai)$ and $i(\fai)$ are respectively as in \eqref{2.7}
and \eqref{2.5}.
Assume that $\Phi$ is as in \eqref{c4.4}.  Then, for all
 $k\in\nn$,  $f\in L^2_{b}(\cx_+)$ and $x\in\cx$, the \emph{operator}
 $\pi_{\Phi,\,L,\,k}$ is defined by
\begin{equation*}
\pi_{\Phi,\,L,\,k}(f)(x):=C_{(\Phi,\,k)}\int_0^{\fz}\lf(t^2L\r)^{k+1}
\Phi(t\sqrt{L})(f(\cdot,\,t))(x)\,\frac{dt}{t},
\end{equation*}
where $C_{(\Phi,\,k)}$ is a positive constant such that
\begin{equation}\label{c4.6}
C_{(\Phi,\,k)}\int_0^\fz  t^{2(k+1)}\Phi(t)t^2e^{-t^2} \,\frac{dt}{t}=1.
\end{equation}

Using Minkowski's integral inequality and the quadratic estimates (see also
\cite[(3.14)]{hlmmy}), we
easily see that $\pi_{\Phi,\,L,\,k}$ can be  continuously extended from
$T^2(\cx_+)$ to $L^2(\cx)$.  Moreover, we have the
following boundedness of $\pi_{\Phi,\,L,\,M}$, which can be
viewed as an extension of \cite[Proposition 4.6]{yys4}.

\begin{proposition}\label{cp4.4}
Let $\fai$ be as in Definition \ref{d2.2}, $L$ satisfy Assumptions $\mathrm{(H_1)}$
and $\mathrm{(H_2)}$, $p_L$ be as in Assumption $\mathrm{(H_2)}$, $q\in(p_L,p_L')$ and
$M\in\nn$ satisfying $M>\frac{n}{2}(\frac{q(\fai)}{i(\fai)}-\frac{1}{p_L'})$, where
$q(\fai)$ and $i(\fai)$ are respectively as in \eqref{2.7}
and \eqref{2.5}.  Assume further that $\fai\in\rh_{(p_L'/I(\fai))'}(\cx)$, where $I(\fai)$ is
as in \eqref{2.4}. Then the operator $\pi_{\Phi,\,L,\,M}$, initially defined on the space
$T^{ {b}}_{\fai}(\cx_+)$, extends to a bounded linear operator
from $T_{\fai}(\cx_+)$ to $H_{\fai,\,L}(\cx)$.
\end{proposition}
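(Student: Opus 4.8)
The plan is to mimic the structure of the proof of Proposition \ref{p4.1}(ii), replacing the semigroup $(t^{2m}L)^{M+1}e^{-t^{2m}L}$ by the compactly supported operator $(t^2L)^{M+1}\Phi(t\sqrt L)$ and exploiting the finite-propagation-speed property \eqref{c4.4}. Since $\{e^{-tL}\}_{t>0}$ satisfies the reinforced $(p_L,p_L',1)$ off-diagonal estimates on balls, by Remark \ref{equivalence between off-diagonal estimates on balls and off-diagonal estimates}(ii) and the spectral theorem we may freely use $L^q$ off-diagonal estimates for all $q\in(p_L,p_L')$, as well as the $L^q(\cx)$-boundedness of $G_{L,k}$ and $S_{L,k}$ from Theorems \ref{thm on G{L,m}} and \ref{thm on S_L}; note that here $m=1$. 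As in Proposition \ref{p4.1}, it suffices to first prove the $T^p_2(\cx_+)\to L^p(\cx)$ boundedness of $\pi_{\Phi,\,L,\,M}$ for $p\in(p_L,p_L')$ (this is routine, using Minkowski's inequality, the quadratic estimate $\int_0^\infty\|(t^2L)^{M+1}\Phi(t\sqrt L)g\|_{L^2}^2\,\frac{dt}{t}\lesssim\|g\|_{L^2}^2$, a duality pairing against $L^{p'}(\cx)$, and the off-diagonal estimates; this mirrors \cite[Proposition 4.1(i)]{jy10}), and then to show that $\pi_{\Phi,\,L,\,M}$ maps a $(T_\fai,\fz)$-atom $A$ supported in $\widehat B$ to a fixed multiple of a $(\fai,M)_L$-\emph{atom}, not merely a molecule.

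The key improvement over the molecular argument is the support property: because $\supp K_{(t^2L)^{M+1}\Phi(t\sqrt L)}\subset\{d(x,y)\le t\}$ by \eqref{c4.4}, and $A$ is supported in $\widehat B=\{(y,t):d(y,B^\complement)\ge t\}\subset B\times(0,r_B)$, the function $a:=\pi_{\Phi,\,L,\,M}(A)=C_{(\Phi,M)}\int_0^{r_B}(t^2L)^{M+1}\Phi(t\sqrt L)(A(\cdot,t))\,\frac{dt}{t}$ is supported in $B$ (more precisely in a fixed dilate, which one absorbs into the atom after shrinking $A$'s ball or enlarging constants; one can also argue directly that $\supp a\subset 2B$ and rescale). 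Writing $b:=C_{(\Phi,M)}\int_0^{r_B}t^{2M}\Phi(t\sqrt L)(A(\cdot,t))\,\frac{dt}{t}\cdot(\text{const})$ so that $a=L^M b$ and each $L^k b=C_{(\Phi,M)}\int_0^{r_B}t^{2M}L^k\Phi(t\sqrt L)(A(\cdot,t))\,\frac{dt}{t}$ for $k\in\{0,\dots,M\}$, the same finite-speed property shows $\supp(L^kb)\subset B$ for each such $k$, giving conditions (i) and (ii) of Definition \ref{cd4.1}. For condition (iii), one estimates $\|(r_B^2L)^kb\|_{L^q(\cx)}$ for $q\in(p_L,p_L')$, $q\ge2$: duality against $h\in L^{q'}(\cx)$ with $\|h\|_{L^{q'}}\le1$ reduces matters, via Hölder and Fubini, to $\|\ca(A)\|_{L^q(\cx)}$ times a tent-space norm of $t^{2m}L^\ast{}^{M+1-k}\Phi(t\sqrt L)(h)$ over $\widehat B$, and then Theorem \ref{thm on S_L} applied to $L^\ast$ (which, being self-adjoint, equals $L$) together with $\|A\|_{T^q_2(\cx_+)}\lesssim[\mu(B)]^{1/q}\|\chi_B\|_{L^\fai(\cx)}^{-1}$ yields the bound $r_B^{2M}[\mu(B)]^{1/q}\|\chi_B\|_{L^\fai(\cx)}^{-1}$; the case $q\in(p_L,2)$ follows by Hölder from the $q=2$ case. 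This part is essentially parallel to \eqref{4.4}--\eqref{4.7}, but cleaner because there are no tail terms $S_j(B)$ with $j$ large.

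Having shown $a=\pi_{\Phi,\,L,\,M}(A)$ is (up to a uniform constant) a $(\fai,M)_L$-atom, the passage from atoms to the claimed inequality $\|\pi_{\Phi,\,L,\,M}f\|_{H_{\fai,\,L}(\cx)}\lesssim\|f\|_{T_\fai(\cx_+)}$ for $f\in T^b_\fai(\cx_+)$ proceeds exactly as in Proposition \ref{p4.1}(ii): decompose $f=\sum_j\lambda_j A_j$ via Theorem \ref{t3.1} and Corollary \ref{c3.1} (using Proposition \ref{p3.1} to know $f\in T^{2,b}_2(\cx_+)$ so the sum converges in $L^2$ after applying $\pi_{\Phi,\,L,\,M}$, by the $T^2_2\to L^2$ boundedness), use $S_L(\pi_{\Phi,\,L,\,M}f)\le\sum_j|\lambda_j|S_L(a_j)$ together with the quasi-subadditivity in Lemma \ref{l2.1}(i), and invoke the atomic analogue of \eqref{4.8}, namely $\int_\cx\fai(x,S_L(\lambda a)(x))\,d\mu(x)\lesssim\fai(B,|\lambda|\|\chi_B\|_{L^\fai(\cx)}^{-1})$ for a $(\fai,M)_L$-atom $a$, whose proof is a strictly simpler version of the estimate \eqref{4.9}--\eqref{4.15} (an atom being a special molecule with better decay), using $\fai\in\rh_{(p_L'/I(\fai))'}(\cx)$, Theorem \ref{thm on S_L} with exponent $q$, Lemma \ref{l2.4}(vi) and (viii), and the choice $M>\frac{n}{2}(\frac{q(\fai)}{i(\fai)}-\frac{1}{p_L'})$ to make the series in $j$ summable. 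The main obstacle is bookkeeping the support: one must verify carefully that the finite speed of propagation \eqref{c4.4}, combined with the precise shape of $\widehat B$, really forces $\supp(L^kb)\subset B$ rather than merely a dilate, and handle the harmless dilation constant consistently across all $k\in\{0,\dots,M\}$ — everything else is a routine adaptation of the molecular argument already carried out above.
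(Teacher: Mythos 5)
Your proposal follows essentially the same route as the paper's proof: tent-space atomic decomposition, the finite-propagation-speed property \eqref{c4.4} to upgrade $\pi_{\Phi,\,L,\,M}(A_j)$ from a molecule to a genuine $(\fai,\,M)_L$-atom, duality against $h\in L^{q'}(B_j)$ for the size condition, and the atomic analogue of \eqref{4.8} (the paper's \eqref{c4.10}) proved by splitting into $j\le 4$ and $j\ge 5$ with the off-diagonal estimates and $\fai\in\rh_{(q/p_1)'}(\cx)$. The only small imprecision is that the size estimate requires the $L^{q'}(\cx)$-boundedness of the square function built from $(t^2L)^{k+1}\Phi(t\sqrt L)$ rather than from $(t^{2m}L)^ke^{-t^{2m}L}$, so Theorem \ref{thm on S_L} does not apply verbatim; the paper handles this by invoking the argument of \cite[Lemma 5.3]{bckyy}, and the same off-diagonal techniques do yield it.
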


\begin{proof}
Without loss of generality, we may only prove Proposition \ref{cp4.4} under the
assumption that $q\in[2,\,p_L')$. For the case when $q\in(p_L,\,2)$,
the following proof is still valid, only need to make a few
modifications when using H\"older's
inequality. Let $f\in T^b_{\fai}(\cx_+)$.
From Proposition \ref{p3.1}, Theorem \ref{t3.1},
Corollary \ref{c3.1} and the fact that
$\pi_{\Phi,\,L,\,M}$ is bounded from
$T_2^2(\cx_+)$ to $L^2(\cx)$, we deduce that there
exist a family $\{A_j\}_j$ of $(T_\fai,\,\fz)$-atoms associated respectively
to the balls $\{B_j\}_j$ and $\{\lz_j\}_j\subset \cc$ such that
\begin{eqnarray*}
\pi_{\Phi,\,L,\,M}(f)=\sum_j\lz_j\pi_{\Phi,\,L,\,M}(A_j)=:\sum_j\lz_j a_j
\end{eqnarray*}
in $L^2(\cx)$ and
\begin{eqnarray}\label{c4.7}
\Lambda(\{\lz_j A_j\}_j)\ls \|f\|_{T_{\fai}(\cx_+)},
\end{eqnarray}
where $\Lambda(\{\lz_jA_j\}_j)$ is as in \eqref{3.2}.
Moreover, since the square function
$S_L$ is nonnegative (which means that, for all
$f\in\mathcal{D}(S_L)$ and $x\in\cx$,
$S_L(f)(x)\ge0$), sublinear and $S_L$ is bounded on
$L^2(\cx)$, we know that,
for almost every $x\in\cx$, $S_L(\pi_{\Phi,\,L,\,M}(f))(x)\le
\sum_{j}\lz_j S_L(a_j)(x)$.
This, combined with Lemma \ref{l2.1}(i), implies that, for all $\lz\in(0,\fz)$,
\begin{equation}\label{c4.8}
\int_{\cx}\fai\lf(x,\,\frac{S_L(\pi_{\Phi,\,L,\,M}(f))(x)}{\lz}\r)\,d\mu(x)\ls
\sum_j\int_{\cx}\fai\lf(x,\,|\lz_j|S_L(\lz_j a_j(x)/\lz)\r)\,d\mu(x).
\end{equation}
We first prove that, for each $j$, $a_j$ is a $(\fai,\,M)_L$-atom
associated with $B_j$. Indeed, let
\begin{eqnarray}\label{c4.9}
b_j:=C_{(\Phi,\,M)}\int_0^{\fz} t^{2(M+1)} L\Phi(t\sqrt{L})(A_j(\cdot,\,t))
\,\frac{dt}{t},
\end{eqnarray}
where $C_{(\Phi,\,M)}$ is as in \eqref{c4.6}. From \eqref{c4.4}, we infer that, for
all $k\in\{0,\,\ldots,\,M\}$, $\supp L^k b_j\subset B_j$,
which is the support condition of a $(\fai,\,q,\,M)_L$-atom
as in Definition \ref{cd4.1}.

On the other hand, for any $h\in L^{q'}(B_j)\cap L^2(B_j)$, By \eqref{c4.9},
Assumption ${\rm (H_1)}$, Fubini's theorem, the fact that $\supp
A_j\subset \widehat{B}_j$ and H\"older's inequality, we conclude that, for all
$k\in\{0,\,\ldots,\,M\}$,
\begin{eqnarray*}
&&\lf|\dint_{\cx} \lf(r_{B_j}^2L\r)^k b_j(x)h(x)\,d\mu(x)\r|\\
&& \hs\sim
\lf|\int_0^{\fz} \dint_{\cx}  A_j(x,\,t) \lf(r_{B_j}^2L\r)^k
t^{2(M+1)}L \Phi(t\sqrt{L})h(x)\,\frac{d\mu(x)\,dt}{t}\r|\\
&& \hs\ls r_{B_j}^{2M} \int_0^{\fz} \int_{\cx}
\lf|A_j(x,\,t) \lf(t^2L\r)^{k+1} \Phi(t\sqrt{L})
h(x)\r|\,\frac{d\mu(x)\,dt}{t}\\
&& \hs\ls r_{B_j}^{2M} \lf\|\mathcal{A}(A_j)\r\|_{L^q(\cx)}
\lf\|\lf[\int_{\Gamma(x)}  \lf| \lf(t^2L\r)^{k+1} \Phi(t\sqrt{L}) h(x)\r|^2
\,\frac{d\mu(x)\,dt}{V(x,t)t}\r]^{\frac{1}{2}}\r\|_{L^{q'}(\cx)}.
\end{eqnarray*}

Following the same argument as that used in the proof of \cite[Lemma 5.3]{bckyy},
we easily see that, for all $q'\in (p_L,\,p_L')$,
$\pi_{\Phi,\,L,\,k}$ is bounded on $L^{q'}(\cx)$. This, together with
the arbitrariness of $h$ and
the fact that $A_j$ is a $(T_\fai,\,\fz)$-atom associated with $B_j$, implies that
\begin{eqnarray*}
\lf\|\lf(r_{B_j}^2L\r)^k b_j\r\|_{L^q(\cx)}\ls r_{B_j}^{2M}[\mu(B)]^{1/q}
\|\chi_{B_j}\|_{L^\fai(\cx)}^{-1},
\end{eqnarray*}
which is the size condition of a $(\fai,\,q,\,M)_L$-atom as in Definition \ref{cd4.1}(iii).
Thus, we conclude that, for each $j$, $a_j$ is a $(\fai,\,M)_L$-atom
associated with $B_j$.

We claim that, to finish the proof of Proposition \ref{cp4.4},
 it suffices to show that, for any $\lz\in\cc$ and $(\fai,\,M)_L$-atom
 $a$ associated with the ball $B\subset\cx$,
\begin{eqnarray}\label{c4.10}
\dint_{\cx}\fai \lf(x,\,S_L(\lz a)(x)\r)\,d\mu(x)\ls \fai\lf(B,\,\frac{|\lz|}
{\|\chi_B\|_{L^\fai(\cx)}}\r).
\end{eqnarray}
Indeed, if \eqref{c4.10} holds, then by \eqref{c4.8},
we see immediately that, for all $f\in T_\fai^b (\cx_+ )$ and $\lz\in
(0,\,\fz)$,
\begin{eqnarray*}
\int_{\cx}\fai \lf(x,\,\frac{S_L(\pi_{\Phi,\,L,\,M}(f))(x)}{\lz}\r)\,d\mu(x)
\ls\sum_{j}\fai\lf(B_j,\,\frac{|\lz_j|}{\lz\|\chi_{B_j}\|_{L^\fai(\cx)}}\r),
\end{eqnarray*}
which, together with \eqref{c4.7}, implies that $\|\pi_{\Phi,\,L,\,M}f\|\ls
\Lambda\lf(\{\lz_j A_j\}_j\r)\ls\|f\|_{T_\fai(\cx_+)}$. Thus,
$\pi_{\Phi,\,L,\,M}$ can be extended to a bounded operator from
$T_\fai(\cx_+)$ to $H_{\fai,\,L}(\cx)$. This proves the claim.
By $M>\frac{n}{2}(\frac{q(\fai)}{i(\fai)}-\frac{1}{p_L'})$,
$\fai\in\rh_{(p_L'/I(\fai))'}(\cx)$ and Lemma \ref{l2.4}(iv),
we know that there exist $q_0\in(q(\fai),\fz)$, $p_2\in(0,i(\fai))$,
$p_1\in [I(\fai),\,1]$ and $q\in (I(\fai)[r(\fai)]',\,p_L')\cap (p_L,\,p_L')$
such that $\fai$ is of uniformly upper type $p_1$ and lower type $p_2$, $\fai\in
\aa_{q_0}(\cx)$, $M>\frac{n}{2}(\frac{q_0}{p_2}-\frac{1}{q})$ and
\begin{eqnarray}\label{c4.11}
\fai\in \rh_{(q/p_1)'}(\cx).
\end{eqnarray}
Now, for $j\in\{0,\,\ldots,\,4\}$, similar to the proof of \eqref{4.13}, we conclude that
\begin{eqnarray*}
&&\int_{S_j(B)}\fai\lf(x,\,S_L(\lz a)(x)\r)\,d\mu(x)\ls\fai
\lf(B,\,\frac{|\lz|} {\|\chi_B\|_{L^\fai(\cx)}}\r).
\end{eqnarray*}

Now, we turn to the case when $j\in\nn$ and $j\ge 5$.
From the fact that $\fai$ is of uniformly upper type $p_1$ and
lower type $p_2$, we deduce that
\begin{eqnarray*}
&&\dint_{S_j(B)}\fai\lf(x,\,S_L(\lz a)(x)\r)\,d\mu(x)\\
&&\nonumber\hs \ls \dint_{S_j(B)} \lf[S_L(a)(x)\|\chi_{B}\|_{L^\fai(\cx)}\r]^{p_1}
\fai\lf(x,\, \frac{|\lz|}{\|\chi_B\|_{L^\fai(\cx)}}\r)\,d\mu(x)\\
&&\nonumber\hs\hs+
\dint_{S_j(B)} \lf[S_L(a)(x)\|\chi_{B}\|_{L^\fai(\cx)}\r]^{p_2}
\fai\lf(x,\, \frac{|\lz|}{\|\chi_B\|_{L^\fai(\cx)}}\r)\,d\mu(x)=:
\mathrm{I}_j+\mathrm{J}_j.
\end{eqnarray*}
To estimate $\mathrm{I}_j$, let $\wz{\mathrm{I}}_j:=
\|S_L(a)\|_{L^q(S_j(B))}^{q}$. By H\"older's inequality
and \eqref{c4.11}, we find that
\begin{eqnarray}\label{c4.15}
\hs\hs\mathrm{I}_j&&\ls \|\chi_{B}\|_{L^\fai(\cx)}^{p_1}
\lf\|S_L(a)\r\|_{L^q(S_j(B))}^{p_1}
\lf\{\int_{S_j(B)} \lf[\fai\lf(x,\, \frac{|\lz|}
{\|\chi_B\|_{L^\fai(\cx)}}\r)\r]^{(\frac{q}{p_1})'}\,d\mu(x)\r\}^{\frac{1}{(\frac{q}{p_1})'}}
\\&& \nonumber\ls \lf[\mu(2^jB)\r]^{-\frac{p_1}{q}}
\|\chi_{B}\|_{L^\fai(\cx)}^{p_1}\wz{\mathrm{I}}_j^{\frac{p_1}{q}}
\fai\lf(S_j(B),\,\frac{|\lz|}{\|\chi_B\|_{L^\fai(\cx)}}\r).
\end{eqnarray}
To estimate $\wz{\mathrm{I}}_j$, we write $\wz{\mathrm{I}}_j$ into
\begin{eqnarray}\label{c4.16}
\wz{\mathrm{I}}_j\ls &&\int_{S_j(B)}\lf[\int_0^{\frac{d(x,\,x_B)}{4}}
\dint_{B(x,t)}\lf|t^2Le^{-t^2L}(a)(y)\r|^2 \, \frac{d\mu(y)\,dt}{t
\,V(x,t)}\r]^{\frac{q}{2}}\,d\mu(x)\\
&& \nonumber\hs+\int_{S_j(B)}\lf[ \int_{\frac{d(x,\,x_B)}{4}}^\fz
\dint_{B(x,t)} \cdots \,\frac{d\mu(y)\,dt}{t
\,V(x,t)}\r]^{\frac{q}{2}}\,d\mu(x)=:\mathcal{A}_j+\mathcal{B}_j.
\end{eqnarray}
We first estimate $\mathcal{A}_j$. For $j\ge5$, let
\begin{eqnarray*}
G_j(B):=\lf\{y\in\cx:\ \text{there exists}\ x\in S_j(B) \ \text{such that}\
d(y,x)<\frac{1}{4}d(x,x_B)\r\},
\end{eqnarray*}
where $x_B$ denotes the center of $B$. Moreover,
by the triangle inequality, we easily see that, for all $y\in G_j(B)$, $d(y,x_B)\le
2^{k+1}r_B$ and $d(y,x_B)\ge 2^{k-2}r_B$. Thus, $G_j(B)\subset \cup_{i=j-1}^{i+1}
S_i(B)=:\wz{\wz{S}}_j(B)$. This, combining with H\"older's inequality,
Fubini's theorem, the definition of the function $b$ as in \eqref{c4.9}, Assumption
$\mathrm{(H_2)}$ and the fact that $a$ is a $(\fai,\,M)_L$-atom, implies that
\begin{eqnarray}\label{c4.17}
\hs\hs\mathcal{A}_j&&\ls(2^jr_B)^{\frac{q}{2}-1}
\int_{S_j(B)} \lf[\int_0^{\frac{d(x,\,x_B)}{4}}
\int_{B(x,t)}\lf|t^2Le^{-t^2L}(a)(y)\r|^q \, \frac{d\mu(y)\,dt}{t^{\frac{q}{2}}
\,V(x,t)}\r]\,d\mu(x)\\
&& \nonumber\ls (2^jr_B)^{\frac{q}{2}-1}\int_0^{2^{j-2}r_B}
\int_{\wz{\wz S}_j(B)}  \lf|\lf(t^2L\r)^{M+1} e^{-t^2L}
(b)(y)\r|^q \,\frac{d\mu(y)\,dt}{t^{q(\frac{1}{2}+2M)}}
\\ && \nonumber\ls (2^jr_B)^{\frac{q}{2}-1} \|b\|_{L^q(B)}^q
\dint_0^{2^{j-2}r_B} \exp\lf\{ -C\frac{[2^jr_B]^2}{t^2}\r\} \,
\frac{dt}{t^{q(\frac{1}{2}+2M)}}
\\ && \nonumber\ls 2^{-2jqM} \mu(B) \|\chi_{B}\|_{L^\fai(\cx)}^{-q}.
\end{eqnarray}
The estimate of $\mathcal{B}_j$ is similar to that of $\mathcal{A}_j$ and, via
 replacing Assumption $\mathrm{(H_2)}$ by the $L^q(\cx)$-boundedness
of the family of operators $\{(t^2L)^Me^{-t^2L}\}_{t>0}$, we conclude that
\begin{eqnarray*}
\hs\hs\mathcal{B}_j&&\ls (2^jr_B)^{-4M(\frac{q}{2}-1)}
\int_{S_j(B)} \lf[\int_{\frac{d(x,x_B)}{4}}^\fz
\int_{B(x,t)} \lf|t^2Le^{-t^2L}(b)(y)\r|^q \,\frac{d\mu(y)\,dt}{t^{4M+1}
\,V(x,t)}\r]\,d\mu(x)\\
&& \nonumber\ls (2^jr_B)^{-4M(\frac{q}{2}-1)}
\|b\|_{L^{q}(\cx)}^{q}\int_{2^{j-3}r_B}^\fz
\,\frac{dt}{t^{4M+1}}\ls 2^{-2jqM} \mu(B)\|\chi_B\|_{L^\fai(\cx)}^{-q},
\end{eqnarray*}
which, together with \eqref{c4.16} and \eqref{c4.17}, shows immediately that
\begin{eqnarray*}
\wz{\mathrm{I}}_j\ls 2^{-2jqM} \mu(B)\|\chi_B\|_{L^\fai(\cx)}^{-q}.
\end{eqnarray*}
Thus, from this, \eqref{c4.15} and Lemma \ref{l2.4}(vii), we deduce that
\begin{eqnarray}\label{c4.18}
\hs\hs\mathrm{I}_j&&\ls 2^{-2jp_1M} \lf[\mu(2^jB)\r]^{-\frac{p_1}{q}}
 \mu(B) \fai\lf(S_j(B),\,\frac{|\lz|} {\|\chi_B\|_{L^\fai(\cx)}}\r)\\
&&\nonumber \ls 2^{-j[2p_1M-n(q_0-\frac{p_1}{q})]}
\fai\lf(B,\, \frac{|\lz|} {\|\chi_{B}\|_{L^\fai(B)}}\r)\sim
 2^{-j\epsilon_0} \fai\lf(B,\, \frac{|\lz|} {\|\chi_{B}\|_{L^\fai(B)}}\r),
\end{eqnarray}
where $\epsilon_0:=2p_1M-n(q_0-{p_1}/{q})$.
The estimate of $\mathrm{J}_j$ is similar to that of $\mathrm{I}_j$.
We only need to point out that, from Lemma \ref{l2.4}(ii) and the fact that
$(\frac{q}{p_2})'<(\frac{q}{p_1})'$, it follows that $\fai\in \rh_{(\frac{q}{p_2})'}(\cx)$.
Thus, we conclude that
\begin{eqnarray}\label{c4.19}
\mathrm{J}_j\ls 2^{-j[2p_2M-n(q_0-\frac{p_2}{q})]} \fai\lf(B,\, \frac{|\lz|}
{\|\chi_{B}\|_{L^\fai(\cx)}}\r)\sim2^{-j\wz{\epsilon}_0} \fai\lf(B,\, \frac{|\lz|}
{\|\chi_{B}\|_{L^\fai(\cx)}}\r),
\end{eqnarray}
where $\wz{\epsilon}_0:=2p_2M-n(q_0-{p_2}/{q})$.
Let $\wz{\wz\epsilon}_0:=\min\{\epsilon_0,\,\wz{\epsilon}_0\}>0$.
Combining \eqref{c4.18} and \eqref{c4.19}, we immediately conclude that
\begin{eqnarray}\label{c4.20}
\int_{\cx}\fai\lf(x,\,S_L(\lz a)(x)\r)\,d\mu(x)&&\ls \sum_{j\in\nn}
2^{-j\wz{\wz{\epsilon}}_0} \fai\lf(B,\, \frac{|\lz|}
{\|\chi_{B}\|_{L^\fai(\cx)}}\r)\\
&& \nonumber\ls\fai\lf(B,\,\frac{|\lz|}{\|\chi_{B}\|_{L^\fai(\cx)}}\r),
\end{eqnarray}
which completes the proof of \eqref{c4.10} and hence Proposition
\ref{cp4.4}.
\end{proof}

Before turning to the proof of Theorem \ref{ct4.3}, we introduce
a sufficient condition which guarantees a given operator to be bounded on
the atomic Musielak-Orlicz-Hardy space.

\begin{lemma}\label{cl4.5}
Let $\fai$ be as in Definition \ref{d2.2}, $L$ satisfy Assumptions $\mathrm{(H_1)}$
and $\mathrm{(H_2)}$, $p_L$ be as in Assumption $\mathrm{(H_2)}$, $q\in(p_L,p_L')$ and
$M\in\nn$ satisfying $M>\frac{n}{2}(\frac{q(\fai)}{i(\fai)}-\frac{1}{p_L'})$, where
$q(\fai)$ and $i(\fai)$ are respectively as in \eqref{2.7}
and \eqref{2.5}. Suppose that $T$ is a linear (resp. nonnegative
sublinear) operator which maps $L^2(\cx)$ continuously into weak-$L^2(\cx)$.
If there exists a positive constant $C$ such that, for any $\lz\in\cc$ and $(\fai,\,
q,\,M)_L$-atom $a$ associated with the ball $B$,
\begin{eqnarray}\label{c4.21}
\int_{\cx}\fai\lf(x,\, T(\lz a)(x)\r)\,d\mu(x)\le C \fai\lf(B,\,\frac{|\lz|}
{\|\chi_B\|_{L^\fai(\cx)}}\r),
\end{eqnarray}
then $T$ can extend to be a bounded linear (resp. sublinear) operator from
$H^{M,\,q}_{\fai,\,L,\,\rm{at}}(\cx)$ to $L^\fai(\cx)$.
\end{lemma}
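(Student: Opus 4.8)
The plan is to run the standard atomic-superposition argument, entirely parallel to the proof of Corollary \ref{sufficient condition for molecule}. Let $f\in\wz{H}^{M,\,q}_{\fai,\,L,\,\rm{at}}(\cx)$ and fix an atomic $(\fai,\,q,\,M)_L$-representation $f=\sum_j\lz_j a_j$, where each $a_j$ is a $(\fai,\,q,\,M)_L$-atom associated with a ball $B_j\subset\cx$ and the sum converges in $L^2(\cx)$. The first step is to establish, for $\mu$-almost every $x\in\cx$, the pointwise domination $|Tf(x)|\le\sum_j|\lz_j|\,|Ta_j(x)|$. Since $T$ maps $L^2(\cx)$ continuously into weak $L^2(\cx)$ and $\sum_{j=1}^N\lz_j a_j\to f$ in $L^2(\cx)$, the partial sums $T(\sum_{j=1}^N\lz_j a_j)$ converge to $Tf$ in weak $L^2(\cx)$, hence, passing to a subsequence $\{N_k\}_k$, $\mu$-almost everywhere; combining this with the sublinearity (resp. linearity) and homogeneity of $T$, which give $|T(\sum_{j=1}^{N_k}\lz_j a_j)|\le\sum_{j=1}^{N_k}|\lz_j|\,|Ta_j|$, and letting $k\to\fz$, the claimed bound follows (it is valid even when the right-hand series is infinite).

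Next, I would fix $\lz\in(0,\fz)$ and invoke the homogeneity of $T$, the monotonicity of $\fai(x,\cdot)$, and the uniform $\sigma$-quasi-subadditivity of $\fai$ (Lemma \ref{l2.1}(i)) to estimate
$$\int_{\cx}\fai\lf(x,\frac{|Tf(x)|}{\lz}\r)\,d\mu(x)\ls\sum_j\int_{\cx}\fai\lf(x,\lf|T\lf(\frac{\lz_j}{\lz}a_j\r)(x)\r|\r)\,d\mu(x).$$
Applying the hypothesis \eqref{c4.21} with the scalar $\lz_j/\lz\in\cc$ and the atom $a_j$ bounds the $j$th summand by $C\fai(B_j,\frac{|\lz_j|}{\lz\|\chi_{B_j}\|_{L^\fai(\cx)}})$, so that
$$\int_{\cx}\fai\lf(x,\frac{|Tf(x)|}{\lz}\r)\,d\mu(x)\ls\sum_j\fai\lf(B_j,\frac{|\lz_j|}{\lz\|\chi_{B_j}\|_{L^\fai(\cx)}}\r).$$
For the value of $\lz$ chosen below the right-hand side is finite, which retroactively guarantees that $\sum_j|\lz_j|\,|Ta_j|<\fz$ $\mu$-a.e., so there is no circularity in the first step.

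It then remains to select $\lz$. For any $\lz_0>\blz(\{\lz_j a_j\}_j)$ one has $\sum_j\fai(B_j,\frac{|\lz_j|}{\lz_0\|\chi_{B_j}\|_{L^\fai(\cx)}})\le1$, and since $\fai$ is of uniformly lower type $p_2$ for some $p_2\in(0,1]$ (Definition \ref{d2.2}(iii)), writing $\frac{|\lz_j|}{\lz\|\chi_{B_j}\|_{L^\fai(\cx)}}=\frac{\lz_0}{\lz}\cdot\frac{|\lz_j|}{\lz_0\|\chi_{B_j}\|_{L^\fai(\cx)}}$ for $\lz\ge\lz_0$ gives $\sum_j\fai(B_j,\frac{|\lz_j|}{\lz\|\chi_{B_j}\|_{L^\fai(\cx)}})\ls(\lz_0/\lz)^{p_2}$. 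Combining this with the previous display and taking $\lz=C_0\lz_0$ for a suitable absolute constant $C_0$ forces $\int_{\cx}\fai(x,|Tf(x)|/\lz)\,d\mu(x)\le1$, whence $\|Tf\|_{L^\fai(\cx)}\le C_0\lz_0$ by the definition of the Luxembourg quasi-norm of $L^\fai(\cx)$. Letting $\lz_0\downarrow\blz(\{\lz_j a_j\}_j)$ and then taking the infimum over all atomic $(\fai,\,q,\,M)_L$-representations of $f$ yields $\|Tf\|_{L^\fai(\cx)}\ls\|f\|_{H^{M,\,q}_{\fai,\,L,\,\rm{at}}(\cx)}$ for all $f\in\wz{H}^{M,\,q}_{\fai,\,L,\,\rm{at}}(\cx)$; since $\wz{H}^{M,\,q}_{\fai,\,L,\,\rm{at}}(\cx)$ is dense in $H^{M,\,q}_{\fai,\,L,\,\rm{at}}(\cx)$ by Definition \ref{cd4.2} and $L^\fai(\cx)$ is complete, $T$ extends to a bounded linear (resp. sublinear) operator from $H^{M,\,q}_{\fai,\,L,\,\rm{at}}(\cx)$ to $L^\fai(\cx)$.

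The step I expect to be the main obstacle is the $\mu$-a.e. pointwise domination $|Tf|\le\sum_j|\lz_j|\,|Ta_j|$: one must pass from weak $L^2(\cx)$ convergence of the atomic partial sums to a.e. convergence along a subsequence, handle the passage to the limit when the dominating series may a priori be infinite, and then close the loop via the finiteness estimate produced in the second step. The remaining manipulations — quasi-subadditivity of $\fai$, the uniformly-lower-type rescaling, and the density argument — are routine.
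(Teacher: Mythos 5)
Your argument is correct and is precisely the standard atomic-superposition argument that the paper invokes by reference (it omits the proof, pointing to \cite[Lemma 5.6]{yys4}, \cite[Lemma 5.1]{jy10} and \cite[Lemma 4.1]{bckyy}, all of which run exactly this way): a.e. pointwise domination of $|Tf|$ by $\sum_j|\lz_j||Ta_j|$ via weak-$L^2$ convergence of partial sums along a subsequence, the uniform $\sigma$-quasi-subadditivity of $\fai$ together with the atomwise hypothesis \eqref{c4.21}, the uniformly-lower-type rescaling to pass from the modular bound to the Luxembourg quasi-norm, and a density argument. No gaps.
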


The proof of Lemma \ref{cl4.5} is similar to that of \cite[Lemma 5.6]{yys4}.
See also the proof of \cite[Lemma 5.1]{jy10} and \cite[Lemma 4.1]{bckyy}.
We omit the details here.

Now we prove Theorem \ref{ct4.3} by using Proposition \ref{cp4.4} and Lemma \ref{cl4.5}.

\begin{proof}[Proof of Theorem \ref{ct4.3}]
By Definition \ref{cd4.2}, we see that, to show Theorem \ref{ct4.3},
it suffices to prove that
\begin{eqnarray}\label{c4.22}
L^2(\cx)\cap
H_{\fai,\,L}(\cx)=L^2(\cx)\cap H^{M,\,q}_{\fai,\,L,\,\rm{at}}(\cx)
\end{eqnarray}
with equivalent norms. We divide the proof of \eqref{c4.22} into
the following two steps.

{\bf Step 1.} We first prove the inclusion $L^2(\cx)\cap
H_{\fai,\,L}(\cx)\subset L^2(\cx)\cap H^{M,\,q}_{\fai,\,L,\,\rm{at}}(\cx)$.
For any $f\in L^2(\cx)\cap H_{\fai,\,L}(\cx)$, by the bounded functional
calculus in $L^2(\cx)$, we know that there exists a positive constant
$C_{(\Phi,\,M)}$ such that
\begin{eqnarray*}
f=C_{(\Phi,\,M)} \int_0^\fz \lf(t^2L\r)^{M+1}
\Phi(t\sqrt{L})t^2Le^{-t^2L}f\,\frac{dt}{t}=\pi_{\Phi,\,L,\,M}
\lf(t^2L e^{-t^2L} f\r)
\end{eqnarray*}
in $L^2(\cx)$.  Moreover, from the fact that $t^2L e^{-t^2L}f\in
T_\fai(\cx_+)$, we deduce that there exist
$\{\lz_j\}_j\subset\cc$ and $\{A_j\}_j$
of $(T_\fai,\,\fz)$-atoms, respectively, associated with $\{B_j\}_j$ such that
\begin{eqnarray*}
t^2L e^{-t^2L}f=\sum_j \lz_j A_j
\end{eqnarray*}
in $T_\fai(\cx_+)\cap T_2^2(\cx_+)$ and
$\Lambda(\{\lz_jA_j\}_j)\ls\|t^2Le^{-t^2L}f\|_{T_\fai(\cx_+)}$,
which, together with Proposition \ref{cp4.4}, implies that
\begin{eqnarray*}
f=\pi_{\Phi,\,L,\,M}\lf(t^2L e^{-t^2L}f\r)=\dsum_{j}\lz_j \pi_{\Phi,\,L,\,M}(A_j)
\end{eqnarray*}
in $L^2(\cx)$. This, together with the fact that $\pi_{\Phi,\,L,\,M}(A_j)$
is a $(\fai,\,q,\,M)_L$-atom associated with $B_j$, immediately shows that
$f\in L^2(\cx)\cap H^{M,\,q}_{\fai,\,L,\,\rm{at}}(\cx)$. Thus, $L^2(\cx)\cap
H_{\fai,\,L}(\cx)\subset L^2(\cx)\cap H^{M,\,q}_{\fai,\,L,\,\rm{at}}(\cx)$.

{\bf Step 2.} We now prove the inclusion $ L^2(\cx)
\cap H^{M,\,q}_{\fai,\,L,\,\rm{at}}(\cx) \subset L^2(\cx)\cap H_{\fai,\,L}(\cx)$.
By \eqref{c4.20}, we know that, for any $\lz\in\cc$ and $(\fai,\,q,\,M)_L$-atom
$a$ associated with the ball $B$, \eqref{c4.21} holds, with $S_L$ in place of
$T$. Thus, by Lemma \ref{cl4.5}, we conclude that $S_L$ is bounded from
$H^{M,\,q}_{\fai,\,L,\,\rm{at}}(\cx)$ to $L^\fai(\cx)$, which immediately
implies that, for all $f\in L^2(\cx)\cap H^{M,\,q}_{\fai,\,L,\,\rm{at}}(\cx)$,
$\|f\|_{H_{\fai,\,L}(\cx)}\ls \|f\|_{H^{M,\,q}_{\fai,\,L,\,\rm{at}}(\cx)}$. This
shows that $ L^2(\cx)
\cap H^{M,\,q}_{\fai,\,L,\,\rm{at}}(\cx) \subset L^2(\cx)\cap H_{\fai,\,L}(\cx)$,
which, together with Step 1, completes the proof of \eqref{c4.22} and hence
Theorem \ref{ct4.3}.
\end{proof}

Now, we consider the question of replacing the role of $L^2(\cx)$ norm
by the more general $L^s(\cx)$ norm for $s\in(p_L,\,p_L')$, in the definition of the atomic
Musielak-Orlicz-Hardy space $H^{M,\,q}_{\fai,\,L,\,\rm{at}}(\cx)$. We also introduce
the following notion of the $L^s(\cx)$-adapted atomic Musielak-Orlicz-Hardy space
$\wz{H}^{M,\,q,\,s}_{\fai,\,L,\,\rm{at}}(\cx)$.

\begin{definition}\label{cd4.8}
Let $\fai$ be as in Definition \ref{d2.2}, $L$ satisfy Assumptions $\mathrm{(H_1)}$
and $\mathrm{(H_2)}$, and $p_L$ be as in Assumption $\mathrm{(H_2)}$.
Assume that $q,\,s\in(p_L,\,p_L')$ and $M\in\nn$. For $f\in L^2(\cx)$,
$f=\sum_j\lz_ja_j$ is called an \emph{atomic $(\fai,\,q,\,s,\,M)_L$-representation} of $f$,
if each $a_j$ is a $(\fai,\,q,\,M)_L$-atom associated with the
ball $B_j\subset\cx$, the summation converges in $L^s(\cx)$ and $\{\lz_j\}_j\subset\cc$
satisfies that
$$\sum_j\fai\lf(B_j,\,\frac{|\lz_j|}{\|\chi_{B_j}\|_{L^\fai(\cx)}}\r)<\fz.$$
Let
\begin{eqnarray*}
\wz{H}^{M,\,q,\,s}_{\fai,\,L,\,\rm{at}}(\cx):=&&\lf\{f:\ f \ \text{has an atomic}
\ (\fai,\,q,\,s,\,M)_L\text{-representation}\r\}
\end{eqnarray*}
with the \emph{quasi-norm} given by
\begin{eqnarray*}
&&\|f\|_{H^{M,\,q}_{\fai,\,L,\,\rm{at}}(\cx)}\\
&&\hs:=\inf\lf\{\blz(\{\lz_j a_j\}_j):\
f=\sum_j\lz_j a_j \ \text{is an atomic}
\ (\fai,\,q,\,s,\,M)_L\text{-representation}\r\},
\end{eqnarray*}
where the infimum is taken over all the atomic
$(\fai,\,q,\,s,\,M)_L$-representations of $f$ and
\begin{eqnarray*}
\Lambda\lf(\lf\{\lz_j a_j\r\}_{j}\r):=\inf \lf\{\lz\in(0,\,\fz):\
\sum_{j}\fai\lf(B_j,\,\frac{|\lz_j|}{\lz \|\chi_{B_j}\|_{L^\fai(\cx)}}\r)\le 1\r\}.
\end{eqnarray*}

The \emph{atomic Musielak-Orlicz-Hardy space} $H^{M,\,q,\,s}_{\fai,\,L,\,\rm{at}}(\cx)$
is then defined as the completion of $\wz{H}^{M,\,q,\,s}_{\fai,\,L,\,\rm{at}}(\cx)$
with respect to the \emph{quasi-norm} $\|\cdot\|_{H^{M,\,q,\,s}_{\fai,\,L,\,\rm{at}}(\cx)}$.
\end{definition}

From its definition, we know that the space $H^{M,\,q}_{\fai,\,L,\,\rm{at}}(\cx)$
as in Definition \ref{cd4.2}
can be viewed as the $L^2(\cx)$-adapted atomic Musielak-Orlicz-Hardy
space $H^{M,\,q,\,2}_{\fai,\,L,\,\rm{at}}(\cx)$.
Moreover, we have the following equivalence between
$H^{M,\,q,\,s}_{\fai,\,L,\,\rm{at}}(\cx)$ and $H^{M,\,q}_{\fai,\,L,\,\rm{at}}(\cx)$.

\begin{theorem}\label{ct4.9}
Let $\fai$ be as in Definition \ref{d2.2}, $L$ satisfy Assumptions $\mathrm{(H_1)}$
and $\mathrm{(H_2)}$, $p_L$ be as in Assumption $\mathrm{(H_2)}$ and $M\in\nn$
satisfying $M>\frac{n}{2}(\frac{q(\fai)}{i(\fai)}-\frac{1}{p_L'})$, where
$q(\fai)$ and $i(\fai)$ are respectively as in \eqref{2.7}
and \eqref{2.5}. Then, for all $s\in (p_L,\,p_L')$ and $q\in(I(\fai)[r(\fai)]',p_L')$, where
$I(\fai)$ and $r(\fai)$ are respectively as in \eqref{2.4} and \eqref{2.8},
$H^{M,\,q,\,s}_{\fai,\,L,\,\rm{at}}(\cx)$ and $H^{M,\,q}_{\fai,\,L,\,\rm{at}}(\cx)$ coincide
with equivalent quasi-norms.
\end{theorem}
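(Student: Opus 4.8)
The plan is to reduce the equivalence of $H^{M,\,q,\,s}_{\fai,\,L,\,\rm{at}}(\cx)$ and $H^{M,\,q}_{\fai,\,L,\,\rm{at}}(\cx)=H^{M,\,q,\,2}_{\fai,\,L,\,\rm{at}}(\cx)$ to a statement about dense subspaces, exactly as in the proofs of Theorems \ref{t4.1} and \ref{t4.2}. Since for both values $2$ and $s$ of the convergence exponent the underlying atoms are the same $(\fai,\,q,\,M)_L$-atoms, the two quasi-norms agree on any $f$ that admits a representation converging in \emph{both} $L^2(\cx)$ and $L^s(\cx)$; in particular they agree on $L^2(\cx)\cap L^s(\cx)\cap H^{M,\,q}_{\fai,\,L,\,\rm{at}}(\cx)$. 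So the heart of the matter is to show that this common space is dense in each of $\wz{H}^{M,\,q}_{\fai,\,L,\,\rm{at}}(\cx)$ and $\wz{H}^{M,\,q,\,s}_{\fai,\,L,\,\rm{at}}(\cx)$, after which a standard completion argument (as used at the end of the proof of Theorem \ref{t4.1}) finishes the proof.

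First I would treat the direction $H^{M,\,q}_{\fai,\,L,\,\rm{at}}(\cx)\hookrightarrow H^{M,\,q,\,s}_{\fai,\,L,\,\rm{at}}(\cx)$. Given $f\in\wz H^{M,\,q}_{\fai,\,L,\,\rm{at}}(\cx)$ with atomic representation $f=\sum_j\lz_j a_j$ converging in $L^2(\cx)$, I would invoke Theorem \ref{ct4.3} to write $f\in L^2(\cx)\cap H_{\fai,\,L}(\cx)$, and then factor $f=\pi_{\Phi,\,L,\,M}(t^2Le^{-t^2L}f)$ via the bounded $H_\fz$-functional calculus as in Step 1 of the proof of Theorem \ref{ct4.3}. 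Truncating the tent-space function $g:=t^2Le^{-t^2L}f$ to $g\chi_{O_N}$ with $O_N$ as in the proof of Theorem \ref{t4.2}, setting $f_N:=\pi_{\Phi,\,L,\,M}(g\chi_{O_N})$, and using that $g\in T^2_2(\cx_+)$ together with Theorem \ref{thm on S_L} to get $g\in T^s_2(\cx_+)$ (since $S_L$ is bounded on $L^s(\cx)$ for $s\in(p_L,p_L')\subset(p_L,q_L)$), I obtain $g\chi_{O_N}\in T^{2,\,b}_2(\cx_+)\cap T^{s,\,b}_2(\cx_+)$, so by Proposition \ref{p4.1}(i) (applied with $p=2$ and with $p=s$) $f_N\in L^2(\cx)\cap L^s(\cx)$; moreover $f_N\in\wz H^{M,\,q}_{\fai,\,L,\,\rm{at}}(\cx)$ with a representation arising from the atomic decomposition of $g\chi_{O_N}$, and $\|f-f_N\|_{H_{\fai,\,L}(\cx)}\ls\|g\chi_{(O_N)^\complement}\|_{T_\fai(\cx_+)}\to0$, hence $f_N\to f$ in $H^{M,\,q}_{\fai,\,L,\,\rm{at}}(\cx)$ by Theorem \ref{ct4.3}. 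This yields the desired density. The reverse inclusion is handled symmetrically: starting from $f\in\wz H^{M,\,q,\,s}_{\fai,\,L,\,\rm{at}}(\cx)$ with representation converging in $L^s(\cx)$, the partial sums $f_N=\sum_{j=1}^N\lz_j a_j$ already lie in $L^2(\cx)$ (each $(\fai,\,q,\,M)_L$-atom is in $L^q(\cx)\cap L^2(\cx)$ when $q\ge2$, and one uses H\"older and the support/size conditions when $q<2$), so $f_N\in L^2(\cx)\cap L^s(\cx)\cap H^{M,\,q}_{\fai,\,L,\,\rm{at}}(\cx)$, and $\|f-f_N\|_{H^{M,\,q,\,s}_{\fai,\,L,\,\rm{at}}(\cx)}\to0$ is immediate from $\blz(\{\lz_j a_j\}_{j>N})\to0$.

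The main obstacle I anticipate is bookkeeping with the two norms on the \emph{same} representation: one must check carefully that the Aoki--Rolewicz functional $\blz$ (equivalently $\Lambda$) controlling the quasi-norm is literally unchanged when we merely change the mode of convergence, so that the inequalities $\|f\|_{H^{M,\,q,\,s}_{\fai,\,L,\,\rm{at}}(\cx)}\le\|f\|_{H^{M,\,q}_{\fai,\,L,\,\rm{at}}(\cx)}$ and its reverse both hold on the dense common subspace, and then pass to completions without circularity. A subsidiary point to verify, for the case $q\in(p_L,2)$, is that $L^q(\cx)$-atoms still lie in $L^2(\cx)$ locally so that the truncated sums $f_N$ make sense as $L^2(\cx)$-functions and Proposition \ref{p4.1}(i) applies with both exponents $2$ and $s$; this is where the hypothesis $q\in(I(\fai)[r(\fai)]',p_L')$ and the constraint $s\in(p_L,p_L')$ enter, guaranteeing that all the relevant square-function bounds from Theorem \ref{thm on S_L} and the operator bounds for $\pi_{\Phi,\,L,\,M}$ are available in the required ranges. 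Once these routine verifications are in place, the completion argument identical to the one concluding the proof of Theorem \ref{t4.1} gives the stated equivalence of quasi-norms.
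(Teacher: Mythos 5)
Your overall strategy (Calder\'on reproducing formula, tent-space atomic decomposition, density plus completion) is the same as the paper's, but the logical core of your argument has a gap. You assert at the outset that, because the atoms are identical, the two quasi-norms ``agree on any $f$ that admits a representation converging in both $L^2(\cx)$ and $L^s(\cx)$; in particular they agree on $L^2(\cx)\cap L^s(\cx)\cap H^{M,\,q}_{\fai,\,L,\,\rm{at}}(\cx)$''. The ``in particular'' is exactly what needs to be proved: an atomic representation converging in $L^2(\cx)$ need not converge in $L^s(\cx)$ (or conversely), and even when some representation converges in both topologies, each quasi-norm is an infimum over \emph{all} representations of its own type, so neither inequality between the two infima follows from exhibiting one common representation. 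The paper resolves this by comparing \emph{both} quasi-norms to the intrinsic quantity $\|S_L f\|_{L^\fai(\cx)}$: the inclusion $H^{M,\,q,\,s}_{\fai,\,L,\,\rm{at}}(\cx)\subset H^{M,\,q}_{\fai,\,L,\,\rm{at}}(\cx)$ comes from Lemma \ref{cl4.10} (the $L^s$-analogue of Lemma \ref{cl4.5}, fed by the uniform atom estimate \eqref{c4.10} and the weak $(s,s)$ bound for $S_L$) together with Theorem \ref{ct4.3}; your proposal never invokes Lemma \ref{cl4.10} or \eqref{c4.10}, and the partial-sum argument you give for this direction only yields upper bounds for both quasi-norms, not their equivalence.

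A second, more localized problem: you justify $g:=t^2Le^{-t^2L}f\in T^s_2(\cx_+)$ by the $L^s(\cx)$-boundedness of $S_L$, but that presupposes $f\in L^s(\cx)$, which a generic element of $\wz H^{M,\,q}_{\fai,\,L,\,\rm{at}}(\cx)$ (only known to lie in $L^2(\cx)$) need not satisfy. This is precisely why the paper works with $f\in L^2(\cx)\cap L^s(\cx)$ and supplies Lemma \ref{cl4.11}(ii) --- whose proof rests on the nontrivial identification $H^s_L(\cx)=L^s(\cx)$ for $s\in(p_L,p_L')$ from \cite{hlmmy,hmm} --- to map $L^s(\cx)$ into $T^s_2(\cx_+)$. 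Your truncated function $g\chi_{O_N}$ can be shown to lie in $T^{s,\,b}_2(\cx_+)$ by a direct bounded-support computation, so this step is repairable, but not by the reason you give. (Also, the boundedness of $\pi_{\Phi,\,L,\,M}$ from $T^s_2(\cx_+)$ to $L^s(\cx)$ is Lemma \ref{cl4.11}(i), not Proposition \ref{p4.1}(i), which concerns the different operator $\pi_{L,\,M}$.)
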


To prove Theorem \ref{ct4.9}, we need a few lemmas.
This first one is a variant of Lemma \ref{cl4.5},
whose proof is similar.
We omit the details.

\begin{lemma}\label{cl4.10}
Let $\fai$ be as in Definition \ref{d2.2}, $L$ satisfy Assumptions $\mathrm{(H_1)}$
and $\mathrm{(H_2)}$, $p_L$ be as in Assumption $\mathrm{(H_2)}$, $q,\,s\in(p_L,\,p_L')$ and
$M\in\nn$ satisfying $M>\frac{n}{2}(\frac{q(\fai)}{i(\fai)}-\frac{1}{p_L'})$, where
$q(\fai)$ and $i(\fai)$ are respectively as in \eqref{2.7}
and \eqref{2.5}. Suppose that $T$ is a linear (resp. nonnegative
sublinear) operator which maps $L^s(\cx)$ continuously into weak-$L^s(\cx)$.
If there exists a positive constant $C$ such that, for any $\lz\in\cc$ and $(\fai,\,
q,\,M)_L$-atom $a$ associated with the ball $B$,
\begin{eqnarray*}
\int_{\cx}\fai\lf(x,\, T(\lz a)(x)\r)\,d\mu(x)\le C \fai\lf(B,\,\frac{|\lz|}
{\|\chi_B\|_{L^\fai(\cx)}}\r),
\end{eqnarray*}
then $T$ can extend to be a bounded linear (resp. sublinear) operator from
$H^{M,\,q,\,s}_{\fai,\,L,\,\rm{at}}(\cx)$ to $L^\fai(\cx)$.
\end{lemma}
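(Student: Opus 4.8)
The plan is to follow the by-now-standard scheme for transferring an atomic modular estimate to an operator bound, adapted to the fact that the representations now converge in $L^s(\cx)$ rather than in $L^2(\cx)$; the argument parallels that of Lemma \ref{cl4.5} and of \cite[Lemma 5.6]{yys4} (see also \cite[Lemma 5.1]{jy10} and \cite[Lemma 4.1]{bckyy}), so the only genuinely new point is the use of the weak-type $(s,s)$ hypothesis in the limiting step. First I would work on the dense subspace $\wz{H}^{M,\,q,\,s}_{\fai,\,L,\,\rm{at}}(\cx)$: given $f$ with an atomic $(\fai,\,q,\,s,\,M)_L$-representation $f=\sum_j\lz_j a_j$ (convergence in $L^s(\cx)$, with $a_j$ a $(\fai,\,q,\,M)_L$-atom associated with a ball $B_j$), the goal is the modular inequality
$$\int_{\cx}\fai\lf(x,\,\frac{|Tf(x)|}{\lz}\r)\,d\mu(x)\ls\sum_j\fai\lf(B_j,\,\frac{|\lz_j|}{\lz\|\chi_{B_j}\|_{L^\fai(\cx)}}\r)\qquad\text{for all }\lz\in(0,\fz).$$

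To obtain it, since $T$ maps $L^s(\cx)$ continuously into weak $L^s(\cx)$ and the partial sums $\sum_{j\le N}\lz_j a_j$ converge to $f$ in $L^s(\cx)$, the images $T(\sum_{j\le N}\lz_j a_j)$ converge to $Tf$ in weak $L^s(\cx)$, hence in measure, and therefore along a subsequence $\mu$-almost everywhere; combined with the linearity (resp. the sublinearity and positive homogeneity) of $T$, this yields $|Tf(x)|\le\sum_j|\lz_j|\,|Ta_j(x)|$ for almost every $x\in\cx$. Then the $\sigma$-quasi-subadditivity of $\fai$ (Lemma \ref{l2.1}(i)) gives $\fai(x,|Tf(x)|/\lz)\ls\sum_j\fai(x,|\lz_j|\,|Ta_j(x)|/\lz)$ pointwise a.e., and integrating and applying the assumed modular estimate to each atom $a_j$ with the scalar $\lz_j/\lz$ produces exactly the displayed bound.

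Next I would convert this into the norm bound. Setting $\lz:=\Lambda(\{\lz_j a_j\}_j)+\eta$ for $\eta>0$, the right-hand side above is $\le1$ by the definition of $\Lambda$ in Definition \ref{cd4.8}, so $\int_{\cx}\fai(x,|Tf(x)|/\lz)\,d\mu(x)$ is bounded by a fixed constant; using the uniformly lower type $p_2$ of $\fai$ to rescale $\lz$ by a fixed factor (exactly as in \cite[Lemma 5.6]{yys4}) brings the modular below $1$, whence by the definition of the Luxembourg norm and Lemma \ref{l2.1}(iii) one gets $\|Tf\|_{L^\fai(\cx)}\ls\Lambda(\{\lz_j a_j\}_j)+\eta$. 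Letting $\eta\to0$ and taking the infimum over all atomic $(\fai,\,q,\,s,\,M)_L$-representations of $f$ gives $\|Tf\|_{L^\fai(\cx)}\ls\|f\|_{H^{M,\,q,\,s}_{\fai,\,L,\,\rm{at}}(\cx)}$ on $\wz{H}^{M,\,q,\,s}_{\fai,\,L,\,\rm{at}}(\cx)$. Since this space is dense in $H^{M,\,q,\,s}_{\fai,\,L,\,\rm{at}}(\cx)$ by definition and $L^\fai(\cx)$ is complete, a standard continuity argument extends $T$ uniquely to a bounded (sub)linear operator from $H^{M,\,q,\,s}_{\fai,\,L,\,\rm{at}}(\cx)$ to $L^\fai(\cx)$.

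The main obstacle is the almost-everywhere passage to the limit $T(\sum_{j\le N}\lz_j a_j)\to Tf$ using only the weak-type $(s,s)$ control on $T$: one must first be sure the atoms genuinely lie in $L^s(\cx)$ — which holds because a $(\fai,\,q,\,M)_L$-atom as in Definition \ref{cd4.1} is supported in a ball and belongs to $L^q(\cx)$ for all $q\in(p_L,p_L')$, hence in $L^s(\cx)$ for every admissible $s$ — and then extract the a.e.-convergent subsequence and feed it through $\fai$; the only other delicate point, the tracking of the implicit constant from $\sigma$-quasi-subadditivity when the modular estimate is turned into a norm estimate, is absorbed by the lower-type homogeneity of the Luxembourg norm precisely as in the proof of \cite[Lemma 5.6]{yys4}.
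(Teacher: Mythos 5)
Your proof is correct and is essentially the argument the paper has in mind: the paper omits the proof, noting only that it is a variant of Lemma \ref{cl4.5} (itself proved as in \cite[Lemma 5.6]{yys4}), and your scheme — pass from $L^s$-convergence of the partial sums to weak-$L^s$, hence a.e.\ subsequential, convergence of their images to get $|Tf|\le\sum_j|\lz_j||Ta_j|$ a.e., then apply the quasi-subadditivity of $\fai$, the atomic modular hypothesis, the definition of $\blz$, the uniformly lower type rescaling, and a density argument — is exactly that standard scheme with the only new ingredient (the weak-type $(s,s)$ hypothesis replacing $(2,2)$) used in the right place. One small correction: a $(\fai,q,M)_L$-atom for a \emph{fixed} $q$ need not lie in $L^r(\cx)$ for all $r\in(p_L,p_L')$ (only for $r\le q$ by H\"older on its supporting ball), but the atoms do lie in $L^s(\cx)$ anyway because Definition \ref{cd4.8} requires the partial sums of the representation to converge in $L^s(\cx)$, so your limiting step is unaffected.
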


\begin{lemma}\label{cl4.11}
Let $\fai$ be as in Definition \ref{d2.2}, $L$ satisfy Assumptions $\mathrm{(H_1)}$
and $\mathrm{(H_2)}$, $p_L$ be as in Assumption $\mathrm{(H_2)}$, $q,\,s\in(p_L,p_L')$ and
$M\in\nn$ satisfying $M>\frac{n}{2}(\frac{q(\fai)}{i(\fai)}-\frac{1}{p_L'})$, where
$q(\fai)$ and $i(\fai)$ are respectively as in \eqref{2.7}
and \eqref{2.5}.  Assume further that $\fai\in\rh_{(p_L'/I(\fai))'}(\cx)$,
where $I(\fai)$ is  as in \eqref{2.4}. Then, for all $k\in\nn$,

{\rm(i)} the operator $\pi_{\Phi,\,L,\,k}$, initially defined on the space
$T^{s,\,b}_2(\cx_+)$, extends to a bounded linear operator
from $T^{s}_2(\cx_+)$ to $L^s(\cx)$;

{\rm(ii)} for all $t\in(0,\,\fz)$, the operator $t^2Le^{-t^2L}$, initially defined on
$L^2(\cx)$, extends to a bounded linear operator
from $L^{s}(\cx)$ to $T^{s}_2(\cx_+)$.
\end{lemma}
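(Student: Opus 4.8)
The plan is to mimic the proof of Lemma \ref{Auscher's thm 1} and Lemma \ref{Auscher's thm 2}, i.e., to reduce both assertions to $L^s(\cx)$-boundedness statements for the underlying semigroup operators, which are already available from Assumption $\mathrm{(H_2)}$ and the argument in Theorem \ref{thm on S_L}. For part (i), one first notes that $\pi_{\Phi,\,L,\,k}$ is bounded from $T^2_2(\cx_+)$ to $L^2(\cx)$ (stated in the excerpt just above Proposition \ref{cp4.4}), so by duality and interpolation it suffices to establish the estimate $\|\pi_{\Phi,\,L,\,k}(f)\|_{L^s(\cx)}\ls\|f\|_{T^s_2(\cx_+)}$ for $s$ in the two ranges $s\in(p_L,2]$ and $s\in[2,p_L')$ separately. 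In each case one tests against $h\in L^{s'}(\cx)$, uses Fubini's theorem to write $\langle\pi_{\Phi,\,L,\,k}(f),h\rangle$ as an integral over $\cx_+$ of $f(x,t)$ against $(t^2L^\ast)^{k+1}\Phi(t\sqrt{L^\ast})h(x)$, applies the Cauchy--Schwarz inequality in $(x,t)$ together with the definition of $\ca$, and then invokes the $L^{s'}(\cx)$-boundedness of the square function $x\mapsto(\int_{\bgz(x)}|(t^2L^\ast)^{k+1}\Phi(t\sqrt{L^\ast})h(y)|^2\,\frac{d\mu(y)\,dt}{V(x,t)t})^{1/2}$. That square-function bound is exactly of the type proved in Theorem \ref{thm on S_L} and Theorem \ref{thm on G{L,m}}, except now the heat semigroup $e^{-t^{2m}L}$ is replaced by the compactly supported functional-calculus operator $\Phi(t\sqrt{L})$; since $\Phi\in\Psi_{\sigma,\,\tau}(S^0_\mu)$ for suitable $\sigma,\tau$, the same Auscher-type criteria (Lemmas \ref{Auscher's thm 1} and \ref{Auscher's thm 2}) apply verbatim once we know $\Phi(t\sqrt{L})$ satisfies off-diagonal estimates, and the finite propagation speed \eqref{c4.3}, \eqref{c4.4} makes these off-diagonal bounds trivial (the kernel is actually supported in $\{d(x,y)\le t\}$). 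This is essentially the computation already referred to in the proof of Proposition \ref{cp4.4}, where the $L^{q'}(\cx)$-boundedness of $\pi_{\Phi,\,L,\,k}$ is quoted from \cite[Lemma 5.3]{bckyy}; we are just repackaging it as a $T^s_2\to L^s$ statement via the standard tent-space pairing.

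For part (ii), the goal is the dual-type estimate $\|t^2Le^{-t^2L}f\|_{T^s_2(\cx_+)}=\|S_{L}(f)\|_{L^s(\cx)}\ls\|f\|_{L^s(\cx)}$, which is precisely the content of Theorem \ref{thm on S_L} with $k=1$ and $m=1$: indeed, $S_L(f)(x)=(\int_0^\fz\int_{B(x,t)}|t^2Le^{-t^2L}f(y)|^2\,\frac{d\mu(y)}{V(x,t)}\frac{dt}{t})^{1/2}$ coincides with $\|t^2Le^{-t^2L}f(\cdot)\|$ in the $T^s_2(\cx_+)$ (quasi-)norm up to the change of variable $t\mapsto t^{2m}=t^2$. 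Under Assumptions $\mathrm{(H_1)}$ and $\mathrm{(H_2)}$ the operator $L$ (after the spectral-theorem reduction, cf. Remark \ref{equivalence between off-diagonal estimates on balls and off-diagonal estimates}(i)) falls within the scope of Assumptions {\rm (A)} and {\rm (B)} with $q_L=p_L'$, so Theorem \ref{thm on S_L} gives boundedness of $S_L$ on $L^s(\cx)$ for all $s\in(p_L,p_L')$. Hence (ii) follows immediately by first verifying the bound on the dense subspace $L^2(\cx)\cap L^s(\cx)$ and then extending by density, noting that $T^{s,\,b}_2(\cx_+)$ and its closure capture the image.

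The main obstacle is a bookkeeping one rather than a conceptual one: one must check carefully that the compactly supported operators $(t^2L)^{k+1}\Phi(t\sqrt{L})$ (and their adjoints $(t^2L^\ast)^{k+1}\Phi(t\sqrt{L^\ast})$) satisfy the hypotheses \eqref{2.13}, \eqref{2.14} (resp. \eqref{2.15}) of the Auscher criteria with summable tails, which requires combining the $L^2$-off-diagonal bounds coming from Assumption $\mathrm{(H_2)}$ with the exact kernel support \eqref{c4.4} and the decay of $\Phi$; this is where one expands $(I-e^{-r_B^2 L})^M$ as in the proof of Theorem \ref{thm on G{L,m}} and estimates the resulting iterated integrals. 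I would also need to handle the two ranges of $s$ by interpolation with the $L^2$-endpoint, and to make sure the constants in (i) and (ii) do not depend on the particular $\Phi$ once it is fixed. Since all these pieces are already present in the excerpt (Theorems \ref{thm on G{L,m}} and \ref{thm on S_L}, the finite-speed-of-propagation facts \eqref{c4.3}--\eqref{c4.4}, and the tent-space machinery of Section \ref{s3}), the proof amounts to assembling them, and I would present it by quoting Theorem \ref{thm on S_L} for (ii) and running the $\pi_{\Phi,\,L,\,k}$ duality argument for (i), referring to \cite{bckyy} for the routine off-diagonal computations.
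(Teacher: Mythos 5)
Your proposal is correct, and for part (i) it follows essentially the same route as the paper: the paper pairs $\pi_{\Phi,\,L,\,k}(f)$ against $g\in L^{s'}(\cx)\cap L^2(\cx)$, uses Fubini's theorem and self-adjointness to move $(t^2L)^k\Phi(t\sqrt L)$ onto $g$, applies the tent-space Cauchy--Schwarz pairing $\ls\|\ca f\|_{L^s(\cx)}\|S_{\Phi,\,L,\,k}(g)\|_{L^{s'}(\cx)}$, and quotes the $L^{s'}(\cx)$-boundedness of the square function $S_{\Phi,\,L,\,k}$ (which, as you say, is obtained from the Auscher criteria plus the finite propagation speed, exactly as in \cite[Lemma 5.3]{bckyy}). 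One small remark: the case-splitting $s\in(p_L,2]$ versus $s\in[2,p_L')$ and the interpolation step you propose are not needed, since the duality argument works uniformly once $S_{\Phi,\,L,\,k}$ is bounded on $L^{s'}(\cx)$ for all $s'\in(p_L,p_L')$ (this interval being stable under conjugation). For part (ii) you take a genuinely shorter path than the paper: you observe that $\|t^2Le^{-t^2L}f\|_{T^s_2(\cx_+)}=\|S_L(f)\|_{L^s(\cx)}$ and invoke Theorem \ref{thm on S_L} (valid here with $q_L=p_L'$ after the spectral-theorem reduction of Remark \ref{equivalence between off-diagonal estimates on balls and off-diagonal estimates}(i)), whereas the paper instead cites the identification $H^s_L(\cx)=L^s(\cx)$ for $s\in(p_L,p_L')$ from \cite{hlmmy,hmm} and reads off the bound from the definition of $H^s_L(\cx)$. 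The two arguments rest on the same underlying square-function estimate, but yours is more self-contained, using only results already proved in the paper rather than an external equivalence of which only one inequality is actually needed.
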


\begin{proof}
We first prove (i). Let $f\in T_2^s(\cx_+)\cap
T^{2,\,b}_2(\cx_+)$. For any $g\in L^{s'}(\cx)\cap L^2(\cx)$, by
Fubini's theorem, Assumption $\mathrm{(H_1)}$,
H\"older's inequality and the $L^{s'}(\cx)$-boundedness of
the square function $S_{\Phi,\,L,\,k}$, we conclude that
\begin{eqnarray*}
\lf|\int_{\cx} \pi_{\Phi,\,L,\,k} (f)(x)g(x)\,d\mu(x)\r|&&=
\lf|\int_0^\fz \dint_{\cx}  f(x,t) \lf(t^2L\r)^k \Phi(t\sqrt{L})(g)(x)\,
\frac{d\mu(x)\,dt}{t}\r|\\
&&\ls\|\mathcal{A}f\|_{L^s(\cx)}\|S_{\Phi,\,L,\,k}(g)\|_{L^{s'}(\cx)}\ls
\|f\|_{T^s(\cx_+)}\|g\|_{L^{s'}(\cx)},
\end{eqnarray*}
which, together with the dual representation of $L^s(\cx)$ norm and
a density argument, implies that $\pi_{\Phi,\,L,\,k}$
extends to a bounded linear operator
from $T_2^{s}(\cx_+)$ to $L^s(\cx)$. This shows that (i) is
valid.

We now turn to the proof of (ii). By the definition of the Hardy space $H_L^p(\cx)$
(with $p\in(0,\,\fz)$)
associated with operators satisfying Assumptions $\mathrm{(H_1)}$ and $\mathrm{(H_2)}$
in \cite{hlmmy}, together with an argument similar to that used in the proof of
\cite[Proposition 9.1(v)]{hmm}, we see that, for all $s\in(p_L,\,p_L')$,
$H_L^s(\cx)=L^s(\cx)$. This, combined with the definition of
$H_L^s(\cx)$, immediately implies that the operator $t^2Le^{-t^2L}$
extends to a bounded linear operator
from $L^s(\cx)$ to $T_2^{s}(\cx_+)$. This shows (ii), which
completes the proof of Lemma \ref{cl4.11}.
\end{proof}

We now turn to the proof of Theorem \ref{ct4.9}.

\begin{proof}[Proof of Theorem \ref{ct4.9}]
The inclusion that $H^{M,\,q,\,s}_{\fai,\,L,\,\rm{at}}(\cx)
\subset H^{M,\,q}_{\fai,\,L,\,\rm{at}}(\cx)$ follows immediately from
Theorem \ref{ct4.3},  Lemma \ref{cl4.11} and \eqref{c4.10}.
We now prove the inclusion $H^{M,\,q}_{\fai,\,L,\,\rm{at}}(\cx)
\subset H^{M,\,q,\,s}_{\fai,\,L,\,\rm{at}}(\cx) $. To this end, we first recall
the following Calder\'on reproducing formula, which is deduced
from the bounded functional calculus in $L^2(\cx)$. More precisely,
let $\Phi$ be as in \eqref{c4.6}. For any
$f\in L^2(\cx)\cap L^s(\cx)\cap H^{M,\,q}_{\fai,\,L,\,
\rm{at}}(\cx)$, we see that there exists a positive constant $C_{(\Phi,\,M)}$
such that
\begin{eqnarray}\label{c4.25}
f=C_{(\Phi,\,M)}\dint_0^\fz \lf(t^2L\r)^{M+1}\Phi\lf(t\sqrt{L}\r)t^2Le^{-t^2L}f\,
\frac{dt}{t}
\end{eqnarray}
in $L^2(\cx)$.
Moreover, by Lemma \ref{cl4.11}(ii),
we know that $t^2Le^{-t^2L}f\in T^s_2(\cx_+)\cap T_\fai(\cx_+)$. Thus, by a slight
modification of the proof of \cite[Corollary 3.4]{hyy}, we conclude that
there exist $\{\lz_j\}_j\subset \cc$ and a sequence of $(T_\fai,\,\fz)$-atoms
$\{A_j\}_j$ associated with the balls $\{B_j\}_j$ such that
$$t^2Le^{-t^2L}f=\sum_j \lz_j A_j$$
in $T^s(\cx_+)$ and $T_\fai(\cx_+)$.
Now, let $g\in L^2(\cx)\cap L^{s'}(\cx)$. From \eqref{c4.25},
Fubini's theorem and Assumption $\mathrm{(H_1)}$,
we deduce that
\begin{eqnarray}\label{c4.26}
\int_{\cx}f(x){g}(x)\,d\mu(x)&&=
\dint_0^\fz\dint_{\cx} t^2L e^{-t^2L}f(x)
{\lf(t^2L\r)^M\Phi(t\sqrt{L})g}(x)\,\frac{d\mu(x)\,dt}{t}\\
&&\nonumber=\dsum_{j}
\dint_0^\fz\dint_{\cx} \lz_j A_j(y,\,t)
{\lf(t^2L\r)^M\Phi(t\sqrt{L})g}(x)\,\frac{d\mu(x)\,dt}{t}\\
&&\nonumber=\dsum_{j}\dint_{\cx} \lz_j \dint_0^\fz \lf(t^2L\r)^M
\Phi(t\sqrt{L}) (A_j)(x)\frac{dt}{t}g(x)\,d\mu(x)\\
&&\nonumber=:\dsum_{j}\dint_{\cx} \lz_j a_j(x) g(x)\,d\mu(x).
\end{eqnarray}
By the proof of Proposition \ref{cp4.4}, we conclude that, for each $j\in\nn$,
$a_j$ is a $(\fai,\,q,\,M)_L$-atom associated with $B_j$. This, together with
\eqref{c4.26}, implies that  $f$ has a $(\fai,\,q,\,s,\,M)_L$-atomic representation
$f=\sum_j \lz_ja_j$ as in Definition \ref{cd4.8}.
Thus, $f\in \wz{H}^{M,\,q,\,s}_{\fai,\,L,\,\rm{at}}(\cx)$, which,
together with the fact that $L^2(\cx)\cap L^s(\cx)\cap
H_{\fai,\,L,\,\rm{at}}^{M,\,q}(\cx)$ is
dense in $H^{M,\,q}_{\fai,\,L,\,
\rm{at}}(\cx)$ and
a density argument, completes the proof of the inclusion
$H^{M,\,q}_{\fai,\,L,\,
\rm{at}}(\cx) \subset H^{M,\,q,\,s}_{\fai,\,L,\,\rm{at}}(\cx)$
and hence Theorem \ref{ct4.9}.
\end{proof}

\section{A sufficient condition for the equivalence between the spaces
$H_{\fai,\,L}(\rn)$ and $H_\fai(\rn)$\label{s5}}

\hskip\parindent In this section, we give a sufficient condition
on the operator $L$, satisfying Assumptions {\rm (A)} and {\rm (B)},
such that $H_{\fai,\,L}(\rn)$ and $H_\fai(\rn)$ coincide
with equivalent quasi-norms.
We first recall some notions and properties of $H_\fai(\rn)$.

In what follows, we denote by $\cs(\rn)$ the \emph{space of all Schwartz functions} and
by $\cs'(\rn)$ its \emph{dual space} (namely, the \emph{space of all tempered distributions}).
For $m\in\nn$, define
$$\cs_m(\rn):=\lf\{\phi\in\cs(\rn):\ \sup_{x\in\rn}\sup_{
\bz\in\zz^n_+,\,|\bz|\le m+1}(1+|x|)^{(m+2)(n+1)}|\partial^\bz_x\phi(x)|\le1\r\}.
$$
Then for all $x\in\rn$ and $f\in\cs'(\rn)$, the \emph{non-tangential grand maximal function}
$f^\ast_m$ of $f$ is defined by setting,
$$f^\ast_m(x):=\sup_{\phi\in\cs_m(\rn)}\sup_{|y-x|<t,\,t\in(0,\fz)}|f\ast\phi_t(y)|,
$$
where for all $t\in(0,\fz)$, $\phi_t(\cdot):=t^{-n}\phi(\frac{\cdot}{t})$.
When $m(\fai):=\lfz n[q(\fai)/i(\fai)-1]\rfz$, where $q(\fai)$ and $i(\fai)$ are,
respectively, as in \eqref{2.7} and \eqref{2.5}, we \emph{denote $f^\ast_{m(\fai)}$ simply
by $f^\ast$}.

Now we recall the definition of the Musielak-Orlicz-Hardy $H_\fai(\rn)$ introduced
by Ky \cite{k} as follows.

\begin{definition}\label{d5.1}
Let $\fai$ be as in Definition \ref{d2.2}. The \emph{Musielak-Orlicz Hardy space
$H_\fai(\rn)$} is defined to be the space of all $f\in\cs'(\rn)$ such that
$f^\ast\in L^\fai(\rn)$
with the \emph{quasi-norm}
$\|f\|_{H_\fai(\rn)}:=\|f^\ast\|_{L^\fai(\rn)}$.
\end{definition}

To introduce the molecular Musielak-Orlicz-Hardy space, we first
introduce the notion of molecules associated with the growth function $\fai$.

\begin{definition}\label{d5.2}
Let $\fai$ be as in Definition \ref{d2.2}, $q\in(1,\fz)$, $s\in\zz_+$ and
$\varepsilon\in(0,\fz)$. A function $\az\in L^q(\rn)$ is called a
\emph{$(\fai,\,q,\,s,\,\uc)$-molecule} associated with the ball $B$ if

(i) for each $j\in\zz_+$, $\|\az\|_{L^q(S_j(B))}\le
2^{-j\uc}|2^j B|^{1/q}\|\chi_{B}\|_{L^\fai(\rn)}^{-1}$;

(ii) $\int_{\rn}\az(x)x^{\bz}\,dx=0$ for all $\bz\in\zz_+^n$ with $|\bz|\le s$.
\end{definition}

\begin{definition}\label{d5.3}
Let $\fai$ be as in Definition \ref{d2.2}, $q\in(1,\fz)$, $s\in\zz_+$
and $\uc\in(0,\fz)$. The \emph{molecular Musielak-Orlicz Hardy space},
$H^{q,\,s,\,\uc}_{\fai,\,\mathrm{mol}}(\rn)$, is defined to be the space of all
$f\in\cs'(\rn)$ satisfying that $f=\sum_j\lz_j\az_j$ in $\cs'(\rn)$,
where $\{\lz_j\}_j\subset\cc$ and $\{\az_j\}_j$ is a sequence of
$(\fai,\,q,\,s,\,\uc)$-molecules respectively associated to the balls $\{B_j\}_j$,
and
$$\sum_j\fai\lf(B_j,\,\frac{|\lz_j|}{\|\chi_{B_j}\|_{L^\fai(\rn)}}\r)<\fz,$$
where, for each $j$, the molecule $\az_j$ is associated with the ball $B_j$.
Moreover, define
\begin{eqnarray*}
\|f\|_{H^{q,\,s,\,\uc}_{\fai,\,\mathrm{mol}}(\rn)}:
=\inf\lf\{\Lambda
\lf(\lf\{\lz_j\az_j\r\}_{j}\r)\r\},
\end{eqnarray*}
where the infimum is taken over all decompositions of $f$ as above and
$$\Lambda
\lf(\lf\{\lz_j\az_j\r\}_{j}\r):=
\inf\lf\{\lz\in(0,\fz):\ \sum_{j}\fai\lf(B_j,\frac{|\lz_j|}{
\lz\|\chi_{B_j}\|_{L^\fai(\rn)}}\r)\le1\r\}.$$
\end{definition}

\begin{definition}\label{d5.4}
Let $\fai$ be as in Definition \ref{d2.2}.

(I) For each ball $B\subset\rn$, the \emph{space} $L^q_\fai(B)$ with
$q\in[1,\fz]$ is defined to be the set of all measurable functions
$f$ on $\rn$, supported in $B$, such that
\begin{equation*}
\|f\|_{L^q_{\fai}(B)}:=
\begin{cases}\dsup_{t\in (0,\fz)}
\lf[\frac{1}
{\fai(B,t)}\dint_{\rn}|f(x)|^q\fai(x,t)\,dx\r]^{1/q}<\fz,& q\in [1,\fz),\\
\|f\|_{L^{\fz}(B)}<\fz,&q=\fz.
\end{cases}
\end{equation*}

(II) A triplet $(\fai,\,q,\,s)$ is said to be \emph{admissible},
if $q\in(q(\fai),\fz]$ and $s\in\zz_+$ satisfying $s\ge\lfz
n[\frac{q(\fai)}{i(\fai)}-1]\rfz$, where
$q(\fai)$ and $i(\fai)$ are respectively as in \eqref{2.7}
and \eqref{2.5}. A measurable function $a$ on
$\rn$ is called a \emph{$(\fai,\,q,\,s)$-atom}, if there exists a ball
$B\subset\rn$ such that

$\mathrm{(i)}$ $\supp a\subset B$;

$\mathrm{(ii)}$
$\|a\|_{L^q_{\fai}(B)}\le\|\chi_B\|_{L^\fai(\rn)}^{-1}$;

$\mathrm{(iii)}$ $\int_{\rn}a(x)x^{\az}\,dx=0$ for all
$\az\in\zz_+^n$ with $|\az|\le s$.

(III) The  \emph{atomic Musielak-Orlicz Hardy space},
$H^{\fai,\,q,\,s}(\rn)$, is defined to be the space of all
$f\in\cs'(\rn)$ satisfying that $f=\sum_j\lz_ja_j$ in $\cs'(\rn)$,
where $\{\lz_j\}_j\subset\cc$ and $\{a_j\}_j$ is a sequence of
$(\fai,\,q,\,s)$-atoms associated with $\{B_j\}_j$, and
$$\sum_j\fai\lf(B_j,\,\frac{|\lz_j|}{\|\chi_B\|_{L^\fai(\rn)}}\r)<\fz.$$
Moreover, let
\begin{eqnarray*}
&&\blz(\{\lz_j a_j\}_j):= \inf\lf\{\lz\in(0,\fz):\ \ \sum_j\fai\lf(B_j,
\frac{|\lz_j|}{\lz\|\chi_{B_j}\|_{L^\fai(\rn)}}\r)\le1\r\}.
\end{eqnarray*}
The \emph{quasi-norm} of $f\in H^{\fai,\,q,\,s}(\rn)$
is defined by $\|f\|_{H^{\fai,\,q,\,s}(\rn)}:=\inf\{
\blz(\{\lz_ja_j\}_j)\}$, where the infimum is taken over all the
decompositions of $f$ as above.
\end{definition}

Then we have the following conclusion, which is just \cite[Theorem 4.11]{hyy}.

\begin{lemma}\label{l5.1}
Let $\fai$ be as in Definition \ref{d2.2}.
Assume that $(\fai,\,q,\,s)$ is
admissible, $\epz\in(\max\{n+s,nq(\fai)/i(\fai)\},\fz)$ and
$p\in(q(\fai)[r(\fai)]',\fz)$,
where $q(\fai)$, $i(\fai)$ and $r(\fai)$ are, respectively, as in \eqref{2.7},  \eqref{2.5}
and \eqref{2.8}. Then $H_\fai(\rn)$, $H^{\fai,\,q,\,s}(\rn)$ and $H^{p,\,s,\,
\epz}_{\fai,\,\mathrm{mol}}(\rn)$ coincide with equivalent quasi-norms.
\end{lemma}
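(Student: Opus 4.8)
Since Lemma \ref{l5.1} concerns only the Musielak--Orlicz--Hardy space $H_\fai(\rn)$ and not the operator $L$, the plan is to prove it by the classical Fefferman--Stein scheme --- the equivalence of the grand maximal, atomic and molecular characterizations --- adapted to the joint $(x,t)$-dependence of the growth function. Concretely, I would establish the two ``easy'' embeddings
\[
H^{\fai,\,q,\,s}(\rn)\hookrightarrow H_\fai(\rn)\qquad\text{and}\qquad H^{p,\,s,\,\epz}_{\fai,\,\mathrm{mol}}(\rn)\hookrightarrow H_\fai(\rn),
\]
and the ``hard'' embedding $H_\fai(\rn)\hookrightarrow H^{\fai,\,q,\,s}(\rn)$ via a Calder\'on--Zygmund decomposition of the level sets of $f^\ast$; since that same decomposition can equally be organized so as to produce a molecular decomposition of $f$, one also gets $H_\fai(\rn)\hookrightarrow H^{p,\,s,\,\epz}_{\fai,\,\mathrm{mol}}(\rn)$, and the four embeddings together give the three-way coincidence with equivalent quasi-norms. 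As all three spaces are complete, it suffices to produce these decompositions for $f$ in a dense subclass (for instance $f\in H_\fai(\rn)$ lying in $L^2(\rn)$ with compact support, so that $f^\ast$ is well-controlled) and then to extend by density using the norm inequalities obtained along the way.

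For the embeddings into $H_\fai(\rn)$ I would use Lemma \ref{l2.1}(i) (uniform $\sigma$-quasi-subadditivity of $\fai$) and the definitions of the atomic and molecular quasi-norms to reduce matters to the single-block estimate
\[
\int_{\rn}\fai\lf(x,\,(\lz\az)^\ast(x)\r)\,dx\ls\fai\lf(B,\,\frac{|\lz|}{\|\chi_B\|_{L^\fai(\rn)}}\r)
\]
for $\lz\in\cc$ and $\az$ a $(\fai,\,q,\,s)$-atom, respectively a $(\fai,\,p,\,s,\,\epz)$-molecule, associated with a ball $B$. To prove this one splits $\rn=\bigcup_{j\ge0}S_j(B)$: on $2^jB$ with $j\le4$ the grand maximal function is dominated by the Hardy--Littlewood maximal function up to a harmless tail, so one invokes H\"older's inequality, the uniformly upper type $p_1\in[I(\fai),1]$ estimate, the membership $\fai\in\rh_{(q/p_1)'}(\rn)$ (a consequence of $\fai\in\aa_\fz(\rn)$ via Lemma \ref{l2.4}), the $L^q$-boundedness of the maximal operator and Lemma \ref{l2.4}(vii); on the far annuli $S_j(B)$ with $j\ge5$ one uses the $s$ vanishing moments together with a Taylor expansion of $\phi_t$ to extract the gain $(2^{-j}r_B)^{n+s+1}$ (plus the extra $2^{-j\epz}$ decay when $\az$ is a molecule), converts $\fai(2^jB,\cdot)$ back into $\fai(B,\cdot)$ at the cost of a power $2^{nq_0 j}$ for some $q_0\in(q(\fai),\fz)$ with $\fai\in\aa_{q_0}(\rn)$, and sums the resulting geometric series. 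This is precisely where the hypotheses $\epz>\max\{n+s,\,nq(\fai)/i(\fai)\}$, $s\ge\lfz n[q(\fai)/i(\fai)-1]\rfz$ and $p>q(\fai)[r(\fai)]'$ are consumed; the uniformly lower type $p_2\in(0,i(\fai))$ is what lets one collapse a sum $\sum_j 2^{-j\delta}\fai(B,\cdot)$ back to $\fai(B,\cdot)$.

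The hard part is the embedding $H_\fai(\rn)\hookrightarrow H^{\fai,\,q,\,s}(\rn)$. For $f$ in the dense subclass and $k\in\zz$, set $\Omega_k:=\{x\in\rn:\ f^\ast(x)>2^k\}$, take a Whitney decomposition $\{Q^k_i\}_i$ of $\Omega_k$ with a subordinate smooth partition of unity $\{\eta^k_i\}_i$, and form the bad parts $b^k_i:=(f-c^k_i)\eta^k_i$, where $c^k_i$ is the polynomial of degree at most $s$ chosen so that $\int_{\rn}b^k_i(x)x^\bz\,dx=0$ for all $|\bz|\le s$; telescoping $f-\sum_i b^k_i$ in $k$ then writes $f=\sum_\ell\lz_\ell a_\ell$ in $\cs'(\rn)$, with $a_\ell$ a multiple of a $(\fai,\,q,\,s)$-atom supported in a fixed dilate of $Q^k_i$ and $\lz_\ell\sim 2^k\|\chi_{B_\ell}\|_{L^\fai(\rn)}$. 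The two delicate points are (i) the $L^q_\fai(B_\ell)$-size control $\|a_\ell\|_{L^q_\fai(B_\ell)}\ls2^k\|\chi_{B_\ell}\|_{L^\fai(\rn)}^{-1}$, uniformly in the parameter $t$ defining the norm $L^q_\fai(B_\ell)$, which requires $q>q(\fai)$ so that $\fai(\cdot,t)\in\aa_q(\rn)$ with uniform constant and the standard good-part/Caccioppoli bounds for the grand maximal function go through; and (ii) the coefficient estimate
\[
\sum_\ell\fai\lf(B_\ell,\,\frac{|\lz_\ell|}{\|\chi_{B_\ell}\|_{L^\fai(\rn)}}\r)\ls\int_{\rn}\fai\lf(x,\,f^\ast(x)\r)\,dx,
\]
obtained from the bounded overlap of the Whitney cubes and the uniformly upper/lower type estimates, telescoping $\sum_k\fai(\Omega_k\setminus\Omega_{k+1},2^k)$ against $\int_{\rn}\fai(\cdot,f^\ast)\,dx$. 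The main obstacle throughout is that $\fai(x,t)$ does not factor as $w(x)\Phi(t)$, so every estimate must be carried out with the coupled $(x,t)$-dependence, controlling ratios $\fai(E,t)/\fai(F,t)$ uniformly in $t$ via the uniform Muckenhoupt and reverse-H\"older bounds of Lemma \ref{l2.4}; re-running the same construction with the weaker target (only a molecular estimate on each piece) yields $H_\fai(\rn)\subset H^{p,\,s,\,\epz}_{\fai,\,\mathrm{mol}}(\rn)$, which together with the two maximal-domination embeddings above closes the circle. All of this is carried out in detail in \cite{hyy}.
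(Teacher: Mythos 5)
Your outline is consistent with the paper's treatment: the paper gives no proof of this lemma at all, stating only that it ``is just \cite[Theorem 4.11]{hyy}'', and the Fefferman--Stein scheme you sketch (single-block estimates for atoms and molecules via the annular splitting and the uniform $\aa_q$/$\rh_q$ bounds, plus a Calder\'on--Zygmund decomposition of the level sets of $f^\ast$ with moment-corrected bad parts for the hard direction) is precisely the argument carried out in that reference. Since you correctly identify where each hypothesis ($s\ge\lfz n[q(\fai)/i(\fai)-1]\rfz$, $\epz>\max\{n+s,nq(\fai)/i(\fai)\}$, $p>q(\fai)[r(\fai)]'$) is consumed and explicitly defer the details to \cite{hyy}, there is nothing to fault.
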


Throughout this section, we always assume that the operator $L$ satisfies the following
additional assumption.

\medskip

\noindent {\bf Assumption (C).} The \emph{distribution
kernels} $h_t$ of $e^{-tL}$ satisfy
that there exist positive constants $C,\,\wz{c},\,c\in(0,\fz)$ and $\nu\in(0,1]$
such that, for all $t\in(0,\fz)$ and almost every
$x,\,y,\,h\in\rn$ with $2|h|\le t^{1/2m}+|x-y|$,
\begin{eqnarray}\label{5.0}
&&|h_t(x,y)|+|h_t(y,x)|\le
\frac{C}{t^{n/2m}}\exp\lf\{-\frac{c|x-y|^{2m/(2m-1)}}{t^{1/(2m-1)}}\r\},
\end{eqnarray}
\begin{eqnarray}\label{5.1}
&&|h_t(x+h,y)-h_t(x,y)|+|h_t(x,y+h)-h_t(x,y)|\\ \nonumber
&&\hs\le
\frac{C}{t^{n/2m}}\lf(\frac{|h|}{t^{1/2m}+|x-y|}\r)^\nu
\exp\lf\{-\frac{\wz{c}|x-y|^{2m/(2m-1)}}{t^{1/(2m-1)}}\r\}
\end{eqnarray}
and
\begin{eqnarray}\label{5.2}
\int_\rn h_t(x,y)\,dx\equiv1\equiv\int_\rn h_t(x,y)\,dy.
\end{eqnarray}

\smallskip

\begin{remark}\label{r5.1}
(i) If the operator $L$ satisfies Assumption (C), then $L$ satisfies Assumption {\rm (B)}
with $p_L=1$ and $q_L=\fz$.

(ii) Let $L:=-\mathrm{div}(A\nabla)$ be the divergence form elliptic operator in
$L^2(\rn)$, where $A$ has real entries when $n\ge3$ and complex entries when $n\in\{1,\,2\}$.
By \cite[Chapter 1]{at98}, we know that the operator $L$ satisfies Assumptions
{\rm (A)} and (C).
\end{remark}

We now in the position to state the main result of this section.

\begin{theorem}\label{t5.1}
Let $\fai$ be as in Definition \ref{d2.2} and $L$ satisfy Assumptions {\rm (A)} and {\rm (C)}.
Assume that $q(\fai)<\frac{n+\nu}{n}$ and $i(\fai)\in (\frac{nq(\fai)}{n+\nu},\,1]$,
where $\nu$, $q(\fai)$ and $i(\fai)$ are, respectively,
as in \eqref{5.1}, \eqref{2.7} and \eqref{2.5}. Then $H_{\fai,\,L}(\rn)$ and
$H_\fai(\rn)$ coincide with equivalent quasi-norms.
\end{theorem}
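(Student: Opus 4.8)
The plan is to prove the equivalence $H_{\fai,\,L}(\rn)=H_\fai(\rn)$ by establishing the two inclusions separately, using the molecular characterization of $H_{\fai,\,L}(\rn)$ from Theorem \ref{t4.1} (available here since Assumption (C) implies Assumption (B) with $p_L=1$, $q_L=\fz$ by Remark \ref{r5.1}(i), so that $\fai\in\rh_{(q_L/I(\fai))'}(\cx)=\rh_{\fz'}(\cx)$ is vacuous) on one side, and the atomic/molecular characterization of $H_\fai(\rn)$ from Lemma \ref{l5.1} on the other. Both inclusions reduce, via the density of $L^2(\rn)\cap H_{\fai,\,L}(\rn)$ in $H_{\fai,\,L}(\rn)$ (Corollary \ref{cs4.1}) and standard completion arguments (cf. Remark \ref{r4.1}), to proving a uniform estimate on a single molecule/atom.

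\textbf{Step 1: $H_{\fai,\,L}(\rn)\subset H_\fai(\rn)$.} By Theorem \ref{t4.1}, it suffices to show that for a suitable choice of $M$ (dictated by $q(\fai)$, $i(\fai)$ and $n$) and $\epz$ large, every $(\fai,\,q,\,M,\,\epz)_L$-molecule $\az$ associated with a ball $B$ is, up to a harmless multiplicative constant and a summable tail, a finite linear combination of classical $(\fai,\,\infty,\,s)$-atoms (or a $(\fai,\,q,\,s,\,\epz')$-molecule in the sense of Definition \ref{d5.2}) with $s:=\lfz n[q(\fai)/i(\fai)-1]\rfz$; then Lemma \ref{l5.1} gives $\|\az\|_{H_\fai(\rn)}\ls\fai(B,\|\chi_B\|_{L^\fai(\rn)}^{-1})$ and the general bound follows from the $\sigma$-quasi-subadditivity of $\fai$ (Lemma \ref{l2.1}(i)) together with \eqref{3.2}. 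The mechanism is the classical one: write $\az=L^M(L^{-M}\az)=:L^M b$; the size conditions on $(r_B^{-2m}L^{-1})^k\az$ for $k\le M$ give that $b$ and its powers $L^kb$ have controlled $L^q$-norms on the annuli $S_j(B)$, and then $\az=L^Mb$ has the required $\int \az(x)x^\bz\,dx=0$ for $|\bz|\le 2mM-1\ge s$ (using $2m\ge2$ and the choice of $M$); the off-diagonal/regularity information needed to pass from "$\az$ has cancellation plus decay" to "$\az\in H_\fai(\rn)$" is exactly Lemma \ref{l5.1}. One does need $q(\fai)<\frac{n+\nu}{n}$ here only through the index bookkeeping; in fact the heat-kernel regularity \eqref{5.1} is not needed for this direction — only $\eqref{5.0}$ is — but I would phrase the molecule-to-atom reduction so as not to need \eqref{5.1}.

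\textbf{Step 2: $H_\fai(\rn)\subset H_{\fai,\,L}(\rn)$.} This is the substantive direction and the place where Assumption (C) is genuinely used. By Lemma \ref{l5.1}, $H_\fai(\rn)=H^{\fai,\,q,\,s}(\rn)$, so it suffices to prove that for every classical $(\fai,\,\infty,\,s)$-atom $a$ supported in a ball $B$, one has $\|S_L(a)\|_{L^\fai(\rn)}\ls\|\chi_B\|_{L^\fai(\rn)}^{-1}$ with a constant independent of $a$; then one sums over the atomic decomposition via Lemma \ref{l2.1}(i) and $\blz$. Split $\int_\rn\fai(x,S_L(a)(x))\,dx=\sum_{j\ge0}\int_{S_j(B)}$. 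For $j\in\{0,\dots,N_0\}$ one uses the uniformly upper/lower type of $\fai$, Hölder's inequality, $L^2$-boundedness of $S_L$ (equivalently Theorem \ref{thm on S_L} with $p=2$), $\|a\|_{L^2(B)}\ls|B|^{1/2}\|\chi_B\|_{L^\fai}^{-1}$ and the reverse-Hölder/$\aa_\infty$ properties of $\fai$ (Lemma \ref{l2.4}), exactly as in the estimate of $\mathrm{E}_{i,j}$ in the proof of Proposition \ref{p4.1}. For $j\ge N_0$ — the far annuli — one exploits the vanishing moments of $a$ up to order $s$: writing $t^{2m}Le^{-t^{2m}L}$ via its kernel, one Taylor-expands $h_t(x,y)$ in $y$ around the center $x_B$ and uses the Hölder regularity \eqref{5.1} together with the Gaussian bound \eqref{5.0} and the moment conditions $\int a(y)y^\bz\,dy=0$ for $|\bz|\le s$ to gain a factor $(\,\mathrm{diam}\,B/(2^jr_B+t))^{s+\nu}$ times Gaussian decay; the precise gain $s+\nu> n(q(\fai)/i(\fai)-1)$ — which is where $i(\fai)>\frac{nq(\fai)}{n+\nu}$ enters — produces a geometric series $\sum_j 2^{-j\delta}\fai(B,\|\chi_B\|_{L^\fai}^{-1})$ with $\delta>0$ after inserting the $\aa_{q_0}$-doubling of $\fai$ (Lemma \ref{l2.4}(vii)). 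Summing Step-1-type and Step-2-type contributions yields \eqref{c4.10}-style control and hence $S_L(a)\in L^\fai(\rn)$ with the right norm.

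\textbf{Main obstacle.} The technical heart is the far-annulus estimate in Step 2: one must combine the moment cancellation of the atom with the pointwise kernel regularity \eqref{5.1} and the Gaussian decay \eqref{5.0} over the full time integral $\int_0^\fz\int_{B(x,t)}|t^{2m}Le^{-t^{2m}L}a(y)|^2\,\frac{d\mu(y)}{V(x,t)}\frac{dt}{t}$, carefully separating the regimes $t\lesssim 2^jr_B$ (where cancellation/regularity gives the decay) and $t\gtrsim 2^jr_B$ (where one uses instead $L^2$-boundedness with the crude size of $a$), and then feeding the resulting spatial decay into the growth function via the reverse-Hölder and $\aa_\infty$ machinery of Lemma \ref{l2.4} so that the $j$-sum converges — this convergence is precisely the arithmetic constraint $i(\fai)\in(\frac{nq(\fai)}{n+\nu},1]$, $q(\fai)<\frac{n+\nu}{n}$. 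I expect this to follow the template of \cite[Theorem 7.4]{yys4} (and \cite[Proposition 3.2]{yys2}) with the Davies–Gaffney input replaced by \eqref{5.0}–\eqref{5.1}, but the Taylor-expansion bookkeeping with a general Musielak–Orlicz weight $\fai(x,t)$ — rather than a fixed power — is the part requiring genuine care.
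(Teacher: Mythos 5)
Your overall plan follows the paper's: the substantive inclusion $H_\fai(\rn)\subset H_{\fai,\,L}(\rn)$ is proved exactly as you outline (atomic decomposition via Lemma \ref{l5.1}, a single-atom estimate for $S_L$, near annuli by $L^q$-boundedness and the uniformly upper type, far annuli by the cancellation of the atom combined with the H\"older regularity of the kernel of $t^{2m}Le^{-t^{2m}L}$, and a final geometric sum calibrated by $i(\fai)>nq(\fai)/(n+\nu)$). But your Step 1 contains a genuine gap. It is false that $\az=L^Mb$ automatically satisfies $\int_\rn\az(x)x^{\bz}\,dx=0$ for $|\bz|\le 2mM-1$: already for $|\bz|=1$ and $L=-\mathrm{div}(A\nabla)$ with variable coefficients one has $\int x_iLb=\int\sum_j a_{ij}\partial_jb\ne0$ in general, and even for $|\bz|=0$ the identity fails for $L=-\Delta+V$ with $V\not\equiv0$ (there $\int Lb=\int Vb$, and indeed $H_{\fai,\,L}\ne H_\fai$ for such $L$). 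The zero-order cancellation $\int_\rn\az(x)\,dx=0$ --- which is all that is needed, since the hypotheses force $s=\lfz n[q(\fai)/i(\fai)-1]\rfz=0$ --- is precisely where the conservation property \eqref{5.2} enters: one derives $\int_\rn(I+L)^{-1}f=\int_\rn f$ from \eqref{5.2} and then computes $\int\az=\int(I+L)^{-1}(I+L)b-\int(I+L)^{-1}b=0$. So your assertion that this direction needs only \eqref{5.0} is wrong; without \eqref{5.2} the inclusion $H_{\fai,\,L}(\rn)\subset H_\fai(\rn)$ simply fails. Once $\int\az=0$ is secured there is no need for your finite-linear-combination reduction: the size conditions of Definition \ref{d4.2} already make $\az$ a $(\fai,\,p_1,\,0,\,\epz)$-molecule in the sense of Definition \ref{d5.2}, and Lemma \ref{l5.1} applies directly.

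A second, smaller error: in Step 2 you have the two time regimes reversed. For $x$ in a far annulus and $t$ small (the paper uses $t\in(0,r_B)$), the Gaussian factor $\exp\{-c(|x-y|/t)^{2m/(2m-1)}\}$ alone gives arbitrarily fast spatial decay and no cancellation is needed; it is for $t\gtrsim r_B$ that the kernel is flat and one must subtract $q_{t^{2m}}(y,x_B)$, using $\int_Ba=0$ and \eqref{5.6}, to gain $(r_B/(t+|x-x_B|))^{n+\gz}$ with $\gz$ close to $\nu$. A crude size or $L^2$ estimate in the large-$t$ regime yields at best $O(2^{-jn})$ for $S_L(a)$ on $S_j(B)$, which does not beat the growth $\fai(2^jB,\cdot)\ls 2^{jnq_0}\fai(B,\cdot)$ from Lemma \ref{l2.4}(vii); the extra exponent $\gz$ coming from cancellation at large $t$ is exactly what the hypothesis on $i(\fai)$ is designed to absorb. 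With these two corrections your argument reproduces the paper's proof.
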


\begin{proof}
Let $f\in H_\fai(\rn)\cap L^2(\rn)$ and $q\in(q(\fai)[r(\fai)]',\fz)$.
Then from Lemma \ref{l5.1},
we deduce that there exist $\{\lz_j\}_j\subset\cc$ and a sequence $\{a_j\}_j$ of
$(\fai,\,q,\,0)$-atoms such that
\begin{equation}\label{5.3}
f=\sum_j\lz_j a_j
\end{equation}
in $\cs'(\rn)$
and
\begin{equation}\label{5.4}
\|f\|_{H_\fai(\rn)}\sim\blz(\{\lz_j a_j\}_j).
 \end{equation}
Moreover, by $f\in L^2(\rn)$ and the proof of \cite[Theorem 3.4]{k},
we know that \eqref{5.3} also holds in $L^2(\rn)$. Thus, to prove $f\in H_{\fai,\,L}(\rn)$,
it suffices to show that, for any $\lz\in\cc$ and $(\fai,\,q,\,0)$-atom $a$ associated
with the ball $B$,
\begin{equation}\label{5.5}
\int_\rn\fai\lf(x,S_L(\lz a)(x)\r)\,dx\ls\fai\lf(B,\,
\frac{|\lz|}{\|\chi_B\|_{L^\fai(\rn)}}\r).
\end{equation}
Indeed, if \eqref{5.5} holds true, by \eqref{5.3} and \eqref{5.4}, we see that
$\|f\|_{H_{\fai,\,L}(\rn)}\ls\blz(\{\lz_ja_j\}_j)\sim\|f\|_{H_\fai(\rn)}$.

Now we prove \eqref{5.5}. From \eqref{5.1} and \eqref{5.2}, we deduce that the kernel of
$t^{2m}Le^{-t^{2m}L}$, $q_{t^{2m}}$, satisfies that, for any $\gz\in(0,\nu)$, there exists
a positive constant $C_1$ such that for all $t\in(0,\fz)$ and almost every
$x,\,y,\,h\in\rn$ with $2|h|\le t+|x-y|$,
\begin{eqnarray}\label{5.6}
&&|q_{t^{2m}}(x+h,y)-q_{t^{2m}}(x,y)|+|q_{t^{2m}}(x,y+h)-
q_{t^{2m}}(x,y)|\\ \nonumber
&&\hs\ls\frac{1}{t^n}\lf(\frac{|h|}{t+|x-y|}\r)^\gz
\exp\lf\{-\frac{C_1|x-y|^{2m/(2m-1)}}{t^{2m/(2m-1)}}\r\}
\end{eqnarray}
and
\begin{eqnarray*}
\int_\rn q_{t^{2m}}(x,y)\,dx=0=\int_\rn q_{t^{2m}}(x,y)\,dy.
\end{eqnarray*}

Write
\begin{equation}\label{5.8}
\int_\rn\fai\lf(x,S_L(\lz a)(x)\r)\,dx=\int_{8B}\fai\lf(x,S_L(\lz a)(x)\r)\,dx
+\int_{(8B)^\complement}\cdots=:\mathrm{I}_1+\mathrm{I}_2.
\end{equation}

Moreover, since $q>q(\fai)[r(\fai)]'$, it follows that there exists $p\in(1,\fz)$
such that $p>[r(\fai)]'$ and $q/p>q(\fai)$, which implies that
$\fai\in\aa_{q/p}(\rn)$. From this, H\"older's inequality and Definition \ref{d2.1},
we infer that
\begin{eqnarray*}
\|a\|^p_{L^p(B)}&&\le\lf\{\int_B|a(x)|^q\fai
\lf(x,|\lz|\|\chi_B\|^{-1}_{L^\fai(B)}\r)\,dx\r\}^{p/q}\\
&&\hs\times\lf\{\int_B\lf[\fai\lf(x,|\lz|\|\chi_B\|^{-1}_{L^\fai(B)}\r)
\r]^{-\frac{p}{q}(\frac{q}{p})'}
\,dx\r\}^{1/(q/p)'}\ls\|a\|^p_{L^q_\fai(B)}|B|,
\end{eqnarray*}
which implies that
\begin{equation}\label{5.9}
\|a\|_{L^p(B)}\ls\|a\|_{L^q_\fai(B)}|B|^{1/p}.
\end{equation}
By this, the uniformly upper type 1 property of $\fai$, H\"older's inequality and
the $L^q(\rn)$-boundedness of $S_L$, we see that
\begin{eqnarray}\label{5.10}
\hs\hs\mathrm{I}_1&&\ls\int_{8B}\fai\lf(x,|\lz|\|\chi_B\|^{-1}_{L^\fai(B)}\r)\lf[1+
\|\chi_B\|_{L^\fai(B)}S_L(a)(x)\r]\,dx\\ \nonumber
&&\ls\lf[1+\|\chi_B\|_{L^\fai(B)}|8B|^{-1/p}\|a\|_{L^p(B)}\r]
\fai\lf(B,|\lz|\|\chi_B\|^{-1}_{L^\fai(B)}\r)\\ \nonumber
&&\ls\fai\lf(B,|\lz|\|\chi_B\|^{-1}_{L^\fai(B)}\r).
\end{eqnarray}

Now we estimate $\mathrm{I}_2$. For any $x\in(8B)^\complement$, we first write
\begin{eqnarray}\label{5.11}
[S_L(a)(x)]^2&&=\int_0^{r_B}\int_{B(x,t)}\lf|\int_B q_{t^{2k}}(y,z)a(z)\,dz\r|^2\,
\frac{dy\,dt}{t}+\int_{r_B}^\fz\cdots\\ \nonumber
&&=:\mathrm{E}_1(x)+\mathrm{E}_2(x).
\end{eqnarray}
 Notice that, for any $x\in(8B)^\complement$, $y\in B(x,t)$ with $t\in(0,r_B)$
 and $z\in B$, it holds that
\begin{eqnarray}\label{5.12}
|y-z|&&\ge|x-z|-|x-y|\ge|x-x_B|-|z-x_B|-r_B\\ \nonumber
&&\ge|x-x_B|-2r_B\ge\frac{1}{2}|x-x_B|.
\end{eqnarray}
Moreover, similar the proof of \eqref{5.9},
we see that
\begin{eqnarray}\label{5.13}
\|a\|_{L^1(B)}\le\|a\|_{L^q_\fai(B)}|B|.
\end{eqnarray}

Let $s\in[n+\nu,\fz)$. Then from \eqref{5.12}, \eqref{5.13} and \eqref{5.0},
we deduce that, for all $x\in(8B)^\complement$,
\begin{eqnarray}\label{5.14}
\mathrm{E}_1(x)
&&\ls\int_0^{r_B}\int_{B(x,t)}
\lf[\int_B\frac{1}{t^n}e^{-(\frac{\wz{c}|y-z|}{t})
^{2m/(2m-1)}}|a(z)|\,dz\r]^2\,\frac{dy\,dt}{t^{n+1}}\\ \nonumber
&&\ls\int_0^{r_B}\int_{B(x,t)}\frac{1}{t^{2n}}\lf(\frac{t}{|x-x_B|}\r)^{2s}
\|a\|^2_{L^1(B)}\,\frac{dy\,dt}{t^{n+1}}\\ \nonumber
&&\ls\frac{r_B^{2(s-n)}}{|x-x_B|^{2s}}\|a\|^2_{L^1(B)}
\ls\frac{r_B^{2s}}{|x-x_B|^{2s}}\|a\|^2_{L^q_\fai(B)}.
\end{eqnarray}

Now we deal with $\mathrm{E}_2(x)$. By $n+\nu>nq(\fai)/i(\fai)$, we know that
there exist $q_0\in(q(\fai),\fz)$, $p_0\in(0,i(\fai))$ and $\gz\in(0,\nu)$ such that
$n+\gz>nq_0/p_0$, which further implies that there exists $\gz_1\in(0,\gz)$ satisfying that
\begin{equation}\label{5.15}
n+\gz-\gz_1>nq_0/p_0.
\end{equation}
Moreover, for any $x\in(8B)^\complement$, $y\in B(x,t)$ with $t\in(r_B,\fz)$, and
$z\in B$, it holds that $t+|y-z|\ge|x-z|\ge\frac{1}{2}|x-x_B|$, which,
together with $\int_B a(x)\,dx=0$, \eqref{5.6} and \eqref{5.13}, implies that
\begin{eqnarray*}
\mathrm{E}_2(x)&&\ls\int_{r_B}^\fz\int_{B(x,t)}\lf[\int_B|q_{t^{2m}}
(y,z)-q_{t^{2m}}(y,x_B)|
|a(z)|\,dz\r]^2\,\frac{dy\,dt}{t^{n+1}}\\ \nonumber
&&\ls\int_{r_B}^\fz\int_{B(x,t)}\lf[\int_B\frac{|z-x_B|^\gz}
{(t+|y-z|)^{n+\gz}}|a(z)|\,dz\r]^2\,\frac{dy\,dt}{t^{n+1}}\\ \nonumber
&&\ls\int_{r_B}^\fz\int_{B(x,t)}\lf[\int_B\frac{r_B^\gz}
{t^{\gz_1}|x-x_B|^{n+\gz-\gz_1}}|a(z)|\,dz\r]^2\,\frac{dy\,dt}{t^{n+1}}\\ \nonumber
&&\ls\frac{r_B^{2\gz}}{|x-x_B|^{2(n+\gz-\gz_1)}}\|a\|^2_{L^1(B)}\int_{r_B}^\fz
t^{-2\gz_1-1}\,dt\\ \nonumber
&&\ls\frac{r_B^{2(\gz-\gz_1)}}
{|x-x_B|^{2(n+\gz-\gz_1)}}\|a\|^2_{L^q_\fai(B)}|B|^2\ls\frac{r_B^{2(n+\gz-\gz_1)}}
{|x-x_B|^{2(n+\gz-\gz_1)}}\|\chi_B\|^{-2}_{L^\fai(B)}.
\end{eqnarray*}
From this, \eqref{5.11} and \eqref{5.14}, it follows that, for all $x\in(8B)^\complement$,
$$S_L(a)(x)\ls\frac{r_B^{n+\gz-\gz_1}}
{|x-x_B|^{n+\gz-\gz_1}}\|\chi_B\|^{-1}_{L^\fai(B)},
$$
which, together with the uniformly lower type $p_0$ properties of $\fai$, Lemma \ref{l2.4}(vi)
and \eqref{5.15}, implies that
\begin{eqnarray*}
\mathrm{I}_2&&\ls\int_{(8B)^\complement}\fai\lf(x,|\lz|S_L(a)(x)\r)\,dx\\
&&\ls\int_{(8B)^\complement}\fai\lf(x,\frac{r_B^{n+\gz-\gz_1}|\lz|}
{|x-x_B|^{n+\gz-\gz_1}}\|\chi_B\|^{-1}_{L^\fai(B)}\r)\,dx\\ \nonumber
&&\ls\sum_{j=3}^\fz\fai\lf(2^jB,2^{-(n+\gz-\gz_1)j}|\lz|
\|\chi_B\|^{-1}_{L^\fai(B)}\r)\\ \nonumber
&&\ls\sum_{j=3}^\fz2^{-(n+\gz-\gz_1)p_0j+nq_0}\fai\lf(B,|\lz|
\|\chi_B\|^{-1}_{L^\fai(B)}\r)\ls\fai\lf(B,|\lz|\|\chi_B\|^{-1}_{L^\fai(B)}\r).
\end{eqnarray*}
By this, \eqref{5.8} and \eqref{5.10}, we see that \eqref{5.5} holds true.

Now we prove that $H_{\fai,\,L}(\rn)\cap L^2(\rn)\subset H_\fai(\rn)\cap L^2(\rn)$.
By Theorem \ref{t4.1} and Lemma \ref{l5.1}, we only need to show that,
for any given $(\fai,\,p_1,\,M,\,\epz)_L$-molecule $\az$, it holds that $\az$ is a
$(\fai,\,p_1,\,0,\,\epz)$-molecule as in Definition \ref{d5.2}, where
$p_1\in(q(\fai),\fz)$,
$M\in\nn$ with $M>\frac{nq(\fai)}{2mi(\fai)}$,
and $\epz\in(nq(\fai)/i(\fai),\fz)$.
Indeed, compared with Definitions \ref{d4.2}
and \ref{d5.2}, we know that, to show our conclusion, it suffices to prove that
\begin{equation}\label{5.16}
\int_\rn\az(x)\,dx=0.
\end{equation}
By the $H_\fz$-bounded functional calculus,
we know that, for all $f\in L^p(\rn)$ with $p\in(1,p_1]$,
$$(I+L)^{-1}f=\int_0^\fz e^{-t}e^{-tL}f\,dt,
$$
which, together with Fubini's theorem and \eqref{5.2}, implies that, for all
$f\in L^p(\rn)\cap L^1(\rn)$,
\begin{equation}\label{5.17}
\int_\rn (I+L)^{-1}f(x)\,dx=\int_0^\fz e^{-t}\int_\rn e^{-tL}f(x)\,dx\,dt=\int_\rn f(x)\,dx.
\end{equation}
Moreover, by the definition of $\az$, we know that $\az\in L^1(\rn)\cap L^p(\rn)$ and
there exists $b\in L^1(\rn)\cap L^p(\rn)$
such that $\az=Lb$. From this and \eqref{5.17}, we deduce that
\begin{eqnarray*}
\int_\rn\az(x)\,dx&&=\int_\rn(I+L)^{-1}Lb(x)\,dx\\
&&=\int_\rn(I+L)^{-1}(I+L)b(x)\,dx-
\int_\rn(I+L)^{-1}b(x)\,dx=0,
\end{eqnarray*}
which completes the proof of \eqref{5.16}.

By the above proofs, we see that $H_{\fai,\,L}(\rn)\cap L^2(\rn)$ and
$H_\fai(\rn)\cap L^2(\rn)$ coincide with equivalent quasi-norms,
which, together with the fact that $H_{\fai,\,L}(\rn)\cap
L^2(\rn)$ and $H_{\fai}(\rn)\cap L^2(\rn)$  are, respectively,
dense in $H_{\fai,\,L}(\rn)$ and $H_{\fai}(\rn)$, and a
density argument, implies that the spaces $H_{\fai,\,L}(\rn)$
and  $H_{\fai}(\rn)$ coincide with equivalent quasi-norms. This
finishes the proof of Theorem \ref{t5.1}.
\end{proof}

\section{The Musielak-Orlicz-Hardy space associated
with the second order elliptic operator in divergence form}\label{cs5}

\hskip\parindent In this section, we study the Musielak-Orlicz-Hardy space
$H_{\fai,\,L}(\rn)$ associated with the second order elliptic operator
in divergence form on $\rn$ with complex bounded measurable coefficients.
By making full use of the special structure of the divergence form elliptic operator,
we establish the radial and non-tangential maximal function
characterizations of $H_{\fai,\,L}(\rn)$
based respectively on the heat and Poisson semigroups of $L$. Moreover, we
establish the boundedness of the associated Riesz transform on
$H_{\fai,\,L}(\rn)$.

\subsection{Maximal function characterizations of $H_{\fai,\,L}(\rn)$}\label{cs5.1}

\hskip\parindent We begin this subsection by recalling some necessary
notions and notation. Let $A$ be an $n\times n$ matrix with entries
$\{a_{i,\,j}\}_{i,\,j=1}^n
\subset L^\fz(\rn,\,\cc)$ satisfying the \emph{ellipticity condition}, namely,
there exist constants $0<\lz_A\le \blz_A<\fz$ such that, for all
$\xi,\,\zeta\in\cc$ and almost every $x\in\rn$,
 \begin{eqnarray*}
 \lz_A|\xi|^2\le \Re e \langle A(x)\xi, \xi\rangle \ \ \text{and}\ \
 \lf|\langle A(x)\xi, \zeta\rangle\r|\le \blz_A|\xi||\zeta|,
 \end{eqnarray*}
 where $\langle \cdot,\, \cdot\rangle$ denotes the \emph{inner product} in $\cc$
 and $\Re e\, \xi$ denotes the \emph{real part} of the complex number $\xi$.
 Then the \emph{second order elliptic operator} $L$ \emph{in divergence form}
 is defined by
\begin{eqnarray}\label{c5.1x}
L f:=-\mathrm{div}(A\nabla f),
\end{eqnarray}
 interpreted in the weak sense via a sesquilinear form. It is well known that
 there exists a positive constant $\omega\in[0,\,\pi/2)$ such that the operator
 $L$ is of type $\omega$ on $L^2(\rn)$ and $L$ has a bounded $H_\fz$-functional
 calculus on $L^2(\rn)$ (see, for example, \cite{adm95,hmm}).
 Moreover, let $(p_-(L),\,p_+(L))$ be the \emph{interior of the maximal interval of exponents}
 $p\in[1,\,\fz]$ \emph{for which the semigroup} $\{e^{-tL}\}_{t>0}$, \emph{generated by}
 $L$, \emph{is} $L^p(\rn)$ \emph{bounded}. By \cite[Proposition 3.2]{au07}
 (see also \cite[Lemma 2.25]{hmm}),
 we conclude that, for all $p_-(L)<p\le q<p_+(L)$,  $\{e^{-tL}\}_{t>0}$ satisfy the
 $L^p-L^q$ off-diagonal estimates. Thus, $L$ satisfies Assumptions {\rm (A)} and
 {\rm (B)} with $k=2$. Therefore, a corresponding theory of the Musielak-Orlicz-Hardy space
 $H_{\fai,\,L}(\rn)$, including its molecular characterization (see Theorem \ref{t4.1})
 is already known.

We also recall the definitions of some maximal functions associated with $L$
from \cite{hm09}.
Let $f\in L^2(\rn)$ and $x\in\rn$, the \emph{radial maximal
functions}, $\mathcal{R}^\az_h$ and $\mathcal{R}^\az_P$, respectively associated
with the heat semigroup and Poisson semigroup
generated by $L$ are defined by setting, for all $\az\in(0,\,\fz)$, $f\in L^2(\rn)$
and $x\in\rn$,
\begin{eqnarray}\label{c5.2}
\mathcal{R}^\az_h(f)(x):=\sup_{t>0}\lf\{\frac{1}{(\az t)^n}\dint_{B(x,\,\az t)}
\lf|e^{-t^2L}(f)(y)\r|^2\,dy\r\}^{\frac{1}{2}}
\end{eqnarray}
and
\begin{eqnarray}\label{c5.3}
\mathcal{R}^\az_P(f)(x):=\sup_{t>0}\lf\{\frac{1}{(\az t)^n}\dint_{B(x,\,\az t)}
\lf|e^{-t\sqrt L}(f)(y)\r|^2\,dy\r\}^{\frac{1}{2}}.
\end{eqnarray}
Similarly, the \emph{non-tangential maximal functions}, $\mathcal{N}^\az_h$
and $\mathcal{N}^\az_P$, respectively associated with the heat semigroup and Poisson semigroup
generated by $L$ are defined by setting, for all $\az\in(0,\,\fz)$, $f\in L^2(\rn)$
and $x\in\rn$,
\begin{eqnarray}\label{c5.4}
\mathcal{N}^\az_h(f)(x):=\sup_{(y,t)\in\Gamma_\az(x)}
\lf\{\frac{1}{(\az t)^n} \int_{B(y,\az t)}
\lf|e^{-t^2 L}(f)(z)\r|^2\,dz\r\}^{\frac{1}{2}}
\end{eqnarray}
and
\begin{eqnarray}\label{c5.5}
\mathcal{N}^\az_P(f)(x):=\sup_{(y,t)\in\Gamma_\az(x)}
\lf\{\frac{1}{(\az t)^n} \int_{B(y,\az t)}
\lf|e^{-t\sqrt L}(f)(z)\r|^2\,dz\r\}^{\frac{1}{2}},
\end{eqnarray}
where above and in what follows,
$\bgz_\az(x):=\{(y,t)\in\rn\times(0,\fz):\ |x-y|<\az t\}$.
In what follows, when $\az=1$, we remove the superscript
$\az$ for simplicity.
We also define the \emph{Lusin-area functions}, $S_h$ and $S_P$,
associated respectively to the
heat semigroup and Poisson semigroup
by setting, for  all $f\in L^2(\rn)$ and $x\in\rn$,
\begin{eqnarray}\label{c5.x6}
S_h(f)(x):=\lf\{\int_{\Gamma (x)}
\lf|t\nabla e^{-t^2 L}(f)(y)\r|^2\,\frac{dy\,dt}{t}\r\}^{\frac{1}{2}}
\end{eqnarray}
and
\begin{eqnarray}\label{c5.6}
S_P(f)(x):=\lf\{\int_{\Gamma (x)}
\lf|t\nabla e^{-t\sqrt L}(f)(y)\r|^2\,\frac{dy\,dt}{t}\r\}^{\frac{1}{2}}.
\end{eqnarray}

We first introduce the Musielak-Orlicz-Hardy space, defined via the above maximal functions,
as follows.

\begin{definition}\label{cd5.1}
Let $\fai$ and $L$ be respectively as in Definition \ref{d2.2} and \eqref{c5.1x},
and $S_P$ as in \eqref{c5.6}.
The $S_P$-\emph{adapted Musielak-Orlicz-Hardy space} $H_{\fai,\,S_P}(\rn)$
is defined to be the completion of the \emph{set}
\begin{eqnarray*}
\lf\{f\in L^2(\rn):\ \|f\|_{H_{\fai,\,S_P}(\rn)}:=
\|S_P(f)\|_{L^\fai(\rn)}<\fz\r\}
\end{eqnarray*}
with respect to the \emph{quasi-norm} $\|\cdot\|_{H_{\fai,\,S_P}(\rn)}$.

In a similar way, the $S_h$-\emph{adapted}, $\mathcal{R}_h$-\emph{adapted},
$\mathcal{R}_P$-\emph{adapted}, $\mathcal{N}_h$-\emph{adapted} and
$\mathcal{N}_P$-\emph{adapted Musielak-Orlicz-Hardy spaces},
$$H_{\fai,\,S_h}(\rn),\ H_{\fai,\,\mathcal{R}_h}(\rn),\ H_{\fai,\,\mathcal{R}_P}(\rn),\
H_{\fai,\,\mathcal{N}_h}(\rn)\ \text{and}\ H_{\fai,\,\mathcal{N}_P}(\rn)$$
are also defined.
\end{definition}

Following \cite{au07}, let $(q_-(L),\,q_+(L))$ be the \emph{interior of
the maximal interval of exponents}
$p\in[1,\,\fz]$, \emph{for which the family of operators}, $\{\sqrt t \nabla
e^{-tL}\}_{t>0}$, \emph{is} $L^p(\rn)$ \emph{bounded}. From
\cite[Proposition 3.7]{au07}, it follows that $q_-(L)=p_-(L)$ and $q_+(L)>2$.
Moreover, by \cite[Corollary 3.8 and Proposition 3.9]{au07}, we know that,
for all $q_-(L)<p\le q<q_+(L)$, the family of operators, $\{\sqrt t \nabla
e^{-tL}\}_{t>0}$, satisfies the $L^p-L^q$ off-diagonal estimates.

For the operator $S_P$, we have the following boundedness.

\begin{lemma}\label{cl5.2}
Let $S_h$ and $S_P$ be respectively
as in \eqref{c5.x6} and \eqref{c5.6}.
Then, for all $p\in(q_-(L),\,q_+(L))$, both $S_h$ and
$S_P$ are bounded on $L^p(\rn)$.
\end{lemma}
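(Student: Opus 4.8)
The plan is to prove the $L^p(\rn)$-boundedness of $S_h$ and $S_P$ separately, but by the same scheme, reducing in each case to the abstract criteria of Lemmas \ref{Auscher's thm 1} and \ref{Auscher's thm 2} (and interpolation with the known $L^2$-boundedness). The starting point is that for $L$ the divergence-form elliptic operator, the family $\{\sqrt{t}\,\nabla e^{-tL}\}_{t>0}$ satisfies $L^p$--$L^q$ off-diagonal estimates for all $q_-(L)<p\le q<q_+(L)$, as recalled above from \cite[Corollary 3.8, Proposition 3.9]{au07}; equivalently, after the change of variables $t\mapsto t^2$, $\{t\,\nabla e^{-t^2L}\}_{t>0}\in\mathcal{O}_1(L^p-L^q)$ for those exponents. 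This off-diagonal information plays exactly the role that Assumption (B) played in the proofs of Theorems \ref{thm on G{L,m}} and \ref{thm on S_L}. Since $S_h$ is bounded on $L^2(\rn)$ (by the standard quadratic estimate $\|S_h f\|_{L^2}\sim\|f\|_{L^2}$), it suffices to treat $p\in(q_-(L),2)$ and $p\in(2,q_+(L))$.

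First I would handle $S_h$. For the case $p\in(q_-(L),2)$, I apply Lemma \ref{Auscher's thm 1} with $T:=S_h$ and $A_{r_B}:=I-(I-e^{-r_B^2L})^M$ for $M$ a large integer (say $M>(2n+\theta_1)/2$). The verification of \eqref{2.14} is immediate from the $L^p$--$L^2$ boundedness of $e^{-tL}$. The verification of \eqref{2.13}, i.e. the decay estimate
$$
\lf\{\fint_{S_j(B)}\lf|S_h(I-e^{-r_B^2L})^Mf(x)\r|^2\,d\mu(x)\r\}^{1/2}\ls 2^{-j(2M-\theta_1)}\lf\{\fint_B|f(x)|^p\,d\mu(x)\r\}^{1/p},
$$
follows by splitting the $t$-integral in the definition of $S_h$ at $t=2^jr_B$ (or at $d(x,x_B)/4$, as in the proof of Theorem \ref{thm on S_L}), writing $(I-e^{-r_B^2L})^M$ as an $M$-fold time integral of $L^Me^{-(s_1+\cdots+s_M)L}$, and invoking $(tL)\nabla e^{-tL}\in\mathcal{O}_1(L^p-L^2)$ together with Minkowski's integral inequality — this is the verbatim argument of Case 1) in the proofs of Theorems \ref{thm on G{L,m}} and \ref{thm on S_L}, so I would simply say ``similar to the proof of Theorem \ref{thm on S_L}'' and omit the routine computation. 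For $p\in(2,q_+(L))$, I would reproduce the duality argument at the end of the proof of Theorem \ref{thm on S_L}: pairing $[S_hf]^2$ against $h\in L^{(p/2)'}$, using Fubini and $\int_{d(x,y)<t}V(x,t)^{-1}\,d\mu(x)\ls1$ to dominate by $\int|t\nabla e^{-t^2L}f(y)|^2\cm(h)(y)\,\frac{dt}{t}d\mu(y)$, then comparing with the vertical square function $G_h(f):=\{\int_0^\infty|t\nabla e^{-t^2L}f|^2\frac{dt}{t}\}^{1/2}$ (whose $L^p$-boundedness for $p\in(q_-(L),q_+(L))$ is proved exactly as Theorem \ref{thm on G{L,m}} using the off-diagonal estimates for $\{\sqrt t\nabla e^{-tL}\}_{t>0}$), and the $L^{(p/2)'}$-boundedness of $\cm$. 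Then $S_P$ is obtained from $S_h$ by the subordination formula $e^{-t\sqrt L}=\frac{1}{\sqrt\pi}\int_0^\infty \frac{e^{-u}}{\sqrt u}\,e^{-t^2L/(4u)}\,du$, which lets one dominate $S_P(f)$ pointwise by an integral average of dilations of $S_h(f)$; after a standard Minkowski-inequality manipulation (as in, e.g., the treatment of Poisson versus heat square functions in \cite{hm09,jy11}), the $L^p$-bound for $S_P$ follows from that for $S_h$.

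The main obstacle, and the only place requiring genuine care, is that $S_h$ and $S_P$ are built from the \emph{gradient} semigroup $\{\sqrt t\,\nabla e^{-tL}\}_{t>0}$ rather than from $\{(t^{2m}L)^k e^{-t^{2m}L}\}_{t>0}$, so one must use the interval $(q_-(L),q_+(L))$ — not $(p_-(L),p_+(L))$ — and be careful that the composite operators appearing after writing out $(I-e^{-r_B^2L})^M$, namely $t\nabla L^M e^{-(t^2+s_1+\cdots+s_M)L}$, still enjoy the right $L^p$--$L^2$ off-diagonal bounds with the correct powers of $t$ and $(t^2+s_1+\cdots+s_M)$; this is true because one can factor $t\nabla L^M e^{-\tau L}=\big(\tau^{1/2}\nabla e^{-\tau L/2}\big)\cdot\big(t\tau^{-1/2}(\tau L)^M e^{-\tau L/2}\big)$ with $\tau:=t^2+s_1+\cdots+s_M$, the first factor in $\mathcal{O}_1(L^p-L^2)$ by \cite{au07} and the second in $\mathcal{O}_1(L^2-L^2)$ with a gain of $(t/\tau^{1/2})(\tau L)^M$, and then compose off-diagonal estimates as in Remark \ref{r2.3}(i). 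Once this bookkeeping is in place, everything else is a transcription of the arguments already carried out for Theorems \ref{thm on G{L,m}} and \ref{thm on S_L}, so the proof can be presented compactly by reference. I would close with the remark that for the Poisson case one may alternatively cite the known equivalence $\|S_P f\|_{L^p}\sim\|S_h f\|_{L^p}$ for $p\in(q_-(L),q_+(L))$ from the literature on elliptic operators.
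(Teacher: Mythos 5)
Your proposal is correct and follows essentially the route the paper intends: the paper omits the proof of Lemma \ref{cl5.2}, indicating only that it follows the method used for the vertical Lusin-area function in \cite[Theorem 6.1]{au07}, and your argument is precisely that method --- the criteria of Lemmas \ref{Auscher's thm 1} and \ref{Auscher's thm 2} (plus duality and interpolation with the $L^2$ quadratic estimate) applied with the $L^p$--$L^q$ off-diagonal estimates for $\{\sqrt{t}\,\nabla e^{-tL}\}_{t>0}$ on $(q_-(L),q_+(L))$, followed by subordination and a change of aperture to pass from $S_h$ to $S_P$. The one point genuinely requiring care, namely the composite off-diagonal bounds for $t\nabla L^{M}e^{-\tau L}$ with $\tau:=t^2+s_1+\cdots+s_M$, is correctly handled by your factorization through $\sqrt{\tau}\,\nabla e^{-\tau L/2}$.
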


The proof of Lemma \ref{cl5.2} follows from a similar method used for the vertical
Lusin-area function associated with the heat semigroup (see \cite[Theorem 6.1]
{au07}). We omit the details here.

\begin{lemma}\label{off-diagonal for the case of Poisson semigroup}
Let $\fai$ and $L$ be respectively as in Definition \ref{d2.2} and \eqref{c5.1x},
$$q\in (q_-(L),\,\min\{q_+(L),\,p_+(L)\})$$
and $M\in\nn$. Then, there exists a positive constant $C$ such that, for all
$t\in(0,\,\fz)$, close sets $E,\,F\subset\rn$ satisfying $d(E,\,F)>0$
and $f\in L^q(\rn)$ with $\supp f\subset E$,
\begin{eqnarray}\label{c5.7}
\lf\{\dint_0^\fz \lf\|s\nabla e^{-s^2L}
\lf(I-e^{-t^2L}\r)^Mf\r\|^2_{L^q(F)}\,\frac{ds}{s}\r\}^{\frac{1}{2}}\le C
\lf[\frac{t}{d(E,\,F)}\r]^{2M}\|f\|_{L^q(E)}
\end{eqnarray}
and
\begin{eqnarray}\label{c5.8}
\lf\{\dint_0^\fz \lf\|s\nabla e^{-s^2L}
\lf(t^2Le^{-t^2L}\r)^Mf\r\|^2_{L^q(F)}\,\frac{ds}{s}\r\}^{\frac{1}{2}}\le C
\lf[\frac{t}{d(E,\,F)}\r]^{2M}\|f\|_{L^q(E)}.
\end{eqnarray}
\end{lemma}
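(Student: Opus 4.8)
The plan is to prove Lemma~\ref{off-diagonal for the case of Poisson semigroup} by reducing both inequalities to the $L^q$ off-diagonal estimates for the families $\{\sqrt{s}\,\nabla e^{-sL}\}_{s>0}$ (valid on $(q_-(L),q_+(L))$) and $\{e^{-sL}\}_{s>0}$ together with $\{sLe^{-sL}\}_{s>0}$ (valid on $(p_-(L),p_+(L))$), exactly as in the proof of Theorem~\ref{thm on S_L}. First I would handle \eqref{c5.7}. Using the subordination identity
$$
\lf(I-e^{-t^2L}\r)^M=\int_0^{t^2}\cdots\int_0^{t^2}L^M e^{-(u_1+\cdots+u_M)L}\,du_1\cdots du_M,
$$
one writes $s\nabla e^{-s^2L}(I-e^{-t^2L})^M f$ as an $M$-fold average of $s\nabla e^{-(s^2+u_1+\cdots+u_M)L}L^M f$, and then decomposes $L^M e^{-(s^2+u_1+\cdots+u_M)L}=[(s^2+u_1+\cdots+u_M)L]^M e^{-(s^2+u_1+\cdots+u_M)L}(s^2+u_1+\cdots+u_M)^{-M}$. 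By Minkowski's integral inequality (in the $du$-variables and in $\frac{ds}{s}$), the composition rule for off-diagonal estimates (a product of a member of $\{\sqrt{\tau}\,\nabla e^{-\tau L}\}$ with a member of $\{(\tau L)^M e^{-\tau L}\}$ at comparable times still satisfies $L^q$ off-diagonal estimates), and the uniform boundedness $\|(\tau L)^M e^{-\tau L}\|_{L^q\to L^q}\ls 1$, one bounds the $L^q(F)$-norm of each piece by
$$
\frac{s}{(s^2+u_1+\cdots+u_M)^{1/2}}\,\frac{(s^2+u_1+\cdots+u_M)^{M}}{(s^2+u_1+\cdots+u_M)^{M}}\exp\lf\{-c\frac{[d(E,F)]^2}{s^2+u_1+\cdots+u_M}\r\}\|f\|_{L^q(E)},
$$
and the remaining computation is the elementary integral
$$
\int_0^\fz\int_0^{t^2}\cdots\int_0^{t^2}\frac{s}{s^2+u_1+\cdots+u_M}\exp\lf\{-c\frac{[d(E,F)]^2}{s^2+u_1+\cdots+u_M}\r\}\,du_1\cdots du_M\,\frac{ds}{s},
$$
which, after the substitution $\tau=s^2$ and crude bounds (splitting according to whether $s^2\le [d(E,F)]^2$ or not, and using $t^{2M}$ from the $M$ inner integrals of length $t^2$), yields the factor $[t/d(E,F)]^{2M}$. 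The estimate \eqref{c5.8} is proved the same way, replacing $(I-e^{-t^2L})^M$ by $(t^2Le^{-t^2L})^M$; here no subordination identity is needed since $(t^2Le^{-t^2L})^M$ already has the right decay built in, and one simply uses $s\nabla e^{-s^2L}(t^2Le^{-t^2L})^M=t^{2M}\,s\nabla e^{-(s^2+Mt^2)L}(Mt^2+s^2)^{-M}[(Mt^2+s^2)L]^M e^{-?}$ — more precisely one combines the $M$ factors $t^2Le^{-t^2L}$ with $\nabla e^{-s^2L}$ into a single off-diagonal-bounded operator evaluated at time $s^2+Mt^2$, picks up $t^{2M}/(s^2+Mt^2)^{M}$, and integrates.

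The key preliminary step I would isolate as a small sub-claim is the composition/stability property: if $\{A_s\}$ and $\{B_s\}$ each satisfy $L^q$ off-diagonal estimates (with Gaussian decay in $d(\cdot,\cdot)^2/s$) and times are comparable, then $A_sB_s$ does too, with the decay controlled by $\max$ of the two separation functions; this is standard (see \cite{au07,am07ii,hmm}) and can be cited. Combined with Lemma~\ref{cl5.2} giving the $L^q$-boundedness of $\{\sqrt{s}\,\nabla e^{-sL}\}$ on $(q_-(L),q_+(L))$ and \cite[Proposition 3.2]{au07} giving $L^q$ off-diagonal estimates for $\{e^{-sL}\}$, $\{sLe^{-sL}\}$ on $(p_-(L),p_+(L))$, the hypothesis $q\in(q_-(L),\min\{q_+(L),p_+(L)\})$ guarantees all the ingredients are available simultaneously (note $q_-(L)=p_-(L)$).

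The main obstacle is the bookkeeping in the multi-parameter integral: one must carefully split the region $\{(s,u_1,\dots,u_M)\}$ so that when $s^2$ is small relative to $[d(E,F)]^2$ the Gaussian factor $\exp\{-c[d(E,F)]^2/(s^2+\sum u_i)\}$ supplies the required power of $s/d(E,F)$, and when $s^2$ is large the factor $(s^2+\sum u_i)^{-M}$ times $t^{2M}$ (from $\prod_i \mathbf{1}_{u_i<t^2}$) does the job; one has to make sure these two contributions each integrate to a constant multiple of $[t/d(E,F)]^{2M}$ and that no logarithmic losses appear. This is routine but is the only place where genuine care is required; everything else follows the template already executed for Theorems~\ref{thm on G{L,m}} and \ref{thm on S_L}.
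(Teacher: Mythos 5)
Your strategy is sound and every ingredient you invoke is available under the stated hypotheses, but your implementation of \eqref{c5.7} is genuinely different from the paper's. You represent $(I-e^{-t^2L})^M=\int_{[0,t^2]^M}L^Me^{-(u_1+\cdots+u_M)L}\,du$ and apply Minkowski's inequality uniformly in $s$, which mirrors the paper's own proof of Theorem \ref{thm on G{L,m}} rather than its proof of this lemma; and it does close: writing $\tau:=s^2+u_1+\cdots+u_M$ and $d:=d(E,F)$, the composed off-diagonal bound is $\ls \tau^{-M-1/2}\,s\,e^{-c d^2/\tau}\|f\|_{L^q(E)}$, the inner quantity $\int_0^\fz \tau^{-2M-1}s^2e^{-2cd^2/\tau}\,\frac{ds}{s}$ becomes, after $v=\tau$ and then $w=d^2/v$, exactly $\frac12 d^{-4M}\int_0^{d^2/|u|}w^{2M-1}e^{-2cw}\,dw\ls d^{-4M}$ with no case-splitting and no logarithm, and the outer $du$-integral of the square root supplies $t^{2M}$. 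The paper instead splits the $s$-integral at $s=t$ and uses two different algebraic devices: the binomial expansion of $(I-e^{-t^2L})^M$ for $s\le t$ (where the Gaussian at time $\sim(kt)^2$ already yields $[t/d]^{2M}$), and the identity $e^{-s^2ML}(I-e^{-t^2L})^M=(e^{-s^2L}-e^{-(s^2+t^2)L})^M$ for $s\ge t$ (each factor gaining $t^2/s^2$). Your single-formula route is more uniform, and for \eqref{c5.8} it is genuinely simpler, since $(t^2Le^{-t^2L})^M=t^{2M}L^Me^{-Mt^2L}$ collapses to one operator at time $s^2+Mt^2$ and no Minkowski step is needed; the paper's route avoids the $M$-fold integral at the price of a two-case analysis.

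One correction to the write-up itself: your displayed operator bound carries the vacuous factor $(s^2+\sum u_i)^{M}/(s^2+\sum u_i)^{M}$, and the ``remaining computation'' integral you display has lost the factor $\tau^{-2M}$ (after squaring) as well as the correct placement of the $du$-integration relative to the $L^2(ds/s)$ norm. As literally written that integral does not produce $[t/d]^{4M}$; its contribution from $s\ge d$ is already of order $t^{2M}/d$. The point to make explicit is that the Gaussian $e^{-cd^2/\tau}$ tends to $1$ as $s\to\fz$, so convergence of the tail comes entirely from $\tau^{-2M}$, while the power of $d$ comes from the region $\tau\ls d^2$; the substitution $w=d^2/\tau$ above handles both at once. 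With that factor restored, your argument is complete.
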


\begin{proof}
We first prove \eqref{c5.7}. To this end, by the change of variable, we write
\begin{eqnarray}\label{c5.9}
&&\lf\{\dint_0^\fz \lf\|s\nabla e^{-s^2L}
\lf(I-e^{-t^2L}\r)^Mf\r\|^2_{L^q(F)}\,\frac{ds}{s}\r\}^{\frac{1}{2}}\\
&&\nonumber\hs\ls
\lf\{\dint_0^t \lf\|s\nabla e^{-s^2(M+1)L}
\lf(I-e^{-t^2L}\r)^Mf\r\|_{L^q(F)}^{2}\,\frac{ds}{s}
\r\}^{\frac{1}{2}}+
\lf\{\dint_t^\fz \cdots\,\frac{ds}{s}\r\}^{\frac{1}{2}}\\
&&\nonumber\hs=:
\mathrm{H}+\mathrm{K}.
\end{eqnarray}
For $\mathrm{H}$, we deduce, from the binomial theorem,
Assumption ${\rm (B)}$ and the fact when $t\ge s$,
$(kt\nabla  e^{-(kt)^2L})(s\nabla e^{-s^2(M+1)L})$
satisfies the $L^q$ off-diagonal estimates
in $(kt)^2$ (see, for example, \cite[Lemma 2.22]{hmm}), that
\begin{eqnarray}\label{c5.10}
\hs\hs\quad\mathrm{H}&&\ls \lf\{\dint_0^t \lf\|s\nabla e^{-s^2(M+1)L}
f\r\|_{L^q(F)}^{2}\,\frac{ds}{s}
\r\}^{\frac{1}{2}}\\&&\nonumber\hs
+\sup_{1\le k\le M} \lf\{\dint_0^t \lf\|\lf(kt\nabla  e^{-(kt)^2L}\r)
\lf(e^{-s^2(M+1)L}\r)
f\r\|_{L^q(F)}^{2}\,\frac{s^2ds}{t^2s}
\r\}^{\frac{1}{2}}\\&&\nonumber
\ls\Bigg\{\lf[\dint_0^t \exp\lf\{-\frac{[d(E,\,F)]^2}
{s^2}\r\}\,\frac{ds}{s}\r]^{\frac{1}{2}}\\
&&\nonumber\hs
+\sup_{1\le k\le M} \lf[\dint_0^t \exp\lf\{-\frac{[d(E,\,F)]^2}{(k t)^2}
\r\}\,\frac{s ds}{t^2}\r]^{\frac{1}{2}}\Bigg\}\|f\|_{L^q(E)}\ls
\lf[\frac{t}{d(E,\,F)}\r]^{2M}\|f\|_{L^q(E)}.
\end{eqnarray}
Similarly, we have
\begin{eqnarray*}
\mathrm{K}&&\ls \lf\{\dint_t^\fz \lf\|s\nabla e^{-s^2L}
\lf(e^{-s^2L}-e^{-(s^2+t^2)L}\r)^Mf\r\|_{L^q(F)}^{2}\,\frac{ds}{s}
\r\}^{\frac{1}{2}}\\&&
\ls\lf[\dint_t^\fz  \exp\lf\{-\frac{[d(E,\,F)]^2}{s^2}\r\}
\lf(\frac{t^2}{s^2}\r)^{2M}\,\frac{ds}{s}
\r]^{\frac{1}{2}}\|f\|_{L^q(E)}\\&&\ls
\lf[\frac{t}{d(E,\,F)}\r]^{2M}\|f\|_{L^q(E)},
\end{eqnarray*}
which, together with \eqref{c5.9} and \eqref{c5.10}, shows immediately
that \eqref{c5.7} holds.
The proof of \eqref{c5.8} is similar to that of \eqref{c5.7}. We omit the details
here.
\end{proof}

Now, we are in the position to state our first main result in this section.

\begin{proposition}\label{cp5.4}
Let $\fai$ and $L$ be respectively as in Definition \ref{d2.2} and \eqref{c5.1x},
$S_h$ and $S_P$ respectively as in \eqref{c5.x6} and \eqref{c5.6}.
Assume further that $\fai\in\rh_{(\min\{p_+(L),\,q_+(L)\}/I(\fai))'}(\cx)$,
where $I(\fai)$ is  as in \eqref{2.4}. Then $H_{\fai,\,S_h}(\rn)$,
$H_{\fai,\,S_P}(\rn)$ and $H_{\fai,\,L}(\rn)$ coincide with equivalent quasi-norms.
\end{proposition}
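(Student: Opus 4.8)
The plan is to prove the chain of equivalences $H_{\fai,\,L}(\rn)=H_{\fai,\,S_h}(\rn)=H_{\fai,\,S_P}(\rn)$ by comparing the three relevant square functions on the dense subspace $L^2(\rn)\cap(\text{respective Hardy space})$ and then invoking the usual completion/density argument, exactly as in the proofs of Theorems \ref{t4.1} and \ref{t4.2}. The operator $L$ here satisfies Assumptions {\rm (A)} and {\rm (B)} with $p_L=p_-(L)$ and $q_L=\min\{p_+(L),\,q_+(L)\}$ (recalling $q_-(L)=p_-(L)$ from \cite{au07}), so the molecular machinery of Section \ref{ss4}, in particular Theorem \ref{t4.1} and Corollary \ref{sufficient condition for molecule}, is available; the reverse-H\"older hypothesis $\fai\in\rh_{(\min\{p_+(L),\,q_+(L)\}/I(\fai))'}(\cx)$ is precisely what is needed to apply those results. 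First I would observe that $S_L$ (the Lusin-area function built from $\{(t^{2m}L)^ke^{-t^{2m}L}\}$ with $m=1$, $k=2$, as in \eqref{2.23}) and $S_h$ (built from the gradient $t\nabla e^{-t^2L}$, as in \eqref{c5.x6}) are comparable in $L^\fai$-norm on the relevant dense class: the inequality $\|S_h f\|_{L^\fai(\rn)}\lesssim\|S_L f\|_{L^\fai(\rn)}$ and its converse follow by the standard subordination/Calder\'on-reproducing-formula argument together with Lemma \ref{cl5.2}, the off-diagonal estimate Lemma \ref{off-diagonal for the case of Poisson semigroup}, and the molecular estimate \eqref{4.8}; concretely, one shows that $\pi_{L,\,M}$ maps $T_\fai(\cx_+)$ to $H_{\fai,\,S_h}(\rn)$ by checking, as in Proposition \ref{p4.1}(ii), that $\pi_{L,\,M}$ applied to a $(T_\fai,\fz)$-atom is (up to a constant) an $(\fai,\,M,\,\epz)_L$-molecule and then that $S_h$ of such a molecule satisfies $\int_\rn\fai(x,S_h(\lz\az)(x))\,dx\lesssim\fai(B,|\lz|/\|\chi_B\|_{L^\fai})$, using Lemma \ref{off-diagonal for the case of Poisson semigroup} in place of Assumption {\rm (B)} in the off-support estimates.

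Next I would pass from the heat-based Lusin function $S_h$ to the Poisson-based one $S_P$. The bridge is the classical subordination formula $e^{-t\sqrt L}=\frac{1}{\sqrt\pi}\int_0^\fz e^{-u}e^{-t^2L/(4u)}\,\frac{du}{\sqrt u}$, which yields the pointwise comparison $S_P(f)(x)\lesssim S_h(f)(x)$ for a.e.\ $x$ after using Minkowski's integral inequality and changing variables in the cone; this immediately gives $H_{\fai,\,S_h}(\rn)\subset H_{\fai,\,S_P}(\rn)$ with norm control, since $L^\fai$ is a quasi-Banach lattice. For the reverse inclusion, the same scheme as above works: one verifies that $\pi_{L,\,M}$ (or rather the Poisson analogue obtained by replacing $e^{-t^2L}$ with $e^{-t\sqrt L}$ in the reproducing formula) maps $T_\fai(\cx_+)$ boundedly into $H_{\fai,\,S_P}(\rn)$, again by checking the molecular estimate for $S_P$ of an $(\fai,\,M,\,\epz)_L$-molecule; here the key input is once more Lemma \ref{off-diagonal for the case of Poisson semigroup}, whose estimates \eqref{c5.7} and \eqref{c5.8} are tailored exactly to controlling $\int_0^\fz\|s\nabla e^{-s^2L}(\cdots)\|_{L^q}^2\,ds/s$ on annuli, together with Lemma \ref{cl5.2} for the $L^p$-boundedness on the ``near'' part. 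Finally, assembling these comparisons on $L^2\cap H_{\fai,\,L}(\rn)$ (which is dense in each of the three spaces by Corollary \ref{cs4.1} and analogous density statements that follow from the same $T_\fai$-atomic-decomposition argument), and using the Aoki-Rolewicz completion remark (Remark \ref{r4.1}), one concludes that all three spaces coincide with equivalent quasi-norms.

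I expect the main obstacle to be the molecular decay estimate for $S_P$ (and $S_h$) of a molecule in the off-support regime --- that is, establishing the analogue of \eqref{4.8} with $S_L$ replaced by $S_P$. Unlike $S_L$, the Poisson square function does not come with the convenient semigroup smoothing $(t^2L)^{M+1}e^{-t^2L}$ that makes the annular decay estimates in Proposition \ref{p4.1} almost mechanical; instead one must exploit Lemma \ref{off-diagonal for the case of Poisson semigroup}, which already packages the needed $[t/d(E,F)]^{2M}$ decay, but one has to be careful to split the $s$-integral in \eqref{c5.6} at the scale $d(x,x_B)/4$ (as in the treatment of $\mathrm{I}_1,\mathrm{I}_2$ in the proof of Proposition \ref{p4.1}) and to track the interplay between the decay exponent $2M$, the doubling dimension $n$, the reverse-H\"older/Muckenhoupt indices $r(\fai),q(\fai),i(\fai)$, and the $L^q$-boundedness range $(q_-(L),q_+(L))$. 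The hypothesis $\fai\in\rh_{(\min\{p_+(L),q_+(L)\}/I(\fai))'}(\cx)$ is exactly what lets H\"older's inequality close this estimate (as in \eqref{4.12}), and $M>\frac{n}{2m}[\frac{q(\fai)}{i(\fai)}+\frac{\tz_1}{n}-\frac{2}{nq}]$ guarantees the summability of the resulting geometric series; so the real work is bookkeeping rather than a genuinely new idea. A secondary technical point worth isolating is verifying that $L$ indeed satisfies Assumption {\rm (B)} with the claimed $p_L,q_L$, i.e.\ assembling the off-diagonal facts $q_-(L)=p_-(L)$, $q_+(L)>2$, and the $L^p$-$L^q$ off-diagonal boundedness of $\{\sqrt t\nabla e^{-tL}\}$ from \cite{au07} into the precise form of Definition \ref{d2.3}; this is routine but must be stated cleanly before the molecular argument can be run.
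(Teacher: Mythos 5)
Your overall architecture is the same as the paper's: reduce everything to (a) the molecular estimate $\int_\rn\fai(x,S_P(\lz\az)(x))\,dx\ls\fai(B,|\lz|\|\chi_B\|^{-1}_{L^\fai(\rn)})$ for $(\fai,q,M,\epz)_L$-molecules, proved via Lemma \ref{off-diagonal for the case of Poisson semigroup}, the subordination formula and Lemma \ref{cl5.2}, and (b) a Calder\'on reproducing formula $f=\pi_{L,\,M}(\cdots)$ combined with Proposition \ref{p4.1} and the tent-space decomposition, then finish by density. You correctly identify the off-support molecular decay for $S_P$ as the hard part. However, there are two concrete soft spots. First, the claimed \emph{pointwise} bound $S_P(f)(x)\ls S_h(f)(x)$ is false as stated: subordination plus Minkowski's inequality expresses $S_P(f)(x)$ as a superposition $\int_0^\fz e^{-u}u^{N}S_h^{(c\sqrt{u})}(f)(x)\,du$ of heat Lusin functions with apertures growing like $\sqrt{u}$, and a large-aperture cone integral is not pointwise dominated by the aperture-one one. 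You need the change-of-aperture estimate in $L^\fai$-norm (the analogue of \cite[Lemma 7.7]{yys4} that the paper invokes in the proof of Theorem \ref{ct5.5}), which is a norm inequality obtained from the tent-space/good-$\lz$ machinery, not a pointwise one.

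Second, and more importantly, your argument only ever produces the inclusions $H_{\fai,\,L}(\rn)\subset H_{\fai,\,S_h}(\rn)$ and $H_{\fai,\,L}(\rn)\subset H_{\fai,\,S_P}(\rn)$: showing that $\pi_{L,\,M}$ maps $T_\fai(\rr^{n+1}_+)$ into $H_{\fai,\,S_h}(\rn)$, or that $S_h$, $S_P$ of a molecule decay, bounds the gradient square functions \emph{by} $\|f\|_{H_{\fai,\,L}(\rn)}$. For the reverse inclusions you must start from the finiteness of $\|S_P(f)\|_{L^\fai(\rn)}$ and recover control of $\|f\|_{H_{\fai,\,L}(\rn)}$; the missing ingredient is a Caccioppoli-type inequality showing that the modified square function $\wz S_P(f)$, built from $t^2Le^{-t\sqrt{L}}f$, satisfies $\wz S_P(f)(x)\ls S_P(f)(x)$ pointwise (this is where the divergence structure $L=-\mathrm{div}(A\nabla)$ is used, via the proof of \cite[Lemma 5.4]{hm09}). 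Only then is $t^2Le^{-t\sqrt{L}}f\in T_\fai(\rr^{n+1}_+)\cap T^2_2(\rr^{n+1}_+)$, so that $f=\pi_{L,\,M}(t^2Le^{-t\sqrt{L}}f)$ and Proposition \ref{p4.1}(ii) yield $\|f\|_{H_{\fai,\,L}(\rn)}\ls\|S_P(f)\|_{L^\fai(\rn)}$; the same device (with $t^2Le^{-t^2L}f$) is needed for $S_h$. Without this step the equivalence cannot be closed, so you should state and use it explicitly.
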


\begin{proof}
We prove Proposition \ref{cp5.4} by first showing that
$L^2(\rn)\cap H_{\fai,\,S_P}(\rn)$ and $L^2(\rn)\cap H_{\fai,\,L}(\rn)$
coincide with equivalent quasi-norms,
whose proof is divided into two different directions of inclusions.
We first prove the inclusion $L^2(\rn)\cap H_{\fai,\,S_P}(\rn)
\subset L^2(\rn)\cap H_{\fai,\,L}(\rn)$. Let $f\in L^2(\rn)\cap H_{\fai,\,S_P}(\rn)$.
For all $x\in\rn$, let
\begin{eqnarray*}
\wz S_{P}(f)(x):=\lf\{\int_{\Gamma(x)} \lf|t^2Le^{-t \sqrt L}(f)(y)\r|^2
\,\frac{dy\,dt}{t^{n+1}}\r\}^{\frac{1}{2}}.
\end{eqnarray*}
From the proof of \cite[Lemma 5.4]{hm09}, we deduce that,
for all $x\in\rn$,
$\wz S_{P}(f)(x)\ls  S_{P}(f)(x)$,
which immediately implies that
\begin{eqnarray}\label{c5.11}
\lf\|\wz S_{P}(f)\r\|_{L^\fai(\rn)}
\ls \|S_{P}(f)\|_{L^\fai(\rn)}.
\end{eqnarray}
 Moreover, by the
$L^2(\rn)$-boundedness of $S_P$ (see, for example, \cite[(5.15)]{hm09}),
we know that
\begin{eqnarray*}
\lf\|\wz S_{P}(f)\r\|_{L^2(\rn)}\ls \lf\| S_{P}(f)\r\|_{L^2(\rn)}\ls \|f\|_{L^2(\rn)}.
\end{eqnarray*}
Thus, $t^2Le^{-t\sqrt L}f\in T_\fai({\rr^{n+1}_+})\cap T^2_2({\rr^{n+1}_+})$.
Using the bounded $H_\fz$-functional calculus in $L^2(\rn)$, we see that
$f=\pi_{L,\,M}(t^2Le^{-t\sqrt{L}}f)$
in $L^2(\rn)$, where $\pi_{L,\,M}$ is as in \eqref{4.2}, which, combining with
Proposition \ref{p4.1} and \eqref{c5.11}, implies that
\begin{eqnarray*}
\|f\|_{H_{\fai,\,L}(\rn)}&&\ls\lf\|t^2Le^{-t\sqrt{L}}f\r\|_{T_\fai({\rr^{n+1}_+})}
\sim\lf\|\wz{S}_P(f)\r\|_{L^\fai(\rn)}\ls\|f\|_{H_{\fai,\,S_P}(\rn)}<\fz.
\end{eqnarray*}
This shows $f\in L^2(\rn)\cap H_{\fai,\,L}(\rn)$ and hence the
inclusion $L^2(\rn)\cap H_{\fai,\,S_P}(\rn)
\subset L^2(\rn)\cap H_{\fai,\,L}(\rn)$.

Now, we prove the
inclusion $L^2(\rn)\cap H_{\fai,\,L}(\rn)
\subset L^2(\rn)\cap H_{\fai,\,S_P}(\rn)$.
To this end, it suffices to show that
the operator  $S_P$ is bounded from
$H_{\fai,\,L}(\rn)$ to $L^\fai(\rn)$. Moreover,
by Corollary \ref{sufficient condition for molecule}, we
only need to show that, for any $\lz\in\cc$ and
$(\fai,\,q,\,M,\,\epsilon)_L$-molecule $\az$ associated with $B$,
\begin{eqnarray}\label{c5.12}
\dint_{\rn}\fai\lf(x,\,S_P(\lz \az)(x)\r)\,dx\ls \fai\lf(B,\,\frac{|\lz|}
{\|\chi_B\|_{L^\fai(\rn)}}\r),
\end{eqnarray}
where $q\in(p_-(L),\min\{p_+(L),q_+(L)\})$, $\epsilon\in(0,\,\fz)$ and
$M\in\nn$ can be chosen sufficiently large.

To prove \eqref{c5.12}, we first write
\begin{eqnarray}\label{c5.13}
&&\dint_{\rn}\fai\lf(x,\,S_P(\lz \az)(x)\r)\,dx\\
&&\nonumber\hs\ls \dint_{\rn}\fai\lf(x,\,S_P
\lf(I-e^{-r_B^2L}\r)^M(\lz \az)(x)\r)\,dx\\
&&\nonumber\hs\hs+
\dint_{\rn}\fai\lf(x,\,S_P\lf[I-\lf(I-e^{-r_B^2L}\r)^M\r]
(\lz \az)(x)\r)\,dx=:\mathrm{I}+\mathrm{J}.
\end{eqnarray}
For $\mathrm{I}$, let $p_1\in [I(\fai),\,1]$ and
$p_2\in(0,\,i(\fai))$ such that $\fai$ is of uniformly upper type $p_1$ and
lower type $p_2$.
By Minkowski's inequality and Lemma \ref{l2.1}(i), we conclude that
\begin{eqnarray}\label{c5.14}
\mathrm{I}&&\ls \dsum_{i\in\nn} \dsum_{j\in\nn}
\dint_{S_j(2^iB)}\fai\lf(x,\,|\lz| S_P
\lf(I-e^{-r_B^2L}\r)^M\lf(\chi_{S_i(B)} \az\r)(x)\r)\,dx\\
&& \nonumber\ls \dsum_{i\in\nn} \dsum_{j\in\nn}
\lf\{\dint_{S_j(2^iB)}\lf[S_P
\lf(I-e^{-r_B^2L}\r)^M\lf(\chi_{S_i(B)} \az\r)(x)
\|\chi_{B}\|_{L^\fai(\rn)}\r]^{p_1}\r.\\
&&\nonumber\lf.\hs\hs\times
\fai\lf(x,\,\frac{|\lz|}{\|\chi_B\|_{L^\fai(\rn)}}\r)\,dx\r.\\
&& \nonumber\hs\lf.+\dint_{S_j(2^iB)}\lf[S_P
\lf(I-e^{-r_B^2L}\r)^M\lf(\chi_{S_i(B)} \az\r)(x)
\|\chi_{B}\|_{L^\fai(\rn)}\r]^{p_2}\r.\\
&&\nonumber\lf.\hs\hs\times
\fai\lf(x,\,\frac{|\lz|}{\|\chi_B\|_{L^\fai(\rn)}}\r)\,dx\r\}
=:\dsum_{i\in\nn} \dsum_{j\in\nn}\lf\{\wz{\rm{I}}_{i,\,j}+
\wz{\rm{II}}_{i,\,j}\r\}.
\end{eqnarray}
We first estimate $\wz{\rm{I}}_{i,\,j}$ in the case
when $j\in\{0,\,\ldots,\,4\}$.  Let $q\ge2$ and
\begin{eqnarray}\label{rangeofq}
q\in (I(\fai)[r(\fai)]',\,[q_+(L)]')\cap (p_-(L),\,\min\{p_+(L),\,q_+(L)\})
\end{eqnarray}
such that \eqref{c4.11} holds true.
Let $\wz p\in (q(\fai),\,\fz)$. Then $\fai\in \rh_{(\frac{q}{p_1})'}(\rn)\cap
\mathbb{A}_{\wz p}(\rn)$. This, together with H\"older's inequality,
Lemma \ref{off-diagonal for the case of Poisson semigroup}
and the $L^q(\rn)$-boundedness of the semigroup
$\{e^{-tL}\}_{t>0}$ for $q\in (p_-(L),\,p_+(L))$, implies that
\begin{eqnarray*}
\wz{\rm{I}}_{i,\,j}&&\ls \lf\{\dint_{S_j(2^iB)}\lf[S_P
\lf(I-e^{-r_B^2L}\r)^M\lf(\chi_{S_i(B)} \az\r)(x)
\|\chi_{B}\|_{L^\fai(\rn)}\r]^{q}\r\}^{\frac{p_1}{q}}\\
&&\hs\times
\lf\{\frac{1}{\lf|2^{i+j}B\r|} \dint_{S_j(2^iB)}
\lf[\fai\lf(x,\,\frac{|\lz|}{\|\chi_B\|_{L^\fai(\rn)}}\r)\r]^{(\frac{q}{p_1})'}
\,dx\r\}^{\frac{1}{(\frac{q}{p_1})'}}\\
&&\hs\times
\lf|2^{i+j}B\r|^{\frac{1}
{(\frac{q}{p_1})'}}\|\chi_{B}\|_{L^\fai(\rn)}^{p_1}\\
&&\ls \|\az\|_{L^q(S_i(B))}^{p_1}\|\chi_B\|_{L^\fai(\rn)}^{p_1}
\lf|2^{i+j}B\r|^{-\frac{p_1}{q}}\lf\{\dint_{S_j(2^iB)} \fai\lf(x,\,
\frac{|\lz|}{\|\chi_{B}\|_{L^\fai(\rn)}}\r)\,dx\r\}.
\end{eqnarray*}
From this, Definition \ref{d4.2} and Lemma \ref{l2.4}(vii), we deduce that
\begin{eqnarray*}
\wz{\rm{I}}_{i,j}\ls 2^{-ip_1[\epsilon+\frac{n}{q}-\frac{n\wz q}{p_1}]}
\fai\lf(B,\,\frac{|\lz|}{\|\chi_B\|_{L^\fai(\rn)}}\r),
\end{eqnarray*}
when $\epsilon>n(\frac{\wz q}{p_1}-\frac{1}{q})$.

We now estimate $\wz{\rm{I}}_{i,\,j}$ in the case
when $j\ge 5$. Similar to the case when $j\le 4$, we first
have
\begin{eqnarray}\label{c5.16}
\wz{\rm{I}}_{i,\,j}&&\ls 2^{-(i+j)n(\frac{p_1}{q}-\wz q)}
\|\chi_B\|_{L^\fai(\rn)}^{p_1} |B|^{-\frac{p_1}{q}}
\fai\lf(B,\,\frac{|\lz|}{\|\chi_B\|_{L^\fai(\rn)}}\r) \\
&&\nonumber\hs\times  \lf\{ \dint_{S_j(2^iB)}\lf[S_P
\lf(I-e^{-r_B^2L}\r)^M\lf(\chi_{S_i(B)} \az\r)(x)
\|\chi_{B}\|_{L^\fai(\rn)}\r]^{q}\r\}^{\frac{p_1}{q}}\\
&&\nonumber=: 2^{-(i+j)n(\frac{p_1}{q}-\wz q)}
\|\chi_B\|_{L^\fai(\rn)}^{p_1} |B|^{-\frac{p_1}{q}}
\fai\lf(B,\,\frac{|\lz|}{\|\chi_B\|_{L^\fai(\rn)}}\r)\lf(\mathcal{A}_{i,\,j}\r)^{p_1}.
\end{eqnarray}
For $\mathcal{A}_{i,\,j}$, since $q\ge 2$, by the dual norm representation
of the $L^{\frac{q}{2}}(\rn)$-norm, we know that there exists
$g\in L^{(\frac{q}{2})'}(\rn)$, with $\|g\|_{L^{(\frac{q}{2})'}(\rn)}\le 1$, such that
\begin{eqnarray}\label{c5.17}
\hs\hs\quad\hs\mathcal{A}_{i,\,j}
&&\sim \lf\{ \dint_{S_j(2^iB)}\lf[\int_{\Gamma(x)}
\lf|t\nabla e^{-t\sqrt L} \lf(I-e^{-r_B^2L}\r)^M \lf(\chi_{S_i(B)} \az\r)(y)\r|^2
\frac{dy\,dt}{t^{n+1}}\r]^{\frac{q}{2}}\,dx\r\}^{\frac{1}{q}}\\
&&\nonumber\sim\lf\{ \dint_{S_j(2^iB)}\lf[\int_{\Gamma(x)}
\lf|t\nabla e^{-t\sqrt L} \lf(I-e^{-r_B^2L}\r)^M \lf(\chi_{S_i(B)} \az\r)(y)\r|^2
\frac{dy\,dt}{t^{n+1}}\r]g(x)\,dx\r\}^{\frac{1}{2}}\\
&&\nonumber\ls\lf\{ \dint_0^\fz \dint_{\rn\setminus (2^{i+j-2} B)}
\lf|t\nabla e^{-t\sqrt L} \lf(I-e^{-r_B^2L}\r)^M \lf(\chi_{S_i(B)} \az\r)(y)\r|^2\r.\\
&&\nonumber\hs\times\lf.
\mathcal{M}(g)(y)\,\frac{dy\,dt}{t}\r\}^{\frac{1}{2}}
+\sum_{k=0}^{j-2} \lf\{ \dint_{2^i(2^{j-1}-2^k)r_B}^\fz
\int_{S_k(2^iB)}\cdots\,\frac{dy\,dt}{t}\r\}^{\frac{1}{2}}\\
&&\nonumber=:\wz{\mathcal{A}}_{i,\,j}+\dsum_{k=0}^{j-2}\wz{\mathcal{A}}_{i,\,j,\,k},
\end{eqnarray}
where $\mathcal{M}$ denotes the classical Hardy-Littlewood maximal
function.

To estimate $\wz{\mathcal{A}}_{i,\,j}$, we need the following
\emph{subordination formula},
\begin{eqnarray}\label{c5.18}
e^{-t\sqrt L}=C\dint_0^\fz \frac{e^{-u}}{\sqrt{u}}e^{-\frac{t^2L}{4u}}\,du,
\end{eqnarray}
where $C$ is a positive constant. By using
H\"older's inequality, \eqref{c5.18}, Minkowski's integral inequality, the
$L^{(\frac{q}{2})'}(\rn)$-boundedness of the Hardy-Littlewood maximal
function and
Lemma \ref{cl5.2}, we conclude that
\begin{eqnarray}\label{c5.19}
\hs\hs\quad\hs\wz{\mathcal{A}}_{i,\,j}&&\ls
\lf\{ \dint_0^\fz \lf[\dint_{\rn\setminus (2^{i+j-2} B)}
\lf|t\nabla e^{-t\sqrt L} \lf(I-e^{-r_B^2L}\r)^M \lf(\chi_{S_i(B)} \az\r)(y)\r|^q
\,dy\r]^{\frac{2}{q}}\,\frac{dt}{t}\r\}^{\frac{1}{2}}\\
&&\nonumber\ls \dint_0^\fz e^{-u}
\lf\{\dint_0^\fz\lf\|\frac{t}{\sqrt{4u}}\nabla e^{-\frac{t^2}{4u} L}
\lf(I-e^{-r_B^2L}\r)^M\r.\r.\\
&&\nonumber\lf.\hs\times\lf(\chi_{S_i(B)} \az\r)(y)\bigg\|_{L^q(\rn\setminus
(2^{i+j-2} B))}\,\frac{dt}{t}\r\}^{\frac{1}{2}}\,du\\
&&\nonumber\ls \dint_0^\fz e^{-u}\lf[\frac{r_B}{2^{i+j}r_B}\r]^{2M}\,du
\|\az\|_{L^q(S_i(B))}\ls 2^{-(i+j)M}\|\az\|_{L^q(S_i(B))}.
\end{eqnarray}
We continue to estimate $\wz{\mathcal{A}}_{i,\,j,\,k}$. Similar
to the estimates for $\wz{\mathcal{A}}_{i,\,j}$,
we first conclude that
\begin{eqnarray*}
\wz{\mathcal{A}}_{i,\,j,\,k}&&\ls \dint_{0}^\fz e^{-u}
\lf\{ \dint_{2^i(2^{j-1}-2^k)r_B}^\fz\lf\|\frac{t}{\sqrt{4u}}\nabla e^{-\frac{t^2}{4u} L}
\lf(I-e^{-r_B^2L}\r)^M\r.\r.\\
&&\nonumber\hs\times\lf(\chi_{S_i(B)} \az\r)\bigg\|^2_{L^q(S_k(2^iB))}\,\frac{dt}{t}
\Bigg\}^{\frac{1}{2}}\,du\\
&&\nonumber\ls\dint_{0}^\fz e^{-u}
\lf\{ \dint_{\frac{[2^i(2^{j-1}-2^k)r_B]^2}{4u(M+1)}}^\fz
\lf\|\sqrt{s}\nabla e^{-sL}\lf[\frac{s}{r_B^2}
\lf(e^{-sL}-e^{-(s+r_B^2)L}\r)\r]^M\r.\r.\\
&&\nonumber\hs\times\lf(\chi_{S_i(B)} \az\r)\bigg\|^2_{L^q(S_k(2^iB))}
\lf(\frac{r_B^2}{s}\r)^{2M}
\,\frac{ds}{s}\Bigg\}^{\frac{1}{2}}\,du.
\end{eqnarray*}
By the $L^q$ off-diagonal estimates (similar to the estimates used in
\eqref{c5.10}) and the change of variable (let $\wz s:=\frac{(2^{i+j}r_B)^2}
{1+u}\frac{1}{s}$), we further find that
\begin{eqnarray*}
\wz{\mathcal{A}}_{i,\,j,\,k}
&&\ls\|\az\|_{L^q(S_i(B))}\dint_{0}^\fz e^{-u}
\lf\{ \dint_{\frac{[2^i(2^{j-1}-2^k)r_B]^2}{4u(M+1)}}^\fz
\exp\lf\{-\frac{[2^{i+j}r_B]^2}{s(1+u)}\r\}
\lf(\frac{r_B^2}{s}\r)^{2M}\,\frac{ds}{s}\r\}^{\frac{1}{2}}\,du\\
&&\ls\|\az\|_{L^q(S_i(B))}\dint_{0}^\fz e^{-u}
\lf\{ \dint_{0}^{\frac{4u(M+1)}{[2^i(2^{j-1}-2^k)r_B]^2}
\frac{(2^{i+j}r_B)^2}{1+u}}e^{-s}
\lf[\frac{r_B^2 s(1+u)}{(2^{i+j}r_B)^2}\r]^{2M}
\,\frac{ds}{s}\r\}^{\frac{1}{2}}\,du\\
&&\ls 2^{2(i+j)M}
\|\az\|_{L^q(S_i(B))} \dint_{0}^\fz (1+u)^{M}e^{-u}
\lf\{ \dint_{0}^1s^{2M}e^{-s}
\,\frac{ds}{s}\r\}^{\frac{1}{2}}\,du \\
&&\ls2^{2(i+j)M}
\|\az\|_{L^q(S_i(B))},
\end{eqnarray*}
which, together with \eqref{c5.16}, \eqref{c5.17}, \eqref{c5.19}
and Definition \ref{d4.2}, implies that,
when $j\ge 5$,
\begin{eqnarray*}
\hs\hs\wz{\rm{I}}_{i,\,j}&&\ls 2^{-(i+j)n(\frac{p_1}{q}-\wz q)}
\|\chi_B\|_{L^\fai(\rn)}^{p_1} |B|^{-\frac{p_1}{q}}
\fai\lf(B,\,\frac{|\lz|}{\|\chi_B\|_{L^\fai(\rn)}}\r)(\mathcal{A}_{i,\,j})^{p_1}\\
&&\ls 2^{-(i+j)p_1[2M+n(\frac{p_1}{q}-\wz q)]}
\|\chi_B\|_{L^\fai(\rn)}^{p_1} |B|^{-\frac{p_1}{q}}
\fai\lf(B,\,\frac{|\lz|}{\|\chi_B\|_{L^\fai(\rn)}}\r)
\|\az\|_{L^q(S_i(B))}^{p_1}\\
&&\ls 2^{-(i+j)p_1[2M+n(\frac{p_1}{q}-\wz q)]}
2^{-i\epsilon p_1}
\fai\lf(B,\,\frac{|\lz|}{\|\chi_B\|_{L^\fai(\rn)}}\r).
\end{eqnarray*}

Similar to the estimates for $\wz{\rm{I}}_{i,\,j}$, we see that
\begin{eqnarray*}
\wz{\rm{II}}_{i,\,j}\ls 2^{-(i+j)p_2[2M+n(\frac{p_2}{q}-\wz q)]}
2^{-i\epsilon p_2}
\fai\lf(B,\,\frac{|\lz|}{\|\chi_B\|_{L^\fai(\rn)}}\r),
\end{eqnarray*}
which, combining with \eqref{c5.14}, implies that
\begin{eqnarray}\label{c5.20}
\mathrm{I}\ls \lf(B,\,\frac{|\lz|}{\|\chi_B\|_{L^\fai(\rn)}}\r).
\end{eqnarray}
Also, by following the same way as the estimates for
$\mathrm{I}$, we know that
$\mathrm{J}\ls(B,\,\frac{|\lz|}{\|\chi_B\|_{L^\fai(\rn)}})$,
which, together with \eqref{c5.13} and \eqref{c5.20}, shows
that \eqref{c5.12} holds true.

The proof for the equivalence of $H_{\fai,\,S_h}(\rn)$ and $H_{\fai,\,L}(\rn)$
is similar. We omit the details here.
This finishes the proof of Proposition \ref{cp5.4}.
\end{proof}

Now, we state the maximal function characterizations of $H_{\fai,\,L}(\rn)$
as follows.

\begin{theorem}\label{ct5.5}
Let $\fai$ and $L$ be respectively as in Definition \ref{d2.2} and \eqref{c5.1x},
$\mathcal{R}_h$, $\mathcal{R}_P$, $\mathcal{N}_h$ and $\mathcal{N}_P$
respectively as in \eqref{c5.2}, \eqref{c5.3}, \eqref{c5.4} and \eqref{c5.5}.
Assume further that $\fai\in\rh_{(\min\{p_+(L),\,q_+(L)\}/I(\fai))'}(\cx)$,
where $I(\fai)$ is  as in \eqref{2.4}. Then $H_{\fai,\,\mathcal{R}_h}(\rn)$,
$H_{\fai,\,\mathcal{R}_P}(\rn)$, $H_{\fai,\,\mathcal{N}_h}(\rn)$,
$H_{\fai,\,\mathcal{N}_P}(\rn)$ and $H_{\fai,\,L}(\rn)$ coincide
with equivalent quasi-norms.
\end{theorem}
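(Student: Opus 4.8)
The plan is to prove Theorem~\ref{ct5.5} by establishing, for each of the four maximal-function spaces, that it coincides with $H_{\fai,\,L}(\rn)$ with equivalent quasi-norms, using as a pivot the already-established equivalence of $H_{\fai,\,S_P}(\rn)$, $H_{\fai,\,S_h}(\rn)$ and $H_{\fai,\,L}(\rn)$ from Proposition~\ref{cp5.4}. The natural chain of inclusions to close is
\begin{eqnarray*}
&&\hspace{-0.6cm}H_{\fai,\,\mathcal{N}_P}(\rn)\cap L^2(\rn)\subset
H_{\fai,\,\mathcal{R}_P}(\rn)\cap L^2(\rn)\subset H_{\fai,\,L}(\rn)\cap L^2(\rn),\\
&&\hspace{-0.6cm}H_{\fai,\,\mathcal{N}_h}(\rn)\cap L^2(\rn)\subset
H_{\fai,\,\mathcal{R}_h}(\rn)\cap L^2(\rn)\subset H_{\fai,\,L}(\rn)\cap L^2(\rn),
\end{eqnarray*}
together with the reverse inclusions $H_{\fai,\,L}(\rn)\cap L^2(\rn)\subset
H_{\fai,\,\mathcal{N}_h}(\rn)\cap L^2(\rn)$ and
$H_{\fai,\,L}(\rn)\cap L^2(\rn)\subset H_{\fai,\,\mathcal{N}_P}(\rn)\cap L^2(\rn)$;
the first inclusion in each line is trivial since $\mathcal{R}^\az_h\le\mathcal{N}^\az_h$ and $\mathcal{R}^\az_P\le\mathcal{N}^\az_P$ pointwise. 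A standard density argument (as in the proofs of Theorems~\ref{t4.1} and \ref{t5.1}, using that $H_{\fai,\,L}(\rn)\cap L^2(\rn)$ is dense in $H_{\fai,\,L}(\rn)$ and similarly for the maximal-function spaces) then upgrades these to equivalences of the full spaces.

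For the direction from the radial maximal functions into $H_{\fai,\,L}(\rn)$, I would run a good-$\lz$ inequality comparing $\mathcal{N}^\az_h$ (hence $\mathcal{R}^\az_h$) with the truncated Lusin-area function associated with $L$, exactly in the spirit of the proof of \cite[Theorem~7.4]{yys4} and \cite[Proposition~3.2]{yys2} referenced in the introduction; this uses the divergence structure of $L$ crucially (to relate $e^{-t^2L}f$ to $t\nabla e^{-t^2L}f$ via Caccioppoli-type estimates) and the reverse-H\"older assumption $\fai\in\rh_{(\min\{p_+(L),q_+(L)\}/I(\fai))'}(\cx)$ to pass from the $L^p$-level good-$\lz$ estimate to the Musielak-Orlicz level, via the uniformly upper/lower type inequalities of $\fai$ and Lemma~\ref{l2.4}(vi). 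The conclusion is that $\|S_h(f)\|_{L^\fai(\rn)}\ls\|\mathcal{N}_h(f)\|_{L^\fai(\rn)}$ for $f\in L^2(\rn)$, and combining with Proposition~\ref{cp5.4} gives $\|f\|_{H_{\fai,\,L}(\rn)}\ls\|f\|_{H_{\fai,\,\mathcal{R}_h}(\rn)}$. The same scheme with the Poisson semigroup, using the subordination formula~\eqref{c5.18} to transfer off-diagonal and Caccioppoli bounds from the heat to the Poisson semigroup (as was already done inside the proof of Proposition~\ref{cp5.4}), yields the analogous estimate for $\mathcal{R}_P$ and $S_P$.

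For the reverse direction, i.e. controlling the non-tangential maximal functions by $\|f\|_{H_{\fai,\,L}(\rn)}$, I would use the molecular characterization (Theorem~\ref{t4.1}) together with Corollary~\ref{sufficient condition for molecule}: it suffices to show that for any $\lz\in\cc$ and any $(\fai,q,M,\epz)_L$-molecule $\az$ associated with a ball $B$,
\begin{eqnarray*}
\int_{\rn}\fai\lf(x,\,\mathcal{N}_h(\lz\az)(x)\r)\,dx\ls\fai\lf(B,\,\frac{|\lz|}{\|\chi_B\|_{L^\fai(\rn)}}\r),
\end{eqnarray*}
and likewise for $\mathcal{N}_P$. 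This is proved by the familiar annular decomposition $\az=\sum_i\chi_{S_i(B)}\az$, splitting $\mathcal{N}_h(\cdot)$ over small and large scales $t\lessgtr r_B$ (using $\az=(r_B^{2}L)^M(r_B^{-2}L^{-1})^M\az$ on the large-scale part), estimating each piece by the $L^q$--$L^q$ off-diagonal bounds on $\{e^{-t^2L}\}_{t>0}$ for $q\in(p_-(L),p_+(L))$ together with the $L^q$-boundedness of the local Hardy--Littlewood maximal operator appearing in the definition of $\mathcal{N}_h$, and finally converting the resulting geometric decay in $i$ and $j$ into the Musielak-Orlicz estimate via the type inequalities of $\fai$, Lemma~\ref{l2.4}(vi), (vii), and $\fai\in\aa_{q_0}(\rn)$; this is entirely parallel to the proof of \eqref{4.8} in Proposition~\ref{p4.1} and to the estimate of $\mathrm{I}$ in Proposition~\ref{cp5.4}. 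The Poisson case again goes through the subordination formula~\eqref{c5.18}. I expect the main obstacle to be the good-$\lz$ inequality in the first direction: getting the correct comparison between the non-tangential heat maximal function and the truncated area function for the \emph{non-self-adjoint} divergence-form $L$ requires careful use of interior regularity (reverse H\"older / Caccioppoli) estimates for solutions of the heat equation of $L$, and then the delicate interchange with the growth function $\fai$ at the good-$\lz$ level, where the reverse-H\"older membership of $\fai$ enters in an essential way; all remaining steps are routine adaptations of arguments already present in \cite{yys4,yys2,hm09} and in the earlier sections of this paper.
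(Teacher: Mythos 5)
Your overall strategy is the paper's: Proposition \ref{cp5.4} as the pivot, a good-$\lz$ inequality between the truncated area function and the non-tangential heat maximal function for one direction, the molecular characterization plus off-diagonal estimates for the other, subordination for the Poisson variants, and a density argument to pass from $L^2(\rn)\cap(\cdot)$ to the full spaces. The reverse direction is fine (the paper proves the molecular estimate for $\ccr_h$ rather than $\cn_h$, which is marginally easier, but your non-tangential version goes through with the same annular bookkeeping, exactly as in Theorem \ref{thm on S_L}).

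There is, however, one genuine gap in your chain: you assert $H_{\fai,\,\ccr_h}(\rn)\cap L^2(\rn)\subset H_{\fai,\,L}(\rn)\cap L^2(\rn)$, but the argument you sketch only yields $\|S_h(f)\|_{L^\fai(\rn)}\ls\|\cn_h(f)\|_{L^\fai(\rn)}$, i.e.\ $H_{\fai,\,\cn_h}(\rn)\cap L^2(\rn)\subset H_{\fai,\,L}(\rn)\cap L^2(\rn)$. Your parenthetical ``(hence $\ccr^\az_h$)'' points the wrong way: conditioning the good-$\lz$ set on $\ccr_h(f)\le\gz\lz$ is a \emph{weaker} hypothesis than conditioning on $\cn_h(f)\le\gz\lz$, so nothing follows ``hence''. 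What closes the loop is the change-of-aperture step (Step 2 of the paper's proof): because these maximal functions are $L^2$-averaged, ball inclusion gives the pointwise bound $\cn^{1/2}_h(f)\ls\ccr_h(f)$, and the aperture-independence $\|\cn^{\az}_h(f)\|_{L^\fai(\rn)}\sim\|\cn^{\bz}_h(f)\|_{L^\fai(\rn)}$ for $0<\az<\bz<\fz$ (the Musielak--Orlicz analogue of \cite[Lemma 5.3]{jy10}, which requires its own Fefferman--Stein type argument with the uniform $\aa_\fz$ weights) then yields $\|\cn_h(f)\|_{L^\fai(\rn)}\ls\|\ccr_h(f)\|_{L^\fai(\rn)}$. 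Without this step your cycle only shows $H_{\fai,\,\cn_h}(\rn)=H_{\fai,\,L}(\rn)\subset H_{\fai,\,\ccr_h}(\rn)$ and the radial characterizations (for both the heat and the Poisson semigroups) remain unproved. With it inserted, your proof coincides with the paper's.
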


\begin{remark}\label{cr5.1}
Theorem \ref{ct5.5} completely covers
\cite[Theorem 5.2 and Corollary 5.1]{jy10} by taking $\fai$ as in \eqref{1.1}
with $w\equiv1$ and $\Phi$ concave.
\end{remark}

To prove Theorem \ref{ct5.5}, we need a good-$\lz$ inequality
concerning the \emph{non-tangential maximal function}
and the \emph{truncated Lusin-area function} associated with the heat semigroup.
More precisely, let $\az\in(0,\,\fz)$ and $0<\epsilon<R<\fz$. For all
$f\in L^2(\rn)$ and $x\in\rn$, the \emph{truncated Lusin-area function}
$S_h^{\epsilon,\,R,\,\az}$, associated with the heat semigroup, is defined by setting,
\begin{eqnarray}\label{c5.22}
S^{\epsilon,\,R,\,\az}_h(f)(x):=\lf\{\int_{\Gamma_\az^{\epsilon,\,R}(x)}
\lf|t\nabla e^{-t^2 L}(f)(y)\r|^2\,\frac{dy\,dt}{t^{n+1}}\r\}^{\frac{1}{2}},
\end{eqnarray}
where $\Gamma_\az^{\epsilon,\,R}(x):=\{(y,t)\in \rn\times(\epsilon,\,R):\
|y-x|<\az t\}$. We have the following good-$\lz$ inequality.

\begin{lemma}\label{cl5.6}
Let $\fai$ and $L$ be respectively as in Definition \ref{d2.2} and \eqref{c5.1x}.
Assume that $\epsilon,\ R\in(0,\fz)$ with $\epsilon<R$. Then, there exist positive
constants $\epsilon_0$ and $C$, independent of $\epsilon$ and $R$, such that,
for all $\gz\in(0,\,1]$, $\lz,\,s\in(0,\,\fz)$ and $f\in L^2(\rn)$
satisfying $\|\mathcal{N}_h(f)\|_{L^\fai(\rn)}<\fz$,
\begin{eqnarray*}
&&\fai\lf(\lf\{x\in\rn:\  S_{h}^{\epsilon,\,R,\,\frac{1}{20}}(f)(x)>2\lz,\,
\mathcal{N}_h(f)(x)\le \gz \lz\r\},\,s\r)\\
&&\hs\le C \gz^{\epsilon_0}
\fai\lf(\lf\{x\in\rn:\  S_{h}^{\epsilon,\,R,\,\frac{1}{2}}(f)(x)>\lz\r\},\,s\r).
\end{eqnarray*}
\end{lemma}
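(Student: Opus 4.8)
\textbf{Proof proposal for Lemma \ref{cl5.6}.}
The plan is to follow the classical Fefferman--Stein good-$\lz$ argument adapted to the weight $\fai(\cdot,s)$. Fix $s\in(0,\fz)$ and abbreviate $w(\cdot):=\fai(\cdot,s)$; since $\fai\in\aa_\fz(\cx)$, $w$ is a Muckenhoupt weight with constant uniform in $s$, and by Lemma \ref{l2.4}(iii) we may fix $q_0\in(q(\fai),\fz)$ with $w\in\aa_{q_0}$. Let $E_\lz:=\{x\in\rn:\ S_h^{\epsilon,\,R,\,1/20}(f)(x)>2\lz,\ \cn_h(f)(x)\le\gz\lz\}$ and $O_\lz:=\{x\in\rn:\ S_h^{\epsilon,\,R,\,1/2}(f)(x)>\lz\}$. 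The first step is to observe that $O_\lz$ is open (the truncated area function with aperture $1/2$ is lower semicontinuous) and that $E_\lz\subset O_\lz$, because increasing the aperture from $1/20$ to $1/2$ only enlarges $S_h^{\epsilon,R,\az}$. We may assume $O_\lz\neq\rn$ (otherwise the truncation forces $|O_\lz|<\fz$ is not needed; if $O_\lz=\rn$ we use instead that $\|\cn_h(f)\|_{L^\fai}<\fz$ forces $\mu(O_\lz)<\fz$ after all). Then perform a Whitney decomposition $O_\lz=\bigcup_k Q_k$ into dyadic cubes with $\mathrm{diam}(Q_k)\sim\dist(Q_k,O_\lz^\complement)$.

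The heart of the argument is a \emph{local} estimate on each Whitney cube: there is a dimensional constant $c_0$ and an absolute $\epsilon_0>0$ such that
\begin{equation*}
\lf|\lf\{x\in Q_k:\ S_h^{\epsilon,R,1/20}(f)(x)>2\lz,\ \cn_h(f)(x)\le\gz\lz\r\}\r|\le C\gz^2|Q_k|.
\end{equation*}
To prove this, fix $Q_k$ and a point $\bar x_k\in O_\lz^\complement$ close to $Q_k$ (so $S_h^{\epsilon,R,1/2}(f)(\bar x_k)\le\lz$). Split the cone $\Gamma_{1/20}^{\epsilon,R}(x)$ for $x\in Q_k$ into the part with $t>c\,\ell(Q_k)$ and the part with $t\le c\,\ell(Q_k)$; the ``far'' part is controlled pointwise by $S_h^{\epsilon,R,1/2}(f)(\bar x_k)\le\lz$ by a standard cone-containment/aperture comparison (choosing the aperture $1/2$ large enough to absorb the $1/20$-cone based at $\bar x_k$), so it contributes at most $\lz$ and the event forces the ``near'' truncated area function $S_{h,\,\mathrm{near}}(f)(x)>\lz$. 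For the near part one uses the $L^2$ bound: by the definition of $t\nabla e^{-t^2L}$, Fubini, and the $L^2(\rn)$-boundedness of the (truncated) area function together with the off-diagonal decay of $\{t\nabla e^{-t^2L}\}_{t>0}$ (which holds on $L^2$ since $2\in(q_-(L),q_+(L))$), one estimates $\int_{Q_k}[S_{h,\mathrm{near}}(f)(x)]^2\,dx$ by $C\int_{c'Q_k}|f\text{-localized data}|^2$, and then replaces $f$ by $e^{-t^2L}f$-type quantities controlled on $\{\cn_h(f)\le\gz\lz\}$ to get the bound $C(\gz\lz)^2|Q_k|$. Chebyshev then yields the $C\gz^2|Q_k|$ measure estimate.

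With the local measure estimate in hand, the final step is to pass to the weighted inequality. Using $w\in\aa_{q_0}$ and the elementary consequence $\frac{w(S)}{w(Q)}\le C\big(\frac{|S|}{|Q|}\big)^{1/q_0}$ for $S\subset Q$ (this is the $\aa_\fz$ form of the $\aa_{q_0}$ condition), we get $w(E_\lz\cap Q_k)\le C\gz^{2/q_0}w(Q_k)$; summing over the disjoint Whitney cubes and using $\bigcup_k Q_k=O_\lz$ gives $w(E_\lz)\le C\gz^{2/q_0}w(O_\lz)$, i.e. the claim with $\epsilon_0:=2/q_0$. Translating back, $w(E_\lz)=\fai(E_\lz,s)$ and $w(O_\lz)=\fai(O_\lz,s)$, which is exactly the asserted inequality; the constants $C$ and $\epsilon_0$ depend only on $n$, the ellipticity constants of $L$, and $q(\fai)$, hence not on $\epsilon,R,\lz,\gz,s$. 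The main obstacle I expect is the near-cone $L^2$ estimate: one must carefully arrange the localization radius $c'\ell(Q_k)$ and use the off-diagonal (Gaffney) bounds for $t\nabla e^{-t^2L}$ at scale $t\lesssim\ell(Q_k)$ to convert the square-function integral over $Q_k$ into the square of a Hardy--Littlewood-type average that is dominated by $\cn_h(f)(x)\le\gz\lz$ on the good set, while keeping the hypothesis $\|\cn_h(f)\|_{L^\fai}<\fz$ only to ensure $\mu(O_\lz)<\fz$ so that the Whitney decomposition is legitimate.
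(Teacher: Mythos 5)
Your overall architecture (Whitney decomposition of $O_\lz:=\{S_h^{\epsilon,R,1/2}(f)>\lz\}$, cone-containment to kill the far part of the truncated cone, a local Lebesgue-measure estimate $|\{x\in Q_k\cap F: S_{h,\mathrm{near}}(f)(x)>\lz\}|\ls\gz^2|Q_k|$, and then the upgrade to the $\fai(\cdot,s)$-measure) matches the paper's proof. But the crucial near-cone estimate — which you yourself flag as "the main obstacle" — is not actually supplied, and the tools you propose for it do not suffice. The $L^2(\rn)$-boundedness of the area function bounds $\int[S_{h,\mathrm{near}}(f)]^2$ by $\|f\|_{L^2}^2$ of localized data, which is not controlled by $\gz\lz$; and if instead you factor $t\nabla e^{-t^2L}f=t\nabla e^{-(t^2/2)L}(e^{-(t^2/2)L}f)$ and use the off-diagonal estimates for $t\nabla e^{-(t^2/2)L}$ to dominate each time-slice by averages of $|e^{-(t^2/2)L}f|^2$ (which \emph{are} bounded by $(\gz\lz)^2$ on the good set), the subsequent integration $\int_\epsilon^{10l_k}\frac{dt}{t}$ produces a factor $\log(10l_k/\epsilon)$ that blows up as $\epsilon\to0$, whereas the lemma requires constants independent of $\epsilon$ and $R$. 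The missing idea is the Caccioppoli-type integration by parts that the paper carries out: writing $u(y,t):=e^{-t^2L}(f)(y)$, introducing a smooth cut-off $\eta$ on a fattened sawtooth region $G_1$, invoking the ellipticity of $A$ to dominate $\iint_G t|\nabla u|^2$ by $\Re e\iint_{G_1}tA\nabla u\,\overline{\nabla u}\,\eta$, and then using Leibniz's rule together with the identity $\partial_t u=-2tLu$ to convert this into $-\frac14\iint\partial_t(|u|^2)\eta$ plus a cross term; after integrating by parts in $t$ everything collapses to integrals of $|u|^2$ (and of $t|\nabla u|\,|u|\,|\nabla\eta|$, handled by Caccioppoli and a covering argument) over $G_1\setminus G$, where $|u|\le\cn_h(f)\le\gz\lz$. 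This is the step that exploits the divergence-form structure of $L$ and avoids any logarithmic loss; without it the proof does not close.

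Two smaller points. First, your passage from the Lebesgue-measure estimate to the weighted one invokes $w\in\aa_{q_0}$ to get $\frac{w(S)}{w(Q)}\le C(\frac{|S|}{|Q|})^{1/q_0}$; the $\aa_{q_0}$ condition actually gives the reverse comparison, and the inequality you need comes from the uniformly reverse H\"older property (the paper cites Lemma \ref{l2.4}(viii), available since $\aa_\fz(\cx)\subset\cup_{q}\rh_q(\cx)$), so the conclusion is correct but the cited mechanism is wrong. Second, before the near/far splitting one must dispose of the case $\epsilon>10l_k$, where the cone-containment alone shows the bad set in $Q_k$ is empty; this is a short but necessary case distinction that your sketch omits.
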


\begin{proof}
Lemma \ref{cl5.6} can be proved by using the same method as in the proof of
\cite[Lemma 3.3]{yys2}, where a good-$\lz$
inequality was established in the setting of the strongly Lipschitz domain of $\rn$.
In the present situation, the proof is more simple, since we do not need to
take care of the boundary condition and the diameter of the domain.
Here, in order to avoid redundancy, we only give an outline for the proof
of Lemma \ref{cl5.6}.   Let
$$O:=\{x\in\rn:\ S_h^{\epsilon,\,R,\,\frac{1}{20}}
(f)(x)>\fz\}.$$
From the $L^2(\rn)$-boundedness of $S_h$, we deduce that
$|O|<\fz$. By using Whitney's covering lemma, we see that
there exists a family $\{Q_j\}_j$ of dyadic cubes, with the lengths $\{l_j\}_j$,
satisfying that
\begin{enumerate}
\item[(i)] $O=\bigcup_{j} Q_j$ and $\{Q_j\}_j$ are disjoint;
\item[(ii)] $2Q_j\subset O$ and $4Q_j\cap O^\complement\neq\emptyset$.
\end{enumerate}
By this, to show the desired conclusion of Lemma \ref{cl5.6},
we only need to prove that, for all $j$,
\begin{eqnarray}\label{c5.23}
\hs\fai\lf(\lf\{x\in Q_j:\  S_{h}^{\epsilon,\,R,\,\frac{1}{20}}(f)(x)>2\lz,\,
\mathcal{N}_h(f)(x)\le \gz \lz\r\},\,s\r)\ls \gz^{\epsilon_0} \fai\lf(Q_j,\,s\r).
\end{eqnarray}

Moreover, for all $x\in Q_j$ and $x_j\in 4Q_j\cap O^\complement$, using
the fact that $\Gamma_{\frac{1}{20}}^{\max\{10l_j,\,\epsilon\}}(x)\subset
\Gamma_{\frac{1}{2}}^{\max\{10l_j,\,\epsilon\}}(x_j)$ and the definition of
$O$, we conclude that
\begin{eqnarray}\label{c5.24}
S_h^{\max\{10l_j,\,\epsilon\},\,
R,\,\frac{1}{20}}(f)(x)\le \lz.
\end{eqnarray}
Thus, if $\epsilon>10 l_j$, we know that, for all $x\in Q_j$,
$S_h^{\epsilon,\,R,\,\frac{1}{20}}(f)(x)\le \lz$, which contracts with the condition
$S_{h}^{\epsilon,\,R,\,\frac{1}{20}}(f)(x)>2\lz$. This implies that $Q_j=\emptyset$.
Hence, \eqref{c5.23} holds in this case.
If $\epsilon<10l_j$, from the fact that, for all $x\in\rn$,
$S_h^{\epsilon,\,R,\,\frac{1}{20}}(f)(x)\le S_h^{\epsilon,\,10l_j,\,\frac{1}{20}}(f)(x)
+S_h^{10l_j,\,R,\,\frac{1}{20}}(f)(x)$, \eqref{c5.24} and Lemma \ref{l2.4}(viii),
we deduce that, to prove \eqref{c5.23}, it suffices to prove that, for all $j$,
\begin{eqnarray}\label{c5.25}
\lf|\lf\{x\in Q_j\cap F:\  S_{h}^{\epsilon,\,R,\,\frac{1}{20}}(f)(x)>
\lz\r\}\r|\ls \gz^2|Q_j|,
\end{eqnarray}
where $F:=\{x\in\rn:\ \mathcal{N}_h(f)(x)\le \gz \lz\}$. We prove
\eqref{c5.25} by dividing two cases. If $\epsilon\ge 5l_j$, then similar to
the proof of \cite[Lemma 3.4]{yys2} (replace the strong Lipschitz domain $\boz$
and the non-tangential maximal function therein respectively by $\rn$ and
the present version of the non-tangential maximal function in \eqref{c5.4}), we conclude that,
for all $x\in\rn$,
$S_h^{\epsilon,\,R,\,\frac{1}{20}}(f)(x)\ls  \mathcal{N}_h(f)(x)$,
which, combining with the definition of $F$, shows that
\begin{eqnarray*}
\dint_{Q_j\cap F}\lf[S_h^{\epsilon,\,10l_j,\,\frac{1}{20}}(f)(x)\r]^2\,dx\ls
\dint_{Q_j\cap F}\lf[\mathcal{N}_h(f)(x)\r]^2\,dx\ls (\gz\lz)^2|Q_j|.
\end{eqnarray*}
This, together with Chebyshev's inequality, implies the validity of
\eqref{c5.25}. For the case when $\epsilon<5l_j$, let $G:=\{(y,t)\in \rn\times
(\epsilon,10l_j):\ d(y,Q_j\cap F)<\frac{t}{20}\}$. From \eqref{c5.22} and Fubini's
theorem, we infer that
\begin{eqnarray*}
\dint_{Q_j\cap F}\lf[S_h^{\epsilon,\,10l_j,\,\frac{1}{20}}(f)(x)\r]^2\,dx=
\iint_{G}t\lf|\nabla u(y,t)\r|^2\,dy\,dt,
\end{eqnarray*}
where $u(y,t):=e^{-t^2L}(f)(y)$.
To estimate $\int_{G}t|\nabla u(y,t)|^2\,dy\,dt$, we need to introduce
some smooth cut-off function defined on a fatter truncated cone.
More precisely, let
$$G_1:=\lf\{(y,t)\in \rn\times
\lf(\frac{\epsilon}{2},\,20l_j\r):\ d(y,Q_j\cap F)<\frac{t}{10}\r\}$$
and $\eta\in C_0^\fz(G_1)$
satisfying $\eta\equiv 1$ on $G$, $0\le \eta\le 1$ and
$\|\nabla \eta\|_{L^\fz(G_1)}\ls \frac{1}{t}$ (see also the proof of
\cite[Lemma 5.4]{hm09}). Then, by the ellipticity condition, we see that
\begin{eqnarray*}
\int_{G}t\lf|\nabla u(y,t)\r|^2\,dy\,dt&&\le \int_{G_1}t
\lf|\nabla u(y,t)\r|^2\eta(y,t)\,dy\,dt\\
&&\nonumber\ls \Re e \int_{G_1}t A(y)\nabla u(y,t)\overline{\nabla u(y,t)}
\eta(y,t)\,dy\,dt=:\Re e \mathrm{I}.
\end{eqnarray*}
Using Leibniz's rule, the definition of $L$ and the fact that $\pat_t u(y,\,t)=
-2tLu(y,\,t)$, we know that
\begin{eqnarray*}
\mathrm{I}&&=\int_{G_1}t A(y)\nabla u(y,t)\overline{\nabla (\eta u)(y,t)}\,dy\,dt
-\int_{G_1}t A(y)\nabla u(y,t)\overline{ u(y,t)\nabla\eta(y,t)}\,dy\,dt\\
&&=\int_{G_1}t L u(y,t)\overline{(\eta u)(y,t)}\,dy\,dt
-\int_{G_1}t A(y)\nabla u(y,t)\overline{ u(y,t)\nabla\eta(y,t)}\,dy\,dt\\
&&=-\frac{1}{2}\int_{G_1}\pat_t u(y,t)\overline{ (\eta u)(y,t)}\,dy\,dt
-\int_{G_1}t A(y)\nabla u(y,t)\overline{ u(y,t)\nabla\eta(y,t)}\,dy\,dt\\
&&=:-\frac{1}{2}\mathrm{I}_1-\mathrm{J}.
\end{eqnarray*}
Thus, from the fact that $\pat_t(\lf|u(y,t)\r|^2)=
2\Re e(\pat_tu(y,t))\overline{u(y,t)}$, the integral by parts and $\eta\in
C_0^\fz(G_1)$, we deduce that
\begin{eqnarray*}
\Re e \mathrm{I}&&=-\Re e\frac{1}{2}\mathrm{I}_1-\Re e\mathrm{J}\\
&&=-\frac{1}{4}\int_{G_1} \pat_{t}\lf(| u(y,t)|^2\r) \eta(y,t)\,dy\,dt
-\Re e\int_{G_1}t A(y)\nabla u(y,t)\overline{ u(y,t)\nabla\eta(y,t)}\,dy\,dt\\
&&=\frac{1}{4}\int_{G_1\setminus G} | u(y,t)|^2 \pat_{t}\eta(y,t)\,dy\,dt
-\Re e\int_{G_1\setminus G}t A(y)\nabla u(y,t)\overline{ u(y,t)\nabla\eta(y,t)}\,dy\,dt\\
&&=:\mathrm{I}_2+\mathrm{I}_3.
\end{eqnarray*}
The remainding estimates for $\mathrm{I}_2$ and $\mathrm{I}_3$
are obtained by using a decomposition of the set $G_1\setminus G$,
the properties of the cut-off function $\eta$, Besicovith's covering lemma
and a Caccioppoli's inequality (see, for example, \cite[(3.29)-(3.33)]{yys2} for
the detail calculations).  We then conclude that
$\mathrm{I}_2+\mathrm{I}_3\ls (\gz \lz)^2 |Q_k|$.
This, together with an application of Chebyshev's inequality, implies the validity of
\eqref{c5.25}, which completes the proof of Lemma \ref{cl5.6}.
 \end{proof}

With the help of Lemma \ref{cl5.6}, we now prove Theorem \ref{ct5.5}.

\begin{proof}[Proof of Theorem \ref{ct5.5}]
We first prove the following equivalence relationships
$$H_{\fai,\,\mathcal{R}_h}(\rn)=H_{\fai,\,\mathcal{N}_h}(\rn)=H_{\fai,\,L}(\rn).$$
The proof is divided into the following three steps.

{\bf Step 1.} $L^2(\rn)\cap H_{\fai,\,\mathcal{N}_h}(\rn)\subset
L^2(\rn)\cap H_{\fai,\,L}(\rn)$.
Let $f\in L^2(\rn)\cap H_{\fai,\,\mathcal{N}_h}(\rn)$. For any $0<\epsilon<R<\fz$
and  $\gz\in(0,1]$, by Lemma \ref{l2.1}(ii), Fubini's theorem and
Lemma \ref{cl5.6}, we conclude that
\begin{eqnarray*}
&&\int_{\rn}\fai\lf(x,\,S_h^{\epsilon,\,R,\,\frac{1}{20}}(f)(x)\r)\,dx\\
&&\hs\sim\int_0^\fz \fai\lf(\lf\{x\in\rn:\ S_h^{\epsilon,\,R,\,\frac{1}{20}}
(f)(x)>t\r\},\,t\r)\frac{dt}{t}\\
&&\hs\sim\int_0^\fz \fai\lf(\lf\{x\in\rn:\ S_h^{\epsilon,
\,R,\,\frac{1}{20}}(f)(x)>t,\,\mathcal{N}_h(f)(x)\le \gz t\r\},\,t\r)\frac{dt}{t}\\
&&\hs\hs+
\dint_0^\fz \fai\lf(\lf\{x\in\rn:\ \mathcal{N}_h(f)(x)>\gz t\r\},\,
t\r)\frac{dt}{t}\\
&&\hs\ls\gz^{\epsilon_0}\dint_0^\fz \fai\lf(\lf\{x\in\rn:\ S_h^{\epsilon,
\,R,\,\frac{1}{2}}(f)(x)>\frac{t}{2}\r\},\,t\r)\frac{dt}{t}\\
&&\hs\hs+
\dint_0^\fz \fai\lf(\lf\{x\in\rn:\ \mathcal{N}_h(f)(x)>\gz t\r\},\,
t\r)\frac{dt}{t}.
\end{eqnarray*}
By the change of variables and the fact that $\fai$ is of uniformly upper type $1$,
we further see that
\begin{eqnarray}\label{c5.26}
&&\int_{\rn}\fai\lf(x,\,S_h^{\epsilon,\,R,\,\frac{1}{20}}
(f)(x)\r)\,dx\\
&&\nonumber\hs\ls\gz^{\epsilon_0}\int_0^\fz \fai
\lf(\lf\{x\in\rn:\ S_h^{\epsilon,
\,R,\,\frac{1}{2}}(f)(x)>t\r\},\,t\r)\frac{dt}{t}\\
&&\nonumber\hs\hs+\frac{1}{\gz}
\int_0^\fz \fai\lf(\lf\{x\in\rn:\ \mathcal{N}_h(f)(x)> t\r\},\,
t\r)\frac{dt}{t}\\
&&\nonumber\hs\ls\gz^{\epsilon_0}\dint_{\rn}\fai
\lf(x,\, S_h^{\epsilon,\,R,\,\frac{1}{2}}(f)(x)\r)\,dx+\frac{1}{\gz}
\int_{\rn} \fai\lf(x,\,\mathcal{N}_h(f)(x)\r)\,dx.
\end{eqnarray}
Moreover, using an argument similar to that used in the proof of \cite[Lemma 7.7]{yys4},
we conclude that, for all $0<\az<\bz<\fz$ and $s\in(0,\,\fz)$,
\begin{eqnarray*}
\int_{\rn}\fai\lf(x,S_h^{\epsilon,\,R,\,\az}(f)(x)\r)\,dx\sim\int_{\rn}
\fai\lf(x, S_h^{\epsilon,\,R,\,\bz}(f)(x)\r)\,dx.
\end{eqnarray*}
From this and \eqref{c5.26} with $\gz$ sufficient small, it follows that
\begin{eqnarray*}
\int_{\rn}\fai\lf(x,\,S_h^{\epsilon,\,R}
(f)(x)\r)\,dx\ls \int_{\rn} \fai\lf(x,\,\mathcal{N}_h(f)(x)\r)\,dx.
\end{eqnarray*}
Letting $\epz\to 0$ and $R\to \fz$, we immediately know that
\begin{eqnarray*}
\int_{\rn}\fai\lf(x,\,S_h
(f)(x)\r)\,dx\ls \int_{\rn} \fai\lf(x,\,\mathcal{N}_h(f)(x)\r)\,dx,
\end{eqnarray*}
which implies that $\|f\|_{H_{\fai,\,S_h}(\rn)}\ls \|f\|_{H_{\fai,\,\cn_h}(\rn)}$.
Thus, $L^2(\rn)\cap H_{\fai,\,\mathcal{N}_h}(\rn)\subset L^2(\rn)
\cap H_{\fai,\,L}(\rn)$.

{\bf Step 2.} $L^2(\rn)\cap H_{\fai,\,\mathcal{R}_h}(\rn)\subset
L^2(\rn)\cap H_{\fai,\,\mathcal{N}_h}(\rn)$.
Let $f\in L^2(\rn)\cap H_{\fai,\,\mathcal{R}_h}(\rn)$.
By their definitions (see \eqref{c5.2} and \eqref{c5.3}), we see that
$\mathcal{N}^{\frac{1}{2}}_h(f)\le \mathcal{R}_h(f)$. Moreover,
similar to \cite[Lemma 5.3]{jy10}, we conclude that, for any $0<\az<\bz<\fz$,
\begin{eqnarray*}
\lf\|\mathcal{N}^{\az}_h(f)\r\|_{L^\fai(\rn)}\sim
\lf\|\mathcal{N}^{\bz}_h(f)\r\|_{L^\fai(\rn)},
\end{eqnarray*}
which immediately implies that
\begin{eqnarray*}
\lf\|\mathcal{N}_h(f)\r\|_{L^\fai(\rn)}\sim
\lf\|\mathcal{N}^{\frac{1}{2}}_h(f)\r\|_{L^\fai(\rn)}\ls
\lf\|\mathcal{R}_h(f)\r\|_{L^\fai(\rn)}.
\end{eqnarray*}
This establishes the inclusion $L^2(\rn)\cap
H_{\fai,\,\mathcal{R}_h}(\rn)\subset L^2(\rn)\cap
H_{\fai,\,\mathcal{N}_h}(\rn)$.

{\bf Step 3.} $L^2(\rn)\cap H_{\fai,\,L}(\rn)\subset L^2(\rn)
\cap H_{\fai,\,\mathcal{R}_h}(\rn)$. Let $f\in L^2(\rn)\cap H_{\fai,\,L}(\rn)$.
By Theorem \ref{t4.2} and Lemma \ref{cl4.5}, it suffices to
prove that,  for any $\lz\in\cc$ and $(\fai,\,q,\,M,\,\epsilon)_L$-molecule $\az$
associated with the ball $B$,
\begin{eqnarray}\label{c5.27}
\dint_{\rn}\fai\lf(x,\,\mathcal{R}_h(\lz \az)(x)\r)\,dx\ls
\fai\lf(B,\,\frac{|\lz|}{\|\chi_B\|_{L^\fai(\rn)}}\r),
\end{eqnarray}
where $\epsilon\in(0,\fz)$ and $M\in\nn$ can be chosen sufficient large. The estimate
\eqref{c5.27} can be proved by using Assumption {\rm (B)}; see, for example,
the proof of \eqref{4.8}. We omit the details.

From Steps 1 though 3, we deduce that
\begin{eqnarray*}
L^2(\rn)\cap H_{\fai,\,L}(\rn)=L^2(\rn)\cap H_{\fai,\,\cn_h}(\rn)=L^2(\rn)\cap
H_{\fai,\,\ccr_h}(\rn)
\end{eqnarray*}
with equivalent quasi-norms, which, together with the fact that
$$L^2(\rn)\cap H_{\fai,\,L}(\rn),\ L^2(\rn)\cap H_{\fai,\,\cn_h}(\rn)\ \text{and}\
L^2(\rn)\cap H_{\fai,\,\ccr_h}(\rn)$$
are, respectively, dense in $H_{\fai,\,L}(\rn)$, $H_{\fai,\,\cn_h}(\rn)$ and
$H_{\fai,\,\ccr_h}(\rn)$, and a density
argument, then implies that the spaces
$H_{\fai,\,L}(\rn)$, $H_{\fai,\,\cn_h}(\rn)$ and $H_{\fai,\,\ccr_h}(\rn)$
coincide with equivalent quasi-norms
The proof for the equivalence relationships that $H_{\fai,\,L}(\rn)=H_{\fai,\,
\mathcal{N}_P}(\rn)=H_{\fai,\,\mathcal{R}_P}(\rn)$ is similar,
we omit the details here. This finishes the proof of Theorem \ref{ct5.5}.
\end{proof}

\subsection{Boundedness of the Riesz transform $\nabla L^{-1/2}$}\label{cs5.2}

\hskip\parindent In this subsection, we study the boundedness of
the Riesz transform $\nabla L^{-1/2}$ associated with $L$ on $H_{\fai,\,L}(\rn)$
for $i(\fai)\in(\frac{n}{n+1},1]$, and the associated weak boundedness at the endpoint
$i(\fai)=\frac{n}{n+1}$. Our main result is as follows.

\begin{theorem}\label{ct5.7}

Let $\fai$ and $L$ be respectively as in Definition \ref{d2.2} and \eqref{c5.1x}.
Let $I(\fai)$, $i(\fai)$, $q(\fai)$ and $r(\fai)$ be, respectively, as in \eqref{2.4},
 \eqref{2.5}, \eqref{2.7} and \eqref{2.8}.

{\rm(i)} If $r(\fai)>(\frac{\min\{p_+(L),\,q_+(L)\}}
{I(\fai)})'$, then $\nabla L^{-1/2}$ is bounded from $H_{\fai,\,L}(\rn)$
to $L^\fai(\rn)$.

{\rm(ii)}  If $i(\fai)\in(\frac{n}{n+1},1]$,
$q(\fai)<\frac{ni(\fai)}{n+1}$ and $r(\fai)>(\frac{\min\{p_+(L),\,q_+(L)\}}
{q(\fai)})'$, then $\nabla L^{-1/2}$ is bounded from $H_{\fai,\,L}(\rn)$
to $H_\fai(\rn)$.
\end{theorem}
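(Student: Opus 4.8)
\textbf{Proof proposal for Theorem \ref{ct5.7}.}

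The plan is to reduce everything to a molecular estimate via Corollary \ref{sufficient condition for molecule}, exactly as in the proof of Proposition \ref{p4.1} and Proposition \ref{cp5.4}. For part (i), I would fix $q\in(p_-(L),\min\{p_+(L),q_+(L)\})$ with $q\ge 2$ and $q\in(I(\fai)[r(\fai)]',\min\{p_+(L),q_+(L)\})$ — this choice is possible precisely because $r(\fai)>(\min\{p_+(L),q_+(L)\}/I(\fai))'$ guarantees $I(\fai)[r(\fai)]'<\min\{p_+(L),q_+(L)\}$ — and then show that there is a positive constant $C$ so that for every $\lz\in\cc$ and every $(\fai,\,q,\,M,\,\epz)_L$-molecule $\az$ associated with a ball $B$,
\begin{equation*}
\int_{\rn}\fai\lf(x,\,\lf|\nabla L^{-1/2}(\lz\az)(x)\r|\r)\,dx\ls\fai\lf(B,\,\frac{|\lz|}{\|\chi_B\|_{L^\fai(\rn)}}\r),
\end{equation*}
with $M$ chosen large depending on $q(\fai)$, $i(\fai)$, $n$. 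Since $\nabla L^{-1/2}$ is bounded on $L^2(\rn)$ (Calder\'on--Zygmund theory for square roots of divergence form operators, as in \cite{hm09}), Corollary \ref{sufficient condition for molecule} then yields boundedness from $H^{M,\,q,\,\epz}_{\fai,\,L}(\rn)$ to $L^\fai(\rn)$, which by Theorem \ref{t4.1} is $H_{\fai,\,L}(\rn)$.

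To prove the displayed molecular estimate, I would use the subordination/functional-calculus identity $\nabla L^{-1/2}=c\int_0^\fz \nabla e^{-tL}\frac{dt}{\sqrt t}$ together with the molecular annular decomposition of $\az$ and the $L^p$--$L^q$ off-diagonal estimates for $\{\sqrt t\,\nabla e^{-tL}\}_{t>0}$ valid on $(q_-(L),q_+(L))$ (recalled after Definition \ref{cd5.1}), splitting the ``near'' part ($t\le r_B$) handled directly and the ``far'' part ($t>r_B$) where the factor $(r_B^{2m}L)^{-M}$ built into the molecule condition gains the decay $(r_B/t)^{2mM}$. The estimate then splits, as in \eqref{4.11}--\eqref{4.15}, into an upper-type-$p_1$ piece and a lower-type-$p_2$ piece, each handled by H\"older's inequality using $\fai\in\aa_{q_0}(\cx)$ and $\fai\in\rh_{(q/p_1)'}(\cx)$ (the latter being exactly what $r(\fai)>(\min\{p_+(L),q_+(L)\}/I(\fai))'$ buys after a small perturbation of exponents via Lemma \ref{l2.4}). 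This is essentially the computation behind Lemma \ref{off-diagonal for the case of Poisson semigroup} and \eqref{c5.12}, so I would cite those mechanisms rather than repeat all of it.

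For part (ii), I would instead prove that $\nabla L^{-1/2}$ maps a $(\fai,\,q,\,M,\,\epz)_L$-molecule, up to a harmless constant, to a fixed multiple of a $(\fai,\,\wz q,\,0,\,\wz\epz)$-molecule in the classical sense of Definition \ref{d5.2}, for suitable $\wz q\in(q(\fai),\fz)$ and $\wz\epz\in(nq(\fai)/i(\fai),\fz)$; by Lemma \ref{l5.1} this gives boundedness into $H_\fai(\rn)$. The size estimates $\|\nabla L^{-1/2}\az\|_{L^{\wz q}(S_j(B))}\ls 2^{-j\wz\epz}|2^jB|^{1/\wz q}\|\chi_B\|_{L^\fai}^{-1}$ come from the same off-diagonal machinery as in (i) (here the hypotheses $q(\fai)<ni(\fai)/(n+1)$ and $r(\fai)>(\min\{p_+(L),q_+(L)\}/q(\fai))'$ are what make the admissibility ranges and the reverse-H\"older exponent work out, and the index $i(\fai)>n/(n+1)$ is needed so the decay exponent beats $nq(\fai)/i(\fai)$ — this is the analogue of the condition $n+\nu>nq(\fai)/i(\fai)$ in Theorem \ref{t5.1}). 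The vanishing moment condition $\int_{\rn}\nabla L^{-1/2}\az(x)\,dx=0$ is obtained exactly as \eqref{5.16}: write $\az=Lb$ with $b\in L^1\cap L^{\wz q}$, use $(I+L)^{-1}Lb=b-(I+L)^{-1}b$ and the conservation property $\int_\rn e^{-tL}g=\int_\rn g$ coming from $L1=0$ for divergence form operators, noting $\nabla L^{-1/2}=\nabla L^{1/2}L^{-1}$ intertwines correctly; alternatively one argues directly that $\widehat{\nabla L^{-1/2}\az}(0)=0$ from $\widehat{b}\in C$ and the symbol. The main obstacle, as in the whole circle of these arguments, is the bookkeeping in the far-region molecular estimate — getting genuinely summable geometric decay in both annular indices $i,j$ simultaneously while respecting the constraint $M>\frac{n}{2m}[\frac{q(\fai)}{i(\fai)}+\frac{\tz_1}{n}-\frac{2}{nq}]$ and the reverse-H\"older room; everything else is a careful but routine adaptation of \eqref{c5.12} and \eqref{5.16}.
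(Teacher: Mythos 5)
Your proposal follows essentially the same route as the paper: part (i) is reduced, via Theorem \ref{t4.1} and the molecular sufficiency criterion, to the single estimate $\int_{\rn}\fai(x,|\nabla L^{-1/2}(\lz\az)(x)|)\,dx\ls\fai(B,|\lz|\|\chi_B\|_{L^\fai(\rn)}^{-1})$ proved with the off-diagonal estimates for $\nabla L^{-1/2}(I-e^{-tL})^M$ and $\nabla L^{-1/2}(tLe^{-tL})^M$, and part (ii) shows that $\nabla L^{-1/2}$ sends a $(\fai,q,M,\epz)_L$-molecule to a classical $(\fai,\wz q,0,\wz\epz)$-molecule and then invokes the classical molecular characterization. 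Two small cautions. First, for the vanishing moment in (ii), your primary suggestion to imitate \eqref{5.16} via the conservation property is off-target: \eqref{5.16} establishes $\int_{\rn}\az=0$ under Assumption (C), which is not assumed for a general complex-coefficient divergence-form $L$ in this section; what is needed is $\int_{\rn}\nabla L^{-1/2}(\az)\,dx=0$, and your fallback (the integral of the gradient of a sufficiently decaying function vanishes, as in the cited proof of \cite[Theorem 7.4]{jy10}) is the correct argument. Second, the paper does not use Lemma \ref{l5.1} directly in (ii) but proves a variant (Proposition \ref{cp5.10}) in which the required molecular decay $\epz$ is relaxed by $n/q$, precisely because $q$ here is capped by $\min\{p_+(L),q_+(L)\}$; if you invoke Lemma \ref{l5.1} as stated you must verify that the decay exponent $\epz_0$ you obtain from the off-diagonal estimates actually exceeds $\max\{n,nq(\fai)/i(\fai)\}$ rather than the weaker threshold, which is the one place where the bookkeeping genuinely matters.
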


To prove Theorem \ref{ct5.7}, we need a new molecular characterization
of the classical Musielak-Orlicz-Hardy space $H_\fai(\rn)$.

Similar to \cite[Theorem 4.11]{hyy}, we have the following molecular
characterization of $H_\fai(\rn)$.

\begin{proposition}\label{cp5.10}
Let $\fai$ be as in Definition \ref{d2.2} and $q\in(1,\,\fz)$. Assume that $s\in\nn$
satisfying $s\ge \lfloor n[\frac{q(\fai)}{i(\fai)}-1] \rfloor$, $\epsilon\in (\max\{
n+s,\,n\frac{q(\fai)}{i(\fai)}\}-\frac{n}{q},\,\fz)$, $q(\fai)\in [1,\,q)$ and
$r(\fai)>\frac{q}{q-q(\fai)}$, where $i(\fai)$, $q(\fai)$ and
$r(\fai)$ are respectively as in \eqref{2.5}, \eqref{2.7} and \eqref{2.8}. Then,
$H_{\fai,\,\rm{mol}}^{q,\,s,\,\epsilon}(\rn)$ and $H_{\fai}(\rn)$ coincide
with equivalent quasi-norms.
\end{proposition}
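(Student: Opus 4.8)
The plan is to follow the classical template for proving that a molecular Hardy space coincides with the atomic (hence the grand maximal) Hardy space $H_\fai(\rn)$, namely by establishing the two inclusions $H^{\fai,\,q,\,s}(\rn)\subset H_{\fai,\,\mathrm{mol}}^{q,\,s,\,\epsilon}(\rn)$ and $H_{\fai,\,\mathrm{mol}}^{q,\,s,\,\epsilon}(\rn)\subset H_{\fai}(\rn)$, the former being essentially trivial and the latter being the substance. By Lemma \ref{l5.1} it suffices to compare with $H^{\fai,\,q,\,s}(\rn)$ and $H^{p,\,s,\,\epz}_{\fai,\,\mathrm{mol}}(\rn)$, so in fact one only needs to show that, under the stated hypotheses on $s$, $\epsilon$, $q(\fai)$ and $r(\fai)$, every $(\fai,\,q,\,s,\,\epsilon)$-molecule is, up to a harmless multiplicative constant, an element of $H_\fai(\rn)$ with controlled quasi-norm. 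Since $(\fai,\,q,\,s)$-atoms are trivially $(\fai,\,q,\,s,\,\epsilon)$-molecules for every $\epsilon$, the reverse inclusion is immediate once the forward one is in hand and a standard density argument is performed.

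First I would reduce, exactly as in the proof of Theorem \ref{t5.1} or of \cite[Theorem 4.11]{hyy}, to a single molecule estimate: given $\lz\in\cc$ and a $(\fai,\,q,\,s,\,\epsilon)$-molecule $\az$ associated with a ball $B$, one splits $\az=\sum_{j\ge0}\az\chi_{S_j(B)}$ and, on each annulus, uses the moment condition (ii) of Definition \ref{d5.2} to replace each piece $\az\chi_{S_j(B)}$ by a finite linear combination of genuine $(\fai,\,q,\,s)$-atoms supported in $2^jB$ (the usual Taylor-subtraction/Calder\'on–Zygmund construction of atoms out of a function with vanishing moments localized to a ball). This produces a decomposition $\lz\az=\sum_k \lz_k a_k$ into atoms associated with dilates $\{2^{j_k}B\}$, with $|\lz_k|\|\chi_{B_k}\|_{L^\fai(\rn)}^{-1}$ geometrically small in $j_k$ thanks to the size bound $\|\az\|_{L^q(S_j(B))}\le 2^{-j\epz}|2^jB|^{1/q}\|\chi_B\|_{L^\fai(\rn)}^{-1}$ together with the condition $\epz>\max\{n+s,\,nq(\fai)/i(\fai)\}-n/q$, which is exactly what makes the series $\sum_j \fai(2^jB,\,|\lz_j|/\|\chi_{2^jB}\|_{L^\fai})$ summable after invoking Lemma \ref{l2.4}(vii) (the $\aa_{q(\fai)}$ doubling of $\fai$) and the uniformly lower type $p_2$ property. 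The role of the reverse H\"older condition $r(\fai)>q/(q-q(\fai))$, equivalently $\fai\in\rh_{(q/p_1)'}(\rn)$ for a suitable $p_1$, enters when one passes from the $L^q$-size estimate to the $L^q_\fai(B_k)$-normalization required in Definition \ref{d5.4}(II)(ii), via H\"older's inequality across the annuli as in the estimate of $\mathrm{I}_1$ in the proof of Theorem \ref{t5.1} (see \eqref{5.9}–\eqref{5.10}).

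The main obstacle, and the place where care is genuinely needed, is the bookkeeping that simultaneously controls the number $\lfz s\rfz$ of vanishing moments, the decay exponent $\epz$, and the two type indices $p_1\in[I(\fai),1]$ and $p_2\in(0,i(\fai))$: one must choose $q_0\in(q(\fai),\fz)$, $p_1$, $p_2$ and check that the exponent appearing in the geometric series, roughly $\min\{p_2(\epz+n/q-nq_0/p_2),\,p_1(s+1-n(q_0-p_1/q))\}$, is strictly positive — this is precisely where the hypothesis $s\ge\lfz n[q(\fai)/i(\fai)-1]\rfz$ and the lower bound on $\epz$ are consumed, and it must be done uniformly in $t\in(0,\fz)$ because $\fai(\cdot,t)$ carries the extra variable. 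The remaining ingredients — that the atomic series converges in $\cs'(\rn)$, and that $\wz H^{q,\,s,\,\epsilon}_{\fai,\,\mathrm{mol}}(\rn)\cap L^2(\rn)$ is dense so that the equivalence of quasi-norms propagates to the completions — follow the pattern already used for Theorems \ref{t4.1}, \ref{ct4.3} and \ref{t5.1}, and I would simply cite \cite{hyy,k} for these routine points. Assembling the two inclusions and the density argument then yields that $H_{\fai,\,\mathrm{mol}}^{q,\,s,\,\epsilon}(\rn)$ and $H_\fai(\rn)$ coincide with equivalent quasi-norms, completing the proof of Proposition \ref{cp5.10}.
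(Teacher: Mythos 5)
Your proposal is correct and follows essentially the same route as the paper, which itself omits the argument and simply refers to the proof of \cite[Theorem 4.11]{hyy}: the substance is the standard annular decomposition of a $(\fai,\,q,\,s,\,\epsilon)$-molecule into $(\fai,\,q,\,s)$-atoms via Taylor subtraction, with the decay hypothesis on $\epsilon$, Lemma \ref{l2.4}(vii) and the uniform lower type giving summability of the coefficients, and the reverse H\"older condition $r(\fai)>q/(q-q(\fai))$ (equivalently $q>q(\fai)[r(\fai)]'$) converting $L^q$ bounds into the $L^q_\fai$ normalization. The only point worth tightening is the ``trivial'' reverse inclusion: a $(\fai,\,q,\,s)$-atom is normalized in $L^q_\fai$ rather than $L^q$, so it is cleanest to start from $(\fai,\,\fz,\,s)$-atoms (permitted by Lemma \ref{l5.1}), which are genuinely $L^q$-molecules for every $\epsilon$.
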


The proof of Proposition \ref{cp5.10} is similar to that of \cite[Theorem 4.11]{hyy}.
We omit the details here. Observe that, in \cite[Theorem 4.11]{hyy},
the ranges of the exponents  may be different from those of Proposition
\ref{cp5.10}. More precisely, in \cite[Theorem 4.11]{hyy},
the authors want to relax the range of the Musielak-Orlicz function $\fai$,
by narrowing  the range of the exponent $q$.
However, in the present case, we need more wider range of $q$.

\begin{proof}[Proof of Theorem \ref{ct5.7}]
The proof of (i) depends on Theorem \ref{t4.1} and Lemma
\ref{cl4.5}. We only need to show that, for any $\lz\in\cc$ and $(\fai,\,q,\,M,\,
\epsilon)_L$-molecule $\az$ (with $M$ and $\epsilon$ large enough) associated with the ball
$B$,
\begin{eqnarray}\label{c5.28}
\dint_{\rn}\fai\lf(x,\,\nabla L^{-1/2}(\lz \az)(x)\r)\,dx\ls
\fai\lf(B,\,\frac{|\lz|}{\|\chi_B\|_{L^\fai(\rn)}}\r).
\end{eqnarray}
Using the $L^q(\rn)$-boundedness of $\nabla L^{-1/2}$ for all $q\in
(p_-(L),\,q_+(L))$ and the following off-diagonal estimates that
\begin{eqnarray*}
\lf\|\nabla L^{-1/2}\lf(I-e^{-tL}\r)^Mf\r\|_{L^q(F)}\ls
\lf\{\frac{t}{[d(E,\,F)]^2}\r\}^M\|f\|_{L^q(E)}
\end{eqnarray*}
and
\begin{eqnarray*}
\lf\|\nabla L^{-1/2}\lf(tLe^{-tL}\r)^Mf\r\|_{L^q(F)}\ls
\lf\{\frac{t}{[d(E,\,F)]^2}\r\}^M\|f\|_{L^q(E)}
\end{eqnarray*}
for closed sets $E,\,F\subset\rn$ with $d(E,F)>0$, we conclude
\eqref{c5.28} by using the same method as in \eqref{c4.10}.
This shows (i).

To prove (ii), let $q\in(p_-(L),\,p_+(L))$. For any
$(\fai,\,q,\,M,\,\epsilon)_L$-molecule $\az$ associated with
$B$, similar to \cite[(7.34)]{yys4}, we infer that there
exists a large enough positive constant $\epsilon_0$ such that, for all $j\in\zz_+$,
\begin{eqnarray}\label{c5.x29}
\lf\|\nabla L^{-1/2}(\az)\r\|_{L^q(S_j(B))}\ls 2^{-j\epsilon_0}
|B|^{\frac{1}{q}}\|\chi_{B}\|_{L^\fai(\rn)}^{-1}.
\end{eqnarray}
Moreover, since $1\le q(\fai)<\frac{n}{n+1}i(\fai)$, we know $s:=\lfloor
n[\frac{q(\fai)}{i(\fai)}-1]\rfloor=0$, which, together with the fact that
(see, for example, the proof of \cite[Theorem 7.4]{jy10} when $\fai$ is
an Orlicz function)
\begin{eqnarray*}
\dint_{\rn}\nabla L^{-1/2}(\az)(x)\,dx=0,
\end{eqnarray*}
immediately implies that, for each $(\fai,\,q,\,M,\,\epsilon)_L$-molecule
$m$ associated with the ball $B$, $\nabla L^{-1/2}(\az)$ is a
$(\fai,\,q,\,0,\,\epsilon)$-molecule associated with the same ball $B$. This, together with
the assumptions $q(\fai)<\frac{n}{n+1}i(\fai)$, $r(\fai)>(\frac{\min\{p_+(L),\,q_+(L)\}}
{q(\fai)})'$ and Proposition \ref{cp5.10}, implies that $\nabla L^{-1/2}$ is bounded from
$H_{\fai,\,L}(\rn)$ to $H_\fai(\rn)$, which completes the proof of (ii) and hence Theorem
\ref{ct5.7}.
\end{proof}

Now, we establish the weak boundedness of $\nabla L^{-1/2}$ at the endpoint
$i(\fai)=\frac{n}{n+1}$. Before stating our conclusions, we first recall some necessary
definitions.

\begin{definition}\label{cd5.11}
Let $\fai$ be as in Definition \ref{d2.2}, $\psi\in \mathcal{S}(\rn)$
satisfying $\supp \psi \subset B(0,\,1)$ and $\int_{\rn} \psi(x)\,dx=1$.
The \emph{weak Musielak-Orlicz-Hardy space} $WH_\fai(\rn)$ is defined to be the set
$\{f\in S'(\rn):\ \|f\|_{WH_\fai(\rn)}<\fz\},$
where
\begin{eqnarray*}
&&\|f\|_{WH_\fai(\rn)}\\
&&\hs:=\lf\|\dsup_{t\in(0,\,\fz)} \psi_t\ast f\r\|_{WL^\fai(\rn)}\\
&&\hs:=\inf\lf\{\lz\in(0,\,\fz):\ \dsup_{\eta\in(0,\,\fz)} \fai\lf(
\lf\{x\in\rn:\ \dsup_{t\in(0,\,\fz)} |(\psi_t\ast f)(x)|>\eta\r\},\,
\frac{\eta}{\lz}\r)\le 1\r\}
\end{eqnarray*}
and, for all $t\in(0,\,\fz)$ and $x\in\rn$,
$\psi_t(x):=t^{-n}\psi(\frac{x}{t})$.
\end{definition}

\begin{theorem}\label{ct5.12}

Let $\fai$ and $L$ be respectively as in Definition \ref{d2.2} and \eqref{c5.1x}. Assume
that $i(\fai)$, $I(\fai)$, $q(\fai)$ and $r(\fai)$ are respectively as in \eqref{2.5},
\eqref{2.4}, \eqref{2.7} and \eqref{2.8}. If
$i(\fai)=\frac{n}{n+1}$ is attainable, $\fai\in\aa_1(\rn)$,
$I(\fai)\in(0,\,1)$ and $r(\fai)>(\frac{\min\{p_+(L),\,q_+(L)\}}{q(\fai)})'$,
then $\nabla L^{-1/2}$ is bounded from $H_{\fai,\,L}(\rn)$ to $WH_\fai(\rn)$.
\end{theorem}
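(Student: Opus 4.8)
\textbf{Proof proposal for Theorem \ref{ct5.12}.}

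The plan is to mimic the proof of Theorem \ref{ct5.7}(ii), but to track the failure of the molecular cancellation estimate at the critical index $i(\fai)=\frac{n}{n+1}$ and to replace the $H_\fai(\rn)$-estimate by a weak $WH_\fai(\rn)$-estimate. First I would reduce, by Theorem \ref{t4.2} and a density argument, to estimating $\nabla L^{-1/2}$ on a single $(\fai,\,q,\,M,\,\epz)_L$-molecule $\az$ associated with a ball $B$, where $q\in(p_-(L),\,p_+(L))$ and $M$, $\epz$ are taken large. As in the proof of Theorem \ref{ct5.7}(ii) (see \eqref{c5.x29}), the $L^q(\rn)$-boundedness of $\nabla L^{-1/2}$ for $q\in(p_-(L),\,q_+(L))$ together with the off-diagonal estimates for $\nabla L^{-1/2}(I-e^{-tL})^M$ and $\nabla L^{-1/2}(tLe^{-tL})^M$ gives, for a suitably large $\epz_0$ and all $j\in\zz_+$,
\begin{equation*}
\lf\|\nabla L^{-1/2}(\az)\r\|_{L^q(S_j(B))}\ls 2^{-j\epz_0}|B|^{1/q}\|\chi_B\|_{L^\fai(\rn)}^{-1},
\end{equation*}
and moreover $\int_{\rn}\nabla L^{-1/2}(\az)(x)\,dx=0$. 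Since $i(\fai)=\frac{n}{n+1}$, one has $s:=\lfz n[q(\fai)/i(\fai)-1]\rfz=0$ (using $\fai\in\aa_1(\rn)$, so $q(\fai)=1$), hence $\nabla L^{-1/2}(\az)$ is, up to a harmless constant, a $(\fai,\,q,\,0,\,\epz_0)$-molecule associated with $B$ in the sense of Definition \ref{d5.2}.

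Next I would assemble these molecular estimates into the weak bound. Write $f=\sum_j\lz_j\az_j\in H_{\fai,\,L}(\rn)\cap L^2(\rn)$ via the molecular representation of Theorem \ref{t4.1}, with $\blz(\{\lz_j\az_j\}_j)\ls\|f\|_{H_{\fai,\,L}(\rn)}$, and set $b_j:=\nabla L^{-1/2}(\az_j)$, a $(\fai,\,q,\,0,\,\epz_0)$-molecule up to a constant. The key point is that $WH_\fai(\rn)$ admits a molecular (or atomic) decomposition characterization for which a quasi-norm estimate of the form
\begin{equation*}
\lf\|\sum_j\lz_j b_j\r\|_{WH_\fai(\rn)}\ls\inf\lf\{\lz\in(0,\fz):\ \sum_j\fai\lf(B_j,\frac{|\lz_j|}{\lz\|\chi_{B_j}\|_{L^\fai(\rn)}}\r)\le1\r\}
\end{equation*}
holds — this is precisely the mechanism used by Cao, Yang and Yang in the weak Orlicz-Hardy setting that Theorem \ref{ct5.12} generalizes. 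Combining this with the control of $\blz(\{\lz_jb_j\}_j)$ by $\blz(\{\lz_j\az_j\}_j)$ (which follows from the molecular estimates above together with the uniformly lower/upper type and Muckenhoupt-$\aa_1$ properties of $\fai$, exactly as in the proof of \eqref{4.8}), and with $\blz(\{\lz_j\az_j\}_j)\ls\|f\|_{H_{\fai,\,L}(\rn)}$, yields $\|\nabla L^{-1/2}f\|_{WH_\fai(\rn)}\ls\|f\|_{H_{\fai,\,L}(\rn)}$ for $f\in H_{\fai,\,L}(\rn)\cap L^2(\rn)$; the general case follows by density since $H_{\fai,\,L}(\rn)\cap L^2(\rn)$ is dense in $H_{\fai,\,L}(\rn)$ and $WH_\fai(\rn)$ is complete.

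The main obstacle I expect is the weak molecular decomposition estimate for $WH_\fai(\rn)$: unlike the strong space $H_\fai(\rn)$, one cannot simply sum the $L^\fai$-norms of the pieces, because $WL^\fai(\rn)$ is only a quasi-normed space and the triangle inequality degrades. The standard remedy is to split the sum $\sum_j\lz_j b_j$ at each level $\eta$ into a ``large-coefficient'' part (finitely many terms, handled by the $\aa_1$-weighted weak-type $(1,1)$ bound for the grand maximal function, using $\fai\in\aa_1(\rn)$) and a ``small-coefficient'' part (handled by the uniformly lower type $p_2$ with $p_2$ close to $i(\fai)=\frac{n}{n+1}$ and the cancellation of first order $s=0$, which converts the molecular decay $2^{-j\epz_0}$ together with the $n/(n+1)$-summability into an $L^\fai$-bound). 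The hypotheses that $i(\fai)=\frac{n}{n+1}$ be attainable, that $\fai\in\aa_1(\rn)$, and that $I(\fai)\in(0,1)$ are exactly what make this two-part estimate work, and verifying it carefully — keeping the dependence on $\eta$ uniform — is the technical heart of the argument; the condition $r(\fai)>(\min\{p_+(L),q_+(L)\}/q(\fai))'$ enters only to guarantee, via H\"older's inequality and the reverse H\"older property, that a molecule of the differential-operator type transfers to a molecule in the classical sense with $q$ in the admissible range.
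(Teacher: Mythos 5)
There is a genuine gap, and it sits exactly at the point you identify as the ``main obstacle''. First, your index computation is wrong: with $\fai\in\aa_1(\rn)$ we have $q(\fai)=1$, and at the endpoint $i(\fai)=\frac{n}{n+1}$ one gets $n[\frac{q(\fai)}{i(\fai)}-1]=n\cdot\frac{1}{n}=1$, so $\lfz n[\frac{q(\fai)}{i(\fai)}-1]\rfz=1$, not $0$. Hence an admissible molecule for $H_\fai(\rn)$ (Definition \ref{d5.4}(II), Proposition \ref{cp5.10}) must have vanishing moments up to order $s=1$, whereas $\nabla L^{-1/2}(\az)$ only satisfies $\int_{\rn}\nabla L^{-1/2}(\az)(x)\,dx=0$. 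This is precisely why the strong boundedness of Theorem \ref{ct5.7}(ii) fails at the endpoint and only a weak bound survives; your reduction to ``$\nabla L^{-1/2}(\az)$ is a classical molecule, now apply a (weak) molecular characterization'' therefore cannot be carried out, and no such weak molecular decomposition with $s=0$ is established anywhere in the paper. Your sketched remedy also aims at converting the decay ``into an $L^\fai$-bound'', i.e.\ a strong bound for the single piece, which is exactly what is false here: the grand maximal function of a piece with only zeroth-order cancellation decays like $|x|^{-(n+1)}$, and $|x|^{-(n+1)(n/(n+1))}=|x|^{-n}$ is not integrable.

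What the paper actually does is prove directly, for each molecule $\az$ and \emph{uniformly in} $\eta$, the single weak-type estimate
\begin{equation*}
\fai\lf(\lf\{x\in\rn:\ \sup_{t>0}\lf|\psi_t\ast\lf(\nabla L^{-1/2}(\lz\az)\r)(x)\r|>\eta\r\},\,\eta\r)
\ls\fai\lf(B,\,\frac{|\lz|}{\|\chi_B\|_{L^\fai(\rn)}}\r),
\end{equation*}
and then sums over the molecular decomposition by the superposition principle (Lemma \ref{cl5.13}), which is where $I(\fai)<1$ enters. The heart of the single-molecule estimate is the far-field, large-time term $\mathrm{J}$: after subtracting $\psi(\frac{x-x_B}{t})$ and using the mean value theorem one gets the pointwise bound $F_i(x)\ls|\lz|\,2^{-j(n+1)}2^{-i\wz\epsilon_0}\|\chi_B\|_{L^\fai(\rn)}^{-1}$ on $(2^i+2^j)B$; the level set $\{F_i>\eta/2\}$ is then contained in $(2^i+2^{j_0})B$ for a critical $j_0=j_0(\eta)$, and the attainability of the lower type $i(\fai)=\frac{n}{n+1}$ together with $\fai\in\aa_1(\rn)$ gives $\fai((2^i+2^{j_0})B,\eta)\ls2^{-j_0[(n+1)i(\fai)-n]}2^{-i(\wz\epsilon_0 i(\fai)-n)}\fai(B,\cdot)$, where the exponent $(n+1)i(\fai)-n$ vanishes exactly at the endpoint, so the bound is uniform in $j_0$ and hence in $\eta$. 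This computation — not a weak molecular characterization of $WH_\fai(\rn)$ — is the mechanism that makes the endpoint result work, and it is missing from your proposal.
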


\begin{remark}\label{cr5.2}
Theorem \ref{ct5.7} completely covers \cite[Theorems 7.1 and 7.4]{jy10}
by taking $\fai$ as in \eqref{1.1} with $w\equiv1$ and $\Phi$ concave.
Theorem \ref{ct5.12} completely covers \cite[Theorem 1.2]{cyy} by taking $\fai(x,t):=
t^{n/(n+1)}$ for all $x\in\rn$ and $t\in[0,\fz)$.
\end{remark}

To prove Theorem \ref{ct5.12}, we need the following superposition principle
of weak type estimates.

\begin{lemma}\label{cl5.13}
Let $\fai$ be as in Definition \ref{d2.2} satisfying $I(\fai)\in(0,\,1)$, where
$I(\fai)$ is as in \eqref{2.4}. Assume that $\{a_j\}_j$ is a sequence of measurable
functions and $\{\lz_j\}_j\subset \cc$ such that there exists a sequence
$\{B_j\}_j$ of balls, it holds that
\begin{eqnarray*}
\dsum_{j}\fai\lf(B_j,\,\frac{|\lz_j|}{\|\chi_{B_j}\|_{L^\fai(\rn)}}\r) <\fz.
\end{eqnarray*}
Moreover, if there exists a positive constant $C$ such that, for all $\eta\in(0,\,\fz)$
and $j\in\nn$,
\begin{eqnarray}\label{c5.29}
\fai\lf(\{x\in\rn:\ |\lz_j a_j(x)|>\eta\},\,\eta\r)\le C \fai\lf(B,\,\frac{|\lz_j|}
{\|\chi_{B_j}\|_{L^\fai(\rn)}}\r).
\end{eqnarray}
Then, there exists a positive constant $\wz C$ such that, for all $\eta\in(0,\,\fz)$,
\begin{eqnarray*}
\fai\lf(\lf\{x\in\rn:\ \sum_j|\lz_j||a_j(x)|>\eta\r\},\, \eta\r)\le
\wz C\sum_j\fai\lf(B_j,\,\frac{|\lz_j|}{\|\chi_{B_j}\|_{L^\fai(\rn)}}\r).
\end{eqnarray*}
\end{lemma}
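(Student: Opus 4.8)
\textbf{Proof plan for Lemma \ref{cl5.13}.}
The plan is to reduce the weak-type estimate for the superposition $\sum_j|\lz_j||a_j|$ to the individual weak-type estimates \eqref{c5.29} by splitting each level set at the threshold $\eta$ into the part where finitely (boundedly) many terms contribute and the part where the tail contributes. First I would fix $\eta\in(0,\fz)$ and, for each $j$, decompose $|\lz_j a_j|=|\lz_j a_j|\chi_{\{|\lz_j a_j|>\eta/2^{j+1}\}}+|\lz_j a_j|\chi_{\{|\lz_j a_j|\le \eta/2^{j+1}\}}=:g_j+b_j$. Then
$$\lf\{x\in\rn:\ \sum_j|\lz_j||a_j(x)|>\eta\r\}\subset
\lf\{x:\ \sum_j g_j(x)>\eta/2\r\}\cup\lf\{x:\ \sum_j b_j(x)>\eta/2\r\},$$
and since $\sum_j b_j(x)\le\sum_j\eta/2^{j+1}=\eta/2$ pointwise, the second set is empty. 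So it suffices to control $\fai(\{x:\ \sum_j g_j(x)>\eta/2\},\eta)$.

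For the $g_j$ part I would use the uniformly quasi-subadditivity of $\fai$ in the first variable together with the assumption $I(\fai)\in(0,1)$: pick $p_1\in(I(\fai),1)$ so that $\fai$ is of uniformly upper type $p_1$, whence $\fai(x,\cdot)$ is, up to equivalence (Lemma \ref{l2.1}(ii)), comparable to a function whose $p_1$-th root is concave, and therefore $t\mapsto\fai(x,t)$ behaves subadditively along the scale $\eta/2^{j}$. Concretely, the set $\{x:\ \sum_j g_j(x)>\eta/2\}$ is contained in $\bigcup_j\{x:\ g_j(x)>\eta/2^{j+2}\}=\bigcup_j\{x:\ |\lz_j a_j(x)|>\eta/2^{j+2}\}$ (on the complement of this union, $g_j(x)=0$ for every $j$, hence the sum vanishes — here one uses exactly that $g_j$ is supported where $|\lz_j a_j|>\eta/2^{j+1}>\eta/2^{j+2}$). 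Applying the hypothesis \eqref{c5.29} at level $\eta/2^{j+2}$ — rescaling $\eta\mapsto\eta/2^{j+2}$ is allowed since \eqref{c5.29} is assumed for all $\eta\in(0,\fz)$ — gives
$$\fai\lf(\lf\{x:\ |\lz_j a_j(x)|>\eta/2^{j+2}\r\},\,\eta/2^{j+2}\r)\ls
\fai\lf(B_j,\,\frac{|\lz_j|}{\|\chi_{B_j}\|_{L^\fai(\rn)}}\r).$$
Summing in $j$ and then using the uniformly upper type $p_1<1$ property to pass from the scale $\eta/2^{j+2}$ back to the scale $\eta$ (this costs a factor $2^{-(j+2)p_1}$ when $\eta/2^{j+2}\le\eta$, which is summable and in our favor), combined with the quasi-subadditivity of $\fai$ to absorb the countable sum inside $\fai(\cdot,\eta)$ on the left, yields the claimed bound with $\wz C$ depending only on the structural constants of $\fai$.

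\textbf{Main obstacle.} The delicate point is the interplay of the two scales: the weak-type hypothesis \eqref{c5.29} is naturally phrased with the \emph{same} $\eta$ in the set and in the weight slot, whereas after the dyadic splitting we need it at level $\eta/2^{j+2}$, and we must then re-normalize the weight argument back to $\eta$ uniformly in $j$. This is exactly where $I(\fai)<1$ is essential — without a genuine sub-unit upper type, the rescaling factors $\fai(\cdot,\eta/2^{j+2})$ versus $\fai(\cdot,\eta)$ would not sum. I would carry this out by invoking the uniformly upper type $p_1$ inequality $\fai(x,2^{j+2}\cdot t)\le C2^{(j+2)p_1}\fai(x,t)$ with $t=\eta/2^{j+2}$, i.e. $\fai(x,\eta)\le C2^{(j+2)p_1}\fai(x,\eta/2^{j+2})$, so that the cost is paid in the harmless direction; the summability $\sum_j 2^{(j+2)p_1}\cdot(\text{something decaying faster})$ is then ensured provided one also inserts, if needed, a further $\eps$-loss from a slightly larger choice of the dyadic splitting weights $\eta/2^{j(1+\delta)}$. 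The remaining steps — emptiness of the $b_j$-set, containment of level sets, quasi-subadditivity to collapse the sum — are routine.
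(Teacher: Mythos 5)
Your strategy --- dyadic splitting of each term at thresholds $\eta/2^{j+1}$ and reduction to the individual weak-type bounds --- breaks down at exactly the step you flag as the ``main obstacle'', and your proposed resolution goes in the wrong direction. After the containment $\{\sum_j g_j>\eta/2\}\subset\bigcup_j\{|\lz_j a_j|>\eta/2^{j+2}\}$ and countable subadditivity of the measure $\fai(\cdot,\eta)\,dx$, you must bound $\fai(\{|\lz_ja_j|>\eta/2^{j+2}\},\eta)$. The uniformly upper type $p_1$ inequality indeed gives $\fai(x,\eta)\le C2^{(j+2)p_1}\fai(x,\eta/2^{j+2})$, and then \eqref{c5.29} at level $\eta/2^{j+2}$ yields
$$\fai\lf(\lf\{x:\ |\lz_ja_j(x)|>\eta/2^{j+2}\r\},\eta\r)\ls 2^{(j+2)p_1}\,\fai\lf(B_j,\frac{|\lz_j|}{\|\chi_{B_j}\|_{L^\fai(\rn)}}\r).$$
But the resulting series $\sum_j 2^{jp_1}\fai(B_j,\cdot)$ is not controlled by $\sum_j\fai(B_j,\cdot)$: the quantities $\fai(B_j,\cdot)$ are only assumed summable and carry no decay in $j$, so the unbounded factors $2^{jp_1}$ destroy the estimate. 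The cost of rescaling the weight slot from $\eta/2^{j+2}$ back up to $\eta$ is paid in the \emph{harmful} direction (monotonicity gives the useless inequality $\fai(x,\eta/2^{j+2})\le\fai(x,\eta)$, not the one you need), and making the dyadic thresholds decay faster (your $\eta/2^{j(1+\delta)}$ suggestion) only increases the loss. No choice of splitting weights $c_j$ with $\sum_jc_j\le1$ yields $\sum_jc_j^{-p_1}\fai(B_j,\cdot)\ls\sum_j\fai(B_j,\cdot)$ in general, so this route cannot be repaired by tuning thresholds.

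The paper's proof (the classical Stein--Taibleson--Weiss superposition argument) splits differently and incurs no $j$-dependent rescaling. Set $E:=\bigcup_j\{|\lz_j a_j|>\eta\}$. On $E$ the hypothesis \eqref{c5.29} is applied at level exactly $\eta$, with matching set and weight arguments, and summed. On $E^\complement$ every term satisfies $|\lz_ja_j(x)|\le\eta$, so Chebyshev's inequality gives
$$\fai\lf(\lf\{x\in E^\complement:\ \sum_j|\lz_j||a_j(x)|>\eta\r\},\eta\r)\ls\frac1\eta\sum_j\int_{\{|\lz_ja_j|\le\eta\}}|\lz_ja_j(x)|\,\fai(x,\eta)\,dx,$$
and the layer-cake formula rewrites each integral as $\int_0^\eta\fai(\{|\lz_ja_j|>t\},\eta)\,dt$. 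The upper type $p_1$ rescaling $\fai(x,\eta)\le C(\eta/t)^{p_1}\fai(x,t)$ is then integrated in $t$, and $\frac1\eta\int_0^\eta(\eta/t)^{p_1}\,dt=(1-p_1)^{-1}$ is finite and independent of $j$ precisely because $p_1<1$; this is where $I(\fai)<1$ enters. You should replace your dyadic splitting by this two-set decomposition.
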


\begin{proof}
For any given $\eta\in(0,\fz)$, let $E:=\cup_{j}\{x\in\rn:\ |\lz_j||a_j(x)|>\eta\}$.
From \eqref{c5.29},
we deduce that
\begin{eqnarray}\label{c5.30}
\quad\quad\fai\lf(E,\, \eta\r)\ls \dsum_j\fai\lf(\lf\{x\in\rn:\
|\lz_j||a_j(x)|>\eta\r\},\,\eta\r)\ls \dsum_j\fai\lf(B_j,\,\frac{|\lz_j|}
{\|\chi_{B_j}\|_{L^\fai(\rn)}}\r),
\end{eqnarray}
which is desired.
On the other hand, taking $p_1\in(I(\fai),\,1)$, then we know that $\fai$
is of uniformly upper type $p_1$. This, together with
Chebyshev's inequality and \eqref{c5.29}, implies that
\begin{eqnarray*}
\fai\lf(\lf\{E^{\complement}:\ \dsum_{j}|\lz_j||a_j(x)|>\eta\r\},\,
\eta\r)&&\ls \frac{1}{\eta}\dsum_j\dint_{\lf\{x\in\rn:\
|\lz_j||a_j(x)|\le \eta\r\}}|\lz_j a_j(x)|\fai(x,\,\eta)\,dx\\
&&\ls \frac{1}{\eta}\dsum_j \dint_0^\eta\fai(\lf\{x\in\rn:\
|\lz_j||a_j(x)|>\eta\r\},\,\eta)\,dt\\
&&\ls \frac{1}{\eta} \dint_0^\eta
\lf(\frac{\eta}{t}\r)^{p_1}\,dt \dsum_j \fai\lf(B_j,\,
\frac{|\lz_j|}{\|\chi_{B_j}\|_{L^\fai(\rn)}}\r)\\
&&\ls \dsum_j \fai\lf(B_j,\,
\frac{|\lz_j|}{\|\chi_{B_j}\|_{L^\fai(\rn)}}\r),
\end{eqnarray*}
which, together with \eqref{c5.30}, implies the desired estimates
and hence completes the proof of Lemma \ref{cl5.13}.
\end{proof}

We now turn to the proof of Theorem \ref{ct5.12}.

\begin{proof}[Proof of Theorem \ref{ct5.12}]
To prove this theorem, let $q\in(p_-(L),\,\min\{p_+(L),\,q_+(L)\})$.
We first claim that it suffices to show
that, for all $\lz\in\cc$ and each $(\fai,\,q,\,\epsilon,\,M)_L$-molecules $\az$
associated with the ball $B$ (with $\epsilon$
and $M$ large enough) and all $\eta\in(0,\,\fz)$,
\begin{eqnarray}\label{c5.31}
\hs\hs\hs\hs\fai\lf(\lf\{x\in\rn:\ \dsup_{t>0}\lf|\lf(\psi_t\ast \lf(\nabla L^{-1/2}(\lz \az)
\r)(x)\r)\r|>\eta\r\},\,\eta\r)\ls \fai\lf(B,\,\frac{|\lz|}{\|\chi_{B}\|_{L^\fai(\rn)}}\r).
\end{eqnarray}
Indeed, if \eqref{c5.31} holds true, then for all $f\in L^2(\rn)\cap H_{\fai,\,L}(\rn)$,
by Theorem \ref{t4.1}, we know that, there exist a sequence
$\{\lz_j\}_j\subset \cc$ and a sequence $\{\az_j\}_j$ of $(\fai,\,q,\,M,\,
\epsilon)_L$-molecules associated with the balls $\{B_j\}_j$ such that
$f=\sum_j \lz_j \az_j $ in $L^2(\rn)$ and $\|f\|_{H_{\fai,\,L}(\rn)}\sim
\Lambda(\{\lz_j\az_j\}_j)$. Moreover, by the assumption that $I(\fai)<1$,
the change of variable and Lemma \ref{cl5.13}, we infer that
\begin{eqnarray*}
&&\lf\|\nabla L^{-1/2}(f)\r\|_{WH_\fai(\rn)}\\
&&\hs=\inf\Bigg\{\lz\in(0,\,\fz):\ \\
&&\hs\hs\lf.\sup_{\eta\in(0,\,\fz)} \fai\lf(
\lf\{x\in\rn:\ \sup_{t\in(0,\,\fz)}\lf |\lf(\psi_t\ast\lf[\sum_{j} \lz_j
\nabla L^{-1/2}(\az_j)\r]\r)(x)\r|>\eta\r\},\,
\frac{\eta}{\lz}\r)\le 1\r\}\\
&&\hs=\inf\Bigg\{\lz\in(0,\,\fz):\ \\
&&\hs\hs\lf.\sup_{\wz \eta\in(0,\,\fz)} \fai\lf(
\lf\{x\in\rn:\ \sup_{t\in(0,\,\fz)}\lf |\lf(\psi_t\ast\lf[\sum_{j} \frac{\lz_j}{\lz}
\nabla L^{-1/2}(\az_j)\r]\r)(x)\r|>\wz\eta\r\},\,\wz\eta\r)\le 1\r\}\\
&&\hs\ls\inf\lf\{\lz\in(0,\,\fz):\ \sum_j \fai\lf(
B_j,\, \frac{|\lz_j|}{\lz\|\chi_{B_j}\|_{L^\fai(\rn)}}\r)\le 1\r\}
\sim\Lambda\lf(\lf\{\lz_j \az_j\r\}_j\r)\sim \|f\|_{H_{\fai,\,L}(\rn)},
\end{eqnarray*}
which, together with a density argument, implies that $\nabla L^{-1/2}$
is bounded from $H_{\fai,\,L}(\rn)$ to $WH_{\fai}(\rn)$. This shows
the claim.

Now, we turn to the proof of \eqref{c5.31}. Let $p_1\in[I(\fai),1)$ be a uniformly upper
type of $\fai$. Then
\begin{eqnarray*}
&&\fai\lf(\lf\{x\in 16B:\ \dsup_{t\in(0,\,\fz)} \lf|\lf(\psi_t\ast
\lf[\nabla L^{-1/2}(\lz \az)\r]\r)(x)\r|>\eta \r\},\,\eta\r)\\
&&\hs\ls \dint_{16B}\fai\lf(x,\, \dsup_{t\in(0,\,\fz)} \lf|\lf(\psi_t\ast
\lf[\nabla L^{-1/2}(\lz \az)\r]\r)(x)\r| \r)\,dx\\
&&\hs\ls \dint_{16B}\lf\{1+\lf[\dsup_{t\in(0,\,\fz)} \lf|\lf(\psi_t\ast\lf[\nabla L^{-1/2}(\az)
\r]\r)(x)\r|\|\chi_B\|_{L^\fai(\rn)}\r]^{p_1}\r\}\\
&&\hs\hs\times\fai\lf(x,\, \frac{|\lz|}{\|\chi_{B}\|_{L^\fai(\rn)}} \r)dx\\
&&\hs\ls \dint_{16B}\fai\lf(x,\,\frac{|\lz|}{\|\chi_B\|_{L^\fai(\rn)}}\r)\,dx\\
&&\hs\hs+\dint_{16B}\lf\{\dsup_{t\in(0,\,\fz)} \lf|\lf(\psi_t\ast
\lf[\nabla L^{-1/2} \az\r]\r)(x)\r|\|\chi_B\|_{L^\fai(\rn)}\r\}^{p_1}\fai\lf(x,\, \frac{|\lz|}
{\|\chi_{B}\|_{L^\fai(\rn)}} \r)dx\\
&&\hs=:\mathcal{A}+\mathcal{B}.
\end{eqnarray*}
Without loss of generality, we may only estimate $\mathcal{B}$, the estimate for $\ca$ being
similar and simpler.
Let $q\ge2$ and $q$ be as in
\eqref{rangeofq} such that \eqref{c4.11} holds true.
Let $\wz q\in (q(\fai),\fz)$. Then $\fai\in \rh_{(\frac{q}{p_1})'}(\rn)\cap
\mathbb{A}_{\wz q}(\rn)$. Moreover, from \cite[Theorem 4.2]{au07}, we know that
$\nabla L^{-1/2}$ is bounded on $L^q(\rn)$, which, together with H\"older's inequality
and the $L^q(\rn)$-boundedness of the Hardy-Littlewood maximal function $\mathcal{M}$,
implies that
\begin{eqnarray*}
\mathcal{B}&&\ls \|\chi_B\|_{L^\fai(\rn)}^{p_1} \lf\{\dint_{16B}
\lf[\mathcal{M} \lf(\nabla L^{-1/2}(\az)\r)(x)\r]^q\,dx\r\}^{\frac{p_1}{q}}\\
&&\hs\times
\lf\{\frac{1}{|16B|}\dint_{16B}\lf[\fai\lf(x,\, \frac{|\lz|}
{\|\chi_{B}\|_{L^\fai(\rn)}} \r)\r]^{(\frac{q}{p_1})'}\,dx\r\}^{\frac{1}
{(\frac{q}{p_1})'}}|B|^{\frac{1}{(\frac{q}{p_1})'}} \\
&&\ls\|\chi_B\|_{L^\fai(\rn)}^{p_1} \lf\{\dint_{16B}
\lf[\az(x)\r]^q\,dx\r\}^{\frac{p_1}{q}}\fai\lf(B,\, \frac{|\lz|}
{\|\chi_{B}\|_{L^\fai(\rn)}} \r)|B|^{-\frac{p_1}{q}}.
\end{eqnarray*}
Using Definition \ref{d4.2}, we further see that
$\mathcal{B}\ls \fai(B,\,\frac{|\lz|}{\|\chi_B\|_{L^\fai(\rn)}}).$
Thus, we have
\begin{eqnarray}\label{c5.32}
&&\fai\lf(\lf\{x\in 16B:\ \dsup_{t\in(0,\,\fz)} \lf|\lf(\psi_t\ast
\lf[\nabla L^{-1/2}(\lz \az)\r]\r)(x)\r|>\eta \r\},\,\eta\r)\\
&&\nonumber\hs\ls \lf(B,\,\frac{|\lz|}{\|\chi_B\|_{L^\fai(\rn)}}\r).
\end{eqnarray}

Now, we turn to the case that $x\in (16B)^{\complement}$. For all
$i\in\{5,\,6,\,\ldots\}$, let
\begin{eqnarray*}
\mathrm{I}_i:=\fai\lf(\lf\{x\in S_i(B):\ \dsup_{t\in(0,\,r_B)}
\lf|\psi_t\ast \lf[\nabla L^{-1/2}(\lz \az)\r](x)\r|>\frac{\eta}{2}\r\},\,
\eta\r)
\end{eqnarray*}
and
\begin{eqnarray*}
\mathrm{J}:=\fai\lf(\lf\{x\in (16B)^{\complement}:\ \dsup_{t\in[r_B,\,\fz)}
\lf|\psi_t\ast \lf(\nabla L^{-1/2}(\lz \az)\r)(x)\r|>\frac{\eta}{2}\r\},\,
\eta\r).
\end{eqnarray*}
Assume that $\wz S_i(B):=2^{i+1}B\setminus 2^{i-2}B$ and
$\wz {\wz S}_i(B):=2^{i+2}B\setminus 2^{i-3}B$. It is easy to see that,
for all $x\in S_i(B)$, $t\in(0,\,r_B)$ and $|y-x|<t$, it holds that $y\in \wz S_i(B)$.
Thus, similar to the estimate for $\mathcal{B}$, we conclude that
\begin{eqnarray*}
\mathrm{I}_i&&\ls \fai\lf( S_i(B),\,
\mathcal{M}\lf(\chi_{\wz {S}_i(B)}\nabla L^{-1/2}\lf(\lz \az\r)\r)(x)\r)\\
&&\ls\lf\{\dint_{\wz S_i(B)}\lf|\nabla L^{-1/2}(\az)(x)\r|^q\,dx
\r\}^{\frac{p_1}{q}} \lf|2^iB\r|^{-\frac{p_1}{q}} \lf\|\chi_{B}\r\|_{L^\fai(\rn)}^{p_1}
\fai\lf( 2^iB,\,\frac{|\lz|}{\|\chi_{B}\|_{L^\fai(\rn)}}\r),
\end{eqnarray*}
which, together with \eqref{c5.x29}, further implies that
\begin{eqnarray*}
\mathrm{I}_i&&\ls 2^{-i\epsilon_0 p_1}|B|^{\frac{p_1}{q}}\|\chi_B\|_{L^\fai(\rn)}^{-p_1}
\lf|2^iB\r|^{-\frac{p_1}{q}}\|\chi_B\|_{L^\fai(\rn)}^{p_1}
\fai\lf( 2^iB,\,\frac{|\lz|}{\|\chi_{B}\|_{L^\fai(\rn)}}\r)\\
&&\nonumber\ls2^{-i[\epsilon_0 p_1+\frac{p_1n}{q}-n\wz q]}\fai\lf(B,\,\frac{|\lz|}
{\|\chi_B\|_{L^\fai(\rn)}}\r),
\end{eqnarray*}
where $\epsilon_0\in(0,\fz)$ is a sufficiently large constant.
Thus, we conclude that
\begin{eqnarray}\label{c5.34}
\dsum_{i\in\nn}\mathrm{I}_i\ls \fai\lf(B,\,\frac{|\lz|}
{\|\chi_B\|_{L^\fai(\rn)}}\r).
\end{eqnarray}

We now estimate $\mathrm{J}$. For all $x\in\rn$, let
\begin{eqnarray*}
F_i(x):=\dsup_{t\in[r_B,\,\fz)}\lf|\dint_{S_i(B)} \frac{1}{t^n}
\lf[\psi\lf(\frac{x-y}{t}\r)-\psi\lf(\frac{x-x_B}{t}\r)\r]
\nabla L^{-1/2} \lf(\lz \az\r)(y)\,dy\r|.
\end{eqnarray*}
For all $j\in\nn$, $y\in S_i(B)$, $t\in [2^{j-1}r_B,\,2^jr_B)$, $|x-y|<t$ or
$|x-x_B|<t$, we have $|x-x_B|\le |x-y|+|y-x_B|<(2^{i}+2^j)r_B$. Thus,
$x\in (2^i+2^j)B$. This, together with the mean value theorem, H\"older's
inequality and \eqref{c5.x29}, implies that, there exists a sufficiently large
constant $\wz \epsilon_0$ such that
\begin{eqnarray*}
F_i(x)&&\ls \sup_{j\in\nn}\sup_{t\in [2^{j-1}r_B,\,2^{j}r_B)}
\chi_{(2^i+2^j)B}(x)\int_{S_i(B)}\frac{1}{t^n}\lf\|\nabla \psi\r\|_{L^\fz(\rn)}
\frac{|y-x_B|}{t}\\
&&\hs\times\lf|\nabla L^{-1/2} \lf(\lz \az\r)(y)\r|\,dy\\
&&\ls |\lz| \dsup_{j\in\nn}\dsup_{t\in [2^{j-1}r_B,\,2^{j}r_B)}
\chi_{(2^i+2^j)B}(x) \frac{2^ir_B}{(2^jr_B)^{n+1}}
\lf|2^iB\r|^{\frac{1}{q'}}\lf\|\nabla L^{-1/2}(\az)\r\|_{L^q(S_i(B))}\\
&&\ls |\lz| \dsup_{j\in\nn}\dsup_{t\in [2^{j-1}r_B,\,2^{j}r_B)}
\chi_{(2^i+2^j)B}(x) 2^{-j(n+1)}2^{-i\wz \epsilon_0}
\lf\|\chi_B\r\|_{L^\fai(\rn)}^{-1}\\
&&=:C_0 |\lz| \dsup_{j\in\nn}\dsup_{t\in [2^{j-1}r_B,\,2^{j}r_B)}
\chi_{(2^i+2^j)B}(x) 2^{-j(n+1)}2^{-i\wz \epsilon_0}
\lf\|\chi_B\r\|_{L^\fai(\rn)}^{-1},
\end{eqnarray*}
where $C_0$ is a positive constant independent of $i$ and $x$.
Let
$$j_0:=\max\lf\{j\in\nn:\ C_0 |\lz|2^{-j(n+1)}2^{-i\wz \epsilon_0}
\lf\|\chi_B\r\|_{L^\fai(\rn)}^{-1}>\frac{\eta}{2}\r\}.$$
We know that, for all $x\in \lf[\lf(2^i+2^{j_0}\r)B\r]^{\complement}$,
\begin{eqnarray*}
F_i(x)\le C_0|\lz|2^{-j(n+1)}2^{-i\wz \epsilon_0}
\lf\|\chi_B\r\|_{L^\fai(\rn)}^{-1}\le \frac{\eta}{2},
\end{eqnarray*}
which immediately implies that
\begin{eqnarray*}
\lf\{x\in (16B)^\complement:\ \lf|F_i(x)\r|>\frac{\eta}{2}\r\}\subset
\lf(2^i+2^{j_0}\r)B.
\end{eqnarray*}
Thus, from the assumption that $i(\fai)$ is attainable and $\fai\in\aa_1(\rn)$,
we infer that
\begin{eqnarray*}
&&\fai\lf(\lf\{x\in (16B)^\complement:\ F_i(x)>\frac{\eta}{2}
\r\},\,\eta\r)\\
&&\hs\ls \fai\lf(\lf(2^i+2^{j_0}\r)B,\,2^{-j_0(n+1)}2^{-i\wz \epsilon_0}
\frac{|\lz|}{\|\chi_B\|_{L^\fai(\rn)}}\r)\\
&&\hs\ls2^{-j_0(n+1)i(\fai)}2^{-i\wz\epsilon_0 i(\fai)} \lf(2^i+2^{j_0}\r)^{n}
\fai\lf(B,\,\frac{|\lz|}{\|\chi_B\|_{L^\fai(\rn)}}\r)\\
&&\hs\ls2^{-j_0[(n+1)i(\fai)-n]}2^{-i\wz\epsilon_0 i(\fai)-n}
\fai\lf(B,\,\frac{|\lz|}{\|\chi_B\|_{L^\fai(\rn)}}\r),
\end{eqnarray*}
which, together with $i(\fai)=\frac{n}{n+1}$, implies that
\begin{eqnarray*}
\fai\lf(\lf\{x\in (16B)^\complement:\ F_i(x)>\frac{\eta}{2}
\r\},\,\eta\r)\ls 2^{-i(\wz \epsilon_0 i(\fai)-n)}
\fai\lf(B,\,\frac{|\lz|}{\|\chi_B\|_{L^\fai(\rn)}}\r).
\end{eqnarray*}
By this, together with Proposition \ref{cp5.10} and the fact that $\int_\rn \nabla L^{-1/2}
(\lz)(x)\,dx=0$, we find that
\begin{eqnarray*}
\mathrm{J}&&\ls \fai\lf(\lf\{x\in (16B)^\complement :\ \dsum_{i=5}^\fz
\dsup_{t\in[r_B,\,\fz)}\lf|\dint_{S_i(B)}\frac{1}{t^n}\lf[\psi\lf(\frac
{x-y}{t}\r)-\psi\lf(\frac{x-x_B}{t}\r)\r]\r.\r.\r.\\
&&\hs\times\nabla L^{-1/2}\lf(\lz \az\r)(y)
\bigg|>\eta\bigg\},\,\eta\Bigg)\\
&&\ls \dsum_{i=5}^\fz2^{-i(\wz\epsilon_0 i(\fai)-n)}
\fai\lf(B,\,\frac{|\lz|}{\|\chi_B\|_{L^\fai(\rn)}}\r)\sim
\fai\lf(B,\,\frac{|\lz|}{\|\chi_B\|_{L^\fai(\rn)}}\r).
\end{eqnarray*}
This, combined with \eqref{c5.32} and \eqref{c5.34}, implies that
\eqref{c5.31} holds true, which completes the proof of Theorem \ref{ct5.12}.
\end{proof}

\section{The Musielak-Orlicz-Hardy space associated
with the \\
Schr\"odinger operator\label{s6}}

\hskip\parindent In this section, we establish several equivalent
characterizations of the Musielak-Orlicz-Hardy space
$H_{\fai,\,L}(\rn)$ associated with the Schr\"odinger operator
$L:=-\bdz+V$,  where $0\le V\in L^1_{\loc}(\rn)$, in terms  of
the Lusin-area function associated with the
Poisson semigroup of $L$, the non-tangential and the radial maximal
functions associated with the heat semigroup generated by $L$, and
the non-tangential and the radial maximal functions associated with
the Poisson semigroup generated by $L$. Moreover, we also
consider the boundedness of the associated Riesz transform
on $H_{\fai,\,L}(\rn)$.

Let
\begin{equation}\label{6.1}
L:=-\bdz+V
\end{equation}
be a Schr\"odinger operator, where $0\le V\in L^1_{\loc}(\rn)$.
Since $V$ is a nonnegative function, from the Feynman-Kac formula,
we deduce that the kernel of the semigroup $e^{-tL}$, $h_t$,
satisfies that, for all $t\in(0,\fz)$ and $x,\ y\in\rn$,
\begin{equation*}
0\le h_t(x,y)\le(4\pi t)^{-n/2}\exp\lf\{-\frac{|x-y|^2}{4t}\r\}.
\end{equation*}
Thus, $L$ satisfies Assumptions ${\rm(A)}$ and ${\rm(B)}$ with $k=1$, $p_L=1$ and $q_L=\fz$.
Moreover, $L$ satisfies Assumptions ${\rm (H_1)}$ and ${\rm (H_2)}$ as in Section \ref{cs4}.

For all $f\in L^2(\rn)$ and $x\in\rn$, define the \emph{Lusin-area
function} $S_P$ associated with the Poisson semigroup of $L$ by
\begin{equation*}
S_P(f)(x):=\lf\{\iint_{\bgz(x)}\lf|t\sqrt{L}e^{-t\sqrt{L}}f(y)\r|^2
\,\frac{dy\,dt}{t^{n+1}}\r\}^{1/2}.
\end{equation*}

Similar to Definition \ref{d4.1}, we introduce the space
$H_{\fai,\,S_P}(\rn)$ as follows.

\begin{definition}\label{d6.1}
Let $\fai$ be as in
Definition \ref{d2.2} and $L$ as in \eqref{6.1}. A function $f\in L^2(\rn)$
is said to be in $\wz{H}_{\fai,\,S_P}(\rn)$ if $S_P(f)\in L^{\fai}(\rn)$; moreover,
define
$$\|f\|_{H_{\fai,\,S_P}(\rn)}:=\|S_P(f)\|_{L^\fai(\rn)}:=
\inf\lf\{\lz\in(0,\fz):\
\int_{\rn}\fai\lf(x,\frac{S_P(f)(x)}{\lz}\r)\,dx\le1\r\}.$$
The $S_P$-\emph{adapted} \emph{Musielak-Orlicz-Hardy space $H_{\fai,\,S_P}(\rn)$}
is defined to be the completion of
$\wz{H}_{\fai,\,S_P}(\rn)$ with respect to the \emph{quasi-norm}
$\|\cdot\|_{H_{\fai,\,S_P}(\rn)}$.
\end{definition}

For any $f\in L^2(\rn)$ and $x\in\rn$, let
$$\cn_h(f)(x):=\sup_{y\in B(x,t),\,t\in(0,\fz)}
\lf|e^{-t^2L}(f)(y)\r|,\ \ \cn_P(f)(x):=\sup_{y\in B(x,
t),\,t\in(0,\fz)}\lf|e^{-t\sqrt{L}}(f)(y)\r|,
$$
$$\ccr_h(f)(x):=\sup_{t\in(0,\fz)}|e^{-t^2L}(f)(x)|\ \text{and}\
\ccr_P(f)(x):=\sup_{t\in(0,\fz)}|e^{-t\sqrt{L}}(f)(x)|.$$

\begin{definition}\label{d6.2}
Let $L$ and $\fai$ be as in \eqref{6.1} and Definition \ref{d2.2}, respectively.
A function $f\in L^2(\rn)$ is said to be in
$\wz{H}_{\fai,\,\cn_h}(\rn)$ if $\cn_h(f)\in L^\fai(\rn)$;
moreover, let
$\|f\|_{H_{\fai,\,\cn_h}(\rn)}:=\|\cn_h(f)\|_{L^\fai(\rn)}.$
The \emph{$\cn_h$-adapted Musielak-Orlicz-Hardy space $H_{\fai,\,\cn_h}(\rn)$}
is defined to be the completion of
$\wz{H}_{\fai,\,\cn_h}(\rn)$ with respect to the \emph{quasi-norm}
$\|\cdot\|_{H_{\fai,\,\cn_h}(\rn)}$.

The \emph{spaces $H_{\fai,\,\cn_P}(\rn)$, $H_{\fai,\,\ccr_h}(\rn)$}
and \emph{$H_{\fai,\,\ccr_P}(\rn)$} are respectively defined in a similar way.
\end{definition}

Now, we give the following equivalent characterizations of
$H_{\fai,\,L}(\rn)$ in terms of maximal functions associated with
$L$.

\begin{theorem}\label{t6.1}
Assume that $\fai$ and $L$ are as in Definition \ref{d6.2}. Then
$$H_{\fai,\,L}(\rn),\ H_{\fai,\,\cn_h}(\rn)\ H_{\fai,\,\cn_P}(\rn),\
H_{\fai,\,\ccr_h}(\rn),\ H_{\fai,\,\ccr_P}(\rn)\ \text{and}\ H_{\fai,\,S_P}(\rn)$$
coincide with  equivalent quasi-norms.
\end{theorem}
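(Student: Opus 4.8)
The plan is to establish all six equivalences in Theorem \ref{t6.1} by proving a circle of inclusions at the level of the dense subspaces $L^2(\rn)\cap(\cdot)$, and then pass to completions via a density argument, exactly as in the proof of Theorem \ref{ct5.5}. Since $L=-\bdz+V$ with $0\le V\in L^1_\loc(\rn)$ satisfies Assumptions $\mathrm{(A)}$, $\mathrm{(B)}$ with $p_L=1$, $q_L=\fz$ (by the Feynman--Kac bound) as well as $\mathrm{(H_1)}$, $\mathrm{(H_2)}$, the condition $\fai\in\rh_{(q_L/I(\fai))'}(\cx)=\rh_1(\cx)$ is automatic, so all the machinery of Sections \ref{ss4}--\ref{cs4} (the molecular characterization Theorem \ref{t4.1}, the atomic characterization Theorem \ref{ct4.3}, Theorem \ref{t4.2} and Corollaries \ref{sufficient condition for molecule}, \ref{cl4.5}) applies without any further restriction on $\fai$. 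First I would handle the heat-semigroup maximal functions: the chain
$$L^2(\rn)\cap H_{\fai,\,\ccr_h}(\rn)\subset L^2(\rn)\cap H_{\fai,\,\cn_h}(\rn)\subset L^2(\rn)\cap H_{\fai,\,L}(\rn)\subset L^2(\rn)\cap H_{\fai,\,\ccr_h}(\rn).$$
The middle inclusion $H_{\fai,\,\cn_h}\subset H_{\fai,\,L}$ is obtained, as in Step 1 of the proof of Theorem \ref{ct5.5}, by combining the good-$\lz$ inequality of Lemma \ref{cl5.6} (which is stated for the divergence-form operator but whose proof via Caccioppoli estimates and a cut-off argument on truncated cones carries over verbatim to $L=-\bdz+V$, since $V\ge0$ only helps in the energy estimate) with the aperture-independence of $\|S_h^{\epz,R,\az}(f)\|_{L^\fai}$ and the $L^2$-boundedness of $S_L$; here one uses that for this $L$ the square function $S_L$ built from the heat semigroup and the "gradient" square function $S_h$ are comparable in $L^\fai$, which follows from Theorem \ref{thm on S_L} together with subordination. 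The inclusion $H_{\fai,\,\ccr_h}\subset H_{\fai,\,\cn_h}$ reduces, as in Step 2 there, to $\cn_h^{1/2}(f)\le\ccr_h(f)$ plus the aperture change $\|\cn_h^\az(f)\|_{L^\fai}\sim\|\cn_h^\bz(f)\|_{L^\fai}$ (the analogue of \cite[Lemma 5.3]{jy10} for growth functions, already available here). Finally $H_{\fai,\,L}\subset H_{\fai,\,\ccr_h}$ follows from Corollary \ref{sufficient condition for molecule} once we verify that for every $(\fai,q,M,\epz)_L$-molecule $\az$ (with $q\in(1,\fz)$, $M$, $\epz$ large) one has $\int_\rn\fai(x,\ccr_h(\lz\az)(x))\,dx\ls\fai(B,|\lz|\|\chi_B\|_{L^\fai}^{-1})$, which is proved by the same molecular annulus-by-annulus decomposition and Gaussian off-diagonal bounds used for \eqref{4.8}.

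Next I would treat the Poisson-semigroup maximal functions and $S_P$. For the Poisson versions one introduces the subordination formula $e^{-t\sqrt L}=C\int_0^\fz e^{-u}u^{-1/2}e^{-t^2L/(4u)}\,du$ and deduces pointwise domination of $\cn_P$, $\ccr_P$ by averages of the corresponding heat objects against the integrable kernel $e^{-u}u^{-1/2}$; this yields $H_{\fai,\,\ccr_P}\subset H_{\fai,\,\cn_P}\subset H_{\fai,\,L}$ and, together with the molecular estimate for $\ccr_P$ (again Corollary \ref{sufficient condition for molecule} plus off-diagonal bounds for $\{e^{-t\sqrt L}\}$, which follow from those of $\{e^{-tL}\}$ via subordination), closes the Poisson chain. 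For $S_P$: the inclusion $L^2(\rn)\cap H_{\fai,\,S_P}(\rn)\subset L^2(\rn)\cap H_{\fai,\,L}(\rn)$ is obtained as in the proof of Proposition \ref{cp5.4}, by writing $f=\pi_{L,\,M}(t^{2m}L e^{-t^{2m}L}f)$, using that $t\sqrt L e^{-t\sqrt L}f\in T_\fai(\rn^{n+1}_+)\cap T_2^2(\rn^{n+1}_+)$ and Proposition \ref{p4.1}(ii); for the converse, $S_P$ is shown to be bounded from $H_{\fai,\,L}(\rn)$ to $L^\fai(\rn)$ by verifying the molecular estimate $\int_\rn\fai(x,S_P(\lz\az)(x))\,dx\ls\fai(B,|\lz|\|\chi_B\|_{L^\fai}^{-1})$, which one splits into a $(I-e^{-r_B^2L})^M$ part and a complementary part and estimates via Lemma \ref{off-diagonal for the case of Poisson semigroup} (whose proof uses only $L^q$ off-diagonal estimates and hence applies here with $q=2$, using that $\nabla e^{-tL}$ and the gradient-type bounds are replaced by $\sqrt L e^{-t\sqrt L}$), exactly paralleling the estimate of $\mathrm{I}$ in Proposition \ref{cp5.4}.

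Having proved the six inclusions among the $L^2$-truncated spaces, I would conclude by the standard density argument: each of $L^2(\rn)\cap H_{\fai,\,L}(\rn)$, $L^2(\rn)\cap H_{\fai,\,\cn_h}(\rn)$, etc., is dense in the corresponding Musielak--Orlicz--Hardy space (by construction these spaces are the completions of their $L^2$-intersections, cf. Definitions \ref{d6.1}, \ref{d6.2} and \ref{d4.1}), so the equivalence of quasi-norms on the dense subspaces extends to all six spaces, which therefore coincide with equivalent quasi-norms. The main obstacle I anticipate is the good-$\lz$ inequality step, i.e.\ showing that Lemma \ref{cl5.6} (stated for divergence-form $L$) remains valid for $L=-\bdz+V$: one must re-examine the Caccioppoli-type energy estimate and the cut-off/integration-by-parts argument to confirm that the extra term $\int V|u|^2$ arising from $Lu=-\bdz u+Vu$ has a favorable sign and does not disrupt the bound $\mathrm{I}_2+\mathrm{I}_3\ls(\gz\lz)^2|Q_j|$; since $V\ge0$ this term only helps, but it needs to be tracked carefully. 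A secondary technical point is confirming the $L^\fai$-comparability of the heat-type Lusin-area function $S_L$ of Definition \ref{d4.1} with the "gradient" area function $S_h$ in the present setting, which is where Theorem \ref{thm on S_L}, subordination, and the aperture-change lemma are combined; this is routine but requires care since $H_{\fai,\,L}(\rn)$ in Definition \ref{d4.1} is defined through $S_L$ and not directly through $S_h$.
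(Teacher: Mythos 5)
Your overall architecture (a cycle of inclusions among the dense subspaces $\wz H_{\fai,\,\cdot}(\rn)$, molecular/atomic estimates via Corollary \ref{sufficient condition for molecule} and Lemma \ref{cl4.5}, the operator $\pi_{L,\,M}$ to get from $S_P$ back into $H_{\fai,\,L}(\rn)$, and a final density argument) is the right one and matches the paper in several steps. However, there are two genuine gaps, both located at the radial--to--non-tangential passages, which is exactly where the difficulty of this theorem lies.

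First, your inclusion $H_{\fai,\,\ccr_h}(\rn)\subset H_{\fai,\,\cn_h}(\rn)$ is justified by importing Step 2 of the proof of Theorem \ref{ct5.5}, i.e.\ the pointwise bound $\cn^{1/2}_h(f)\le \ccr_h(f)$ plus an aperture-change lemma. That argument is specific to the $L^2$-\emph{averaged} maximal functions \eqref{c5.2}--\eqref{c5.5} of Section \ref{cs5}, where it follows from the inclusion of the averaging ball $B(y,t/2)$ in $B(x,t)$. The maximal functions of Definition \ref{d6.2} are pointwise suprema, $\cn_h(f)(x)=\sup_{y\in B(x,t),\,t>0}|e^{-t^2L}f(y)|$ and $\ccr_h(f)(x)=\sup_{t>0}|e^{-t^2L}f(x)|$; there is no averaged version, no aperture parameter, and no pointwise domination of $\cn_h$ by $\ccr_h$. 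Likewise, your mechanism for $H_{\fai,\,\ccr_P}(\rn)\subset H_{\fai,\,\cn_P}(\rn)$ is the subordination formula, but subordination only dominates Poisson objects by the corresponding heat objects (e.g.\ $\ccr_P(f)\ls\ccr_h(f)$ pointwise); it says nothing about controlling a non-tangential supremum by a radial one. The paper avoids both problems by arranging the cycle as $L\to\cn_h\to\ccr_h\to\ccr_P\to\cn_P\to S_P\to L$: the only trivial pointwise steps are $\cn_h\to\ccr_h$ and $\ccr_h\to\ccr_P$ (the latter by subordination), and the single hard radial-to-non-tangential step $\ccr_P\to\cn_P$ is handled by the harmonicity-type argument for the Poisson extension (Step 4 of \cite[Theorem 7.4]{yys4}). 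You need either that argument or an equivalent substitute; as written, both of your radial-to-non-tangential steps fail.

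Second, your route into $H_{\fai,\,L}(\rn)$ from $\cn_h$ via the good-$\lz$ inequality of Lemma \ref{cl5.6} is not needed in the paper's proof and is more delicate than you acknowledge: Lemma \ref{cl5.6} concerns the gradient square function $S_h(f)=\{\int_{\Gamma(x)}|t\nabla e^{-t^2L}f|^2\,dy\,dt/t^{n+1}\}^{1/2}$, whereas $H_{\fai,\,L}(\rn)$ is defined through $S_L$ built from $t^2Le^{-t^2L}$, and the $L^\fai$-comparability of $S_L$ and $S_h$ for $L=-\bdz+V$ with merely $0\le V\in L^1_{\loc}(\rn)$ is not established anywhere in the paper (the relevant gradient bounds hinge on the Riesz transform, which is only bounded on $L^p(\rn)$ for a restricted range of $p$). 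The paper sidesteps this entirely: the return to $H_{\fai,\,L}(\rn)$ happens only from $S_P$ via the Calder\'on reproducing formula and Proposition \ref{p4.1}(ii), and the exit from $H_{\fai,\,L}(\rn)$ to $\cn_h$ is done by the atomic characterization (Theorem \ref{ct4.3}) and the $L^q(\rn)$-boundedness of $\cn_h$, with no good-$\lz$ inequality at all.
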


\begin{remark}\label{r6.1}
Theorem \ref{t6.1} generalizes \cite[Theorem 7.4]{yys4} by extending the range of
the considered weights. More precisely, The radial and non-tangential maximal
function characterizations of $H_{\fai,\,L}(\rn)$ were obtained in \cite[Theorem 7.4]{yys4}
under the assumption $\fai\in\rh_{2/[2-I(\fai)]}(\rn)$. However, in the above
Theorem \ref{t6.1}, the assumption $\fai\in\rh_{2/[2-I(\fai)]}(\rn)$ is not needed.
\end{remark}

\begin{proof}[Proof of Theorem \ref{t6.1}]
The proof of Theorem \ref{t6.1} is divided into the following six
steps.

\textbf{Step 1.} $\wz{H}_{\fai,\,L}(\rn)\subset
\wz{H}_{\fai,\,\cn_h}(\rn)$. Let $M$ and $q$ be as in Theorem \ref{ct4.3}.
By the proof of Theorem \ref{ct4.3}, we know that
$\wz{H}_{\fai,\,L}(\rn)$ and $\wz{H}^{M,\,q}_{\fai,\,L,\,\at}(\rn)$ coincide  with
equivalent quasi-norms. Thus, we only need to prove
$\wz{H}^{M,\,q}_{\fai,\,L,\,\at}(\rn)\subset\wz{H}_{\fai,\,\cn_h}(\rn)$.
To this end, similar to the
proof of \eqref{c4.10}, it suffices to show that, for any $\lz\in\cc$ and
$(\fai,\,q,\,M)_L$-atom $\az$ associated to the ball $B$,
\begin{equation*}
\int_{\rn}\fai\lf(x,\cn_h(\lz \az)(x)\r)\,dx\ls
\fai\lf(B,|\lz|\|\chi_B\|^{-1}_{L^\fai(\rn)}\r).
\end{equation*}
From the $L^q(\rn)$-boundedness of $\cn_h$, similar to the proof of
\eqref{c4.10}, it follows that the above estimate holds true. We omit the details here.

\textbf{Step 2.} $\wz{H}_{\fai,\,\cn_h}(\rn)\subset
\wz{H}_{\fai,\,\ccr_h}(\rn)$, which is deduced from the fact
that, for all $f\in L^2(\rn)$ and $x\in\rn$,
$\ccr_h(f)(x)\le\cn_h(f)(x)$.

\textbf{Step 3.} $\wz{H}_{\fai,\,\ccr_h}(\rn)\subset
\wz{H}_{\fai,\,\ccr_P}(\rn)$, whose proof is similar to that of Step 3 in the proof of
\cite[Theorem 7.4]{yys4}. We omit the details here.

\textbf{Step 4.} $\wz{H}_{\fai,\,\ccr_P}(\rn)\subset
\wz{H}_{\fai,\,\cn_P}(\rn)$, whose proof is similar to that of Step 4 in the proof of
\cite[Theorem 7.4]{yys4}, and hence we omit the details here.

\textbf{Step 5.} $\wz{H}_{\fai,\,\cn_P}(\rn)\subset
\wz{H}_{\fai,\,S_P}(\rn)$, whose proof is similar to that of
\cite[Proposition 7.6]{yys4}. We omit the details here.

\textbf{Step 6.} $\wz{H}_{\fai,\,S_P}(\rn)\subset
\wz{H}_{\fai,\,L}(\rn)$.

Let $f\in \wz{H}_{\fai,\,S_P}(\rn)$.
Then $t\sqrt{L}e^{-t\sqrt{L}}f\in T_\fai(\rr^{n+1}_+)$, which,
together with Proposition \ref{p4.1}(ii), implies that
$\pi_{L,\,M}(t\sqrt{L}e^{-t\sqrt{L}}f)\in H_{\fai,\,L}(\rn)\cap L^2(\rn)$.
Furthermore, from the $H_{\fz}$ functional calculus, we infer that
$$f=\frac{\wz{C}_{(M)}}{C_{(M)}}\pi_{L,\,M}
(t\sqrt{L}e^{-t\sqrt{L}}f)
$$
in $L^2(\rn)$, where $\wz{C}_{(M)}$ is a positive constant such
that $\wz{C}_{(M)}\int_0^\fz t^{2(M+1)}e^{-t^2}te^{-t}\,\frac{dt}{t}=1$ and
$C_{(M)}$ is as in \eqref{4.3}. This, combined with
$\pi_{L,\,M}(t\sqrt{L}e^{-t\sqrt{L}}f)\in H_{\fai,\,L}(\rn)$,
implies that $f\in H_{\fai,\,L}(\rn)$. Therefore, we know that
$\wz{H}_{\fai,\,S_P}(\rn)\subset \wz{H}_{\fai,\,L}(\rn)$.

From Steps 1 though 6, we deduce that
\begin{eqnarray*}
\wz{H}_{\fai,\,L}(\rn),\ \wz{H}_{\fai,\,\cn_h}(\rn),\ \wz{H}_{\fai,\,\ccr_h}(\rn),\
\wz{H}_{\fai,\,\ccr_P}(\rn),\ \wz{H}_{\fai,\,\cn_P}(\rn)\ \text{and}\
\wz{H}_{\fai,\,S_P}(\rn)
\end{eqnarray*}
coincide with equivalent quasi-norms, which, together with the fact that
$$\wz{H}_{\fai,\,L}(\rn),\ \wz{H}_{\fai,\,\cn_h}(\rn),\
\wz{H}_{\fai,\,\ccr_h}(\rn),\  \wz{H}_{\fai,\,\ccr_P}(\rn),\
\wz{H}_{\fai,\,\cn_P}(\rn)\ \text{and}\ \wz{H}_{\fai,\,S_P}(\rn)$$
are, respectively, dense in $H_{\fai,\,L}(\rn)$,
$H_{\fai,\,\cn_h}(\rn)$,
$H_{\fai,\,\ccr_h}(\rn)$, $H_{\fai,\,\ccr_P}(\rn)$,
$H_{\fai,\,\cn_P}(\rn)$ and $H_{\fai,\,S_P}(\rn)$, and a density
argument, then implies that the spaces $H_{\fai,\,L}(\rn)$,
$H_{\fai,\,\cn_h}(\rn)$, $H_{\fai,\,\ccr_h}(\rn)$,
$H_{\fai,\,\ccr_P}(\rn)$, $H_{\fai,\,\cn_P}(\rn)$
and $H_{\fai,\,S_P}(\rn)$ coincide with equivalent quasi-norms, which
completes the proof of Theorem \ref{t6.1}.
\end{proof}

From now on, we study the boundedness of $\nabla L^{-1/2}$ on $H_{\fai,\,L}(\rn)$.
Similar to Theorem \ref{ct5.7}, we have the following conclusions.

\begin{theorem}\label{t6.2}
Let $\fai$ and $L$ be as in Definition \ref{d2.2} and
\eqref{6.1}, respectively. Assume
that $\nabla L^{-1/2}$ is bounded on $L^r(\rn)$ for all $r\in(1,p_0)$ with
some $p_0\in(2,\fz)$. Let $i(\fai)$, $q(\fai)$ and
$r(\fai)$ be, respectively, as in \eqref{2.5}, \eqref{2.7} and \eqref{2.8}.

{\rm(i)} If $r(\fai)>(p_0/I(\fai))'$,
then $\nabla L^{-1/2}$ is bounded from $H_{\fai,\,L}(\rn)$ into $L^\fai(\rn)$.

{\rm(ii)} If $i(\fai)\in(\frac{n}{n+1},1]$,
$\frac{q(\fai)}{i(\fai)}\in(1,\frac{n+1}{n})$ and
$r(\fai)>(p_0/q(\fai))'$, then $\nabla L^{-1/2}$ is bounded from
$H_{\fai,\,L}(\rn)$ into $H_\fai(\rn)$.
\end{theorem}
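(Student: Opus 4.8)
The plan is to mirror the proof of Theorem \ref{ct5.7}, exploiting that the Schr\"odinger operator $L$ in \eqref{6.1} has a heat kernel with Gaussian upper bounds, so that $L$ satisfies Assumptions ${\rm (A)}$ and ${\rm (B)}$ with $p_L=1$ and $q_L=\fz$ and the molecular theory of Sections \ref{ss4}--\ref{s5} is available with the full range of exponents. For part (i), first fix $q\in(1,\,p_0)$ with $q>I(\fai)[r(\fai)]'$, which is possible exactly because $r(\fai)>(p_0/I(\fai))'$. By Theorem \ref{t4.1} together with Corollary \ref{sufficient condition for molecule}, it then suffices to show that, for all $\lz\in\cc$ and every $(\fai,\,q,\,M,\,\epz)_L$-molecule $\az$ associated with a ball $B$, with $M$ and $\epz$ chosen large enough,
\begin{equation*}
\int_\rn\fai\lf(x,\,\nabla L^{-1/2}(\lz\az)(x)\r)\,dx\ls\fai\lf(B,\,\frac{|\lz|}{\|\chi_B\|_{L^\fai(\rn)}}\r).
\end{equation*}

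To establish this inequality I would run the argument of \eqref{c4.10} step by step, splitting $\nabla L^{-1/2}\az=\nabla L^{-1/2}(I-e^{-r_B^2L})^M\az+\nabla L^{-1/2}[I-(I-e^{-r_B^2L})^M]\az$ and estimating the contribution of each annulus $S_j(B)$ by means of the uniformly upper and lower type of $\fai$, H\"older's inequality, and the reverse H\"older and doubling properties of $\fai$ collected in Lemma \ref{l2.4}. The two analytic inputs needed are: the $L^q(\rn)$-boundedness of $\nabla L^{-1/2}$ (the standing hypothesis for $q\in(1,\,p_0)$), and the off-diagonal estimates $\|\nabla L^{-1/2}(I-e^{-tL})^Mf\|_{L^q(F)}\ls[t/d(E,\,F)^2]^M\|f\|_{L^q(E)}$, together with the companion bound with $(tLe^{-tL})^M$ in place of $(I-e^{-tL})^M$, valid for closed sets $E,\,F\subset\rn$ with $d(E,\,F)>0$ and $\supp f\subset E$. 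These last estimates follow by composing the $L^q$-boundedness of $\nabla L^{-1/2}$ with the $L^q$--$L^q$ off-diagonal decay of order $M$ of $(I-e^{-tL})^M$ (resp. $(tLe^{-tL})^M$), a consequence of the Gaussian bound on $e^{-tL}$, after a routine localization with respect to $d(E,\,F)$. This proves part (i) on $\wz H_{\fai,\,L}(\rn)\cap L^2(\rn)$, and a density argument completes it.

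For part (ii) I would additionally prove that each such molecule $\az$ satisfies the vanishing moment $\int_\rn\nabla L^{-1/2}(\az)(x)\,dx=0$, arguing as in the proof of \cite[Theorem 7.4]{jy10}: writing $\az=Lb$ with $b\in L^q(\rn)$ coming from the molecular structure, one uses the cancellation of $\nabla L^{-1/2}$ on the range of $L$, organized through the semigroup representation of $L^{-1/2}$ to ensure convergence of the relevant integrals. Combined with the sharpened annular decay $\|\nabla L^{-1/2}(\az)\|_{L^q(S_j(B))}\ls2^{-j\epz_0}|B|^{1/q}\|\chi_B\|_{L^\fai(\rn)}^{-1}$ for all $j\in\zz_+$ and some large $\epz_0$ (cf. \eqref{c5.x29}, proved from the off-diagonal estimates above as in \cite[(7.34)]{yys4}), and using that $i(\fai)\in(\frac{n}{n+1},\,1]$ and $\frac{q(\fai)}{i(\fai)}<\frac{n+1}{n}$ force $s:=\lfz n[\frac{q(\fai)}{i(\fai)}-1]\rfz=0$, these two facts say precisely that, up to a harmless constant, $\nabla L^{-1/2}(\az)$ is a $(\fai,\,q,\,0,\,\epz_0)$-molecule in the sense of Definition \ref{d5.2}. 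Choosing $q<p_0$ close enough to $q(\fai)$, the hypothesis $r(\fai)>(p_0/q(\fai))'$ yields $r(\fai)>q/(q-q(\fai))$, so the exponent conditions of Proposition \ref{cp5.10} are met; that proposition identifies $H^{q,\,0,\,\epz_0}_{\fai,\,\mathrm{mol}}(\rn)$ with $H_\fai(\rn)$, and feeding in the molecular decomposition $f=\sum_j\lz_j\az_j$ of Theorem \ref{t4.1} with $\blz(\{\lz_j\az_j\}_j)\sim\|f\|_{H_{\fai,\,L}(\rn)}$ shows that $\nabla L^{-1/2}$ maps $H_{\fai,\,L}(\rn)$ boundedly into $H_\fai(\rn)$, after passing to the closure.

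The main obstacle will be the two composed off-diagonal estimates: since $\nabla L^{-1/2}$ is genuinely non-local and only $L^r$-bounded for $r\in(1,\,p_0)$, one must carefully balance this $L^q$-bound against the $t/d(E,\,F)^2$ decay carried by the heat-semigroup factor, inserting suitable cut-offs and summing geometric series. In part (ii) the companion difficulty is the rigorous verification of the vanishing-moment condition --- the defining integral of $\nabla L^{-1/2}$ applied to a molecule is only conditionally convergent, so the cancellation must be extracted through the resolvent/semigroup calculus rather than by a naive interchange of integrals --- together with the check that the exponents produced above genuinely lie in the admissible range required by Proposition \ref{cp5.10}.
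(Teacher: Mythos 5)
Your part (i) coincides with the paper's argument: the paper simply refers back to the proof of Theorem \ref{ct5.7}(i), which is exactly the reduction via Theorem \ref{t4.1} and Corollary \ref{sufficient condition for molecule} to the single-molecule estimate, proved from the $L^q(\rn)$-boundedness of $\nabla L^{-1/2}$ together with the two composed off-diagonal estimates you list.

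For part (ii) you take a genuinely different route. You transplant the molecular argument of Theorem \ref{ct5.7}(ii): show that $\nabla L^{-1/2}(\az)$ is, up to a constant, a classical $(\fai,\,q,\,0,\,\epz_0)$-molecule (annular decay plus vanishing integral) and then invoke Proposition \ref{cp5.10}. The paper instead starts from the \emph{atomic} decomposition of $H_{\fai,\,L}(\rn)$ (Theorem \ref{ct4.3}, available here because the Schr\"odinger operator is nonnegative self-adjoint with Gaussian bounds) and, for each $(\fai,\,q,\,M)_L$-atom $a$, performs a Coifman--Weiss-type annular decomposition $\nabla L^{-1/2}(a)=\sum_i M_i+\sum_i N_{i+1}(\wz\chi_{i+1}-\wz\chi_i)$, checking that every piece is a constant multiple of a compactly supported $(\fai,\,\wz q,\,0)$-atom in the sense of Definition \ref{d5.4}; this bypasses Proposition \ref{cp5.10} entirely and keeps all objects compactly supported. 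Both routes are legitimate: yours is shorter but leans on the classical molecular theory and requires the annular decay for molecules (hence an extra summation over the tails of $\az$), while the paper's is self-contained at the atomic level.

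One concrete slip in your part (ii): to deduce $r(\fai)>q/(q-q(\fai))$ from $r(\fai)>(p_0/q(\fai))'=p_0/(p_0-q(\fai))$ you must take $q<p_0$ close to $p_0$, not close to $q(\fai)$. The function $q\mapsto q/(q-q(\fai))$ is decreasing on $(q(\fai),\fz)$ and blows up as $q\to q(\fai)^+$, so choosing $q$ near $q(\fai)$ makes the reverse H\"older requirement of Proposition \ref{cp5.10} strictly harder, not easier; only as $q\uparrow p_0$ does $q/(q-q(\fai))$ decrease to $(p_0/q(\fai))'$ and fall below $r(\fai)$. With $q$ taken near $p_0$ the exponent bookkeeping goes through and the rest of your argument is sound.
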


\begin{proof}
The proof of Theorem \ref{t6.2}(i) is similar to that of Theorem \ref{ct5.7}(i).
We omit the details here. Now we prove (ii). Let $f\in H_{\fai,\,L}(\rn)\cap L^2(\rn)$
and $M\in\nn$ with $M>\frac{n}{2}(\frac{q(\fai)}{i(\fai)})$. Then there
exist $p_2\in(0,i(\fai))$ and $q_0\in(q(\fai),\fz)$ such that
$M>\frac{n}{2}(\frac{q_0}{p_2})$, $\fai$ is of uniformly
lower type $p_2$ and $\fai\in\aa_{q_0}(\rn)$. Moreover, by
Proposition \ref{ct4.3}, we know that there exist
$\{\lz_j\}_j\subset\cc$ and a sequence $\{a_j\}_j$ of
$(\fai,\,q,\,M)_L$-atoms such that $f=\sum_j\lz_ja_j$ in $L^2(\rn)$
and $\|f\|_{H_{\fai,\,L}(\rn)}\sim\|f\|_{H^{M,\,q}_{\fai,\,\at}(\rn)}$.
Moreover, we know that $\nabla L^{-1/2}(f)=\sum_j\lz_j\nabla L^{-1/2}(a_j)$ in
 $L^2(\rn)$.

Let $a$ be a $(\fai,\,q,\,M)_L$-atom associated with the ball $B$.
For $i\in\zz_+$, let $\chi_i:=\chi_{S_i(B)}$,
$\wz{\chi}_i := |S_i(B)|^{-1}\chi_i$, $m_i:=\int_{S_i (B)}\nabla
L^{-1/2}(a)(x)\,dx$ and $M_i:=\nabla L^{-1/2}(a)\chi_i-m_i
\wz{\chi}_i$. Then we have
\begin{equation}\label{6.10}
\nabla L^{-1/2}(a)=\sum_{i=0}^{\fz}M_i +\sum_{i=0}^{\fz}m_i
\wz{\chi}_i.
\end{equation}
For $j\in\zz_+$, let $N_j :=\sum_{i=j}^{\fz}m_i$. By an
argument similar to that used in the proof of \cite[Theorem 6.3]{jy11},
we know that $\int_{\rn}\nabla L^{-1/2}(a)(x)\,dx=0$, which, together with \eqref{6.10},
yields
\begin{equation*}
\nabla L^{-1/2}(a)=\sum_{i=0}^{\fz}M_i
+\sum_{i=0}^{\fz}N_{i+1}\lf(\wz{\chi}_{i+1} -\wz{\chi}_i\r).
\end{equation*}
Obviously, for all $i\in\zz_+$,
\begin{equation}\label{6.12}
\supp M_i\subset 2^{i+1}B\ \ \text{and} \ \ \int_{\rn}M_i
(x)\,dx=0.
\end{equation}

When $i\in\{0,\,\ldots,\,4\}$, by H\"older's inequality and the
$L^q(\rn)$-boundedness of $\nabla L^{-1/2}$, we conclude that
\begin{eqnarray}\label{6.13}
\hs\|M_i\|_{L^q(\rn)}&&\le\lf\{\int_{S_i(B)}|\nabla
L^{-1/2}a(x)|^q\,dx\r\}^{1/q}+\lf\{\int_{S_i(B)}|m_i
\wz{\chi}_i(x)|^q\,dx\r\}^{1/q}\\ \nonumber
&&\ls\|a\|_{L^q(\rn)}\ls|B|^{1/q}\|\chi_B\|^{-1}_{L^\fai(\rn)}.
\end{eqnarray}

Moreover, similar to \eqref{c5.x29}, we know that,
for all $i\in\nn$ with $i\ge5$,
\begin{eqnarray}\label{6.14}
\hs\|M_i\|_{L^q(\rn)}\ls\lf\|\nabla
L^{-1/2}a\r\|_{L^q(S_i(B))}\ls2^{-2Mi}
|B|^{1/q}\|\chi_B\|^{-1}_{L^\fai(\rn)}.
\end{eqnarray}

Furthermore, by $\fai\in\rh_{(p_0/q(\fai))'}(\rn)$, we
see that there exist $q\in(2,p_0)$ and $\widetilde{q}\in(q(\fai),\fz)$ such that
$\fai\in\aa_{\widetilde{q}}(\rn)\cap\rh_{(q/\widetilde{q})'}(\rn)$. From this,
H\"older's inequality, \eqref{6.13} and \eqref{6.14}, it follows
that, for all $i\in\zz_+$ and  $t\in(0,\fz)$,
\begin{eqnarray*}
&&\lf[\fai(2^{i+1}B,t)\r]^{-1}\int_{2^{i+1}B}|M_i(x)|^
{\widetilde{q}}\fai(x,t)\,dx\\ \nonumber
&&\hs\le\lf[\fai(2^{i+1}B,t)\r]^{-1}
\lf\{\int_{2^{i+1}B}|M_i(x)|^q\,dx\r\}^{\frac{\widetilde{q}}q}
\lf\{\int_{2^{i+1}B}[\fai(x,t)]^{(\frac{q}{\widetilde{q}})'}\,dx\r\}^{\frac{1}
{(q/\widetilde{q})'}}\\ \nonumber
&&\hs\ls2^{-2\widetilde{q}iM}|B|^{\frac{\widetilde{q}}{q}}
\|\chi_B\|^{-\widetilde{q}}_{L^\fai(\rn)}|2^{k+1}B|^{-\frac{\widetilde{q}}{q}},
\end{eqnarray*}
which implies that
\begin{eqnarray}\label{6.16}
\|M_i\|_{L^{\widetilde{q}}_\fai(2^{i+1}B)}\ls2^{-(2M+\frac nq)i}
\|\chi_B\|^{-1}_{L^\fai(B)}.
\end{eqnarray}
Then by \eqref{6.12} and \eqref{6.16}, we conclude that, for each
$i\in\zz_+$, $M_i$ is a constant multiple of a $(\fai,\,\widetilde{q},\,\,0)$-atom.
Moreover, from \eqref{6.14}, it follows that $\sum_{i=0}^{\fz}M_i$ converges in $L^q(\rn)$.

Now we estimate
$\|N_{i+1}(\wz{\chi}_{i+1}-\wz{\chi}_i)\|_{L^q(\rn)}$ with
$i\in\zz_+$. By H\"older's inequality and \eqref{6.13}, we see that
\begin{eqnarray}\label{6.17}
\lf\|N_{i+1}(\wz{\chi}_{i+1}-\wz{\chi}_i)\r\|_{L^q(\rn)}&&\ls|N_{i+1}||2^i
B|^{-\frac1{q'}}\ls\sum_{j=i+1}^\fz|m_{j+1}||2^i B|^{-\frac1{q'}}\\ \nonumber
&&\ls\sum_{j=i+1}^\fz|2^i B|^{-\frac1{q'}}|2^j B|^{\frac1{q'}}\|\nabla
L^{-1/2}a\|_{L^q(S_j(B))}\\ \nonumber
&&\ls2^{-2kM}|B|^{\frac1q}\|\chi_B\|^{-1}_{L^\fai(\rn)}.
\end{eqnarray}
From this and H\"older's inequality, similar to the proof of \eqref{6.16},
we deduce that, for all $i\in\zz_+$,
\begin{eqnarray}\label{6.18}
\lf\|N_{i+1}(\wz{\chi}_{i+1}-\wz{\chi}_i)\r\|_{L^{\widetilde{q}}_\fai(2^{i+1}B)}
\ls2^{-(2M+\frac nq)i} \|\chi_B\|^{-1}_{L^\fai(B)},
\end{eqnarray}
which, together with $\int_\rn
(\wz{\chi}_{i+1}(x)-\wz{\chi}_i(x))\,dx=0$ and
$\supp(\wz{\chi}_{i+1}-\wz{\chi}_i)\subset 2^{i+1}B$, implies that,
for each $i\in\zz_+$, $N_{i+1}(\wz{\chi}_{i+1}-\wz{\chi}_i)$ is a
constant multiple of a $(\fai,\,\widetilde{q},\,\,0)$-atom. Moreover,
by \eqref{6.17}, we see that
$\sum_{i=0}^\fz N_{i+1}(\wz{\chi}_{i+1}-\wz{\chi}_i)$ converges in $L^q(\rn)$.

Thus, \eqref{6.10} is an atomic decomposition of $\nabla
L^{-1/2}a$ and, furthermore, by \eqref{6.16}, \eqref{6.18}, the
uniformly lower type $p_2$ property of $\fai$ and
$M>\frac{n}{2}(\frac{q_0}{p_2}-\frac{1}{2})$, we know that
\begin{eqnarray}\label{6.19}
\qquad&&\sum_{i\in\zz_+}\fai\lf(2^{i+1}B,\|M_i\|_{L^{\widetilde{q}}_\fai(2^{i+1}B)}\r)
+\sum_{i\in\zz_+}\fai\lf(2^{i+1}B,
\|N_{i+1}(\wz{\chi}_{i+1}-\wz{\chi}_i)\|_{L^{\widetilde{q}}_\fai(2^{i+1}B)}\r)\\
\nonumber &&\hs\ls
\sum_{i\in\zz_+}\fai\lf(2^{i+1}B,2^{-(2M+\frac nq)i}
\|\chi_B\|^{-1}_{L^\fai(\rn)}\r)\ls\sum_{i\in\zz_+}
2^{-(2M+\frac nq)p_2}2^{knq_0}\ls1.
\end{eqnarray}

Replace $a$ by $a_j$ and, consequently, we then denote $M_i$,
$N_i$ and $\wz{\chi}_i$ in \eqref{6.10}, respectively, by
$M_{j,\,i}$, $N_{j,\,i}$ and $\wz{\chi}_{j,\,i}$. Similar to \eqref{6.10}, we know that
\begin{eqnarray*}
\nabla L^{-1/2}(f)&=&\sum_j\sum_{i=0}^{\fz}\lz_j
M_{j,\,i}+\sum_j\sum_{i=0}^{\fz} \lz_j N_{j,\,i+1}
(\wz{\chi}_{j,\,i+1}- \wz{\chi}_{j,\,i}),
\end{eqnarray*}
where, for each $j$ and $i$, $M_{j,\,i}$ and $N_{j,\,i+1}
(\wz{\chi}_{j,\,i+1}- \wz{\chi}_{j,\,i})$ are constant multiples of
$(\fai,\,\widetilde{q},\,\,0)$-atoms and the both summations hold
true in $L^q (\rn)$ and hence in $\cs'(\rn)$. Moreover, from \eqref{6.19} with $B$,
$M_{i}$, $N_{i+1} (\wz{\chi}_{i+1}- \wz{\chi}_{i})$ replaced, respectively, by
$B_j$, $M_{j,\,i}$, $N_{j,\,i+1} (\wz{\chi}_{j,\,i+1}-
\wz{\chi}_{j,\,i})$, we deduce that
$$\blz\lf(\{M_{j,\,i}\}_{j,\,i}\r)+
\blz\lf(\{N_{j,\,i+1} (\wz{\chi}_{j,\,i+1}-
\wz{\chi}_{j,\,i})\}_{j,\,i}\r) \ls\blz\lf(\{\lz_j a_j\}_j\r)\ls
\lf\|f\r\|_{H_{\fai,\,L}(\rn)}.
$$

By this, we conclude that $\|\nabla
L^{-1/2}(f)\|_{H_{\fai}(\rn)}\ls\|f\|_{H_{\fai,\,L}(\rn)}$,
which, together with the fact that $H_{\fai,\,L}(\rn)\cap
L^2(\rn)$ is dense in $H_{\fai,\,L}(\rn)$ and a density argument,
implies that $\nabla L^{-1/2}$ is bounded from $H_{\fai,\,L}(\rn)$
to $H_\fai(\rn)$. This finishes the proof of Theorem \ref{t6.3}.
\end{proof}

Moreover, similar to Theorem \ref{ct5.12}, for the Riesz transform $\nabla L^{-1/2}$ associated
with the Schr\"odinger operator $L$, we also have the following endpoint boundedness.

\begin{theorem}\label{t6.3}
Let $\fai$ and $L$ be respectively as in Definition \ref{d2.2} and \eqref{6.1}, and
$i(\fai)$, $I(\fai)$, $q(\fai)$ and $r(\fai)$ be respectively as in \eqref{2.5},
\eqref{2.4}, \eqref{2.7} and \eqref{2.8}. Assume
that $\nabla L^{-1/2}$ is bounded on $L^r(\rn)$ for all $r\in(1,p_0)$ with
some $p_0\in(2,\fz)$, and $\fai\in\aa_1(\rn)\cap\rh_{(p_0/q(\fai))'}(\rn)$.
If $i(\fai)=\frac{n}{n+1}$ is attainable and
$I(\fai)\in(0,\,1)$, then $\nabla L^{-1/2}$ is bounded from $H_{\fai,\,L}(\rn)$
to $WH_\fai(\rn)$.
\end{theorem}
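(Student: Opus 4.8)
The plan is to follow the proof of Theorem \ref{ct5.12} almost verbatim, replacing the divergence form elliptic operator by the Schr\"odinger operator $L=-\bdz+V$. Since $0\le V\in L^1_\loc(\rn)$, the Feynman--Kac formula yields the Gaussian upper bound for the heat kernel of $e^{-tL}$, so that $L$ satisfies Assumptions $\mathrm{(A)}$, $\mathrm{(B)}$ (with $p_L=1$, $q_L=\fz$, $m=1$) and also $\mathrm{(H_1)}$, $\mathrm{(H_2)}$. Hence by Theorem \ref{ct4.3} (equivalently Theorem \ref{t4.1}), for any $f\in H_{\fai,\,L}(\rn)\cap L^2(\rn)$ there exist $\{\lz_j\}_j\subset\cc$ and $(\fai,\,q,\,M)_L$-atoms $\{a_j\}_j$ associated with balls $\{B_j\}_j$, with $M$ as large as we wish, such that $f=\sum_j\lz_j a_j$ in $L^2(\rn)$ and $\blz(\{\lz_j a_j\}_j)\sim\|f\|_{H_{\fai,\,L}(\rn)}$. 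Since $\nabla L^{-1/2}$ is bounded on $L^q(\rn)$ for $q\in(1,\,p_0)$, we have $\nabla L^{-1/2}(f)=\sum_j\lz_j\nabla L^{-1/2}(a_j)$ in $L^2(\rn)$. Using $I(\fai)\in(0,1)$, Lemma \ref{cl5.13} (the superposition principle for weak type estimates), and a density argument, the theorem is reduced to the following single-atom estimate: for a fixed $q\in(2,\,p_0)$ adapted to $\fai$ (so that $\fai\in\rh_{(q/p_1)'}(\rn)\cap\aa_{\wz q}(\rn)$ for suitable $p_1\in[I(\fai),1)$ and $\wz q\in(q(\fai),\fz)$, which is possible because $\fai\in\rh_{(p_0/q(\fai))'}(\rn)$), any $\lz\in\cc$, any $(\fai,\,q,\,M)_L$-atom $a$ associated with a ball $B$, and all $\eta\in(0,\fz)$,
\begin{equation*}
\fai\lf(\lf\{x\in\rn:\ \sup_{t>0}\lf|\lf(\psi_t\ast\lf(\nabla L^{-1/2}(\lz a)\r)\r)(x)\r|>\eta\r\},\,\eta\r)\ls\fai\lf(B,\,\frac{|\lz|}{\|\chi_B\|_{L^\fai(\rn)}}\r).
\end{equation*}

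To prove this weak estimate I would split $\rn$ into $16B$ and $(16B)^\complement$. On $16B$, using the uniformly upper type $p_1$ of $\fai$, Chebyshev's inequality, H\"older's inequality, the pointwise domination of $\sup_{t>0}|\psi_t\ast g|$ by the Hardy--Littlewood maximal function $\cm(g)$, the $L^q(\rn)$-boundedness of $\cm$ and of $\nabla L^{-1/2}$, together with $\fai\in\rh_{(q/p_1)'}(\rn)$ and the size condition of $a$, one obtains the bound $\fai(B,|\lz|\|\chi_B\|^{-1}_{L^\fai(\rn)})$ exactly as in the estimate of the term $\mathcal{B}$ in the proof of Theorem \ref{ct5.12}. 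On $(16B)^\complement$ I would further split according to whether the aperture scale $t$ is smaller or larger than $r_B$. For $t\in(0,r_B)$, the contribution over the annulus $S_i(B)$ with $i\ge5$ is controlled, via the off-diagonal decay estimate
\begin{equation*}
\lf\|\nabla L^{-1/2}(a)\r\|_{L^q(S_i(B))}\ls 2^{-i\epz_0}|B|^{1/q}\|\chi_B\|^{-1}_{L^\fai(\rn)},
\end{equation*}
which follows, as in \eqref{c5.x29}, from the $L^q$ off-diagonal estimates for $\nabla L^{-1/2}(I-e^{-tL})^M$ and $\nabla L^{-1/2}(tLe^{-tL})^M$ (valid for this $L$ once $M$ is chosen large), by the same Hardy--Littlewood-maximal-function argument as on $16B$, summing the geometric series in $i$ after applying Lemma \ref{l2.4}(vii).

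For $t\in[r_B,\fz)$ I would use the cancellation $\int_\rn\nabla L^{-1/2}(a)(x)\,dx=0$ --- valid for the Schr\"odinger operator by the argument recalled in the proof of Theorem \ref{t6.2}, cf. \cite[Theorem 6.3]{jy11} --- to replace $\psi((x-y)/t)$ by $\psi((x-y)/t)-\psi((x-x_B)/t)$ and apply the mean value theorem to $\psi$. This yields, for each annulus $S_i(B)$ and $t\sim2^jr_B$, a pointwise bound of the form $F_i(x)\ls|\lz|\,2^{-j(n+1)}2^{-i\wz{\epz}_0}\|\chi_B\|^{-1}_{L^\fai(\rn)}\chi_{(2^i+2^j)B}(x)$; choosing $j_0$ to be the largest $j$ for which this bound exceeds $\eta/2$ localizes $\{F_i>\eta/2\}\cap(16B)^\complement$ inside $(2^i+2^{j_0})B$. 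Then, using that $i(\fai)=\frac{n}{n+1}$ is attainable (so $\fai$ is of uniformly lower type $\frac{n}{n+1}$), that $\fai\in\aa_1(\rn)$ (to pass from $(2^i+2^{j_0})B$ back to $B$ via Lemma \ref{l2.4}(vii)), and the identity $(n+1)i(\fai)-n=0$, the $j_0$-dependent factor cancels and what remains is summable in $i\ge5$; combining with the local estimate, the $t<r_B$ estimate, and Proposition \ref{cp5.10} finishes the proof. The main obstacle is precisely the bookkeeping in this last far-field, large-$t$ piece: the exact balance $(n+1)i(\fai)=n$ together with the $\aa_1$ behavior of the weight is what makes the sum converge at the endpoint, so there is no slack, and one must make sure the \emph{attainability} of $i(\fai)$ (not merely $i(\fai)\ge\frac{n}{n+1}$) is genuinely exploited. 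Apart from this, the proof is a routine transcription of the proof of Theorem \ref{ct5.12} once the off-diagonal estimate and the vanishing moment of $\nabla L^{-1/2}$ associated with $L=-\bdz+V$ are in hand.
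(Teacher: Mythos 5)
Your proposal is correct and is essentially the proof the paper intends: the paper itself only states that Theorem \ref{t6.3} ``is similar to that of Theorem \ref{ct5.12}'', and your transcription — atomic (rather than molecular) decomposition via Theorem \ref{ct4.3}, reduction through Lemma \ref{cl5.13} to a single-atom weak estimate, the local/far-field split at $16B$ with the off-diagonal decay of $\|\nabla L^{-1/2}(a)\|_{L^q(S_i(B))}$, the cancellation $\int_{\rn}\nabla L^{-1/2}(a)=0$ with the mean value theorem for $\psi$, and the endpoint balance $(n+1)i(\fai)=n$ exploited together with $\fai\in\aa_1(\rn)$ — is exactly the argument of Theorem \ref{ct5.12} adapted to $L=-\Delta+V$, which is legitimate since this $L$ satisfies Assumptions $(\mathrm{H_1})$ and $(\mathrm{H_2})$ with $p_L=1$. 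Your identification of where the attainability of $i(\fai)=\frac{n}{n+1}$ is genuinely used (to cancel the $j_0$-dependent factor) is the right critical point, and the rest goes through as you describe.
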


\begin{remark}\label{r6.2}
(i) Theorem \ref{t6.2} improves \cite[Theorems 7.11 and 7.15]{yys4} by widening the range
of weights. More precisely, it was proved in \cite[Theorems 7.11 and 7.15]{yys4},
respectively, that $\nabla L^{-1/2}$ is bounded from $H_{\fai,\,L}(\rn)$ to $L^\fai(\rn)$
when $\fai\in\rh_{2/[2-I(\fai)]}(\rn)$, and from $H_{\fai,\,L}(\rn)$ to $H_\fai(\rn)$
when $\fai\in\rh_{2/[2-q(\fai)]}(\rn)$. From the assumption
$p_0\in(2,\fz)$, it follows that $(p_0/I(\fai))'<(2/I(\fai))'=2/[2-I(\fai)]$ and
$(p_0/q(\fai))'<(2/q(\fai))'=2/[2-q(\fai)]$,
which, together with Lemma \ref{l2.4}(iv), implies that
$$\rh_{(2/q(\fai))'}(\rn)\subset RH_{(p_0/q(\fai))'}(\rn).$$
Thus, Theorem \ref{t6.2} essentially improves \cite[Theorems 7.11 and 7.15]{yys4}.

(ii) Theorem \ref{t6.3} completely covers \cite[Corollary 1.1]{cyy} by taking
$\fai(x,t):=t^{n/(n+1)}$ for all $x\in\rn$ and $t\in[0,\fz)$.
\end{remark}

The proof of Theorem \ref{t6.3} is similar to that of Theorem \ref{ct5.12}. We omit the
details here.

\medskip

{\bf Acknowledgements.} The authors would like to thank the referees
for their careful reading and several valuable remarks which
improve the presentation of this article.

\bigskip

\noindent The Anh Bui

\medskip

\noindent Department of Mathematics, Macquarie University, NSW 2109, Australia and
Department of Mathematics, University of Pedagogy, Ho chi Minh city, Vietnam

\smallskip

\noindent{\it E-mails:} \texttt{the.bui@mq.ed.au} \ \ and \ \ \texttt{bt\_anh80@yahoo.com}

\bigskip

\noindent Jun Cao, Dachun Yang (Corresponding author) and Sibei Yang

\medskip

\noindent School of Mathematical Sciences, Beijing Normal
University, Laboratory of Mathematics and Complex Systems, Ministry
of Education, Beijing 100875, People's Republic of China

\smallskip

\noindent{\it E-mails:} \texttt{caojun1860@mail.bnu.edu.cn} (J. Cao)

\hspace{1.2cm}\texttt{dcyang@bnu.edu.cn} (D. Yang)

\hspace{1.2cm}\texttt{yangsibei@mail.bnu.edu.cn} (S. Yang)

\bigskip

\noindent Luong Dang Ky

\medskip

\noindent MAPMO-UMR 6628,\! D\'epartement de Math\'ematiques,
Universit\'e d'Orleans,\! 45067
Orl\'eans Cedex 2, France

\smallskip

\noindent{\it E-mails:} \texttt{dangky@math.cnrs.fr}

\end{document}